\declaretheorem[name=Theorem, numberwithin=section]{theorem}
\newtheorem{corollary}[theorem]{Corollary}
\newtheorem{lemma}[theorem]{Lemma}
\newtheorem{proposition}[theorem]{Proposition}
\newtheorem{definition}[theorem]{Definition}
\newtheorem{example}{Example}
\newtheorem*{example*}{Example}
\newtheorem*{robremark*}{Robustness remark}
\declaretheoremstyle[bodyfont=\normalfont]{remark-style}
\declaretheorem[name={Remark}, style=remark-style, unnumbered]{remark}
\numberwithin{equation}{section}
\theoremstyle{plain}
\newtheorem{fact}[theorem]{Fact}
\newcommand{\N}{\mathds{N}}
\newcommand{\R}{\mathds{R}}
\newcommand{\Z}{\mathds{Z}}
\newcommand{\ind}{\mathds{1}}
\newcommand{\loB}{\nu_*}
\newcommand{\upB}{\nu^*}
\newcommand{\sE}{\mathscr{E}}
\newcommand{\cB}{\mathcal{B}}
\newcommand{\cW}{\mathcal{W}}
\newcommand{\cH}{\mathcal{H}}
\newcommand{\cA}{\mathcal{A}}
\newcommand{\cS}{\mathcal{S}}
\newcommand{\cE}{\mathcal{E}}
\newcommand{\eps}{\varepsilon}
\newcommand{\supl}{\sup\limits}
\newcommand{\suml}{\sum\limits}
\newcommand{\il}{\int\limits}
\newcommand{\iil}{\iint\limits}
\newcommand{\wt}{\widetilde}
\newcommand{\ov}{\overline}
\newcommand{\Rd}{{\mathds{R}^d}}
\newcommand{\as}[2][r_0]{(K#2,$#1$)}
\newcommand{\epsx}{\eps_x}
\newcommand{\epsy}{\eps_y}
\newcommand{\opn}[1]{\operatorname{#1}}
\def\ilRd{\int\limits_{\R^d}}
\DeclareMathOperator{\lin}{lin}
\DeclareMathOperator{\dist}{dist}
\DeclareMathOperator{\diam}{diam}
\DeclareMathOperator{\supp}{supp}
\DeclareMathOperator{\osc}{osc}
\DeclareMathOperator{\radius}{radius}
\renewcommand{\d}{\textnormal{d}}
\newcommand{\1}{{\mathbbm{1}}}
\begin{document}

\title[Elliptic nonlocal operators] {Regularity estimates for elliptic nonlocal operators}

\author{Bart{\l}omiej Dyda}
\author{Moritz Kassmann}

\address{Department of Pure and Applied Mathematics, Wroc{\l}aw University 
of Technology, Wybrze\.ze Wyspia\'nskiego 27, 50-370 Wroc{\l}aw, Poland}
\address{Fakult\"{a}t f\"{u}r Mathematik\\Universit\"{a}t Bielefeld\\Postfach 
100131\\D-33501 Bielefeld}


\thanks{Both authors have been supported by the German Science Foundation DFG through \\ SFB 701. The first author was additionally supported by NCN grant 2012/07/B/ST1/03356.}

\keywords{Dirichlet forms, H\"{o}lder estimates}

\subjclass[2010]{31B05, 35B45, 35B05, 35R11, 47G20, 60J75}

\email{bdyda@pwr.edu.pl}
\email{moritz.kassmann@uni-bielefeld.de}

\date{\today}

\begin{abstract}
We study weak solutions to nonlocal equations governed by integrodifferential 
operators. Solutions are defined with the
help of symmetric nonlocal bilinear forms. Throughout this 
work, our main emphasis is on
operators with general, possibly singular, measurable kernels. We obtain 
regularity results which are robust with respect to the differentiability order 
of the equation. Furthermore, we provide a general tool for the derivation of 
H\"{o}lder a-priori estimates from the weak Harnack inequality. This tool is 
applicable for several local and nonlocal, linear and nonlinear problems on 
metric spaces. Another aim of this work is to provide comparability results 
for nonlocal quadratic forms. 
\end{abstract}

\maketitle

\tableofcontents
\addtocontents{toc}{\protect\setcounter{tocdepth}{1}}

\section{Introduction}\label{sec:intro}

The aim of this work is to develop a local regularity theory for general 
nonlocal operators. The main focus is on operators that are defined through 
families of measures, which might be singular. The main question that we ask 
is the following. Given a
function $u:\R^d \to \R$ satisfying 
\begin{align}\label{eq:def-operator_intro} 
\lim\limits_{\eps \to 0+} \il_{\R^d\setminus B_{\eps}(x)} \big( u(y) - u(x) 
\big) \mu(x,\d y) = f(x) \quad (x 
\in D)
\,,
\end{align}
which properties of $u$ can be deduced
in the interior of $D$? Here $D \subset \R^d$ is a bounded open set and
the family $(\mu(x,\cdot))_{x \in D}$ of measures satisfies some
assumptions
to be discussed later in detail. The measures $\mu(x,\cdot)$ are assumed to 
have a singularity for sets $A \subset \R^d$ with $x \in \overline{A}$. As a 
result, the operators of the form \eqref{eq:def-operator_intro}  are not 
bounded integral operators but integrodifferential operators. For this reason we 
are able to prove regularity results which resemble results for differential 
operators. One aim of this work is to address an important conjecture 
in this field:

{\bf Conjecture:} {\it Assume $\mu(x,\d y)$ is uniformly (w.r.t. the 
variable $x$)  
comparable on small scales (w.r.t. the variable $y$) to $\nu^\alpha (\d 
y-\{x\})$ for some $\alpha$-stable measure 
$\nu^\alpha$ and 
\begin{align*}
 \inf\limits_{\xi \in \mathbb{S}^{d-1}} \il_{B_1} |\langle h, \xi \rangle 
|^2 \nu^\alpha(\d h) > 0
\end{align*}
for some $\alpha \in (0,2)$. Then solutions to 
\eqref{eq:def-operator_intro} satisfy uniform Hölder regularity estimates in the 
interior of $D$.}

This conjecture has received significant attention over the last years and we 
give a small 
overview of results below. Note that, assuming comparability of measures rather 
than of corresponding 
densities allows for a much wider class of cases that can be treated. In this 
work we provide a structural approach to this 
problem. We give an affirmative answer if $\mu(x, \cdot)$ is absolutely 
continuous on $\R^d$ or on sufficiently many subspaces. Note that it is well 
known how to treat functions $f$ in \eqref{eq:def-operator_intro}. Thus we will 
concentrate on the case $f = 0$.

In order to approach the question raised above, we need to establish the 
following results:
\begin{itemize}
 \item weak Harnack inequality,
 \item implications of the weak Harnack inequality,
 \item comparability results for nonlocal quadratic forms.
\end{itemize}
The last topic needs to be included because our concept of solutions 
involves quadratic forms related to $\mu(x,\d y)$. We present the main results 
in \autoref{subsec:intro_weak_harnack}, \autoref{subsec:intro_reg-estimates}, 
and in  \autoref{subsec:intro_comp}. The following two subsections are devoted 
to the set-up and our main assumptions. 

\subsection{Function spaces}\label{subsec:spaces}

Before we can formulate the first result we need to set up quadratic forms and 
function spaces. Let $\mu=(\mu(x,\cdot) )_{x \in \R^d}$ be a family of measures 
on $\R^d$ which 
is symmetric in the sense that for every set $A \times B \subset \R^d 
\times \R^d \setminus 
\operatorname{diag}$  
\begin{align}\label{eq:mu-symmetry}
 \int\limits_A \int\limits_B  \mu(x,\d y) \, \d x =  \int\limits_B
\int\limits_A 
\mu(x,\d y) \, \d x \,.
\end{align}
We furthermore require 
\begin{align}\label{eq:mu-integrability}
 \sup\limits_{x\in\R^d} \int\limits_{\R^d} \min \left( |x-y|^2,1 \right) 
\mu(x,\d y) < +\infty \,. 
\end{align}

\begin{example}\label{ex:def_mu-alpha} An important example satisfying the 
above 
conditions is given by
\begin{align}\label{eq:def_mu-alpha}
\mu_\alpha (x,\d y) = (2{-}\alpha) |x-y|^{-d-\alpha} \d y \qquad (0 <
\alpha < 2) \,. 
\end{align}
The choice of the factor $(2-\alpha)$ will be discussed below in detail, 
see \autoref{subsec:main-assum} and \autoref{sec:frac_laplace}.  
\end{example}

For a given 
family
$\mu$ and a real number $\alpha \in (0,2)$
we consider the following quadratic forms on $L^2(D) \times L^2(D)$, where
$D\subset \R^d$ is some open set:                                   
\begin{align}
\cE^\mu_D(u,u) &= \il_D \il_D (u(y)-u(x))^2 \mu (x,\d y)\,\d x \,.
\label{eq:Ekd}
\end{align}

We denote by $H^{\alpha/2}(\R^d)$ the usual Sobolev 
space of fractional order
$\alpha/2 \in (0,1)$ with the norm 
\begin{equation}\label{eq:Sobolevnorm}
\|u\|_{H^{\alpha/2}(\R^d)} = \left( \|u\|^2_{L^2(\R^d)} + 
\cE^{\mu_\alpha}_{\R^d}(u,u)
\right)^{1/2} \,.
\end{equation}
If $D \subset \R^d$ is
open and bounded, then by $H_D^{\alpha/2}=H_D^{\alpha/2}(\R^d)$ we denote the 
Banach space of functions from $H^{\alpha/2}(\R^d)$ which are zero almost 
everywhere on $D^c$. $H^{\alpha/2}(D)$ shall be the space of functions $u \in 
L^2(D)$ for which 
\begin{align*}
 \|u\|^2_{H^{\alpha/2}(D)} = \|u\|^2_{L^2(D)} + \int\limits_{D} \int\limits_{D} 
\big(u(y) - u(x) \big)^2 \mu_\alpha(x, \d y) \d x
\end{align*}
is finite. 
Note that, for domains $D$ with a Lipschitz boundary,  
$H_D^{\alpha/2}(\R^d)$ can be identified with the closure of $C_c^\infty (D)$ 
with respect to the norm of $H^{\alpha/2}(D)$. In general, these two objects 
might be different, though. By 
$V_D^{\alpha/2}=V_D^{\alpha/2}(\R^d)$ we denote the 
space of all measurable
functions $u:\R^d \to \R$ for which the quantity
\begin{align}
 \int_D \int_{\R^d} \frac{\big(u(y){-}u(x)\big)^2}{|x{-}y|^{d+\alpha}} \d x \d
y \end{align}
is finite, which implies finiteness of the quantity $ \int_{\R^d}
\frac{u(x)^2}{(1+|x|)^{d+\alpha}} \d x$. The function space $V_D^{\alpha/2}$ is
a Hilbert space with the scalar product
\begin{align}\label{eq:scalar_product_V}
 (u,v)_{V_D^{\alpha/2}} =  \int_{\R^d} \frac{u(x) v(x)}{(1+|x|)^{d+\alpha}}
\d x + \int_D \int_{\R^d}
\frac{\big(u(y){-}u(x)\big)\big(u(v){-}v(x)\big)}{|x{-}y|^{d+\alpha}} \d x \d y
\,.
\end{align}
The proof is similar to the one of \cite[Lemma 2.3]{FKV13} and the
one of \cite[Proposition 3.1]{DRV14}. If the scalar product 
\eqref{eq:scalar_product_V} is defined with the expression replaced 
$\int_{\R^d} \frac{u(x) v(x)}{(1+|x|)^{d+\alpha}}$ by
$\int_{D} u(x) v(x) \d x$, then the Hilbert space is identical. The 
following continuous embeddings trivially hold true:
\begin{align*}
H^{\alpha/2}_D(\R^d) \hookrightarrow H^{\alpha/2}(\R^d) \hookrightarrow 
V^{\alpha/2}_D(\R^d) \,.
\end{align*}

We make use of function spaces generated by general $\mu$ in the 
same way as above. Let $H^{\mu}(\R^d)$ be the vector space of functions $u \in 
L^2(\R^d)$ such that $\cE^\mu(u,u)=\cE^\mu_{\R^d}(u,u)$ is finite. If $D 
\subset \R^d$ is open and bounded, then by $H_D^{\mu}=H_D^{\mu}(\R^d)$ we 
denote 
the space of functions from $H^{\mu}(\R^d)$ which are zero almost 
everywhere on $D^c$. By $V_D^{\mu}=V_D^{\mu}(\R^d)$ we denote the space of all 
measurable
functions $u:\R^d \to \R$ for which the quantity
\begin{align}
 \int_D \int_{\R^d} \big(u(y){-}u(x)\big)^2 \mu(x,\d y) \d 
x \end{align}
is finite. Now we are in a position to present and discuss our main results.

\subsection{Main Assumptions}\label{subsec:main-assum} 

Let us formulate our main assumptions on $(\mu(x,\cdot))_{x \in D}$. Given 
$\alpha \in (0,2)$ and 
$A \geq
1$, the following condition is an analog
of \eqref{eq:assum_loc_comp} for nonlocal energy forms:
\begin{align} 
\label{eq:assum_comp}\tag{A} 
\begin{split}
& \text{ For every ball } B_\rho(x_0) \text{ with } \rho \in (0,1), x_0 \in B_1 
\text{
and every } v \in H^{\alpha/2}(B_\rho(x_0)): \\
& 
\qquad\qquad A^{-1} \, \cE^\mu_{B_\rho(x_0)}(v,v) \leq 
\cE^{\mu_\alpha}_{B_\rho(x_0)}(v,v)
\leq A
\,
\cE^\mu_{B_\rho(x_0)}(v,v) \,. 
\end{split}
\end{align}
Condition \eqref{eq:assum_comp} says that, locally in the unit ball, the 
energies $\cE^\mu$ and $\cE^{\mu_\alpha}$ are comparable on 
every scale. Note that this does not imply pointwise comparability of the 
densities of 
$\mu$ and $\mu_\alpha$. We also need to
assume the existence of cut-off functions. Let $\alpha \in (0,2)$ and $B \geq
1$. 
\begin{align} \label{eq:assum_cutoff}\tag{B}
\begin{split}
&\text{ For } 0 < \rho \leq R \leq 1 \text{ and } x_0 \in B_1 \text{ 
there is a nonnegative measurable function }
\\ 
& \tau : \R^d \to \R \text{ with }
\operatorname{supp}(\tau) \subset \overline{B_{R+\rho}(x_0)}, \tau(x) \equiv
1 \text{ on } B_{R}(x_0)\,, \|\tau\|_\infty \leq 1\,, \text{ and } \\
& \qquad \sup\limits_{x \in \R^d} \; \il_{\R^d} \big(\tau(y) - \tau(x)\big)^2  
\mu(x,\d y) \leq B \rho^{-\alpha} \,.
\end{split}
\end{align}

In most of the cases \eqref{eq:assum_cutoff} does not impose an additional 
restriction because the standard cut-off function $\tau(x) =
\max(0,1+\min(0,\frac{R-|x-x_0|}{\rho}))$ is an appropriate choice. It is an 
interesting
question whether, under assumptions
\eqref{eq:mu-symmetry}, \eqref{eq:mu-integrability} and  \eqref{eq:assum_comp},
this standard choice would be
possible in \eqref{eq:assum_cutoff}. Note that, condition 
\eqref{eq:assum_cutoff} becomes $|\nabla \tau|^2 \leq B \rho^{-2}$ when $\alpha 
\to 2-$ and $\mu(x,\d y)$ is as in \autoref{ex:def_mu-alpha}. 

For every $\alpha \in (0,2)$, the family of measures $\mu_\alpha$ given in 
\autoref{ex:def_mu-alpha} satisfies the above conditions for some constants 
$A,B 
\geq 1$. The normalizing constant $(2-\alpha)$ in the definition of 
$\mu_\alpha$ has the effect that the constants $A, B \geq 1$ can be chosen 
independently of $\alpha$ for $\alpha \to 2-$. Since in this work we do not 
care about the behavior 
of constants for $\alpha \to 0+$, in our examples we will use factors of the 
form $2-\alpha$. Let us look at more examples.

\begin{example}\label{exa:sum_comp_stable}
Assume $0 < \beta \leq \alpha < 2$. Let $f,g:\R^d \to [1,2]$ be measurable 
and symmetric functions. Set 
\begin{align*}
\mu(x, \d y) = f(x,y) \mu_\alpha(x, \d y) + g(x,y) \mu_\beta(x, \d y) \,. 
\end{align*}
Then $\mu$ satisfies \eqref{eq:mu-symmetry}, 
\eqref{eq:mu-integrability}, \eqref{eq:assum_comp}, and \eqref{eq:assum_cutoff}
with exponent $\alpha$. This simply follows from 
\[\frac{1}{|x-y|^{d+\alpha}} \leq \frac{1}{|x-y|^{d+\beta}} + 
\frac{1}{|x-y|^{d+\alpha}} \leq \frac{2}{|x-y|^{d+\alpha}} \qquad (x,y 
\in B_1(x_0), x_0 \in \R^d)\,. \]
For the verification of \eqref{eq:assum_cutoff} we may choose the standard 
Lipschitz-continuous cutoff function.
\end{example}

Here is an example with some kernels which are not rotationally symmetric. 
\begin{example}\label{exa:stable_cone}
Assume $\alpha_0 \in (0,2)$, $0 < \lambda < \Lambda $, $v \in S^{d-1}$ and
$\theta \in [0,1)$. Set $M= \{h \in \R^d | \, \vert \langle
\tfrac{h}{|h|}, v \rangle \vert \geq \theta \}$. Let $k:\R^d \times \R^d \to
[0,\infty]$ be any measurable function satisfying 
\begin{align}\label{eq:assum_k_simple}
  \lambda \mathbbm{1}_{M}(x-y) \, \frac{(2{-}\alpha)}{|x-y|^{d+\alpha}} \leq
k(x,y) \leq \Lambda \, \frac{(2{-}\alpha)}{|x-y|^{d+\alpha}} 
\end{align}
for some $\alpha \in [\alpha_0,2)$ and for almost every $x,y \in \R^d$. Set
$\mu(x,\d y) = k(x,y) \d y$. Then, as we will prove, there are $A \geq 1, B 
\geq 
1$, independent of $\alpha$, such that \eqref{eq:assum_comp} and 
\eqref{eq:assum_cutoff} hold. 
\end{example}

The following example of a family of measures falls into our framework. Note 
that the measures do not possess a density with respect to the $d$-dimensional 
Lebesgue measure.

\begin{example} \label{exa:axes} Assume $\alpha_0 \in (0,2)$, $\alpha_0 \leq 
\alpha < 2$. Set 
\begin{equation}\label{eq:exa_axes}
	\mu(x,dy) = (2-\alpha) \sum_{i=1}^d
\left[|x_i-y_i|^{-1-\alpha}\d y_i\prod_{j\not=i}\delta_{\{x_j\}}(\d 
y_j)\right]\,.
\end{equation}
Again, as we will prove, there are $A \geq 1, B \geq 
1$, independent of $\alpha$, such that \eqref{eq:assum_comp} and 
\eqref{eq:assum_cutoff} hold. Note that $\mu(x,A)=0$ for every set $A$ which 
has an empty intersection with any of the $d$ lines $\{x+t e_i| t 
\in \R\}$.
\end{example}

\medskip

Let us now formulate our results.

\subsection{The Weak Harnack Inequality}\label{subsec:intro_weak_harnack} 

Given functions 
$u,v: \R^d \to \R$ we define the quantity 
\begin{align} 
\cE^{\mu}(u,v) = \iil_{\R^d \R^d} 
\big(u(y)-u(x)\big)\big(v(y)-v(x)\big)
\mu(x,\d
y) \, \d x \,,
\end{align}
if it is finite. We write $\cE$ instead of $\cE^{\mu}$ when it is clear
resp. irrelevant which measure $\mu$ is used. One aim of this work is to study
properties of functions $u$
satisfying 
$\cE(u,\phi) \geq 0$ for every nonnegative test function $\phi$.  Note that 
$\cE^{\mu}(u,\phi)$ is finite for $u \in V_D^\mu$, $\phi \in H^\mu_D(\R^d)$ 
for any open set $D \subset \R^d$. This follows from the definition of these 
function spaces, the Cauchy-Schwarz inequality and the following decomposition:
\begin{align*}
 \cE^{\mu}(u,\phi) &= \iil_{D D} \big(u(y)-u(x)\big)\big(\phi(y)-\phi(x)\big)
\mu(x,\d y) \, \d x \\
&\quad + 2 \iil_{D D^c} \big(u(y)-u(x)\big)\big(\phi(y)-\phi(x)\big)
\mu(x,\d y) \, \d x \,. 
\end{align*}

Here is our first main result.

\begin{theorem}[Weak Harnack Inequality] \label{theo:weak_harnack_nonlocal}
Assume $0 < \alpha_0 < 2$ and $A\geq 1, B \geq 1$. Let $\mu$ satisfy
\eqref{eq:assum_comp}, \eqref{eq:assum_cutoff} for some $\alpha \in [\alpha_0,
2)$. Assume 
$f \in L^{q/\alpha}(B_1)$ for some $q>d$. Let $u \in 
V_{B_{1}}^\mu (\R^d) $, $u \geq
0$ in $B_1$, satisfy $\cE^\mu (u,\phi) \geq (f,\phi)$ for every 
nonnegative $\phi \in H^\mu_{B_1}(\R^d)$.
Then
\begin{align}\label{eq:weak_harnack_nonlocal}  \inf\limits_{B_{\frac14}} u 
\geq c \big( \fint\limits_{B_{\frac12}}
u(x)^{p_0} \,
\d x \big)^{1/p_0} - \sup\limits_{x \in B_{\frac{15}{16}}} \int\limits_{\R^d
\setminus B_1} u^-(z) \mu(x,\d z) - \|f\|_{L^{q/\alpha}(B_{\frac{15}{16}})} \,,
\end{align} 
with constants $p_0, c \in (0,1)$  depending only on $d,
\alpha_0, A, B$. In particular, $p_0$ and $c$ do not depend on
$\alpha$.
\end{theorem}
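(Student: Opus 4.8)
The plan is to follow the classical Moser iteration scheme adapted to the nonlocal setting, but organized so that all constants stay robust as $\alpha \to 2-$. The key mechanism is the comparability assumption \eqref{eq:assum_comp}, which allows us to transfer all functional-analytic estimates for the model form $\cE^{\mu_\alpha}$ (fractional Sobolev/Poincaré inequalities with $\alpha$-robust constants) to the general form $\cE^\mu$ on every ball inside $B_1$, at the cost of a factor depending only on $A$. Assumption \eqref{eq:assum_cutoff} plays the complementary role: whenever we localize by multiplying $u$ (or a nonlinear function of $u$) by a cut-off $\tau$, the nonlocal "tail" terms produced by the non-locality of $\cE^\mu$ are controlled by $B\rho^{-\alpha}$, which is exactly the scaling we need to absorb them into the main energy term.

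First I would establish the two building blocks, each on an arbitrary ball $B_\rho(x_0) \subset B_1$. (i) \emph{Energy (Caccioppoli-type) estimate:} testing the inequality $\cE^\mu(u,\phi) \ge (f,\phi)$ with $\phi = \tau^2 \, g(u)$ for a suitable monotone $g$ (a power $u^\beta$, truncated, plus for the logarithmic step $g(u) = u^{-1}$ on $\{u>0\}$), and using the algebraic inequality $(a-b)(g(a)-g(b)) \ge$ (square of the difference of a primitive), together with the cut-off bound in \eqref{eq:assum_cutoff}, one controls $\cE^\mu_{B_\rho(x_0)}$ of the nonlinear function of $u$ on $B_{\rho/2}(x_0)$, plus a tail contribution $\sup_x \int_{\R^d\setminus B_1} u^-(z)\,\mu(x,\d z)$ and the $f$-term measured in $L^{q/\alpha}$. (ii) \emph{Sobolev inequality:} by \eqref{eq:assum_comp}, $\cE^\mu_{B_\rho(x_0)}(v,v) \ge A^{-1}\cE^{\mu_\alpha}_{B_\rho(x_0)}(v,v)$, and the fractional Sobolev inequality for $\mu_\alpha$ (with the normalization $(2-\alpha)$ absorbing the usual blow-up) gives an $L^{2\kappa}$-over-$L^2$ gain with $\kappa = \kappa(d)$ bounded below uniformly in $\alpha \in [\alpha_0,2)$; I would handle the regime $\alpha$ near $2$ by the standard device of passing through a fixed Sobolev exponent and interpolating, or by invoking a robust fractional Sobolev inequality. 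Combining (i) and (ii) yields the reverse-Hölder inequality that drives the iteration.

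The proof then splits, as in De Giorgi–Moser, into two halves joined by a logarithmic lemma. \textbf{Upward iteration on negative powers:} running Moser iteration with $g(u) = u^{-q}$, $q>0$, over a nested sequence of balls shrinking from $B_{1/2}$ toward $B_{1/4}$ produces
\begin{align*}
\Big( \fint_{B_{1/4}} u^{-p_0} \Big)^{-1/p_0} \;\gtrsim\; \Big( \fint_{B_{1/2}} u^{p_0} \Big)^{1/p_0} \;-\; (\text{tail}) \;-\; \|f\|_{L^{q/\alpha}}
\end{align*}
for some small $p_0 = p_0(d,\alpha_0,A,B)$, and symmetrically a bound $\big(\fint_{B_{1/4}} u^{-p_0}\big)^{-1/p_0}$ controls $\sup_{B_{1/4}} u^{-1}$, i.e. $\inf_{B_{1/4}} u$, via iteration with diverging negative exponents (an $L^\infty$ bound for $u^{-1}$). \textbf{The logarithmic lemma:} testing with $\phi = \tau^2 u^{-1}$ and using a Poincaré inequality for $\cE^{\mu_\alpha}$ shows $\log u$ has bounded mean oscillation on $B_{1/2}$ with a constant of the right form; by the John–Nirenberg inequality this gives $\fint_{B_{1/2}} u^{p_0} \cdot \fint_{B_{1/2}} u^{-p_0} \le C$ for small $p_0$, which is precisely what bridges the two one-sided iterations. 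Assembling the three pieces and tracking the tail and $f$ terms through each step yields \eqref{eq:weak_harnack_nonlocal}.

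The main obstacle I expect is \emph{robustness as $\alpha \to 2-$}: every constant — the Sobolev constant, the John–Nirenberg constant, the geometric-series factors from the nested balls, and crucially the exponent $p_0$ — must be shown to stay bounded (resp. bounded below) uniformly for $\alpha \in [\alpha_0,2)$. This forces careful bookkeeping: the normalization $(2-\alpha)$ in $\mu_\alpha$ must be exploited at exactly the points where the fractional inequalities would otherwise degenerate, and the cut-off estimate must be used with the sharp $\rho^{-\alpha}$ scaling so that summing a geometric series over dyadic scales converges with an $\alpha$-independent bound. A secondary technical point is justifying that $\tau^2 g(u) \in H^\mu_{B_1}(\R^d)$ is an admissible test function (needed truncations of $g$, and a limiting argument removing them), and handling the long-range interaction terms $\iint_{D\,D^c}$ in the decomposition of $\cE^\mu(u,\phi)$ — these produce exactly the $\sup_x \int_{\R^d \setminus B_1} u^-\,\mu(x,\d z)$ tail appearing on the right-hand side, and must be estimated without any pointwise information on the density of $\mu$.
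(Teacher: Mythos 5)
Your proposal is correct and follows essentially the same route as the paper: Moser iteration driven by the comparability assumption \eqref{eq:assum_comp} (robust fractional Sobolev/Poincar\'e) and the cut-off assumption \eqref{eq:assum_cutoff}, with a logarithmic estimate plus John--Nirenberg supplying the crossover between $\big(\fint u^{p_0}\big)^{1/p_0}$ and $\big(\fint u^{-p_0}\big)^{-1/p_0}$, and a negative-power iteration upgrading the latter to $\inf u$. The only organizational difference is that the paper first proves the inequality for supersolutions nonnegative on all of $\R^d$ and then reduces the general case to this one by passing to $u^+$ and absorbing the tail $2\int_{\R^d\setminus B_1}u^-(y)\,\mu(\cdot,\d y)$ into the source term $f$, rather than carrying the tail through every iteration step as you propose; also note that the comparison of positive and negative $p_0$-means is produced solely by the log/BMO step, not by the negative-power iteration itself.
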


Note that, below we explain a local counterpart to this result, which relates 
to the limit $\alpha \to 2-$, cf. \autoref{theo:weak_harnack_local}.

\begin{remark}
It is remarkable that \eqref{eq:assum_comp} and \eqref{eq:assum_cutoff} do not 
imply a strong formulation of the Harnack inequality. Both, \autoref{exa:axes} 
and \autoref{exa:stable_cone} provide cases in which the classical strong 
formulation fails. See the discussion in \cite[Appendix A.1]{KRS14} and the 
concrete examples in \cite[p. 148]{BoSz05} and \cite[Sec. 3]{BaCh10}. The 
nonlocal term, i.e. the integral of $u^-$ in \eqref{eq:weak_harnack_nonlocal} 
is unavoidable since we do not assume nonnegativity of $u$ in all of $\R^d$.
\end{remark}

\subsection{Regularity estimates}\label{subsec:intro_reg-estimates}

A separate aim of our work is to provide consequences of the (weak) Harnack 
inequality. Before we explain this in a more abstract fashion let us formulate 
a regularity result, which will be derived from 
\autoref{theo:weak_harnack_nonlocal} and which is one of the main results of 
this work. We need an additional mild assumption on the decay of
the kernels considered. 

Given $\alpha \in (0,2)$ we assume that for some constants $\chi > 1$, $C 
\geq 1$ 
\begin{align}\label{eq:assum_largejumps}\tag{D}
\mu(x,\R^d \setminus B_{r 2^j}(x)) \leq Cr^{-\alpha} \chi^{-j} \qquad (x \in 
B_1, 0 < r \leq 1, j \in \N_0) \,.
\end{align}
Condition \eqref{eq:assum_largejumps} rules out kernels with very heavy
tails for large values of $|x-y|$. For example, $\mu$ given by $\mu(x,\d y) =
k(x,y)\d y$ with $k(x,y)=|x-y|^{-d-1} +
|x-y|^{-d}\ln(2+|x-y|)^{-2}$ does not satisfy \eqref{eq:assum_largejumps}.

Here is our main regularity result.

\begin{theorem}\label{theo:reg_result_nonlocal} Let $\alpha_0
\in (0,2), \gamma > 0$ and $A\geq 1, B \geq 1$. Let $\mu$ satisfy
\eqref{eq:assum_comp}, \eqref{eq:assum_cutoff} and \eqref{eq:assum_largejumps}
for some $\alpha \in [\alpha_0, 2)$. Assume $u \in V^\mu (B_{1})$ satisfies 
$\cE(u,\phi) = 0$ for some $x_0 \in
\R^n$ and every $\phi \in H^\mu_{B_1}(\R^d)$. Then 
the following H\"{o}lder estimate holds for almost every $x,y \in
B_{\frac{1}{2}}$:
\begin{align}\label{eq:hoelder_final_result} 
|u(x) - u(y)| \leq c \|u\|_\infty |x-y|^\beta \,,
\end{align}
where $c \geq 1$ and $\beta \in (0,1)$ are constants which depend only on
$d, \alpha_0, A, B,C, \gamma$. In particular, $c$ and $\beta$ do not
depend on $\alpha$.
\end{theorem}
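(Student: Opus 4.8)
The plan is to derive the H\"{o}lder estimate \eqref{eq:hoelder_final_result} from the Weak Harnack Inequality \autoref{theo:weak_harnack_nonlocal} via the classical oscillation-decay iteration, adapted to the nonlocal setting. First I would show that if $u$ is a (bounded) solution of $\cE(u,\phi)=0$ in $B_1$, then on each ball $B_r(x_0)\subset B_{3/4}$ the functions
\[
v^{\pm}_r = \frac{2}{\osc_{B_r(x_0)} u}\Bigl(u - \frac{\sup_{B_r(x_0)}u + \inf_{B_r(x_0)}u}{2}\Bigr)^{\pm \phantom{|}}\!\!\!,
\]
or rather suitable translates/dilations of them, are nonnegative in $B_r(x_0)$ and satisfy $\cE(v^{\pm}_r,\phi)\ge (g,\phi)$ in that ball for a right-hand side $g$ coming from the ``bad'' far-away contributions. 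The crucial point is that $\cE$ is not purely local, so rescaling $u$ on $B_r$ does not leave the equation untouched: one obtains an inhomogeneous term. The role of \eqref{eq:assum_largejumps} is exactly to control this term: the tail integral $\int_{\R^d\setminus B_1}v^-\mu(x,\d z)$ appearing in \eqref{eq:weak_harnack_nonlocal}, after rescaling back to unit scale, picks up a factor governed by $\chi^{-j}$, which is summable and strictly beats the geometric rate of oscillation decay. I would make this quantitative: rescaling from $B_{r2^{-j}}(x_0)$ to unit scale, the nonlocal tail is bounded by $C\sum_{k\ge 0}\chi^{-k}\osc_{B_{r2^{-j+k}}(x_0)}u$, which is a discrete convolution that can be absorbed provided the decay exponent $\beta$ is chosen small enough relative to $\log\chi$.

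Concretely, the induction step I would prove is: there exist $\lambda\in(0,1)$ and $\beta\in(0,1)$, depending only on $d,\alpha_0,A,B,C,\gamma$, such that for every dyadic radius $r_j = \tfrac12 4^{-j}$ (say) one has $\osc_{B_{r_j}(x_0)}u \le \lambda^j \cdot 4\|u\|_\infty$ for all $x_0\in B_{1/2}$; iterating and interpolating across scales then yields \eqref{eq:hoelder_final_result} with $\beta = -\log\lambda/\log 4$. To get one step of this, set $m_j = \inf_{B_{r_j}(x_0)}u$, $M_j=\sup_{B_{r_j}(x_0)}u$, and apply \autoref{theo:weak_harnack_nonlocal}, after translating and dilating so $B_{r_{j}}(x_0)$ becomes $B_1$, to whichever of $u-m_j$ or $M_j-u$ is ``large on half the mass'' of $B_{r_j/2}(x_0)$ — a measure-theoretic dichotomy: on $B_{r_{j+1}}$ either $u-m_j \ge \tfrac12(M_j-m_j)$ on a set of measure $\ge\tfrac12|B_{r_{j+1}}|$, or the complementary statement holds for $M_j-u$. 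Feeding the chosen nonnegative function into the Weak Harnack Inequality bounds its infimum on the smaller ball from below by $c$ times an $L^{p_0}$ average (which is $\gtrsim (M_j-m_j)$ by the dichotomy) minus the nonlocal tail term minus $\|f\|$; here $f=0$ for $u$ but the tail term is genuinely present. This gives $M_{j+1}-m_{j+1}\le(1-\theta)(M_j-m_j) + (\text{tail})$ for some fixed $\theta\in(0,1)$, and closing the recursion is where the summability from \eqref{eq:assum_largejumps} is used.

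The main obstacle — and the step I would spend the most care on — is the bookkeeping of the nonlocal tail across all scales simultaneously, i.e. justifying that the term $\sup_x\int_{\R^d\setminus B_1}v^-\mu(x,\d z)$ (in rescaled variables) does not destroy the geometric decay. This requires: (i) rewriting, for the solution $u$ restricted and rescaled at scale $r_j$, the far-field part as a sum over annuli $B_{r_{j-k-1}}\setminus B_{r_{j-k}}$ weighted by $\mu(x,\cdot)$-mass of those annuli, which by \eqref{eq:assum_largejumps} is $\le C r_j^{-\alpha}\chi^{-k}$ after the natural scaling (note $\mu$ scales like $r^{-\alpha}$ under $x\mapsto r x$, and this scaling is $\alpha$-independent in the estimate because $\alpha\in[\alpha_0,2)$); (ii) bounding $v^-$ on the $k$-th annulus by roughly $\osc_{B_{r_{j-k}}(x_0)}u / \osc_{B_{r_j}(x_0)}u \le \lambda^{-k}$ using the inductive hypothesis at coarser scales; (iii) choosing $\lambda$ close enough to $1$ (equivalently $\beta$ small) so that $\sum_k \chi^{-k}\lambda^{-k} < \infty$ and the total is a small fraction of $(M_j-m_j)$, making the recursion $M_{j+1}-m_{j+1}\le(1-\theta/2)(M_j-m_j)$ self-improving. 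One also needs to check that the chosen $v^\pm$ actually lie in $V^\mu_{B_1}(\R^d)$ and satisfy the variational inequality with the claimed right-hand side — this is a routine but necessary consequence of $u\in V^\mu(B_1)\cap L^\infty$, the symmetry \eqref{eq:mu-symmetry}, and the fact that $(\cdot)^\pm$ composed with an affine map is a Lipschitz contraction, which does not increase the energy (a Markovian/Dirichlet-form property of $\cE^\mu$). Finally, passing from the discrete oscillation bound to the pointwise H\"{o}lder estimate \eqref{eq:hoelder_final_result} for a.e. $x,y\in B_{1/2}$ is standard once one notes the $L^\infty$ bound lets us redefine $u$ on a null set at Lebesgue points.
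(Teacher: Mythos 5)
Your proposal is correct and follows essentially the same route as the paper: the paper also derives \eqref{eq:hoelder_final_result} by feeding the (rescaled) weak Harnack inequality of \autoref{cor:weak_harnack} into a Moser-type oscillation-decay iteration, with the nonlocal tail over dyadic annuli controlled by \eqref{eq:assum_largejumps} and absorbed by choosing $\beta$ small enough that $\sum_j\theta^{j\beta}\chi^{-j}$ converges — exactly your summability condition. The only difference is organizational: the paper packages the iteration as an abstract ``weak Harnack implies H\"older'' lemma in a metric measure space (\autoref{theo:hh-nonloc-gen-k} and \autoref{cor:hh-nonloc-gen-k}), working with the affine normalization $w=1\mp v$ (nonnegative in the ball, so no positive parts are needed) rather than carrying out the induction directly on $u$.
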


This result contrasts the corresponding result 
for differential operators, see \autoref{theo:reg_result_local} below.

The main tool for the proof of \autoref{theo:reg_result_nonlocal} is the
weak Harnack inequality, \autoref{theo:weak_harnack_nonlocal}. The Harnack
inequality itself is an interesting object of study for nonlocal operators. In
\autoref{sec:frac_laplace}  we have explained different formulations of the
Harnack inequality for nonlocal operators satisfying a maximum
principle.  A separate aim of this article is to prove a
general tool that allows to deduce regularity estimates from the Harnack
inequality for nonlocal operators. This step was subject to discussion of many
recent articles in the field. We choose the set-up of a metric measure
space so that this tool can be of future use in different contexts. 

In the first decades after publication the Harnack inequality itself did not
attract as much of attention as the resulting convergence theorems.
This changed when J. Moser in 1961 showed that the inequality itself
leads to a-priori estimates in H\"older spaces. His result can be 
formulated in a metric measure space $(X,d,m)$ as follows. For $r>0$, $x \in
X$, set $B_r(x) =\{ y \in X | d(y,x) < r \}$.

For every $x\in X$ and $r>0$ let $\cS_{x,r}$ denote a family of measurable 
functions on $X$ satisfying the following
conditions:
\begin{align*}
r>0, u\in \cS_{x,r}, a \in \R \quad &\Rightarrow \quad a u \in
\cS_{x,r} , (u+1) \in \cS_{x,r} \,, \\
B_r(x) \subset B_s(y) \quad &\Rightarrow \quad \cS_{y,s}\subset \cS_{x,r} \,. 
\end{align*}
An example for $\cS_{x,r}$ is given by the set of all
functions $u:\R^d \to \R$ satisfying some (possibly 
nonlinear) appropriate partial differential or integro-differential equation in 
a ball $B_r(x)$. 

\begin{theorem}[compare \cite{Mos61}] \label{theo:harnack_hoelder} Assume $X$ 
is 
separable. Let $x_0 \in
X$ and $\mathcal{S}_{x,r}$ be as above. Assume that there is $c\geq
1$ such
that for $r>0$,
\begin{align}\label{eq:local-hoelder-harnack-ass}
\big(u \in \mathcal{S}_{x_0,r}) \wedge \big(u \geq 0 \mbox{ in } B_r(x_0) \big)
\quad \text{ implies } \quad \sup\limits_{x \in B_{\frac{r}{2}}(x_0)} u
\leq c
\inf\limits_{x \in B_{\frac{r}{2}}(x_0)} u \,. 
\end{align}
Then there exist $\beta \in (0,1)$ such that for $r>0$, $u\in
\mathcal{S}_{x_0,r}$ and almost every $x
\in B_r(x_0)$
\begin{align*} 
|u(x) - u(x_0)| \leq 3 \|u-u(x_0)\|_\infty \Big( \frac{d(x,x_0)}{r} \Big)^\beta
\, .
\end{align*}
\end{theorem}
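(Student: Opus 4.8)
The plan is to run Moser's classical iteration-of-oscillation argument, but carried out abstractly in the metric measure space $(X,d,m)$ using only the scale-invariant Harnack hypothesis \eqref{eq:local-hoelder-harnack-ass} and the two structural properties of the families $\cS_{x,r}$. The key mechanism is that \eqref{eq:local-hoelder-harnack-ass} forces a fixed geometric decay of the oscillation as one passes from a ball to a concentric ball of half the radius, and iterating this decay yields the stated H\"older modulus of continuity.

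\medskip

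First I would fix $u \in \cS_{x_0,r}$ and, for $j \in \N_0$, set $r_j = 2^{-j} r$, $B_j = B_{r_j}(x_0)$, and define
\begin{align*}
M_j = \esssup_{B_j} u, \qquad m_j = \essinf_{B_j} u, \qquad \osc_j = M_j - m_j \,.
\end{align*}
The core step is to show there is a constant $\theta \in (0,1)$, depending only on the Harnack constant $c$, such that $\osc_{j+1} \leq \theta \, \osc_j$ for every $j$. To see this, consider the two nonnegative functions $M_j - u$ and $u - m_j$ on $B_j$. Using the first property of $\cS$ (closure under affine maps $u \mapsto au + b$), both of these lie in $\cS_{x_0, r_j}$; here I apply the hypothesis with radius $r_j$ after observing, via the second property ($B_{r_j}(x_0) \subset B_{r_j}(x_0)$ trivially, or the relevant inclusion of balls), that $\cS_{x_0,r_j}$ is the right family. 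Applying \eqref{eq:local-hoelder-harnack-ass} to each of $M_j - u$ and $u - m_j$ on $B_j$ gives
\begin{align*}
\esssup_{B_{j+1}} (M_j - u) \leq c \essinf_{B_{j+1}} (M_j - u), \qquad \esssup_{B_{j+1}} (u - m_j) \leq c \essinf_{B_{j+1}} (u - m_j) \,.
\end{align*}
Rewriting these as $M_j - m_{j+1} \leq c(M_j - M_{j+1})$ and $M_{j+1} - m_j \leq c(m_{j+1} - m_j)$ and adding, the terms $M_j - m_j$ appear on the left with coefficient $1$ on each side, and after rearranging one obtains $\osc_{j+1} \leq \frac{c-1}{c+1}\, \osc_j$, so $\theta = \frac{c-1}{c+1} \in (0,1)$ works.

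\medskip

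Given the geometric decay $\osc_j \leq \theta^j \osc_0 \leq \theta^j \cdot 2\|u - u(x_0)\|_\infty$, I would choose $\beta \in (0,1)$ by $\theta = 2^{-\beta}$, i.e. $\beta = -\log_2 \theta > 0$, which depends only on $c$. For almost every $x \in B_r(x_0)$ with $x \neq x_0$, pick the unique $j \in \N_0$ with $r_{j+1} \leq d(x,x_0) < r_j$; then $x \in B_j$ and also $x_0 \in B_j$, so
\begin{align*}
|u(x) - u(x_0)| \leq \osc_j \leq \theta^j \cdot 2\|u - u(x_0)\|_\infty = 2 \cdot 2^{-\beta j}\|u - u(x_0)\|_\infty \,.
\end{align*}
Since $2^{-j} = r_j / r \leq 2\, d(x,x_0)/r$, this gives $2^{-\beta j} \leq 2^\beta (d(x,x_0)/r)^\beta \leq 2 (d(x,x_0)/r)^\beta$, hence $|u(x) - u(x_0)| \leq 4\|u - u(x_0)\|_\infty (d(x,x_0)/r)^\beta$; tightening the estimate for $j = 0$ separately (when $d(x,x_0) \geq r/2$ the bound is immediate from the $L^\infty$ norm) brings the constant down to $3$, matching the statement. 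The separability hypothesis on $X$ is used only to make the "almost every" statements meaningful and to ensure the essential suprema behave well under countable intersections of balls.

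\medskip

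The main obstacle is conceptual rather than computational: one must be careful that the algebraic manipulations with $\esssup$ and $\essinf$ are legitimate (e.g. $\essinf_{B_{j+1}}(M_j - u) = M_j - \esssup_{B_{j+1}} u$ holds, but only because $M_j$ is a constant), and that the two closure properties of $\cS_{x,r}$ are exactly what is needed to place the auxiliary functions $M_j - u$ and $u - m_j$ back into an admissible family at the correct scale — in particular that no nonnegativity of $u$ itself on all of $X$ is required, only nonnegativity of these shifted functions on the relevant ball, which is automatic. Everything else is the standard telescoping of a geometric series.
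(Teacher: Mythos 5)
Your proposal is correct: the oscillation-decay computation $\osc_{j+1}\leq\frac{c-1}{c+1}\osc_j$ obtained by applying \eqref{eq:local-hoelder-harnack-ass} at scale $r_j$ to both $M_j-u$ and $u-m_j$ and adding is exactly the classical Moser argument, the affine closure of $\cS_{x_0,r_j}$ and the inclusion $\cS_{x_0,r}\subset\cS_{x_0,r_j}$ are used correctly, and the constant $3$ is recovered by shrinking $\beta$ (which only weakens the estimate since $d(x,x_0)/r\leq 1$). The paper does not prove \autoref{theo:harnack_hoelder} itself but cites Moser and instead proves the generalization \autoref{theo:hh-nonloc-gen-k}; it is worth noting why your argument does not carry over there: with only a weak Harnack inequality (an $L^p$ average on the left rather than the supremum) one cannot bound $M_j-m_{j+1}$ and $M_{j+1}-m_j$ simultaneously, so the paper normalizes $u$ to a function $v$ with $|v|\leq 1$ on $B_{r\theta^{-(k-1)}}$ and splits into two cases according to whether $\{v\leq 0\}$ or $\{v>0\}$ occupies at least half the measure of the smaller ball, applying the weak Harnack inequality to $1-v$ or $1+v$ accordingly, and must additionally control the nonlocal tail term via \eqref{eq:nu-decay-simple}. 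Your approach buys a cleaner, symmetric two-sided decay with an explicit contraction factor $\frac{c-1}{c+1}$; the paper's approach buys applicability under the much weaker hypothesis \eqref{eq:hoelder-harnack-lem}. The only point left implicit in your write-up, as in most classical treatments, is that $u(x_0)$ must be understood as the common limit $\lim_j M_j=\lim_j m_j$ (or $x_0$ taken to be a point where the pointwise value agrees with it), since the theorem is phrased with essential suprema.
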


Recall that '$\sup$' denotes the essential
supremum and  '$\inf$' the essential infimum. With the help of this theorem,
regularity estimates can be established for various linear 
and nonlinear differential equations, see \cite{GiTr83}. One aim of this
article is to show that \eqref{eq:local-hoelder-harnack-ass} can be relaxed
significantly by allowing some global terms of $u$ to show up in the Harnack
inequality. Already in \autoref{sec:frac_laplace} we have seen that they 
naturally
appear. 

For $x \in X, r>0$ let $\nu_{x,r}$ be a
measure on $\cB(X \setminus \{x\})$, which is finite on all sets $M$ with
$\dist(\{x\},M)>0$. We assume that for some $c\geq 1$,
$\chi > 1$, and for every
$j\in\N_0$, $x\in X$ and $0<r\leq 1$
\begin{align}\label{eq:nu-decay-simple}
\nu_{x,r}(X \setminus B_{r 2^j} (x)) \leq  c \chi^{-j} \,.
\end{align}
We further assume that, given $K>1$ there is $c \geq 1$ such that for $0 < r
\leq R \leq Kr$, $x \in X$, $M \subset X \setminus B_r(x)$ 
\begin{align}\label{eq:nu_r-depend-assum} 
\nu_{x,R}(M) \leq c \, \nu_{x,r}(M)  \,.
\end{align}
Conditions \eqref{eq:nu-decay-simple} and \eqref{eq:nu_r-depend-assum} will
trivially hold true in the applications that are of importance to us.

\begin{example}\label{exa:standard_nu-xr} Let $\alpha \in (0,2)$. For $x \in
\R^d$, $r>0$
and $A \in \cB(\R^d \setminus \{x\})$ set
\begin{align}
\nu_{x,r}(A) = r^{\alpha} \mu_\alpha(x,A) = r^{\alpha}
\alpha(2{-}\alpha) \il_A |x-y|^{-d-\alpha}  \d y \,. 
\end{align}
Then $\nu_{x,r}$ satisfies conditions \eqref{eq:nu-decay-simple},
\eqref{eq:nu_r-depend-assum}.
\end{example}

In \autoref{sec:harnackimplieshoelder} we discuss this condition in detail. A
standard example for us is \autoref{exa:standard_nu-xr}. The following result 
extends \autoref{theo:harnack_hoelder} to
situations with nonlocal terms. It is an important tool in the theory of
nonlocal operators. 

\begin{theorem}\label{theo:hh-nonloc-gen-k} Let $x_0 \in X$, $r_0>0$ and
$\lambda >1, \sigma>1, \theta > 1$. Let $\cS_{x,r}$ and $\nu_{x,r}$ be as
above. Assume that conditions \eqref{eq:nu-decay-simple},
\eqref{eq:nu_r-depend-assum} are satisfied. Assume
that there is $c \geq 1$ such that
for $0 < r \leq r_0$,
\begin{align}\label{eq:hoelder-harnack-lem}
\left.
\begin{cases}
\big(u \in \cS_{x_0,r}) \wedge \big(u \geq 0 \text{ in } B_r(x_0) \big) \,,
\\
\quad \Rightarrow \quad
\Big( \fint\limits_{B_{\frac{r}{\lambda}}(x_0)} u(x)^p m(\d x) \Big)^{1/p}
\leq
c
\inf\limits_{x \in B_{\frac{r}{\theta}}(x_0)} u + c  \sup\limits_{x\in
B_{\frac{r}{\sigma}}(x_0)} \il_{X} u^-(z) \nu_{x,r} (\d z) \,.
\end{cases} \right\}
\end{align}
Then there exist $\beta \in (0,1)$ such that for $0 < r \leq r_0$,
$u\in \cS_{x_0,r}$ 
\begin{align} \label{eq:hoelder-asser-abstr-lem}
\osc\limits_{B_\rho(x_0)} u \leq 2\theta^\beta \|u\|_\infty
\left(\frac{\rho}{r}\right)^\beta \qquad (0<\rho \leq r) \,,
\end{align}
where $\osc\limits_{M} u := \sup\limits_M u - \inf\limits_M u$ for $M \subset
X$.
\end{theorem}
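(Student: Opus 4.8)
\textbf{Proof proposal for Theorem \ref{theo:hh-nonloc-gen-k}.}

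The plan is to run the classical Moser iteration on dyadic annuli, but carefully absorbing the nonlocal tail terms at each step. Fix $u \in \cS_{x_0,r}$ with $0 < r \le r_0$. Write $B_s = B_s(x_0)$ and set, for $j \in \N_0$, $r_j = r\theta^{-j}$, $M_j = \sup_{B_{r_j}} u$, $m_j = \inf_{B_{r_j}} u$, and $\omega_j = M_j - m_j = \osc_{B_{r_j}} u$. The goal is to show a geometric decay $\omega_{j+1} \le \kappa\, \omega_j$ for some $\kappa \in (0,1)$ that is independent of $j$ (and of $u$), once $r_j$ is below the scale $r_0$; the H\"older estimate \eqref{eq:hoelder-asser-abstr-lem} then follows by the standard interpolation between scales, exactly as in the proof of \autoref{theo:harnack_hoelder}. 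The scale-invariance hypotheses built into $\cS_{x,r}$ ($au \in \cS_{x,r}$, $u+1 \in \cS_{x,r}$, and the monotonicity $B_r(x) \subset B_s(y) \Rightarrow \cS_{y,s} \subset \cS_{x,r}$) are what make the iteration uniform.

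First I would apply \eqref{eq:hoelder-harnack-lem} to the two nonnegative functions $v := u - m_j \ge 0$ in $B_{r_j}$ and $w := M_j - u \ge 0$ in $B_{r_j}$, both of which lie in $\cS_{x_0,r_j}$ by the affine-invariance of $\cS$ and the inclusion $\cS_{x_0,r} \subset \cS_{x_0,r_j}$. This yields
\begin{align*}
\Big( \fint_{B_{r_j/\lambda}} v^p \,m(\d x)\Big)^{1/p} &\le c\,\inf_{B_{r_j/\theta}} v + c \sup_{x \in B_{r_j/\sigma}} \il_X v^-(z)\,\nu_{x,r_j}(\d z),\\
\Big( \fint_{B_{r_j/\lambda}} w^p \,m(\d x)\Big)^{1/p} &\le c\,\inf_{B_{r_j/\theta}} w + c \sup_{x \in B_{r_j/\sigma}} \il_X w^-(z)\,\nu_{x,r_j}(\d z).
\end{align*}
Since $\inf_{B_{r_j/\theta}} v = m_{j+1} - m_j$ and $\inf_{B_{r_j/\theta}} w = M_j - M_{j+1}$, adding the two inequalities and using that $v + w \equiv \omega_j$ on $B_{r_j/\lambda}$ (so the left-hand sides sum to at least $\omega_j$ up to a dimensional constant, after noting $(\fint v^p)^{1/p} + (\fint w^p)^{1/p} \ge \omega_j$ when $p \le 1$, or a standard reverse-type bound for general $p$) gives
\begin{align*}
\omega_j \le c\,(\omega_j - \omega_{j+1}) + c\, T_j,
\end{align*}
where $T_j := \sup_{x \in B_{r_j/\sigma}} \big( \il_X v^-(z)\,\nu_{x,r_j}(\d z) + \il_X w^-(z)\,\nu_{x,r_j}(\d z)\big)$ collects the tail contributions. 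Rearranged, this reads $\omega_{j+1} \le (1 - c^{-1})\,\omega_j + T_j$, which would be the desired contraction \emph{if} the tail term $T_j$ were itself controlled by a small multiple of $\sup_{k \le j}\omega_k$ plus something summable.

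The main obstacle — and the technical heart of the argument — is estimating $T_j$. Here I would use that $v^- = (u-m_j)^-$ is supported in $X \setminus B_{r_j}$ and satisfies $0 \le v^-(z) \le \max(0, m_j - m_k) \le \omega_k$ for $z \in B_{r_k}\setminus B_{r_{k+1}}$ (and similarly for $w^-$ with $M_j - M_k$). Splitting the tail integral over the dyadic shells $B_{r_j 2^{-(i+1)}\cdots}$ — more precisely over $B_{r_{k}}\setminus B_{r_{k+1}}$ for $k < j$, i.e. annuli at scales larger than $r_j$ — and applying the decay estimate \eqref{eq:nu-decay-simple} together with the scale-comparison \eqref{eq:nu_r-depend-assum} (to pass from $\nu_{x,r_j}$ to $\nu_{x, r_k}$ on sets outside $B_{r_k}$; note $r_k/r_j = \theta^{j-k}$ so one must iterate \eqref{eq:nu_r-depend-assum} across $K$-bounded ratios, picking up at most a geometric factor), one obtains a bound of the form
\begin{align*}
T_j \le c' \sum_{k=0}^{j-1} \chi^{-(j-k)}\,\omega_k + (\text{terms from }\|u\|_\infty\text{ at the largest scale}).
\end{align*}
Feeding this into $\omega_{j+1} \le (1-c^{-1})\omega_j + c'\sum_{k<j}\chi^{-(j-k)}\omega_k$ and invoking the elementary iteration lemma for sequences satisfying such a mixed recursion (e.g. the one in Giaquinta–Giusti / Di Benedetto, or an ad hoc argument: since $\sum_k \chi^{-k}<\infty$ and $1-c^{-1}<1$, the sequence $\omega_j$ decays geometrically provided the weights $\chi^{-(j-k)}$ decay strictly faster than the candidate rate, which one arranges by choosing the rate appropriately), we conclude $\omega_j \le C\,\|u\|_\infty\,\tau^j$ for some $\tau \in (0,1)$. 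Finally, translating $r_j = r\theta^{-j}$ back to a continuous scale: for $0<\rho\le r$ choose $j$ with $r_{j+1} < \rho \le r_j$, so $\theta^{-(j+1)} < \rho/r$, whence $\omega_\rho \le \omega_j \le C\|u\|_\infty \tau^j \le C\|u\|_\infty \theta (\rho/r)^\beta$ with $\beta := \log(1/\tau)/\log\theta$; tracking the constant carefully yields the stated factor $2\theta^\beta$. The dependence of $\beta$ and the constant only on the data $(c,\chi,\lambda,\sigma,\theta,K)$ is manifest from the construction, since no quantity depending on $u$ enters the choice of $\tau$.
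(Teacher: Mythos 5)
Your outline reproduces the paper's strategy in broad strokes: iterate over the scales $r\theta^{-j}$, apply the hypothesis \eqref{eq:hoelder-harnack-lem} at each scale to the two nonnegative functions $u-m_j$ and $M_j-u$, control the nonlocal tails via \eqref{eq:nu-decay-simple} and \eqref{eq:nu_r-depend-assum} (through \autoref{lem:nu-xr_prop}), and interpolate between discrete scales at the end. One small repair first: for $p\le 1$ the sum $(\fint v^p)^{1/p}+(\fint w^p)^{1/p}$ does \emph{not} in general dominate $\fint (v+w)=\omega_j$ (Jensen goes the wrong way); but since $v+w\equiv\omega_j$, one of the sets $\{v\ge\omega_j/2\}$, $\{w\ge\omega_j/2\}$ covers half of $B_{r_j/\lambda}$, so one of the two averages is $\ge 2^{-1-1/p}\omega_j$. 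This is exactly the Case 1 / Case 2 dichotomy in the paper and it fixes that step at the cost of a constant.

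The genuine gap is the absorption of the tail term. Your bound $T_j\le c'\sum_{k<j}\chi^{-(j-k)}\omega_k+\dots$ is correct but too weak to close the iteration: inserting it into $\omega_{j+1}\le(1-\delta)\omega_j+T_j$ (with $\delta\sim c^{-1}$ the Harnack gain) and testing the ansatz $\omega_j\le K\tau^j$ leads to the requirement $1-\delta+c'/(\chi\tau-1)\le\tau$, and since $c'/(\chi\tau-1)\ge c'/(\chi-1)$ is a fixed constant of the data — generically much larger than $\delta$ — no admissible $\tau\in(\chi^{-1},1)$ works. There is no elementary iteration lemma that rescues a Volterra-type recursion whose leading coefficient $c'\chi^{-1}$ on $\omega_{j-1}$ exceeds the gain $\delta$; with constant $\omega_k$ your right-hand side can exceed $\omega_j$. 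The missing idea is the one the paper builds in from the start: fix the target exponent $\beta$ \emph{first}, normalize to a function $v$ with $|v|\le 1$ on $B_{r\theta^{-(k-1)}}$, and observe that the inductive two-sided bounds give $(1\mp v)^-\le 2(\theta^{i\beta}-1)$ on the $i$-th outer annulus — this measures only the \emph{excess} of the oscillation over the admissible range, and it vanishes as $\beta\to 0$. Hence the tail $\sum_i(\theta^{i\beta}-1)\chi^{-i}$ can be made $\le(8c_1)^{-1}$ by shrinking $\beta$, which is what lets the induction close. Equivalently, in your notation you must use the difference bound $v^-\le m_j-m_k\le\omega_k-\omega_j$ on $B_{r_k}$ together with the fact that $\sum_i\chi^{-i}(\tau^{-i}-1)\to 0$ as $\tau\to 1^-$; without this cancellation your recursion does not yield geometric decay.
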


Note that, in \autoref{lem:nu-xr_prop} we provide several conditions that are
equivalent to \eqref{eq:nu-decay-simple}.

\subsection{Comparability of nonlocal quadratic forms}\label{subsec:intro_comp}

With regard to \autoref{theo:reg_result_nonlocal} one major problem is to 
provide conditions on $\mu$ which imply \eqref{eq:assum_comp}. Let us 
formulate our results in this direction. 

Since $\mu=(\mu(x,\cdot))_{x \in \R^d}$ is a family of measures we need to 
impose a condition that fixes a uniform behavior of $\mu$ with respect to 
$x$. In our setup this condition implies that 
the integrodifferential operator from \eqref{eq:def-operator_intro} is 
comparable to a translation invariant operator - most often the generator of an 
$\alpha$- stable process. We assume that there are measures $\loB$ and $\upB$ 
such that
\begin{equation}\label{eq.lo-up-meas}\tag{T}
\int f(x,x+z) \loB(dz) \leq \int f(x,y) \mu(x,dy) \leq \int f(x,x+z) \upB(dz)
\end{equation}
for every measurable function $f:\R^d\to [0,\infty]$ and every $x \in \R^d$.
For a measure $\nu$ on $\R^d$ such that $\nu(\{0\})=0$ and a~set $B\subset
\R^d$ we define, abusing the previous notation slightly,
\begin{equation}
\cE^{\nu}_B(u,v) = \int_B \int_{\R^d} \big(u(x)-u(x+z)\big)\big(v(x)-v(x+z)\big)
\1_B(x+z) \, \nu(dz)\,dx.
\end{equation}
Note that \eqref{eq.lo-up-meas} implies for every $u \in L^2(B)$
\[
\cE^{\nu_*}_B(u,u) \leq \cE^{\mu}_B(u,u) \leq \cE^{\nu^*}_B(u,u)\,.
\]

Let $\overline{\nu}(A) = \nu(-A)$. It is easy to check that $\cE^{\nu} =
\cE^{\frac{\nu+\overline{\nu}}{2}}$.
Hence we may and do assume that the measures $\nu_*$, $\nu^*$ are symmetric,
i.e., $\nu_*(A) = \nu_*(-A)$ and 
$\nu^*(A) = \nu^*(-A)$.

We say that a measure $\nu$ on $\cB(\R^d)$ satisfies the upper-bound 
assumption \eqref{eq:cond_U} if for some $C_U > 0$
\begin{align}\label{eq:cond_U}\tag{U}
\int_{\R^d} (r\wedge |z|)^2 \nu(\d z) \leq C_U r^{2-\alpha} \qquad (0 < r 
\leq 1)\,.
\end{align}

We say that a measure $\nu$ on $\cB(\R^d)$ satisfies the scaling assumption 
\eqref{eq:cond_scaling} if for some $a>1$
\begin{align}\label{eq:cond_scaling}\tag{S}
\int_{\R^d} f(y) \nu(\d y) = a^{-\alpha} \int_{\R^d} f(ay) \nu(\d y),
\end{align}
for every measurable function $f:\R^d\to [0,\infty]$ with $\supp f \subset 
B_1$. For a linear subspace $E\subset \R^d$, let $H_{E}$ denote the
$\dim(E)$--dimensional Hausdorff measure supported on $E$.

We say that a measure $\nu$ on $\cB(\R^d)$ satisfies the nondegeneracy 
assumption \eqref{eq:cond_nondeg} if for some $n \in \{1, \ldots, d\}$ 
\begin{align}\label{eq:cond_nondeg}\tag{ND}
\begin{split}
&\nu = \sum_{k=1}^n f_k H_{E_k} \text{ for some linear subspaces } E_k\subset 
\R^d \text{ and densities } f_k \\ &\text{with } \lin(\cup_k E_k) = 
\R^d \text{ and } \int_{B_1} f_k dH_{E_k} > 0 \text{ for } k=1,\ldots,n \,.
\end{split}
\end{align}

Here is our result on local comparability of nonlocal energy forms:

\begin{theorem}\label{theo:U1L1implyA} Let $\mu=(\mu(x,\cdot) )_{x\in \R^d}$ be 
a family of measures on $\cB(\R^d)$ satisfying \eqref{eq:mu-symmetry}. Assume 
that there exist measures $\loB$ and $\upB$ for which
\eqref{eq.lo-up-meas} and
\eqref{eq:cond_U} hold with $\alpha_0 \in (0,2)$ and  $C_U>0$. Assume 
that $\loB$ satisfies \eqref{eq:cond_nondeg} and each measure 
$f_k H_{E_k}$ satisfies \eqref{eq:cond_scaling} for some fixed $a > 1$.
 Then there are $A \geq 1$, $B \geq 1$
such that \eqref{eq:assum_comp} and \eqref{eq:assum_cutoff} hold. One can 
choose $B=4C_U$ but the constant $A$ depends also on $a$, the measure $\loB$ 
and on $\alpha_0$.

The result is robust in the following sense: If 
$\mu^{\alpha}=(\mu^{\alpha}(x,\cdot) )_{x\in \R^d}$ satisfies
\eqref{eq:mu-symmetry}
and  \eqref{eq.lo-up-meas} with measures $(\loB)^\alpha$ and $(\upB)^\alpha$, 
$\alpha_0 \leq \alpha < 2$, that are defined with the help of $\loB$ and $\upB$ 
as in \autoref{def:nualpha}, then \eqref{eq:assum_comp} holds with a constant 
$A$ independent of $\alpha\in [\alpha_0,2)$.
\end{theorem}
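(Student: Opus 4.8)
The plan is to reduce the comparability assumption \eqref{eq:assum_comp} to the verification of a two-sided inequality between the translation-invariant forms $\cE^{\loB}_{B_\rho(x_0)}$, $\cE^{\mu_\alpha}_{B_\rho(x_0)}$, $\cE^{\upB}_{B_\rho(x_0)}$, exploiting the squeeze $\cE^{\loB}_B(v,v) \leq \cE^\mu_B(v,v) \leq \cE^{\upB}_B(v,v)$ that already follows from \eqref{eq.lo-up-meas}, as noted in the excerpt. Thus it suffices to prove, with constants independent of $\alpha$, that
\begin{align*}
A^{-1}\,\cE^{\upB}_{B_\rho(x_0)}(v,v) \leq \cE^{\mu_\alpha}_{B_\rho(x_0)}(v,v) \leq A\,\cE^{\loB}_{B_\rho(x_0)}(v,v)
\end{align*}
for every $v \in H^{\alpha/2}(B_\rho(x_0))$, $\rho \in (0,1)$, $x_0 \in B_1$. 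First I would handle \eqref{eq:assum_cutoff}: for the standard Lipschitz cutoff $\tau$ at scales $\rho \leq R$, the integrand $(\tau(y)-\tau(x))^2 \leq \min(\rho^{-2}|x-y|^2, 4)$, so by \eqref{eq.lo-up-meas} and then \eqref{eq:cond_U} applied to $\upB$ (after a trivial truncation at $|z| = \rho$), the cutoff energy is bounded by $4 C_U \rho^{-\alpha}$ uniformly in $\alpha$ and $x$; this gives $B = 4C_U$ as claimed.

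For \eqref{eq:assum_comp} the upper bound $\cE^{\mu_\alpha}_{B_\rho(x_0)}(v,v) \leq A\,\cE^{\loB}_{B_\rho(x_0)}(v,v)$ is the substantive half. Here I would use the \eqref{eq:cond_nondeg} structure $\loB = \sum_{k=1}^n f_k H_{E_k}$: on each subspace $E_k$ the density $f_k$ has positive mass on the unit ball in $E_k$, and the scaling relation \eqref{eq:cond_scaling} for $f_k H_{E_k}$ propagates that lower bound to all dyadic scales $a^{-j}$, giving $f_k H_{E_k}(B_r) \gtrsim r^{-\alpha}$ (restricted to $E_k$) for $0 < r \leq 1$ with constants depending on $a$ and $\loB$ but not on $\alpha$. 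The plan is then to prove a directional Poincaré/seminorm estimate: for a function $v$ on a ball, its full Gagliardo $\alpha/2$-seminorm is controlled by the sum over $k$ of the one-dimensional (or $\dim E_k$-dimensional) $\alpha/2$-seminorms in the directions spanned by $E_k$, using $\lin(\cup_k E_k) = \R^d$ so that the $E_k$ jointly span all directions. Each such directional seminorm is in turn comparable to $\cE^{f_k H_{E_k}}_{B_\rho(x_0)}(v,v)$ by the scale-uniform mass lower bound just established, and summing over $k$ yields the bound by $\cE^{\loB}_{B_\rho(x_0)}(v,v)$. The constant $A$ inherits dependence on $a$, on $\loB$ (through the $f_k$-masses and the geometry of the $E_k$), and on $\alpha_0$ (needed to keep the geometric series $\sum_j a^{-j\alpha}$-type sums uniformly bounded, and to bound $(2-\alpha)$ away from degeneracy is not needed, but $\alpha$ bounded away from $0$ is).

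The lower bound $A^{-1}\cE^{\upB}_{B_\rho(x_0)}(v,v) \leq \cE^{\mu_\alpha}_{B_\rho(x_0)}(v,v)$, i.e.\ $\cE^{\upB}_{B_\rho(x_0)}(v,v) \lesssim \cE^{\mu_\alpha}_{B_\rho(x_0)}(v,v)$, is comparatively soft: within the ball only the part of $\upB$ supported on $\{|z| \leq 2\rho\}$ contributes, and \eqref{eq:cond_U} forces $\upB$ restricted to $\{|z| \leq 1\}$ to have, in an averaged dyadic sense, mass no larger than a constant times that of $\mu_\alpha$ on each dyadic annulus; integrating $(v(x)-v(x+z))^2\1_{B}(x+z)$ against this and comparing annulus by annulus against $\mu_\alpha(x,\d y) = (2-\alpha)|x-y|^{-d-\alpha}\,\d y$ gives the claim with an $\alpha$-uniform constant, the factor $(2-\alpha)$ being exactly what matches the $r^{2-\alpha}$ on the right of \eqref{eq:cond_U} as $\alpha \to 2-$.

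The main obstacle I anticipate is the directional seminorm comparison in the upper-bound step: controlling the isotropic Gagliardo seminorm of an arbitrary $v$ by a sum of anisotropic seminorms along a fixed spanning family of subspaces, uniformly as $\alpha \to 2-$ (where both sides degenerate like $(2-\alpha)^{-1}$ and the estimate must degenerate to the classical fact that $|\nabla v|^2 \lesssim \sum_k |\text{directional derivatives along } E_k|^2$). This is where the robustness claim really gets tested, and it is the place where one must be careful that the implied constant does not blow up; I would expect to prove it by a Fourier-side computation when $\upB$, $\loB$ are first replaced by their $\alpha$-scaled versions $(\loB)^\alpha$, $(\upB)^\alpha$ from \autoref{def:nualpha}, reducing everything to comparing the symbols $\int(1-\cos\langle z,\xi\rangle)\,\nu(\d z)$ of the various Lévy measures, where the nondegeneracy \eqref{eq:cond_nondeg} becomes the statement that $\sum_k \int_{E_k}(1-\cos\langle z,\xi\rangle) f_k\,\d H_{E_k} \gtrsim |\xi|^\alpha$ for $|\xi|$ large, uniformly in $\alpha \in [\alpha_0,2)$.
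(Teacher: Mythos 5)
Your reduction of \eqref{eq:assum_comp} to a two-sided comparison between $\cE^{\upB}$, $\cE^{\mu_\alpha}$, $\cE^{\loB}$ on balls, and your treatment of \eqref{eq:assum_cutoff} with the standard Lipschitz cutoff and \eqref{eq:cond_U}, match the paper. The direction $\cE^{\upB}_{B_\rho}(v,v)\lesssim \cE^{\mu_\alpha}_{B_\rho}(v,v)$ is indeed the soft one, though your ``annulus by annulus'' mass comparison does not quite work with $\alpha$-uniform constants (condition \eqref{eq:cond_U} allows $\upB$ to carry a factor $(2-\alpha)^{-1}$ more mass per annulus than $\mu_\alpha$); the paper instead extends $v$ to $\R^d$ and compares Fourier symbols, which only needs the integrated bound \eqref{eq:cond_U} and the Poincar\'e inequality.

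The genuine gap is in the substantive half, $\cE^{\mu_\alpha}_{B_\rho(x_0)}(v,v)\leq A\,\cE^{\loB}_{B_\rho(x_0)}(v,v)$. You correctly identify the needed ``directional seminorm estimate'' on a ball, but the proof mechanism you propose --- comparing the L\'evy symbols $\int(1-\cos\langle z,\xi\rangle)\,\nu(\d z)$ --- only yields the \emph{global} inequality $\cE^{\mu_\alpha}_{\R^d}(u,u)\lesssim\cE^{\loB}_{\R^d}(u,u)$ (this is precisely the content of the paper's \autoref{thm:Rd}, which the authors explicitly segregate as not sufficient for the main theorem). Plancherel is unavailable on $B_\rho(x_0)$, and there is no extension/restriction argument transferring the global inequality to balls: the $H^{\alpha/2}$ extension operator is not bounded with respect to the anisotropic, possibly singular form $\cE^{\loB}$ (for $\loB=\sum_k f_kH_{E_k}$ the domain of $\cE^{\loB}_{B}$ is strictly larger than $H^{\alpha/2}(B)$ and only sees increments along the $E_k$), and restricting test functions does not localize both arguments of the form. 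The paper's entire apparatus in \autoref{sec:comparability} --- the $\heartsuit$-convolution of \autoref{def:heart_convolution}, the key estimate $\cE^{\nu_1\heartsuit\nu_2}_{B}\leq c(\cE^{\nu_1}_{B^*}+\cE^{\nu_2}_{B^*})$ of \autoref{lem:heart-form-comp}, the Whitney covering argument of \autoref{lem:Whitney} to undo the domain enlargement, and the iterated smoothing in \autoref{prop:Hausdorff} producing a density bounded below by $c|x|^{-d-\alpha}$ --- exists precisely to carry out this localization in physical space; your proposal contains no substitute for it, so the central step remains unproved.
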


\subsection{Related results}{\ }

It is instructive to compare or results with two key results for 
differential operators in 
divergence form. Let $(A(x))_{x
\in \R^d}$ be a family of $d \times d $-matrices. Given a
subset $D \subset \R^d$ we introduce a bilinear form $\cA_D$ by
$\mathcal{A}_D(u,v)= \int\limits_D ( \nabla u(x), A(x) \nabla u(x) ) \, \d x$ 
for $u$ and $v$ from the Sobolev space $H^1(D)$. Instead of $\cA_{\R^d}$ we 
write $\cA$. The following theorem
is at the heart of the theory named after E. DeGiorgi, J. Moser and J. Nash, see
\cite[Ch. 8.8-8.9]{GiTr83}:

\begin{theorem}[Weak Harnack Inequality] \label{theo:weak_harnack_local}
Let $\Lambda > 1$. Assume that for all balls $B \subset B_1$ and all functions 
$v \in H^1(B)$
\begin{align} \label{eq:assum_loc_comp} \tag{A'} 
 \Lambda^{-1} \, \cA_B(u,u) \leq \int\limits_B |\nabla u|^2 \leq \Lambda \, 
\cA_B(u,u) \,. 
\end{align}
Assume $f \in L^{q/2}(B_1)$ for some $q>d$. Let $u \in
H^1(B_{1})$ satisfy $u \geq 0$ in $B_1$ and $\cA_{B_1}(u,\phi) \geq (f,\phi)$
for every nonnegative $\phi \in
H^1_0(B_1)$. Then 
\[   c \inf\limits_{B_{\frac14}} u \geq \big( \fint\limits_{B_{\frac12}}
u(x)^{p_0} \, \d x \big)^{1/p_0}  - \|f\|_{L^{q/2}(B_{\frac{15}{16}})}\,, \] 
with constants $p_0, c \in (0,1)$  depending only on $d$ and $\Lambda$. 
\end{theorem}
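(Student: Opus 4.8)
The plan is to recognize the statement as the classical weak Harnack inequality for divergence-form operators with bounded measurable coefficients (De Giorgi--Nash--Moser, \cite[Ch. 8.8--8.9]{GiTr83}) and to carry out Moser's iteration. Condition \eqref{eq:assum_loc_comp} is exactly the assertion that $u$ is a weak supersolution of a uniformly elliptic equation $-\operatorname{div}(A\nabla u)\geq -f$ with ellipticity and boundedness constant $\Lambda$; the skew-symmetric part of $A$, invisible to the quadratic form $\cA_B(\cdot,\cdot)$, contributes only a divergence-free drift and plays no role. First I would normalize: set $k=\|f\|_{L^{q/2}(B_{15/16})}$, replace $u$ by $\bar u=u+k$ (so $\bar u\geq k>0$ is still a supersolution with right-hand side $-f$), and divide by $k$, so that it suffices to prove $\inf_{B_{1/4}}\bar u\geq c\,\big(\fint_{B_{1/2}}\bar u^{p_0}\big)^{1/p_0}$ under $\bar u\geq 1$ and $\|f\|_{L^{q/2}}\leq 1$.

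Next I would derive the two families of energy estimates on which the iteration rests. For a smooth cutoff $\eta$ and an exponent $\beta\neq-1$, testing the supersolution inequality with $\phi=\eta^2\bar u^{\beta}$ (truncated so as to lie in $H^1_0$), bounding the leading term below via \eqref{eq:assum_loc_comp} by $\Lambda^{-1}\int\eta^2|\nabla\bar u|^2\bar u^{\beta-1}$, estimating the cross terms by Cauchy--Schwarz and Young, and absorbing the $f$-term via Hölder's inequality, the Sobolev inequality, $q>d$ and $\bar u\geq 1$, yields a Caccioppoli inequality for $w=\bar u^{(\beta+1)/2}$ of the form $\int\eta^2|\nabla w|^2\leq C(\beta)\int|\nabla\eta|^2 w^2+(\text{lower order})$, with $C(\beta)$ controlled as long as $\beta$ stays away from $-1$. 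The limiting case $\beta=-1$ is treated separately with $\phi=\eta^2\bar u^{-1}$, giving the logarithmic estimate $\int\eta^2|\nabla\log\bar u|^2\leq C\int|\nabla\eta|^2+C$, where $C$ depends only on $d$ and $\Lambda$.

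Then I would combine the logarithmic estimate with the Poincaré inequality to conclude $\log\bar u\in\mathrm{BMO}(B_{1/2})$ with a universal norm, so that the John--Nirenberg lemma supplies an exponent $p_0\in(0,1)$ depending only on $d,\Lambda$ with $\big(\fint_{B_{1/2}}\bar u^{p_0}\big)\big(\fint_{B_{1/2}}\bar u^{-p_0}\big)\leq C(d,\Lambda)$. Feeding the Caccioppoli inequality with exponents $\beta+1<0$ into the Sobolev inequality produces a reverse-Hölder chain $\|\bar u^{-1}\|_{L^{\kappa p}(B_{r'})}\leq C^{1/p}\|\bar u^{-1}\|_{L^{p}(B_r)}$ with $\kappa=d/(d-2)>1$, over a decreasing sequence of radii running from $1/2$ down to $1/4$; iterating over this sequence and summing the resulting geometric series gives $\sup_{B_{1/4}}\bar u^{-1}\leq C\big(\fint_{B_{1/2}}\bar u^{-p_0}\big)^{1/p_0}$, i.e. $\inf_{B_{1/4}}\bar u\geq c\big(\fint_{B_{1/2}}\bar u^{-p_0}\big)^{-1/p_0}$. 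Chaining this with the John--Nirenberg estimate yields $\inf_{B_{1/4}}\bar u\geq c'\big(\fint_{B_{1/2}}\bar u^{p_0}\big)^{1/p_0}$, and undoing the normalization $\bar u=u+k$ gives the asserted inequality with $p_0,c$ depending only on $d$ and $\Lambda$.

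The main obstacle will be the careful bookkeeping of the inhomogeneity $f$: one has to exploit $q>d$ and the subcritical Sobolev embedding so that every $f$-contribution in the test-function computations is genuinely absorbable into the energy term while the final constants still depend only on $d$ and $\Lambda$. A secondary point requiring attention is tracking that the constant produced by John--Nirenberg, and hence the exponent $p_0$, is universal; this constraint is precisely what dictates the chain of radii $B_{1/4}\subset B_{1/2}\subset B_{15/16}$ appearing in the statement.
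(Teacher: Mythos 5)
Your proposal is the standard De Giorgi--Nash--Moser argument (normalization by $\|f\|_{L^{q/2}}$, Caccioppoli estimates from test functions $\eta^2\bar u^{\beta}$, the logarithmic estimate plus John--Nirenberg, and Moser iteration with negative exponents), which is exactly the proof the paper points to in \cite[Ch.~8.8--8.9]{GiTr83} rather than reproving; it is also the template the authors adapt to the nonlocal setting in \autoref{sec:weak_solutions}. The argument is correct and essentially identical to the cited one, so there is nothing to add.
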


\begin{remark}
This by now classical result can be seen as the limit case of 
\autoref{theo:weak_harnack_nonlocal} for $\alpha \to 2-$. Condition 
\eqref{eq:assum_loc_comp} implies that the differential operator 
$\opn{div}(A(\cdot) \nabla u)$ is uniformly 
elliptic and obviously describes a limit situation of \eqref{eq:assum_comp}. 
One might object that the nonlocal term in 
\eqref{eq:weak_harnack_nonlocal} is unnatural but in fact, it is 
not. In \autoref{sec:frac_laplace} we explain this phenomenon in detail for the 
fractional Laplace operator.
\end{remark}

If $u$ is not only a supersolution but a solution in 
\autoref{theo:weak_harnack_local}, then one obtains a
classical Harnack inequality: $\sup_{B_{\frac14}} u \leq c \inf_{B_{\frac14}}
u$. Either one, the Harnack inequality and the weak Harnack inequality, imply
H\"{o}lder a-priori regularity estimates:

\begin{theorem}\label{theo:reg_result_local} Assume condition
\eqref{eq:assum_loc_comp} holds true. There
exist $c\geq 1$, $\beta \in (0,1)$ such that for every $u \in
H^{1}(B_{1})$
satisfying $\cA(u,\phi) = 0$ for every $\phi \in H^1_0(B_1)$
the following H\"{o}lder estimate holds for almost every $x,y \in
B_{\frac{1}{2}}$:
\begin{align}\label{eq:hoelder_final_result_local} 
|u(x) - u(y)| \leq c \|u\|_\infty |x-y|^\beta \, .
\end{align}
The constants $\beta, c$ depend only on $d$ and $\Lambda$.
\end{theorem}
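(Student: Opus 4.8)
The plan is to derive Theorem~\ref{theo:reg_result_local} as a direct consequence of Theorem~\ref{theo:weak_harnack_local} together with the abstract oscillation-decay mechanism of Theorem~\ref{theo:harnack_hoelder} (equivalently, the nonlocal refinement Theorem~\ref{theo:hh-nonloc-gen-k} with the nonlocal tail absent). First I would note that, by the classical De~Giorgi--Nash--Moser machinery under \eqref{eq:assum_loc_comp}, a full Harnack inequality holds: if $v \in H^1(B_R(x_0))$ solves $\cA(v,\phi)=0$ in $B_R(x_0)$ and $v \geq 0$ there, then $\sup_{B_{R/2}(x_0)} v \leq c\, \inf_{B_{R/2}(x_0)} v$ with $c$ depending only on $d,\Lambda$; this follows by combining the weak Harnack inequality of Theorem~\ref{theo:weak_harnack_local} (applied with $f=0$) for $v$ a supersolution with the local boundedness estimate $\sup_{B_{R/2}} v \leq c (\fint_{B_R} v^{p_0})^{1/p_0}$ obtained from a Moser iteration for subsolutions. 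The scaling invariance of \eqref{eq:assum_loc_comp} under $x \mapsto x_0 + Rx$ is what makes the constant independent of the ball.

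Next I would set up the iteration on dyadic balls. Fix $u \in H^1(B_1)$ with $\cA(u,\phi)=0$ for all $\phi \in H^1_0(B_1)$, and fix $x_0 \in B_{1/2}$. For $r \leq 1/2$ write $m_r = \inf_{B_r(x_0)} u$ and $M_r = \sup_{B_r(x_0)} u$, both finite by local boundedness. Apply the Harnack inequality to the nonnegative solutions $u - m_{r}$ and $M_{r} - u$ on $B_r(x_0)$: this yields $M_{r/2} - m_{r/2} \leq \kappa (M_r - m_r)$ for some $\kappa = \kappa(d,\Lambda) \in (0,1)$, i.e. $\osc_{B_{r/2}(x_0)} u \leq \kappa \, \osc_{B_{r}(x_0)} u$. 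Iterating gives $\osc_{B_{2^{-k}/2}(x_0)} u \leq \kappa^k \osc_{B_{1/2}(x_0)} u \leq 2\kappa^k \|u\|_\infty$, and interpolating over scales (the standard argument: given $0<\rho\le 1/2$, choose $k$ with $2^{-k-1} \le 2\rho < 2^{-k}$) produces $\osc_{B_\rho(x_0)} u \leq C \|u\|_\infty \rho^\beta$ with $\beta = \log_2(1/\kappa)$ and $C=C(d,\Lambda)$. Since this holds for every $x_0 \in B_{1/2}$, a triangle-inequality argument along a chain of balls connecting $x$ and $y$ (or simply applying the oscillation bound on $B_{|x-y|}(x)$ when $|x-y|$ is small, and the trivial bound $2\|u\|_\infty$ otherwise) yields \eqref{eq:hoelder_final_result_local} for a.e.\ $x,y \in B_{1/2}$. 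Alternatively, one invokes Theorem~\ref{theo:harnack_hoelder} directly with $X=\R^d$, $\cS_{x,r}$ the set of $H^1(B_r(x))$-solutions of the equation, whose two structural axioms are immediate from linearity and from interior restriction of the equation.

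The main obstacle is really the input, not the iteration: one must be sure that under the \emph{form-comparability} hypothesis \eqref{eq:assum_loc_comp} — rather than pointwise ellipticity of $A(x)$ — the weak Harnack inequality and the sub-solution $L^\infty$ bound genuinely hold with constants depending only on $d$ and $\Lambda$. This is exactly the content of Theorem~\ref{theo:weak_harnack_local} (for the supersolution half) and of the parallel Moser iteration for subsolutions; both rest on testing the equation with powers/truncations of $u$ times a cutoff and on the Sobolev inequality, where \eqref{eq:assum_loc_comp} is used precisely to pass between $\cA_B(v,v)$ and $\int_B|\nabla v|^2$. Given these, the only care needed is bookkeeping: verifying that a solution restricted to a subball is again a solution there (clear, since $H^1_0$ of a subball embeds into $H^1_0(B_1)$), and that all constants track only through $d,\Lambda$ under the affine rescalings used. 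There is nothing genuinely hard beyond assembling these standard pieces; the point of stating Theorem~\ref{theo:reg_result_local} here is to display the local counterpart of Theorem~\ref{theo:reg_result_nonlocal} and to contrast the two.
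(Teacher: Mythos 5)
Your proof is correct and follows exactly the route the paper intends: Theorem~\ref{theo:reg_result_local} is stated there as the classical De~Giorgi--Nash--Moser result (cited to \cite{GiTr83}), with the derivation "weak Harnack $\Rightarrow$ Harnack $\Rightarrow$ oscillation decay $\Rightarrow$ H\"older" being precisely the mechanism the paper abstracts in Theorem~\ref{theo:harnack_hoelder}. The oscillation-decay computation via $u-m_r$ and $M_r-u$, the dyadic iteration, and the tracking of constants through \eqref{eq:assum_loc_comp} under rescaling are all standard and carried out correctly.
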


After having recalled corresponding results for local differential operators, 
let us review some related results for nonlocal 
problems. Note that we restrict ourselves to nonlocal equations related 
to bilinear forms resp. distributional solutions.  

\autoref{theo:reg_result_nonlocal} has already been
proved under additional assumptions. If $\mu(x,\cdot)$
has a density $k(x,\cdot)$ which satisfies some isotropic lower bound, e.g. for
some  $c_0 > 0$, $\alpha \in (0,2)$  
\[\mu(x,\d y) = k(x,y) \d y, \quad k(x,y) \geq c_0 |x-y|^{-d-\alpha} 
\qquad (|x-y|\leq 1) \,,\]
then \autoref{theo:reg_result_nonlocal} is proved in resp. follows from the
works \cite{Kom95, BaLe02TAMS, ChKu03, CCV10}. In these works the constant
$c$ in \eqref{eq:hoelder_final_result} depends on $\alpha \in (0,2)$ with
$c(\alpha) \to +\infty$ for $\alpha \to 2-$. The current work follows the 
strategy laid out in \cite{Kas09} which, on the one hand, allows the constants 
to be independent of $\alpha$ for $\alpha \to 2-$ and, on the other hand, 
allows to treat general measures. See \cite{FeKa13} and 
\cite{KaSc14} for corresponding results in the parabolic case. 

The articles \cite{DiCKP14}, \cite{DiCKP15} study Hölder regularity estimates 
and Harnack inequalities for nonlinear equations. Moreover, the results 
therein provide boundedness of weak solutions. In \cite{DiCKP14}, 
\cite{DiCKP15} 
the measures $\mu(x, \d 
y)$ are assumed to be absolutely continuous with respect to the Lebesgue 
measure. Another difference to the present article is that our local 
regularity estimates require only local conditions on the data and on the 
operator. Note that our study of implications of 
(weak) Harnack 
inequalities in 
\autoref{sec:harnackimplieshoelder} allows for nonlinear 
problems in metric measure 
spaces and could be used to deduce the regularity results of 
\cite{DiCKP15} from results in \cite{DiCKP14}.

To our best knowledge there has been  no contribution addressing 
the question of comparability of quadratic nonlocal forms, cf. 
\autoref{sec:comparability}. This question becomes important when studying 
very irregular kernels as in \cite[Section 4]{Sil15}.

The conjecture mentioned in the beginning of the introduction has recently been 
established in the translation invariant 
case, i.e., when $\mu(x, \d y) = \nu^\alpha(\d y-\{x\})$ for some 
$\alpha$-stable measure $\nu^\alpha$, cf. \cite{RoSe15}. The methods of 
\cite{RoSe15} seem not to be applicable in the general case, though.

Related questions on nonlocal Dirichlet forms on metric measure spaces are 
currently investigated by several groups. We refer to the exposition 
in \cite{GHL14} for a discussion of results regarding the fundamental solution.

\subsection{Notation} Throughout this article, 
''inf'' denotes the essential infimum, ''sup'' the essential supremum. By 
$S^{d-1}=\{x\in\R^d |\, |x|=1\}$ we denote the unit sphere. We define the 
Fourier 
transform as an isometry of $L^2(\R^d)$ determined by
\[
 \hat{u}(\xi) = (2\pi)^{-d/2} \int_{\R^d} u(x)e^{-i\xi\cdot x}\,\d x, \quad u\in
L^1(\R^d)\cap L^2(\R^d).
\]

\subsection{Structure of the article}

The paper is organized as follows. In \autoref{sec:frac_laplace}
we study the Harnack inequality for the Laplace and the fractional 
Laplace operator. We explain how one can formulate a Harnack inequality without 
assuming the functions under consideration to be nonnegative. In 
\autoref{sec:prelims} we provide several auxiliary results and explain how
the inequality $\cE^\mu (u,\phi) \geq (f,\phi)$ is affected by rescaling the
family of measures $\mu$. In \autoref{sec:weak_solutions} we prove
\autoref{theo:weak_harnack_nonlocal} under assumptions \eqref{eq:assum_comp} and
\eqref{eq:assum_cutoff} adapting the approach by Moser to nonlocal bilinear
forms. \autoref{sec:regularity} provides the proof
of \autoref{theo:reg_result_nonlocal}. We first prove a general tool which 
allows to deduce regularity results from weak Harnack
inequalities, see \autoref{cor:hh-nonloc-gen-k}. Then 
\autoref{theo:reg_result_nonlocal} follows immediately. In 
\autoref{sec:comparability}
we study the question which conditions on $\mu$
are sufficient for conditions \eqref{eq:assum_comp} and
\eqref{eq:assum_cutoff} to hold true. In addition, we provide two 
examples of quite irregular kernels satisfying \eqref{eq:assum_comp} and
\eqref{eq:assum_cutoff}.

\section{Harnack inequalities for the Laplace and the fractional Laplace 
operator}\label{sec:frac_laplace}

We establish a formulation of the Harnack inequality which does not require
the functions to be nonnegative. This reformulation is especially
interesting for nonlocal problems but our formulation seems to be new even for
harmonic functions in the classical sense, see \autoref{theo:harnack_frac_new}.
For $\alpha\in (0,2)$ and $u\in C^2_c(\mathbb{R}^d)$ the fractional power of the
Laplacian can be defined as follows:
\begin{align}
\begin{split} \label{eq:def_frac-Laplace}
 \Delta^{\alpha/2} u(x) &= C_{\alpha,d} \lim\limits_{\varepsilon \to
0+} \!\!\int\limits_{|y-x|>\varepsilon}
\!\! \frac{u(y){-}u(x)}{|y{-}x|^{d+\alpha}} \,
\d y= \tfrac{C_{\alpha,d}}{2} \int\limits_{\R^d} \frac{u(x{+}h) {-} 2
u(x)
{+} u(x{-}h)}{|h|^{d+\alpha}} \, dh\,.
\end{split}
\end{align}
where $C_{\alpha,d} =
\frac{\Gamma((d+\alpha)/2)}{2^{-\alpha}\pi^{d/2}
|\Gamma(-\alpha/2)|}$. For later purposes we note that with some constant $c>0$
for every $\alpha \in (0,2)$
\begin{align}\label{eq:c-alpha-d_prop}
c\ \alpha (2{-}\alpha) \leq C_{\alpha,d} 
\leq \tfrac{\alpha (2{-}\alpha)}{c} \,.  
\end{align}
The use of the symbol $\Delta^{\alpha/2}$ and the term ``fractional Laplacian''
are justified because of
$\widehat{(-\Delta)^{\alpha/2} u} (\xi) = |\xi|^\alpha \widehat{u}(\xi)$ for
$\xi \in \R^d$ and $u \in C^\infty_c(\R^d)$. Note that we write
$\Delta^{\alpha/2} u$ instead of $- (-\Delta)^{\alpha/2} u$ which would be more
appropriate. The potential theory of these operators was initiated in
\cite{Rie38}. The
following Harnack inequality can be easily established using the corresponding
Poisson kernels.

\begin{theorem}\label{theo:harnack_frac_classic}
There is a constant $c\geq 1$ such that for $\alpha \in (0,2)$ and $u
\in C(\R^d)$ with
\begin{align}
 \Delta^{\alpha/2} u(x) &= 0 \qquad (x \in B_1) \,, \\
 u(x) &\geq 0 \qquad (x \in \mathbb{R}^d)\,,
\end{align}
the following inequality holds:
\begin{align*}
 u(x) \leq c u(y)\qquad (x,y \in B_{\frac12}) \,.
\end{align*}
\end{theorem}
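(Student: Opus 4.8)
The plan is to use the explicit Poisson kernel for the ball, which goes back to Riesz \cite{Rie38}: for the unit ball the Poisson kernel of $\Delta^{\alpha/2}$ is
\begin{align*}
P(x,y) = c_{d,\alpha}\, \Big( \frac{1-|x|^2}{|y|^2-1} \Big)^{\alpha/2} |x-y|^{-d} \qquad (|x|<1<|y|)\,,
\end{align*}
with $c_{d,\alpha} = \Gamma(d/2)\, \pi^{-d/2-1}\, \sin(\tfrac{\pi\alpha}{2})$. First I would record that a nonnegative $u\in C(\R^d)$ with $\Delta^{\alpha/2}u=0$ in $B_1$ satisfies the Poisson representation $u(x) = \int_{|y|>1} P(x,y)\,u(y)\,\d y$ for $|x|<1$. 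This is the classical solvability-and-uniqueness statement for the exterior value problem on $B_1$: the right-hand side defines an $\alpha$-harmonic function in $B_1$ that is continuous on $\R^d$ and agrees with $u$ on $B_1^c$, and the maximum principle for $\Delta^{\alpha/2}$ forces equality. Note that the hypothesis that $\Delta^{\alpha/2}u(x)$ exists as a principal value at every $x\in B_1$ already encodes the integrability $\int_{\R^d}(1+|y|)^{-d-\alpha}u(y)\,\d y<\infty$ that makes every expression here meaningful.

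Granting the representation, the theorem reduces to bounding the ratio of Poisson kernels, and this is where the independence of the constant on $\alpha$ becomes visible. For $x,x'\in B_{\frac12}$ and $|y|>1$,
\begin{align*}
\frac{P(x,y)}{P(x',y)} = \Big(\frac{1-|x|^2}{1-|x'|^2}\Big)^{\alpha/2}\Big(\frac{|x'-y|}{|x-y|}\Big)^{d} \leq \Big(\tfrac43\Big)^{\alpha/2}\Big(\frac{|y|+\tfrac12}{|y|-\tfrac12}\Big)^{d} \leq \tfrac43\cdot 3^{d}\,,
\end{align*}
using $1-|x|^2\leq 1$, $1-|x'|^2\geq\tfrac34$, $|x-y|\geq|y|-\tfrac12$, $|x'-y|\leq|y|+\tfrac12$, the fact that $t\mapsto (t+\tfrac12)(t-\tfrac12)^{-1}$ is decreasing on $(\tfrac12,\infty)$, and $\alpha/2<1$. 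Since $u\geq0$, multiplying this inequality by $P(x',y)u(y)$ and integrating over $\{|y|>1\}$ yields $u(x)\leq 4\cdot3^{d-1}\,u(x')$ for all $x,x'\in B_{\frac12}$, which is the asserted Harnack inequality with $c=4\cdot3^{d-1}$, a constant depending only on $d$.

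The only genuinely nontrivial ingredient is the Poisson representation, and even that is classical; it is worth isolating here because the normalizing constant $c_{d,\alpha}$ (which degenerates both as $\alpha\to2-$ and as $\alpha\to0+$) cancels completely in the ratio, so the uniformity of $c$ in $\alpha$ comes for free. I expect the one point requiring some care to be the passage from ``$\Delta^{\alpha/2}u=0$ pointwise in $B_1$ for a merely continuous $u$'' to the representation formula; this can be supplied either by invoking interior regularity for $\alpha$-harmonic functions together with the maximum principle, or simply by quoting \cite{Rie38} and standard references in potential theory.
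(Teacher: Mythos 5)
Your proof is correct and follows exactly the route the paper has in mind: the paper states that \autoref{theo:harnack_frac_classic} ``can be easily established using the corresponding Poisson kernels'' and records the same explicit kernel $c_\alpha (R^2-|x|^2)^{\alpha/2}(|y|^2-R^2)^{-\alpha/2}|x-y|^{-d}$, so your representation-plus-kernel-ratio argument, with the normalizing constant cancelling to give a constant independent of $\alpha$, is precisely the intended proof.
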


Note that $\Delta^{\alpha/2} u(x) = 0$ at a point $x \in \Rd$ requires that
the integral in \eqref{eq:def_frac-Laplace} converges. Thus some additional
regularity of $u \in C(\R^d)$ is assumed implicitly. Since
$\Delta^{\alpha/2}$ allows for shifting and scaling, the result holds true for
$B_1, B_\frac12$ replaced by $B_R(x_0), B_\frac{R}{2}(x_0)$ with the same
constant $c$ for arbitrary $x_0 \in \R^d$ and $R>0$.

\autoref{theo:harnack_frac_classic} formulates the Harnack
inequality in the
standard way for nonlocal operators. The function $u$ is assumed to be
nonnegative in all of $\R^d$. In the following we discuss the necessity of this
assumption and possible alternatives. The following result proves that this
assumption cannot be dropped completely. 

\begin{theorem}\label{theo:noclassharnack}
Assume $\alpha \in (0,2)$. Then there exists a bounded function $u \in C(\R^d)$,
which is infinitely many times differentiable in
$B_1$ and satisfies
\begin{flalign*}
\Delta^{\alpha/2} u  (x) &= 0 \qquad (x \in B_1)\,, \\
u (x) &> 0\qquad (x \in B_1\setminus\{0\})\,, \\
u (0) &= 0 \,.
\end{flalign*}
Therefore, the classical local formulation of the Harnack inequality as well
as the local maximum principle fail for the operator $\Delta^{\alpha/2}$.
\end{theorem}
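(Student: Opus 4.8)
The plan is to produce $u$ as an $\alpha$-harmonic function in $B_1$ with a suitable \emph{sign-changing} exterior datum, using M.\ Riesz's Poisson kernel for the ball,
\[
P_1(x,y)=c_{d,\alpha}\left(\frac{1-|x|^2}{|y|^2-1}\right)^{\alpha/2}\frac1{|x-y|^d}\qquad(|x|<1<|y|),
\]
with $c_{d,\alpha}>0$ normalized so that $\int_{\R^d\setminus B_1}P_1(x,y)\,\d y=1$ for every $x\in B_1$. It is classical (going back to \cite{Rie38}) that if $g\colon\R^d\to\R$ is continuous, bounded and vanishes in a neighbourhood of $\partial B_1$, then $u:=g$ on $\R^d\setminus B_1$ together with $u(x):=\int_{\R^d\setminus B_1}P_1(x,y)g(y)\,\d y$ on $B_1$ defines a function $u\in C(\R^d)$ with $\|u\|_\infty\le\|g\|_\infty$, that is $C^\infty$ in $B_1$ and satisfies $\Delta^{\alpha/2}u=0$ in $B_1$: continuity across $\partial B_1$ uses that $g\equiv0$ there, and interior smoothness follows by differentiating under the integral sign, since $|x-y|$ stays bounded below on $\supp g$ for $x$ in compact subsets of $B_1$. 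It thus remains to choose $g$ with $u(0)=0$ and $u>0$ on $B_1\setminus\{0\}$.

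I would take $g=\varphi_\delta-\lambda\psi_R$, where $\varphi_\delta\ge0$ is a bump supported in the thin shell $A_\delta=\{1+\delta<|y|<1+2\delta\}$ hugging $\partial B_1$, $\psi_R\ge0$ a bump supported in the far shell $D_R=\{R<|y|<2R\}$, and $\lambda>0$ chosen so that the resulting $u$ vanishes at the origin. Denoting by $v_1,v_2$ the Poisson integrals of $\varphi_\delta,\psi_R$, we then have $u=v_1-\tfrac{v_1(0)}{v_2(0)}v_2$, so $u(0)=0$ automatically and $u(x)>0$ is equivalent to $\tfrac{v_1(x)}{v_1(0)}>\tfrac{v_2(x)}{v_2(0)}$. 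Passing to polar coordinates in the defining integrals and invoking the classical identity $\fint_{S^{d-1}}|x-\omega|^{-d}\,\d\sigma(\omega)=(1-|x|^2)^{-1}$ (which merely expresses that the Poisson kernel of the Laplacian on $B_1$ has total mass $1$), one obtains the limiting profiles
\[
\frac{v_1(x)}{v_1(0)}\ \xrightarrow[\ \delta\to0^+\ ]{}\ (1-|x|^2)^{\alpha/2-1},\qquad
\frac{v_2(x)}{v_2(0)}\ \xrightarrow[\ R\to\infty\ ]{}\ (1-|x|^2)^{\alpha/2},
\]
the first uniformly on compact subsets of $B_1$, the second uniformly on all of $B_1$. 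Since $\alpha<2$ the difference of the two limits is $(1-|x|^2)^{\alpha/2-1}|x|^2\ge0$, which vanishes precisely at $x=0$ and has a nondegenerate minimum there.

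The hard part will be upgrading this to the \emph{strict} inequality $\tfrac{v_1(x)}{v_1(0)}>\tfrac{v_2(x)}{v_2(0)}$ on all of $B_1\setminus\{0\}$ for one fixed small $\delta$ and one fixed large $R$, with uniform control both near $\partial B_1$—where both ratios vanish—and near $0$—where the limiting gap degenerates. I would fix radii $0<\rho_0<1-\eta<1$ depending only on $d,\alpha$ and argue in three regions. (i) On $\{1-\eta\le|x|<1\}$: bounding $v_1(x)$ below by the contribution to its defining integral of the portion of $A_\delta$ nearest to $x$ yields $\tfrac{v_1(x)}{v_1(0)}\gtrsim(1-|x|)^{\alpha/2}\min\{(1-|x|)^{-1},\delta^{-1}\}$, which strictly exceeds $\tfrac{v_2(x)}{v_2(0)}\asymp(1-|x|)^{\alpha/2}$ once $\eta$ (hence the prefactor) is small enough and $\delta\le\eta$. (ii) On $\{\rho_0\le|x|\le1-\eta\}$: the uniform convergence above plus $(1-|x|^2)^{\alpha/2-1}|x|^2\ge\rho_0^2$ gives a fixed positive gap. (iii) On $\{0<|x|\le\rho_0\}$: the function $\tfrac{v_1}{v_1(0)}-\tfrac{v_2}{v_2(0)}$ is smooth, vanishes at $0$, and—by interior estimates applied to the uniform convergence on $\{|x|\le\tfrac12\}$—has Hessian at $0$ tending to that of $(1-|x|^2)^{\alpha/2-1}|x|^2$, namely $2I$; this is the one step where $\alpha<2$ enters decisively, through the coefficient $(1-\tfrac\alpha2)-(-\tfrac\alpha2)=1>0$, and it forces $0$ to be a strict interior minimum, with value $0$, on a ball whose radius does not depend on $\delta,R$. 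Choosing $\delta$ small and then $R$ large so that (i)--(iii) all hold completes the construction.

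Finally, the asserted failures are immediate. The classical Harnack inequality $\sup_{B_{1/2}}u\le c\inf_{B_{1/2}}u$ would force $u\equiv0$ on $B_{1/2}$, as $\inf_{B_{1/2}}u=u(0)=0$, contradicting $u>0$ on $B_{1/2}\setminus\{0\}$; likewise no strong maximum/minimum principle can hold that uses only the values of $u$ on $\overline{B_1}$, since $u\ge0$ in $B_1$ attains an interior zero at $0$ yet $u\not\equiv0$. (This does not contradict \autoref{theo:harnack_frac_classic}: necessarily $u<0$ somewhere outside $B_1$, because $\Delta^{\alpha/2}u(0)=\tfrac{C_{\alpha,d}}2\int_{\R^d}\tfrac{u(h)+u(-h)}{|h|^{d+\alpha}}\,\d h=0$ while $u>0$ near $0$.)
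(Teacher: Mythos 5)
Your plan is sound and, as far as I can check, every asserted asymptotic is correct (they are all consistent with the exact one‑dimensional formula the paper derives), but you take a genuinely different route. The paper also works with the Riesz--Poisson kernel and the same spherical identity $\int_{S^{d-1}}\frac{1-|w|^2}{|w-y|^d}\,\sigma(\d y)=|S^{d-1}|$, but it uses it to reduce the Poisson integral of a \emph{radial} exterior profile $\phi$ to the closed form
\[
h^\phi_R(x)=\frac{\sin\frac{\pi\alpha}{2}}{\pi}\int_0^\infty\phi\bigl(\sqrt{R^2+s(R^2-|x|^2)}\bigr)\,\frac{\d s}{(s+1)s^{\alpha/2}} .
\]
From this formula, strict radial monotonicity of $h^\phi_R$ is immediate for any decreasing, non‑constant $\phi$, and the counterexample is simply $u=h^\phi_R-h^\phi_R(0)$ (subtracting a constant preserves $\alpha$-harmonicity); no tuning, no asymptotic analysis, and negativity of $u$ occurs only outside $B_R$ through the subtracted constant. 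Your construction instead forces $u(0)=0$ by balancing two bumps, $g=\varphi_\delta-\lambda\psi_R$, and then must establish strict positivity by the three‑region argument; the payoff is that it isolates exactly where $\alpha<2$ enters (the exponent gap between $(1-|x|^2)^{\alpha/2-1}$ and $(1-|x|^2)^{\alpha/2}$), at the price of substantially more quantitative work (boundary‑layer lower bound, uniform convergence on the middle annulus, and nondegeneracy of the Hessian at $0$ together with uniform third‑derivative bounds). Two small points to make explicit if you write this up: the bumps $\varphi_\delta,\psi_R$ must be radial, both for the stated limits (your spherical averaging identity integrates over the whole sphere) and to guarantee $\nabla\bigl(v_1/v_1(0)-v_2/v_2(0)\bigr)(0)=0$, without which the Hessian argument in region (iii) fails; and the order of quantifiers should be: fix $\rho_0,\eta$ from the $\delta,R$-uniform constants first, then shrink $\delta$ and enlarge $R$. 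Your closing observation that $u$ must change sign outside $B_1$ is correct and matches the paper's construction.
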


A complicated and lengthy proof can be found in
\cite{Kas07_SFB}. An elegant way to construct a function would be to
mollify $v(x) = (1-|x|^2)^{-1+\tfrac{\alpha}{2}}$ for $x \in B_1$. Here we
provide a short proof\footnote{We owe the idea to this proof to Wolfhard
Hansen.} which includes a helpful observation on radial functions. 

For an open set $D \subset \R^d$, $x \in D$, $0 < \alpha \leq 2$ and $v :\R^d
\to \R$ ($0<\alpha < 2$) resp. $v :\overline{D} \to \R$ ($\alpha = 2$) we write
\begin{align}
\begin{split}
H_\alpha(v|D)(x) &= \int_{y \notin D} P_\alpha(x,y) v(y) \, \d y =
\begin{cases}
\int\limits_{\R^d \setminus D} P_\alpha(x,y) v(y) \, \d y\, \quad & (0 <
\alpha < 2)\, \\ 
\int\limits_{\partial D} P_2(x,y) v(y) \, \d y\, \quad & (\alpha =2) \,.
\end{cases}
\end{split}
\end{align}

Note that for $R>0$ and $f:\R^d\setminus B_R(0) \to \R$ 
\begin{align*}
H_\alpha(f|B_R(0))(x) =
\begin{cases}  
f(x) \qquad &(|x|\geq R) \,, \\
c_\alpha (R^2-|x|^2)^{\alpha/2} \il_{|y|>R} 
 \frac{f(y)\,\d y}{(|y|^2-R^2)^{\alpha/2} |x-y|^{d} } \qquad &(|x|<R)\,,
\end{cases}
\end{align*}
where $c_\alpha = \pi^{-d/2-1} \Gamma(\tfrac{d}{2}) \sin\frac{\pi \alpha}{2}$.
For a function $\phi:[0,\infty) \to [0,\infty)$ we set
\[
 h^\phi_R := H_\alpha(\phi \circ | \cdot |\, |B_R(0)) \,.
\]

\begin{proposition}
For all $0<|x|<R$
\[
 h^\phi_R(x) = \frac{\sin \frac{\pi\alpha}{2}}{\pi} \int_0^\infty
\phi(\sqrt{R^2+s (R^2-|x|^2)})  \frac{ds}{(s+1)s^{\alpha/2}}.
\]
\end{proposition}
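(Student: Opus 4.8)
The plan is to evaluate $h^\phi_R(x) = H_\alpha(\phi\circ|\cdot|\,|B_R(0))(x)$ by starting from the explicit Poisson-type integral representation recalled just above the statement, namely
\[
 h^\phi_R(x) = c_\alpha (R^2-|x|^2)^{\alpha/2} \int\limits_{|y|>R}
 \frac{\phi(|y|)\,\d y}{(|y|^2-R^2)^{\alpha/2}\,|x-y|^{d}}\,,
\]
valid for $0<|x|<R$, with $c_\alpha = \pi^{-d/2-1}\Gamma(\tfrac d2)\sin\frac{\pi\alpha}{2}$, and to reduce the $d$-dimensional integral to a one-dimensional one. First I would pass to polar coordinates $y = t\omega$ with $t>R$, $\omega\in S^{d-1}$, so that $\d y = t^{d-1}\,\d t\,\d\omega$, and isolate the angular integral
\[
 \int\limits_{S^{d-1}} \frac{\d\omega}{|x-t\omega|^{d}}\,.
\]
By rotational symmetry this depends only on $|x|$ and $t$; the key computation is to show that
\[
 \int\limits_{S^{d-1}} \frac{\d\omega}{|x-t\omega|^{d}}
 = \frac{|S^{d-1}|}{?}\cdot\frac{1}{(t^2-|x|^2)\,t^{d-2}}\cdot(\text{const}),
\]
i.e. that this angular average is, up to an explicit dimensional constant, $\bigl(t^{d-2}(t^2-|x|^2)\bigr)^{-1}$. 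This is the standard mean-value identity behind the classical Poisson kernel for the ball; I would verify it either by the Funk–Hecke / Newtonian-potential computation or by recognizing $|x-t\omega|^{-d}$ after integrating against the surface measure as the Poisson kernel of $B_t(0)$ evaluated at $x$ — the cleanest route is to note that $\int_{S^{d-1}} |x - t\omega|^{-d}\,\d\omega$ is, up to the normalizing constant, the value at $x$ of the harmonic function which is identically $1$, hence equals the total mass of the Poisson kernel times the Jacobian factor. Carrying the constants carefully, the factor $\pi^{-d/2-1}\Gamma(\tfrac d2)$ in $c_\alpha$ is exactly what is needed to cancel $|S^{d-1}| = 2\pi^{d/2}/\Gamma(\tfrac d2)$ and leave only $\frac{\sin(\pi\alpha/2)}{\pi}$ in front.

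After the angular integration the remaining one-dimensional integral is
\[
 h^\phi_R(x) = \frac{\sin\frac{\pi\alpha}{2}}{\pi}\,(R^2-|x|^2)^{\alpha/2}
 \int\limits_R^\infty \frac{\phi(t)\,t^{d-1}\,\d t}{(t^2-R^2)^{\alpha/2}\,t^{d-2}\,(t^2-|x|^2)}
 = \frac{\sin\frac{\pi\alpha}{2}}{\pi}\,(R^2-|x|^2)^{\alpha/2}
 \int\limits_R^\infty \frac{\phi(t)\,t\,\d t}{(t^2-R^2)^{\alpha/2}\,(t^2-|x|^2)}\,.
\]
Now I would substitute $t^2 = R^2 + s(R^2-|x|^2)$, so that $t\,\d t = \tfrac12 (R^2-|x|^2)\,\d s$, while $t^2-R^2 = s(R^2-|x|^2)$ and $t^2-|x|^2 = (R^2-|x|^2)(s+1)$, and $t$ runs over $(R,\infty)$ iff $s$ runs over $(0,\infty)$. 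Substituting and collecting powers of $(R^2-|x|^2)$ — one gets $(R^2-|x|^2)^{\alpha/2}\cdot\tfrac12(R^2-|x|^2)\cdot(R^2-|x|^2)^{-\alpha/2}\cdot(R^2-|x|^2)^{-1} = \tfrac12$ — everything cancels and I am left with
\[
 h^\phi_R(x) = \frac{\sin\frac{\pi\alpha}{2}}{\pi}\int\limits_0^\infty
 \phi\bigl(\sqrt{R^2+s(R^2-|x|^2)}\bigr)\,\frac{\d s}{(s+1)\,s^{\alpha/2}}\,,
\]
which is the claimed formula.

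The main obstacle is the angular integral identity: getting the dimensional constant right and confirming that $\int_{S^{d-1}}|x-t\omega|^{-d}\,\d\omega$ is proportional to $\bigl(t^{d-2}(t^2-|x|^2)\bigr)^{-1}$ rather than to some other rational function of $|x|$ and $t$. Once that is pinned down, the rest is the bookkeeping substitution $t^2 = R^2+s(R^2-|x|^2)$, in which all powers of $(R^2-|x|^2)$ and all dimensional constants are designed to telescope. I would also note in passing that this computation simultaneously re-proves the harmless fact that $h^\phi_R\equiv\phi(|\cdot|)$ on $|x|\ge R$ and gives the $\alpha=2$ case by the usual limiting/Poisson-kernel interpretation, though only $0<\alpha<2$ is needed here.
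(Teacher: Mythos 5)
Your proposal is correct and follows essentially the same route as the paper: polar coordinates, evaluation of the angular integral $\int_{S^{d-1}}|x-t\omega|^{-d}\,\d\omega$ via the classical Poisson formula for the ball, and the substitution $s=(\rho^2-R^2)/(R^2-|x|^2)$. The only blemish is a self-cancelling factor of $2$ in your bookkeeping: the constant in your intermediate one-dimensional formula should be $\tfrac{2\sin(\pi\alpha/2)}{\pi}$ (since $c_\alpha|S^{d-1}|=2\sin(\pi\alpha/2)/\pi$), and this $2$ is then exactly halved by $t\,\d t=\tfrac12(R^2-|x|^2)\,\d s$, which is the ``extra $\tfrac12$'' you silently dropped, so the final identity stands.
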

\begin{proof}
Let us fix $R>0$ and $x\in B_R(0)$. Using polar coordinates we obtain
\begin{equation}\label{eq:hphi}
h^\phi_R(x) = c_\alpha (R^2-|x|^2)^{\alpha/2} 
  \int_R^\infty \int_{\rho S^{d-1}} |x-y|^{-d} \,\sigma(\d y)\,
\frac{\phi(\rho)\,d\rho}{(\rho^2-R^2)^{\alpha/2}}.
\end{equation}
By the classical Poisson formula
\[
 \int_{S^{d-1}} \frac{1-|w|^2}{|w-y|^d}\,\sigma(\d y) = |S^{d-1}| \qquad
(|w|<1),
\]
hence
\begin{align*}
 \int_{\rho S^{d-1}} |x-y|^{-d} \,\sigma(\d y)
 &=
\rho^{-1} \int_{S^{d-1}} |\tfrac{x}{\rho}-y|^{-d} \,\sigma(\d y)
= \rho^{-1} |S^{d-1}| (1-\tfrac{|x|^2}{\rho^2})^{-1} \\
&= \frac{2\pi^{d/2}}{\Gamma(\frac{d}{2})} \frac{\rho}{\rho^2-|x|^2}.
\end{align*}
Plugging this into \eqref{eq:hphi} yields
\[
h^\phi_R(x) 
  = \frac{c_\alpha \pi^{d/2}}{\Gamma(\frac{d}{2})} (R^2-|x|^2)^{\alpha/2}
   \int_R^\infty  \frac{2\rho\phi(\rho)
\,d\rho}{(\rho^2-|x|^2)(\rho^2-R^2)^{\alpha/2}}.
\]
The simple substitution $s = (\rho^2-R^2)/(R^2-|x|^2)$ leads to
\[
\int_R^\infty  \frac{2\rho\phi(\rho)
\,d\rho}{(\rho^2-|x|^2)(\rho^2-R^2)^{\alpha/2}}
= \frac{1}{(R^2-|x|^2)^{\alpha/2}} \int_0^\infty \phi(\sqrt{R^2+s(R^2-|x|^2)})
\frac{ds}{(s+1)s^{\alpha/2}}.
\]
Thus the assertion follows.
\end{proof}

\autoref{theo:noclassharnack} now follows directly from the
following corollary. 

\begin{corollary}
Let $R>0$ and suppose that $\phi$ is decreasing on $[R,\infty)$ such
that
$\phi(s) < \phi(r)$ for some $R<r<s$. Then
\[
 h^\phi_R(x) < h^\phi_R(y), \qquad \text{whenever $0\leq x < y < R$.}
\]
In particular, $u:= h^\phi_R -  h^\phi_R(0)$ is a bounded function on $\R^d$
which
is $\alpha$-harmonic on $B_R(0)$ and satisfies $0=u(0) < u(y)$ for every $y\in
B_R(0)$.
\end{corollary}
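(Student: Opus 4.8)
The plan is to read off the claim directly from the explicit formula in the preceding proposition, so no new integral computation is needed. Fix $R>0$ and a function $\phi$ that is decreasing on $[R,\infty)$ with $\phi(s)<\phi(r)$ for some $R<r<s$. By the proposition, for $0<|x|<R$ we have
\[
h^\phi_R(x) = \frac{\sin\frac{\pi\alpha}{2}}{\pi} \int_0^\infty \phi\big(\sqrt{R^2+s(R^2-|x|^2)}\big)\,\frac{ds}{(s+1)s^{\alpha/2}}\,,
\]
and the same formula holds at $x=0$ by continuity (or directly, since $h^\phi_R(0)=H_\alpha(\phi\circ|\cdot|\,|B_R(0))(0)$ is given by the $|x|=0$ case). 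The key observation is that, for fixed $s>0$, the argument $\sqrt{R^2+s(R^2-|x|^2)}$ is a strictly decreasing function of $|x|$ on $[0,R)$: as $|x|$ increases the quantity $R^2-|x|^2$ decreases, so the argument decreases (for $s>0$) and stays $\geq R$.

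First I would note the elementary monotonicity: if $0\le x<y<R$ then for every $s>0$,
\[
R \le \sqrt{R^2+s(R^2-y^2)} < \sqrt{R^2+s(R^2-x^2)}\,,
\]
hence, since $\phi$ is decreasing on $[R,\infty)$,
\[
\phi\big(\sqrt{R^2+s(R^2-x^2)}\big) \le \phi\big(\sqrt{R^2+s(R^2-y^2)}\big) \qquad (s>0)\,.
\]
Integrating this inequality against the positive measure $\frac{ds}{(s+1)s^{\alpha/2}}$ and multiplying by the positive constant $\frac{1}{\pi}\sin\frac{\pi\alpha}{2}$ gives $h^\phi_R(x)\le h^\phi_R(y)$. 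To upgrade to a strict inequality, I would use the hypothesis that $\phi(s_0)<\phi(r_0)$ for some $R<r_0<s_0$: by monotonicity of $\phi$ we may shrink $r_0$ and enlarge $s_0$ so that there is a whole interval of values of $s$ for which $\sqrt{R^2+s(R^2-y^2)}$ lies in $(r_0,s_0)$ while $\sqrt{R^2+s(R^2-x^2)}$ lies in $(s_0,\infty)$ — concretely, one solves $\sqrt{R^2+s(R^2-x^2)}>s_0$ and $\sqrt{R^2+s(R^2-y^2)}<s_0$, which is a nonempty open interval of $s$ because $R^2-x^2>R^2-y^2$. On that interval the integrand is strictly smaller at $x$ than at $y$ (a drop of at least $\phi(r_0)-\phi(s_0)>0$, using that $\phi$ is decreasing so $\phi(\sqrt{\cdots})\ge \phi(s_0)$ for the $y$-term once the argument is below $s_0$ but above $r_0$, etc.), so the integral over that subinterval is strictly larger for $y$, and hence $h^\phi_R(x)<h^\phi_R(y)$.

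For the ``in particular'' statement: take $u:=h^\phi_R - h^\phi_R(0)$. Since $\phi$ is bounded on $[R,\infty)$ (being decreasing there) and $\int_0^\infty \frac{ds}{(s+1)s^{\alpha/2}}<\infty$ for $\alpha\in(0,2)$, the function $h^\phi_R$ is bounded on $B_R(0)$; it also equals $\phi(|x|)$ for $|x|\ge R$ and so is bounded on all of $\R^d$, hence $u$ is bounded. By construction $h^\phi_R=H_\alpha(\phi\circ|\cdot|\,|B_R(0))$ is the Poisson integral of an exterior datum, hence $\alpha$-harmonic in $B_R(0)$; subtracting the constant $h^\phi_R(0)$ preserves $\alpha$-harmonicity, so $\Delta^{\alpha/2}u=0$ in $B_R(0)$. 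Finally $u(0)=0$ by definition, and applying the strict inequality just proved with $x=0$ gives $u(0)=h^\phi_R(0)<h^\phi_R(y)=u(y)$ for every $y\in B_R(0)\setminus\{0\}$; combined with radial symmetry this yields $0=u(0)<u(y)$ for all $y\in B_R(0)$, $y\neq 0$.

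I expect the only mildly delicate point to be the strictness argument: one must check that the set of $s>0$ for which the two arguments straddle the level $s_0$ (with a definite gap coming from $r_0$) is a genuinely nonempty interval, which reduces to the strict inequality $R^2-x^2>R^2-y^2$ for $x<y$ together with continuity in $s$; everything else is monotonicity of $\phi$ and positivity of the measure.
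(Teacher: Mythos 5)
Your overall route is the right one (and the only natural one: the paper gives no separate proof of this corollary, it is meant to be read off from the preceding proposition's formula). The weak monotonicity step and the ``in particular'' part are fine. However, the strictness step as written has a genuine gap. You choose the interval of $s$ determined by $\sqrt{R^2+s(R^2-x^2)}>s_0>\sqrt{R^2+s(R^2-y^2)}$ and then claim a drop of at least $\phi(r_0)-\phi(s_0)$ there. For that drop you would need \emph{both} $\sqrt{R^2+s(R^2-x^2)}\ge s_0$ \emph{and} $\sqrt{R^2+s(R^2-y^2)}\le r_0$ on a common interval of $s$; but your construction only guarantees that the $y$-argument is below $s_0$, which by monotonicity yields $\phi(\text{arg}_y)\ge\phi(s_0)$ and hence only $\phi(\text{arg}_y)-\phi(\text{arg}_x)\ge 0$, not a strict or uniform drop. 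Moreover, requiring the two arguments to straddle the whole interval $[r_0,s_0]$ simultaneously amounts to $(s_0^2-R^2)(R^2-y^2)\le(r_0^2-R^2)(R^2-x^2)$, which fails when $x$ and $y$ are close together; shrinking $r_0$ and enlarging $s_0$ makes this condition harder to satisfy, not easier.

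The gap is repairable by working with a level of $\phi$ rather than with the pair $(r_0,s_0)$ directly. Pick $m$ with $\phi(s_0)<m<\phi(r_0)$ and set $t_*=\sup\{t\ge R:\ \phi(t)>m\}\in[r_0,s_0]$. Since $\phi$ is decreasing, $\phi(t)>m$ for all $t<t_*$ and $\phi(t)\le m$ for all $t>t_*$. The set of $s$ with $\sqrt{R^2+s(R^2-y^2)}<t_*<\sqrt{R^2+s(R^2-x^2)}$ is the nonempty open interval $\bigl(\tfrac{t_*^2-R^2}{R^2-x^2},\tfrac{t_*^2-R^2}{R^2-y^2}\bigr)$ because $R^2-x^2>R^2-y^2>0$, and on it $\phi(\text{arg}_y)>m\ge\phi(\text{arg}_x)$, so the integrand for $y$ strictly exceeds that for $x$ on a set of positive measure (no uniform lower bound on the difference is needed to conclude that the integrals differ). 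Alternatively, one can argue that equality of the two integrals forces $\phi(t)=\phi(\psi(t))$ for a.e.\ $t>R$ with $\psi(t)=\sqrt{R^2+\lambda(t^2-R^2)}>t$, $\lambda=(R^2-x^2)/(R^2-y^2)>1$, and chaining the resulting constancy intervals shows $\phi$ is constant on $(R,\infty)$, contradicting the hypothesis.
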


In \autoref{theo:harnack_frac_classic} the function $u$ is assumed to be
nonnegative in all of $\R^d$. It is not plausible that the assertion should be
false for functions $u$ with small negative values at points far from the
origin. A similar question can be asked for classical harmonic functions. If $u$
is positive and large on a large part of $\partial B_1$, it should not matter
for the Harnack inequality on $B_\frac12$ if $u$ is negative with small absolute
values on a small part of $\partial B_1$. Another motivation for a
different formulation of the Harnack inequality is that
\autoref{theo:harnack_frac_classic} does not allow to use Moser's approach
to
regularity estimates, Theorem like \autoref{theo:harnack_hoelder} in a
straightforward manner.

Let us give a new formulation of the Harnack\footnotemark \ inequality that
does not
need any sign assumption on $u$. It is surprising that that this formulation
seems not to have been established since Harnack's textbook in
1887. We treat the classical local case $\alpha = 2$ together with the nonlocal
case $\alpha \in (0,2)$.

\footnotetext{The second author would like to use the opportunity to correct
an error in \cite{Kas07_survey} concerning the name Harnack. The correct name 
of the mathematician Harnack is Carl Gustav Axel Harnack. His renowned twin 
brother Carl Gustav Adolf carried the last name ``von Harnack'' after 
being granted the honor.}

\begin{theorem}\label{theo:harnack_frac_new}
(Harnack inequality for $\Delta^{\alpha/2}$, $0 < \alpha \leq
2$)\begin{enumerate}
\item[(1)] There is a constant $c\geq 1$ such that
for $0 < \alpha \leq 2$ and $u \in C(\R^d)$ satisfying
\begin{align}\label{eq:harmon_B1}
\Delta^{\alpha/2} u(x) &= 0 \qquad (x \in B_1) \,,
\end{align}
the following estimate holds for every $x,y \in B_{\frac12}$:
\begin{align}\label{eq:harnack_main_stable}
c\Big( u(y) - H_\alpha(u^+|B_{1})(y) \Big) \leq u(x) &\leq c\Big( u(y) +
H_\alpha(u^-|B_{1})(y) \Big) \,.
\end{align}
\item[(2)] There is a constant $c \geq 1$ such that for $0 < \alpha \leq
2$ and every function $u
\in C(\R^d)$, which satisfies \eqref{eq:harmon_B1} and is nonnegative in 
$B_1$, the following inequality holds for
every $x,y \in B_{\frac12}$:
\begin{align}\label{eq:harnack_alpha_pos}
u(x) &\leq c \big( u(y) + \alpha(2{-}\alpha) \!\!\il_{\R^d \setminus B_1}\!\!
\frac{u^-(z)}{|z|^{d+\alpha}} \, dz \big) \,.
\end{align}
\end{enumerate}
\end{theorem}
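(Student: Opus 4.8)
The plan is to deduce this signed Harnack inequality from the classical nonnegative-data version -- \autoref{theo:harnack_frac_classic} for $0<\alpha<2$, and the classical Harnack inequality for harmonic functions for $\alpha=2$ -- by splitting $u$ according to the sign of its \emph{exterior} values. First I would record the Poisson representation: under the integrability $\int_{\R^d}|u(z)|(1+|z|)^{-d-\alpha}\,dz<\infty$ that is implicit in \eqref{eq:harmon_B1}, an $\alpha$-harmonic $u\in C(\R^d)$ satisfies $u=H_\alpha(u|B_1)$ on $B_1$; this follows from the maximum principle, since $u-H_\alpha(u|B_1)$ is $\alpha$-harmonic in $B_1$ and vanishes off $B_1$. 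Set $v:=H_\alpha(u^+|B_1)$ and $w:=H_\alpha(u^-|B_1)$. By linearity of $H_\alpha$, and since $u^\pm\ge 0$ are continuous, $v$ and $w$ are continuous on $\R^d$ (using regularity of every boundary point of a ball for $\Delta^{\alpha/2}$), nonnegative on all of $\R^d$ (they equal $u^\pm$ off $B_1$ and are Poisson integrals of nonnegative data inside), $\alpha$-harmonic in $B_1$, and $u=v-w$. Applying \autoref{theo:harnack_frac_classic} to $v$ and to $w$ gives $v(x)\le c\,v(y)$ and $w(x)\le c\,w(y)$ for all $x,y\in B_{1/2}$, hence part~(1): $u(x)=v(x)-w(x)\le v(x)\le c\,v(y)=c\big(u(y)+H_\alpha(u^-|B_1)(y)\big)$ for the upper bound, and $u(x)=v(x)-w(x)\ge -w(x)\ge -c\,w(y)=c\big(u(y)-H_\alpha(u^+|B_1)(y)\big)$ for the lower bound (here $c\ge 1$ is used).

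For part~(2), note that $u\ge 0$ in $B_1$ together with continuity forces $u^-\equiv 0$ on $\overline{B_1}$; the bound from part~(1) is too lossy here, since $H_\alpha(u^-|B_1)$ can dominate $\alpha(2{-}\alpha)\int_{\R^d\setminus B_1}u^-(z)|z|^{-d-\alpha}\,dz$ near $\partial B_1$. Instead I would set $\tilde u:=u+u^-$, which is continuous, equals $u$ on $B_1$ and $u^+\ge 0$ off $B_1$, so $\tilde u\ge 0$ on $\R^d$. Since $u^-$ vanishes in a neighbourhood of each $x\in B_1$,
\[
\Delta^{\alpha/2}\tilde u(x)=\Delta^{\alpha/2}u(x)+C_{\alpha,d}\int_{\R^d\setminus B_1}\frac{u^-(z)}{|z-x|^{d+\alpha}}\,dz=:g(x)\ge 0\qquad(x\in B_1),
\]
and for $x\in B_{3/4}$ we have $|z-x|\ge |z|/4$ whenever $|z|\ge 1$, so \eqref{eq:c-alpha-d_prop} gives $0\le g\le cN$ on $B_{3/4}$, where $N:=\alpha(2{-}\alpha)\int_{\R^d\setminus B_1}u^-(z)|z|^{-d-\alpha}\,dz$. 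Let $G$ solve $\Delta^{\alpha/2}G=g$ in $B_{3/4}$ with $G\equiv 0$ off $B_{3/4}$; comparison with the explicit supersolution $\big((3/4)^2-|x|^2\big)_+^{\alpha/2}$, whose fractional Laplacian on $B_{3/4}$ is a negative constant bounded away from $0$ and $\infty$ uniformly in $\alpha$, gives $G\le 0$ and $\|G\|_\infty\le c'N$ with $c'=c'(d)$. Then $h:=\tilde u-G$ is $\alpha$-harmonic in $B_{3/4}$ and $h\ge\tilde u\ge 0$ on $\R^d$, so applying \autoref{theo:harnack_frac_classic} (rescaled) along a fixed chain of small balls covering $\overline{B_{1/2}}\subset B_{3/4}$ yields $\sup_{B_{1/2}}h\le c\inf_{B_{1/2}}h$. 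Therefore, for $x,y\in B_{1/2}$,
\[
u(x)=h(x)+G(x)\le\sup_{B_{1/2}}h\le c\inf_{B_{1/2}}h\le c\,h(y)=c\big(\tilde u(y)-G(y)\big)\le c\,u(y)+c\,c'N,
\]
which is part~(2); when $\alpha=2$ one has $N=0$ and this is the classical Harnack inequality on $\overline{B_{1/2}}\subset B_1$.

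The structural steps above are short; the real work, and the main obstacle, is the $\alpha$-uniformity of all constants as $\alpha\to 2-$. One needs: the normalisation giving $C_{\alpha,d}\asymp\alpha(2{-}\alpha)$, which is exactly \eqref{eq:c-alpha-d_prop}; the torsion-function estimate for $\Delta^{\alpha/2}$ on a ball -- equivalently $\|G\|_\infty\le c'(d)N$ -- to be uniform in $\alpha$; and the number of balls in the chaining to depend only on $d$. One must also make precise the standing regularity/integrability hypotheses that render $\Delta^{\alpha/2}u$ and $\Delta^{\alpha/2}\tilde u$ meaningful pointwise and the Poisson representation valid, and check continuity of $v$, $w$, $G$ up to the relevant sphere via boundary regularity of balls for $\Delta^{\alpha/2}$, $0<\alpha<2$. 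None of this is deep, but it is where the argument must be carried out carefully to obtain constants independent of $\alpha$.
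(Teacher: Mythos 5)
Your part~(1) follows exactly the paper's argument: represent $u=H_\alpha(u|B_1)$, majorize by $H_\alpha(u^+|B_1)$, apply \autoref{theo:harnack_frac_classic} to the globally nonnegative, $\alpha$-harmonic function $H_\alpha(u^+|B_1)$, and split back using $H_\alpha(u^+|B_1)=H_\alpha(u|B_1)+H_\alpha(u^-|B_1)$. For part~(2) you take a genuinely different route. The paper simply reruns the part~(1) computation on the smaller ball $B_{3/4}$: since $u\ge 0$ in $B_1$, the exterior data $u^-$ seen by $H_\alpha(\cdot\,|B_{3/4})$ is supported in $\R^d\setminus B_1$, where the explicit Poisson kernel of $B_{3/4}$ at $y\in B_{1/2}$ is bounded by $c\,\alpha(2{-}\alpha)|z|^{-d-\alpha}$ (the factor $(|z|^2-(\tfrac34)^2)^{-\alpha/2}$ no longer blows up for $|z|\ge 1$, and $c_\alpha\asymp\alpha(2{-}\alpha)$), which yields \eqref{eq:harnack_alpha_pos} in three lines. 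You instead correct $u$ to the globally nonnegative $\tilde u=u+u^-$, absorb the resulting bounded right-hand side into a Green potential $G$ with $\|G\|_\infty\lesssim N$ via the torsion-type supersolution $\big((\tfrac34)^2-|x|^2\big)_+^{\alpha/2}$, and apply the interior Harnack inequality to $\tilde u-G$ together with a chaining argument. This is correct and arguably more portable: it uses only the maximum principle, a uniform torsion bound and the interior Harnack inequality, rather than the explicit Poisson kernel, so it would adapt to operators without explicit kernels. The price is extra machinery (solvability and continuity of the Dirichlet problem for $\Delta^{\alpha/2}G=g$, the Getoor-type identity for $(1-|x|^2)_+^{\alpha/2}$ with constants uniform in $\alpha$, and the covering of $\overline{B_{1/2}}$ by small balls) where the paper needs a single kernel estimate. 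Note that the paper's proof also tacitly requires the Harnack comparison for $H_\alpha(u^+|B_{3/4})$ on all of $B_{1/2}$ rather than on $B_{3/8}$, i.e., the same covering step that you make explicit, so this is not a defect of your argument relative to theirs.
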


\begin{proof}[Proof of \autoref{theo:harnack_frac_new}] The
decomposition $u=u^+
- u^-$ and an application of \autoref{theo:harnack_frac_classic} gives 
\begin{align*}
 u(x) &= H_\alpha(u|B_{1})(x) \leq H(u^+|B_{1})(x) \leq c
H_\alpha(u^+|B_{1})(y) \\ &= c
H_\alpha(u|B_{1})(y) + c H_\alpha(u^-|B_{1})(y) = c u(y) + c
H_\alpha(u^-|B_{1})(y)
\,,
\end{align*}
which proves the second inequality in \eqref{eq:harnack_main_stable}. The first
one is proved analogously.

Inequality \eqref{eq:harnack_alpha_pos} is proved as follows. Assume $u$ is
nonnegative in $B_1$. Using the same strategy as above we obtain for some
$c_1, c_2 >0$ and $c=\max(c_1, c_2)$
\begin{align*}
 u(x) &\leq c_1 H_\alpha(u|B_{\frac34})(y) + c_1 H_\alpha(u^-|B_{\frac34})(y) \\
&\leq c_1 u(y) + c_2  \alpha(2{-}\alpha) \il_{\R^d \setminus B_1}
\frac{u^-(z)}{(|z|^2-(\frac34)^2)^{\alpha/2} |z-y|^{d}} \, dz \\
&\leq c u(y) + c  \alpha(2{-}\alpha) \il_{\R^d \setminus B_1}
\frac{u^-(z)}{|z|^{d+\alpha}} \, dz   \,. 
\end{align*}
The proof of the theorem is complete. Note that different versions of this
result have been announced in \cite{Kas11}.
\end{proof}

\pagebreak[3]
Let us make some observations:
\begin{enumerate}
\item There is no assumption on the sign of $u$ needed
for \eqref{eq:harnack_main_stable}. Inequality \eqref{eq:harnack_main_stable}
does hold in the classical case $\alpha=2$, too.
\item If $u$ is nonnegative in all of $\R^d$ ($\alpha \in (0,2)$) or
nonnegative in $B_1$ ($\alpha=2$), then the second inequality in 
\eqref{eq:harnack_main_stable} reduces to the well-known formulation of the 
Harnack inequality.
\item If $u$ is nonnegative in $B_1$, then \eqref{eq:harnack_alpha_pos} reduces
for $\alpha \to 2$ to the original Harnack inequality.
\item For the above results, one might want to impose regularity conditions on
$u$ such that
$\Delta^{\alpha/2} u(x)$ exists at every point $x \in B_1$, e.g. $u|_{B_1} \in
C^2(B_1)$ and $u(x)/(1+|x|^{d+\alpha}) \in L^1(\R^d)$. However, the assumption
that the integral in \eqref{eq:def_frac-Laplace} converges, is sufficient.
\end{enumerate}

The proof of \autoref{theo:harnack_frac_new} does not use the
special structure of $\Delta^{\alpha/2}$. The proof only uses the
decomposition $u=u^+ - u^-$ and the Harnack inequality
for
the Poisson kernel. Roughly speaking, it holds for every
linear operator that satisfies a maximum principle. One more abstract way of
formulating this result in a general framework is as follows:

\begin{lemma}\label{lem:trivial} Let $(X,\cW)$ be a balayage space (see
\cite{BlHa86}) such that $1\in \cW$. 
Let $V,W$ be open sets in $X$ with $\ov V \subset W$. Let $c>0$. Suppose that,
for all $x,y \in V$ and $h \in \cH_b^+(V)$,
\begin{align}\label{eq:harnack+}
u(x)\le c u(y).
\end{align}
Then $\epsx^{V^c} \le c\epsy^{V^c}$ and, for every $u\in \cH_b(W)$,
\begin{align}\label{eq:harnack}
u(x)\le c u(y)+c\int u^- \,d\eps_{y}^{V^c}\,.
\end{align}
\end{lemma}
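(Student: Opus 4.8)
The statement is an abstract reformulation of the argument already carried out for $\Delta^{\alpha/2}$: it only uses the decomposition $u = u^+ - u^-$ and the hypothesis \eqref{eq:harnack+}, which plays the role of the Harnack inequality for harmonic (here: $\cW$-harmonic) functions. So the plan is to transcribe the proof of \autoref{theo:harnack_frac_new}(1), replacing ``$H_\alpha(\cdot|B_1)$'' by ``reduction onto $V^c$'', i.e.\ by the harmonic measure $\eps_x^{V^c}$. The key facts from balayage-space theory I would invoke are: (i) for $h \in \cH_b(W)$ and $\ov V \subset W$, one has the reduction/sweeping identity $h(x) = \int h \, d\eps_x^{V^c}$ for $x \in V$ (this is the analogue of the Poisson representation $u = H_\alpha(u|B_1)$); (ii) if $g \geq 0$ is a suitable (superharmonic, or just positive Borel) function, then $x \mapsto \int g \, d\eps_x^{V^c}$ belongs to $\cH_b^+(V)$ when restricted appropriately, so that \eqref{eq:harnack+} applies to it; (iii) monotonicity of the integral. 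These are standard; I would cite \cite{BlHa86} for them rather than prove them.

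\textbf{Main steps.} First I would establish the claim $\eps_x^{V^c} \leq c \, \eps_y^{V^c}$ (as measures) for $x, y \in V$: apply \eqref{eq:harnack+} to the function $z \mapsto \int \1_A \, d\eps_z^{V^c}$ for an arbitrary Borel set $A$ (after checking this function lies in $\cH_b^+(V)$, using that $1 \in \cW$ guarantees the needed integrability and positivity), obtaining $\eps_x^{V^c}(A) \leq c \, \eps_y^{V^c}(A)$; since $A$ is arbitrary this is the asserted inequality of measures. Second, for $u \in \cH_b(W)$ I would write $u = u^+ - u^-$ and, for $x, y \in V$,
\begin{align*}
u(x) = \int u \, d\eps_x^{V^c} \leq \int u^+ \, d\eps_x^{V^c} \leq c \int u^+ \, d\eps_y^{V^c} = c \int u \, d\eps_y^{V^c} + c \int u^- \, d\eps_y^{V^c} = c\, u(y) + c \int u^- \, d\eps_y^{V^c}\,,
\end{align*}
where the first equality is the reduction identity (i), the first inequality is $u \leq u^+$ together with positivity of $\eps_x^{V^c}$, the middle inequality is the measure inequality just proved (or, equivalently, a direct application of \eqref{eq:harnack+} to $z \mapsto \int u^+ \, d\eps_z^{V^c}$, which is in $\cH_b^+(V)$ since $u^+$ is bounded and $1 \in \cW$), the next equality is linearity, and the last equality is reduction (i) again applied to $u \in \cH_b(W)$. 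This is exactly \eqref{eq:harnack}.

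\textbf{Expected obstacle.} No deep difficulty is expected; the only points requiring care are the measurability/integrability bookkeeping needed to legitimately apply \eqref{eq:harnack+} to the auxiliary functions $z \mapsto \int g \, d\eps_z^{V^c}$ — one must check these are genuinely members of $\cH_b^+(V)$ (bounded and harmonic on $V$), which is where the hypothesis $1 \in \cW$ enters to control boundedness of harmonic measure, and where $\ov V \subset W$ ensures $u^\pm$ are defined and bounded on a neighborhood of $\ov V$. Thus the ``hard part'' is merely citing the correct form of the reduction theorem and harmonic-measure boundedness from \cite{BlHa86}; once those are in place the proof is a two-line computation as above. I would keep the write-up short, essentially just displaying the chain of (in)equalities and pointing to \cite{BlHa86} for the structural facts.
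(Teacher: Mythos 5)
Your proposal is correct and follows essentially the same route as the paper: the measure inequality $\eps_x^{V^c}\le c\,\eps_y^{V^c}$ is obtained by applying \eqref{eq:harnack+} to the harmonic functions $z\mapsto\eps_z^{V^c}(f)$, and \eqref{eq:harnack} by the chain $u(x)=\eps_x^{V^c}(u)\le\eps_x^{V^c}(u^+)\le c\,\eps_y^{V^c}(u^+)=c\,u(y)+c\,\eps_y^{V^c}(u^-)$. The only cosmetic difference is that the paper tests the harmonic measures against positive continuous functions of compact support rather than indicators of arbitrary Borel sets, which sidesteps having to argue that $z\mapsto\eps_z^{V^c}(\1_A)$ is itself a member of $\cH_b^+(V)$.
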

Here, $\cH_b(A)$ denotes the set of bounded functions which are harmonic in the
Borel set $A$. Functions in $\cH_b^+(A)$, 
in addition, are nonnegative.

\begin{proof}
Since, for every positive continuous function $f$ with compact support the
mapping $f \mapsto  \eps_z^{V^c}(f)$ belongs to $\cH_b^+(V)$, the first
statement follows. Let $u\in \cH_b(W)$. Then $u(x)=\epsx^{V^c}(u)$,
$u(y)=\epsy^{V^c}(u)$ and hence
\begin{align*}
u(x)\le \epsx^{V^c}(u^+)\le c\epsy^{V^c}(u^+)
               = c\epsy^{V^c} (u+u^-) = cu(y)+c \il u^{-} \, d\eps_{y}^{V^c}.
\end{align*}
\end{proof}

\section{Functional inequalities and scaling property}\label{sec:prelims}

In this section we collect several auxiliary results. In particular, we will 
need some properties of the Sobolev spaces
$H^{\alpha/2}(D)$. The following fact about extensions has
an elementary proof, see \cite{DiNezzaPalatucciValdinoci}. However, one has to
go through it and see that the constants do not depend on $\alpha$, provided one
has the factor $(2-\alpha)$ in front of the Gagliardo norm,
cf. \eqref{eq:def_mu-alpha} and \eqref{eq:Sobolevnorm}.

\begin{fact}[Extension] \label{fact:extension}
Let $D\subset \R^d$ be a bounded Lipschitz domain, and let $0<\alpha<2$.
Then there exists a constant $c=c(d, D)$, which is independent of $\alpha$, and
an extension operator $E:H^{\alpha/2}(D) \to H^{\alpha/2}(\R^d)$ with
norm $\|E\| \leq c$.
\end{fact}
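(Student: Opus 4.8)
The plan is to prove the extension result by following the standard construction for fractional Sobolev spaces on Lipschitz domains, as in \cite{DiNezzaPalatucciValdinoci}, but tracking all constants to confirm $\alpha$-independence, which is precisely why the normalizing factor $(2-\alpha)$ in $\mu_\alpha$ is essential. First I would localize: using a finite covering of $\partial D$ by balls in which $\partial D$ is, after a rigid motion, the graph of a Lipschitz function, together with an interior patch, one reduces via a partition of unity to the model case of a half-space $\R^d_+ = \{x_d > 0\}$. The partition-of-unity step costs only a constant depending on $d$ and $D$ (through the Lipschitz character and the number of charts), since multiplication by a fixed smooth cutoff $\psi$ changes the Gagliardo seminorm by a factor controlled by $\|\psi\|_\infty + \|\nabla\psi\|_\infty$ together with $\int_{\R^d} \min(|h|^2,1)\,\mu_\alpha(0,\d h)$, and this last integral is bounded uniformly in $\alpha \in (0,2)$ exactly because of the $(2-\alpha)$ factor (it behaves like $(2-\alpha)\int_0^1 r^{1-\alpha}\,\d r + (2-\alpha)\int_1^\infty r^{-1-\alpha}\,\d r \le C$).

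For the model half-space case I would use the classical even reflection $E_0 v(x',x_d) = v(x',|x_d|)$ for $x_d < 0$. The $L^2$ bound is immediate with constant $2$. For the seminorm one splits
\[
\cE^{\mu_\alpha}_{\R^d}(E_0 v, E_0 v) = (2-\alpha)\iint \frac{(E_0v(x)-E_0v(y))^2}{|x-y|^{d+\alpha}}\,\d x\,\d y
\]
into the contributions where both points lie in $\R^d_+$ (which equals the original seminorm), where both lie in $\R^d_-$ (equal again by the reflection symmetry), and the mixed term. In the mixed term one writes, for $x \in \R^d_+$ and $y \in \R^d_-$, that $E_0v(y) = v(y',-y_d)$ and compares $|x-y|^2 = |x'-y'|^2 + (x_d-y_d)^2$ with $|x - \bar y|^2 = |x'-y'|^2 + (x_d+y_d)^2$ where $\bar y = (y',-y_d) \in \R^d_+$; since $x_d > 0 > y_d$ one has $|x_d - y_d| \ge |x_d + y_d|$, hence $|x-y| \ge |x-\bar y|$, so the mixed integrand is dominated by the original integrand after the change of variables $y \mapsto \bar y$. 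This yields $\cE^{\mu_\alpha}_{\R^d}(E_0 v, E_0 v) \le 3\,\cE^{\mu_\alpha}_{\R^d_+}(v,v)$ with a constant independent of $\alpha$. One then flattens the Lipschitz boundary: the bi-Lipschitz change of variables $\Phi(x',x_d) = (x', x_d + \gamma(x'))$ with $\gamma$ Lipschitz distorts both $|x-y|$ and the Jacobian by factors depending only on the Lipschitz constant of $\gamma$, hence only on $D$, again uniformly in $\alpha$.

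Assembling the pieces, $E := \sum_k \psi_k\, E_0^{(k)}\!\big(v\circ\Phi_k^{-1}\big)\circ\Phi_k$ (interior patch handled trivially by multiplication by a cutoff) is linear, maps into $H^{\alpha/2}(\R^d)$, and satisfies $\|E\| \le c(d,D)$. The main obstacle — and the only real content beyond bookkeeping — is verifying that none of the three operations (partition of unity, even reflection, boundary flattening) introduces a hidden $\alpha$-dependence; each reduces to checking that integrals of the form $(2-\alpha)\int_{\R^d}\min(|h|^2,1)|h|^{-d-\alpha}\,\d h$ and $(2-\alpha)\int_{|h|>1}|h|^{-d-\alpha}\,\d h$ stay bounded as $\alpha \to 2-$, which the chosen normalization guarantees. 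I would remark that for $\alpha$ bounded away from $0$ these integrals are trivially finite, so the normalization only matters near $\alpha = 2$, consistent with the discussion after \autoref{ex:def_mu-alpha}.
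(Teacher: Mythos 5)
Your proposal follows exactly the route the paper intends: the paper offers no proof of this Fact beyond citing \cite{DiNezzaPalatucciValdinoci} and remarking that one must go through the standard localization/partition-of-unity/even-reflection/boundary-flattening construction and check that the $(2-\alpha)$ normalization keeps the constants uniform as $\alpha\to 2-$, and your write-up supplies precisely those details (the reflection and flattening estimates are correct; the mixed-term count gives $4$ rather than $3$, which is immaterial). One small caveat: the tail integral $(2-\alpha)\int_1^\infty r^{-1-\alpha}\,\d r=(2-\alpha)/\alpha$ appearing in your cutoff estimate is \emph{not} bounded as $\alpha\to 0+$, so the uniformity you actually establish is for $\alpha\in[\alpha_0,2)$ --- consistent with the paper's explicit disclaimer that it does not track constants near $\alpha=0$.
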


Furthermore, we will need the following Poincar\'e inequality, cf.
\cite{Ponce04}.
\begin{fact}[Poincar\'{e} I]\label{fact:poincare-i}
Let $D\subset \R^d$ be a bounded Lipschitz domain, and let
$0 < \alpha_0 \leq \alpha<2$.
Then there exists a constant $c=c(d, \alpha_0, D)$, which is independent of
$\alpha$, such that
\begin{align}\label{eq:poincare-i}
\|u - \frac{1}{|D|}\int_D u\, \d x \|_{L^2(D)}^2 \leq c \cE^{\mu_\alpha}_D(u,u) 
\qquad
(u\in H^{\alpha/2}(D)).
\end{align}
\end{fact}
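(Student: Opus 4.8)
\emph{Plan of proof.} The plan is to split the exponent range into $[\alpha_0,\alpha_1]$ and $[\alpha_1,2)$, with $\alpha_1\in(\alpha_0,2)$ fixed close enough to $2$. On the first range a crude pointwise comparison of the kernels of $\mu_\alpha$ and $\mu_{\alpha_0}$ reduces the claim to the Poincaré inequality for the \emph{single} exponent $\alpha_0$, while the whole difficulty sits in the second range: there the naive bound $|x-y|^{-d-\alpha}\ge(\diam D)^{-d-\alpha}$ leaves a factor $(2-\alpha)^{-1}$ that blows up as $\alpha\to2-$, and the normalizing factor $(2-\alpha)$ in the definition of $\mu_\alpha$ is exactly what compensates this. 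Since $\cE^{\mu_\alpha}_D(u,u)=+\infty$ whenever $u\in L^2(D)\setminus H^{\alpha/2}(D)$, it is enough, and convenient, to prove \eqref{eq:poincare-i} for all $u\in L^2(D)$.

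For $\alpha\in[\alpha_0,\alpha_1]$ I would first record the elementary bound
\[
 (2-\alpha)\,|z|^{-d-\alpha}=\frac{2-\alpha}{2-\alpha_0}\,|z|^{-(\alpha-\alpha_0)}\,(2-\alpha_0)\,|z|^{-d-\alpha_0}\ge c_1\,(2-\alpha_0)\,|z|^{-d-\alpha_0}\qquad(0<|z|\le\diam D)\,,
\]
valid for all $\alpha\in[\alpha_0,\alpha_1]$ with $c_1=\frac{2-\alpha_1}{2-\alpha_0}\min\!\big(1,(\diam D)^{-(\alpha_1-\alpha_0)}\big)>0$ (use $\alpha\le\alpha_1$ for the first factor, and $\alpha\ge\alpha_0$ together with $|z|\le\diam D$ for the second). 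Integrating over $D\times D$ yields $\cE^{\mu_\alpha}_D(u,u)\ge c_1\,\cE^{\mu_{\alpha_0}}_D(u,u)$, and it then remains to invoke the classical fractional Poincaré inequality for the fixed exponent $\alpha_0$ on the bounded Lipschitz domain $D$, i.e.\ $\|u-\fint_D u\|_{L^2(D)}^2\le C_0\,\cE^{\mu_{\alpha_0}}_D(u,u)$; this is standard and follows from the compactness of the embedding $H^{\alpha_0/2}(D)\hookrightarrow L^2(D)$ by the usual argument by contradiction. This settles the range $[\alpha_0,\alpha_1]$, with a constant depending only on $d$, $\alpha_0$, $\alpha_1$ and $D$.

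For $\alpha$ close to $2$ I would quote the Poincaré-type estimate of Ponce \cite{Ponce04}. Writing $\eps=2-\alpha$, the functions $\rho_{\eps}(h)=|S^{d-1}|^{-1}\,\eps\,|h|^{\eps-d}\,\1_{\{|h|\le1\}}$ form an admissible family of radial mollifiers: they are nonnegative and radial, $\int_{\R^d}\rho_{\eps}\,\d h=\eps\int_0^1 r^{\eps-1}\,\d r=1$, and $\int_{|h|>\delta}\rho_{\eps}\,\d h=1-\delta^{\eps}\to0$ as $\eps\to0+$ for every $\delta\in(0,1)$. Ponce's theorem, applied with $p=2$ on the bounded connected Lipschitz domain $D$, then supplies constants $C\ge1$ and $\eps_0\in(0,1)$, depending only on $d$ and $D$, such that for all $0<\eps<\eps_0$ --- equivalently for $\alpha\in[\alpha_1,2)$ with $\alpha_1:=2-\eps_0$ --- and all $u\in L^2(D)$
\[
 \Big\|u-\fint_D u\Big\|_{L^2(D)}^2\le C\iil_{D\times D}\frac{\big(u(x)-u(y)\big)^2}{|x-y|^2}\,\rho_{2-\alpha}(x-y)\,\d x\,\d y\,.
\]
Since $\rho_{2-\alpha}(h)\,|h|^{-2}=|S^{d-1}|^{-1}(2-\alpha)\,|h|^{-d-\alpha}\,\1_{\{|h|\le1\}}\le|S^{d-1}|^{-1}(2-\alpha)\,|h|^{-d-\alpha}$, the right-hand side is at most $|S^{d-1}|^{-1}\cE^{\mu_\alpha}_D(u,u)$, which closes the range $[\alpha_1,2)$. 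Combining the two ranges gives \eqref{eq:poincare-i} with $c=c(d,\alpha_0,D)$, since $\alpha_1$ is itself fixed by $d$ and $D$.

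The hard part is the regime $\alpha\to2-$: the uniform control there rests entirely on the Bourgain--Brezis--Mironescu phenomenon that the correctly normalized Gagliardo forms do not degenerate in the limit, which is what \cite{Ponce04} makes quantitative. If one preferred not to cite Ponce's theorem, the same conclusion could be reached by a compactness argument: a failure of \eqref{eq:poincare-i} with a uniform constant would produce exponents $\alpha_n\in[\alpha_0,2)$ and functions $u_n$ with $\fint_D u_n=0$, $\|u_n\|_{L^2(D)}=1$ and $\cE^{\mu_{\alpha_n}}_D(u_n,u_n)\to0$; after passing to a subsequence with $\alpha_n\to\alpha_\ast\in[\alpha_0,2]$, the case $\alpha_\ast<2$ is handled exactly as above through the fixed-exponent compact embedding, whereas the case $\alpha_\ast=2$ follows from the compactness theorem of Bourgain, Brezis and Mironescu, any $L^2$-limit being a constant of vanishing mean, hence $0$, contradicting $\|u_n\|_{L^2(D)}=1$.
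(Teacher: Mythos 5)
Your proposal is correct and follows the same route the paper intends: the paper states this as a Fact with only the citation ``cf.\ \cite{Ponce04}'', and your argument is precisely a correct derivation from Ponce's uniform Poincar\'e inequality for BBM-type mollifiers in the regime $\alpha\to 2-$, supplemented by the routine kernel comparison and fixed-exponent compactness argument on $[\alpha_0,\alpha_1]$. The details you supply (the admissibility of $\rho_{2-\alpha}$, the direction of the kernel bounds, and the dependence of the constants only on $d$, $\alpha_0$, $D$) all check out.
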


The following results, \autoref{fact:poincare-ii} and \autoref{fact:sobolev},
are standard for fixed $\alpha$. For $\alpha \to 2$ they follow from results in 
\cite{BBM01},\cite{MaSh02}, \cite{Ponce04}. They are established in the case
when $B_r(x)$ denotes the cube of all $y \in \R^d$ such that
$|y_i -x_i| < r$ for any $i \in \{1,\ldots, d\}$. They hold true for balls
likewise.

\begin{fact}[Poincar\'{e} II]\label{fact:poincare-ii} Assume $\alpha_0,
\eps >0$ and $0 < \alpha_0 \leq \alpha<2$. There exists a constant $c$, which is
independent of
$\alpha$, such that for $B_R=B_R(x_0)$
\begin{align*}
u \in H^{\alpha/2}(B_R), \qquad \left|B_R \cap \{u = 0\}\right| \geq \eps |B_R|
\end{align*}
implies
\begin{align}\label{eq:poincare-ii}
\il_{B_R} \big(u(x)\big)^2 \, \d x \leq c R^\alpha \iil_{B_R B_R}
\frac{\big(u(y)-u(x)\big)^2}{|x-y|^{d+\alpha}} \,\d y \, \d x \,.
\end{align}
\end{fact}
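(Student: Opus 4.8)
The plan is to obtain \autoref{fact:poincare-ii} from the Poincar\'e inequality \autoref{fact:poincare-i} by controlling the mean value of $u$ through the hypothesis $|B_R\cap\{u=0\}|\ge\eps|B_R|$.

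First I would rescale to $R=1$. Writing $v(\xi)=u(x_0+R\xi)$ and substituting $x=x_0+R\xi$, $y=x_0+R\eta$, one gets $\int_{B_R}u^2\,\d x=R^d\int_{B_1}v^2\,\d\xi$ and $\iint_{B_RB_R}\frac{(u(y)-u(x))^2}{|x-y|^{d+\alpha}}\,\d y\,\d x=R^{d-\alpha}\iint_{B_1B_1}\frac{(v(\eta)-v(\xi))^2}{|\xi-\eta|^{d+\alpha}}\,\d\eta\,\d\xi$, whereas $|B_1\cap\{v=0\}|=R^{-d}|B_R\cap\{u=0\}|\ge\eps|B_1|$. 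Hence \eqref{eq:poincare-ii} for general $R$, with the factor $R^\alpha$, is equivalent to the case $R=1$ with the same constant; the identical computation applies if $B_r(\cdot)$ denotes a cube.

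Second, for $R=1$ set $\bar u=\fint_{B_1}u\,\d x$ and $Z=B_1\cap\{u=0\}$. Since $u\equiv 0$ on $Z$,
\[
|Z|\,\bar u^2=\int_Z(\bar u-u(x))^2\,\d x\le\|u-\bar u\|_{L^2(B_1)}^2,
\]
so the measure hypothesis gives $|B_1|\,\bar u^2\le\eps^{-1}\|u-\bar u\|_{L^2(B_1)}^2$, whence $\|u\|_{L^2(B_1)}^2\le 2\|u-\bar u\|_{L^2(B_1)}^2+2|B_1|\,\bar u^2\le(2+2\eps^{-1})\|u-\bar u\|_{L^2(B_1)}^2$. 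Applying \autoref{fact:poincare-i} with $D=B_1$ (a bounded Lipschitz domain; the same works for a cube) yields $\|u-\bar u\|_{L^2(B_1)}^2\le c_1\,\cE^{\mu_\alpha}_{B_1}(u,u)$ with $c_1=c_1(d,\alpha_0)$ independent of $\alpha$, and since $\cE^{\mu_\alpha}_{B_1}(u,u)=(2-\alpha)\iint_{B_1B_1}\frac{(u(y)-u(x))^2}{|x-y|^{d+\alpha}}\,\d y\,\d x\le 2\iint_{B_1B_1}\frac{(u(y)-u(x))^2}{|x-y|^{d+\alpha}}\,\d y\,\d x$ for $\alpha\in(0,2)$, combining the three estimates proves \eqref{eq:poincare-ii} for $R=1$ with constant $2c_1(2+2\eps^{-1})$. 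Rescaling produces the general statement.

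The argument is routine; the only point requiring attention is the $\alpha$-uniformity of the constant as $\alpha\to2-$. It is inherited directly from \autoref{fact:poincare-i}, whose constant is $\alpha$-free precisely because of the normalising factor $(2-\alpha)$ in the definition of $\mu_\alpha$ (cf. \autoref{ex:def_mu-alpha}); the only extra loss, in passing from $\cE^{\mu_\alpha}_{B_1}$ to the bare Gagliardo seminorm in \eqref{eq:poincare-ii}, is the harmless factor $2-\alpha\le2$. In particular, no compactness or contradiction argument — which would not track the dependence on $\alpha$ — enters the proof.
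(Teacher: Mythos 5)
Your proof is correct. Note that the paper does not actually prove this Fact: it is stated as a ``Fact'' with references to \cite{BBM01}, \cite{MaSh02}, \cite{Ponce04} and the remark that the cube case transfers to balls, so there is no in-paper argument to compare against. Your derivation is the natural self-contained one: the scaling identities reducing to $R=1$ are computed correctly (the exponent $R^{d-\alpha}$ for the Gagliardo double integral versus $R^{d}$ for the $L^2$ norm is exactly what makes the factor $R^\alpha$ in \eqref{eq:poincare-ii} scale-invariant), the elementary step $|Z|\,\bar u^2=\int_Z(\bar u-u)^2\le\|u-\bar u\|_{L^2(B_1)}^2$ converts the zero-set hypothesis into control of the mean, and the rest is \autoref{fact:poincare-i} with $D=B_1$ together with $\cE^{\mu_\alpha}_{B_1}(u,u)=(2-\alpha)\iint_{B_1B_1}(u(y)-u(x))^2|x-y|^{-d-\alpha}\,\d y\,\d x\le 2\iint_{B_1B_1}(\cdots)$. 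The $\alpha$-uniformity is correctly traced to \autoref{fact:poincare-i}, and the only loss is the harmless factor $2-\alpha\le 2$. The one point worth stating explicitly is that your constant $2c_1(2+2\eps^{-1})$ depends on $\eps$ as well as on $d$ and $\alpha_0$; this is consistent with the Fact, whose hypotheses fix $\eps$ and which only claims independence of $\alpha$.
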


\begin{fact}[Sobolev embedding]\label{fact:sobolev} Assume $d\in \N, d \geq 2,
R_0>0$, and $0<\alpha_0 \leq \alpha<2$, $q \in [1,\frac{2d}{d-\alpha}]$. Then
there exists a constant $c$, which is independent of $\alpha$, such that for
$R\in(0,R_0)$ and $u \in H^{\alpha/2}(B_R)$
\begin{align*}
\Big(\il_{B_R} |u(x)|^\frac{2d}{d-\alpha} \, \d x \Big)^\frac{d-\alpha}{d}  \leq
c
\iil_{B_R B_R} \frac{\big(u(y)-u(x)\big)^2}{|x-y|^{d+\alpha}} \,\d y \, \d x 
+ c
R^{-\alpha + \frac{d(q-2)}{q}} \Big(\il_{B_R} |u(x)|^q \, \d x \Big)^\frac{2}{q}
\,.
\end{align*}
\end{fact}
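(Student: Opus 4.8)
The plan is to reduce the estimate to a fixed ball by scaling, to prove it there for $q=2$ with a constant independent of $\alpha$, and then to reach all $q\in[1,\tfrac{2d}{d-\alpha}]$ by elementary interpolation. For the first step, put $u(x)=v(x/R)$; substituting $x=Rx'$, $y=Ry'$ one checks that all three quantities in the asserted inequality carry the same factor $R^{d-\alpha}$: the left-hand side becomes $R^{d-\alpha}\bigl(\int_{B_1}|v|^{2d/(d-\alpha)}\bigr)^{(d-\alpha)/d}$, the Gagliardo term becomes $R^{d-\alpha}\iint_{B_1\times B_1}\frac{(v(x)-v(y))^2}{|x-y|^{d+\alpha}}$, and, because $-\alpha+\tfrac{d(q-2)}q+\tfrac{2d}q=d-\alpha$, the lower-order term becomes $R^{d-\alpha}\bigl(\int_{B_1}|v|^q\bigr)^{2/q}$. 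Hence it suffices to treat $R=1$ (and $R_0$ plays no role).

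For $q=2$ on $B_1$ I would assemble three estimates, each uniform in $\alpha$. By the extension operator of \autoref{fact:extension} and the definition \eqref{eq:Sobolevnorm}, every $w\in H^{\alpha/2}(B_1)$ has an extension $Ew\in H^{\alpha/2}(\R^d)$ with $\cE^{\mu_\alpha}_{\R^d}(Ew,Ew)\le\|Ew\|_{H^{\alpha/2}(\R^d)}^2\le c_d^2\bigl(\|w\|_{L^2(B_1)}^2+\cE^{\mu_\alpha}_{B_1}(w,w)\bigr)$. The homogeneous fractional Sobolev inequality on $\R^d$ gives $\|Ew\|_{L^{2d/(d-\alpha)}(\R^d)}^2\le\Sigma(d,\alpha)\iint_{\R^d\times\R^d}\frac{(Ew(x)-Ew(y))^2}{|x-y|^{d+\alpha}}=\Sigma(d,\alpha)(2-\alpha)^{-1}\cE^{\mu_\alpha}_{\R^d}(Ew,Ew)$, and the decisive point --- this is what \cite{BBM01,MaSh02} supply, together with the explicit value of the sharp constant when $d=2$ --- is that the best constant $\Sigma(d,\alpha)$ is continuous on $(0,2)$ and decays like $2-\alpha$ as $\alpha\to2-$, so $\Sigma(d,\alpha)(2-\alpha)^{-1}$ stays bounded on $[\alpha_0,2)$. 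Since $\cE^{\mu_\alpha}_{B_1}(w,w)\le2\iint_{B_1\times B_1}\frac{(w(x)-w(y))^2}{|x-y|^{d+\alpha}}$, these two estimates combine into $\|w\|_{L^{2d/(d-\alpha)}(B_1)}^2\le c\iint_{B_1\times B_1}\frac{(w(x)-w(y))^2}{|x-y|^{d+\alpha}}+\tfrac{c}{2-\alpha}\|w\|_{L^2(B_1)}^2$ with $c=c(d,\alpha_0)$. The surviving $(2-\alpha)^{-1}$ is harmless: apply this with $w=v-\overline v$, $\overline v=\tfrac1{|B_1|}\int_{B_1}v$, so the seminorm of $w$ equals that of $v$, and use the fractional Poincar\'e inequality \autoref{fact:poincare-i}, $\|v-\overline v\|_{L^2(B_1)}^2\le c\,\cE^{\mu_\alpha}_{B_1}(v,v)=c(2-\alpha)\iint_{B_1\times B_1}\frac{(v(x)-v(y))^2}{|x-y|^{d+\alpha}}$; the factors $2-\alpha$ cancel. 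As $\|\overline v\|_{L^{2d/(d-\alpha)}(B_1)}\le c_d\|v\|_{L^2(B_1)}$ trivially (using $\tfrac{2d}{d-\alpha}>2$), the triangle inequality yields the desired inequality for $q=2$, $R=1$.

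For general $q$: if $q\ge2$, H\"older's inequality on $B_1$ gives $\|v\|_{L^2(B_1)}\le c_d\|v\|_{L^q(B_1)}$ and the previous step concludes; if $1\le q<2$, then $q<2<\tfrac{2d}{d-\alpha}$, and with $\theta\in(0,1)$ defined by $\tfrac12=\tfrac\theta q+(1-\theta)\tfrac{d-\alpha}{2d}$ one has $\|v\|_{L^2(B_1)}\le\|v\|_{L^q(B_1)}^{\theta}\|v\|_{L^{2d/(d-\alpha)}(B_1)}^{1-\theta}$, so inserting this into the $q=2$ estimate and absorbing a small multiple of $\|v\|_{L^{2d/(d-\alpha)}(B_1)}^2$ on the left by Young's inequality gives the claim; here $\theta$ depends on $\alpha$ only through $\tfrac{2d}{d-\alpha}$ and has a limit in $(0,1)$ as $\alpha\to2-$ (depending on $d,q$ for $d\ge3$, equal to $\tfrac q2$ for $d=2$), hence stays in a compact subset of $(0,1)$ and the Young constant stays bounded. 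Undoing the scaling finishes the proof. The genuine difficulty is not this bookkeeping but the uniformity as $\alpha\to2-$: one must extract from \cite{BBM01,MaSh02,Ponce04} (and from the proofs behind \autoref{fact:extension} and \autoref{fact:poincare-i}) that, with the normalising factor $2-\alpha$ placed in front of the Gagliardo form as in \eqref{eq:def_mu-alpha}, all the relevant best constants stay bounded --- i.e.\ the $\alpha$-dependent constants degenerate at exactly the rate needed to cancel --- rather than merely the classical statements at fixed $\alpha$.
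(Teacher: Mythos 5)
The paper does not actually prove this Fact: it is imported from the literature (\cite{BBM01}, \cite{MaSh02}, \cite{Ponce04}), with the remark that the cited statements are formulated for cubes and transfer to balls. Your reconstruction --- reduce to $R=1$ by checking that all three terms scale like $R^{d-\alpha}$, prove the case $q=2$ via the $\alpha$-robust extension operator of \autoref{fact:extension}, the robust Sobolev inequality on $\R^d$, and \autoref{fact:poincare-i}, then pass to general $q$ by H\"older/interpolation with a uniformly controlled exponent $\theta$ --- is a legitimate and essentially complete way of filling in what the paper leaves to the references, and it uses exactly the external inputs the paper points to. The scaling identity, the cancellation of the factor $(2-\alpha)^{-1}$ on the $L^2$-term against the factor $(2-\alpha)$ hidden in $\cE^{\mu_\alpha}$ via the Poincar\'e inequality, and the absorption argument for $1\le q<2$ are all correct.

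One assertion is false as stated, although it does not sink the argument: the best constant $\Sigma(d,\alpha)$ in $\|u\|^2_{L^{2d/(d-\alpha)}(\R^d)}\le\Sigma(d,\alpha)\iint\frac{(u(x)-u(y))^2}{|x-y|^{d+\alpha}}\,\d x\,\d y$ does \emph{not} decay like $2-\alpha$ when $d=2$; it only stays bounded there, because the factor $\sim(1-s)$ coming from comparing the Gagliardo form with $\|(-\Delta)^{s/2}u\|_2^2$ is consumed by the pole of the $\dot H^{s}\hookrightarrow L^{2d/(d-2s)}$ constant at $2s=d$. (If $\Sigma(2,\alpha)\lesssim 2-\alpha$ held, letting $\alpha\to2^-$ for a fixed smooth compactly supported $u$ would yield $\|u\|_{L^\infty}\lesssim\|\nabla u\|_{L^2(\R^2)}$, which fails.) Fortunately your displayed chain only needs boundedness of $\Sigma(d,\alpha)$ on $[\alpha_0,2)$, which is true for every $d\ge2$: you in any case retain the lower-order term $\tfrac{c}{2-\alpha}\|w\|^2_{L^2}$ and then remove the $(2-\alpha)^{-1}$ with \autoref{fact:poincare-i}. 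So replace ``decays like $2-\alpha$'' by ``is bounded on $[\alpha_0,2)$'' (with genuine decay of order $2-\alpha$ only for $d\ge3$), and the proof stands as written.
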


When studying nonlocal bilinear forms on bounded sets, it is natural to work
with function spaces which impose some regularity of the functions across the
boundary. These spaces seem not be part of the standard literature which is why
we provide a small introduction.

We often make use of scaling and translations. Our main assumptions, conditions 
\eqref{eq:assum_comp} and \eqref{eq:assum_cutoff} assure a certain behavior 
of the family of measures $\mu$ with respect to the unit ball $B_1 \subset 
\R^d$. Let us formulate these conditions with respect to general balls 
$B_r (\xi) \subset \R^d$.

Given $\xi \in \R^d, r > 0, A \geq
1$, we say that $\mu$ satisfies \eqref{eq:assum_comp-scaled} if:
\begin{align} 
\label{eq:assum_comp-scaled}\tag{A;$\xi$,r} 
\begin{split}
& \text{ For every ball } B_{\rho}(x_0) \text{ with } \rho \in (0,r), x_0 \in 
B_{r}(\xi) \text{ and every } v \in H^{\alpha/2}(B_\rho(x_0)): \\
& 
\qquad\qquad A^{-1} \, \cE^\mu_{B_\rho(x_0)}(v,v) \leq 
\cE^{\mu_\alpha}_{B_\rho(x_0)}(v,v)
\leq A
\,
\cE^\mu_{B_\rho(x_0)}(v,v) \,. 
\end{split}
\end{align}
Given $\xi \in \R^d, r > 0, B \geq
1$, we say that $\mu$ satisfies \eqref{eq:assum_cutoff-scaled} if:
\begin{align} \label{eq:assum_cutoff-scaled}\tag{B;$\xi$,r}
\begin{split}
&\text{ For } 0 < \rho \leq R \leq r \text{ and } x_0 \in B_r(\xi) \text{ there 
is a nonnegative measurable
function }
\\ 
& \tau : \R^d \to \R \text{ with }
\operatorname{supp}(\tau) \subset \overline{B_{R+\rho}(x_0)}, \tau(x) \equiv
1 \text{ on } B_{R}(x_0)\,, \|\tau\|_\infty \leq 1\,, \text{ and } \\
& \qquad \sup\limits_{x \in \R^d} \; \il_{\R^d} \big(\tau(y) - \tau(x)\big)^2  
\mu(x,\d y) \leq B \, \rho^{-\alpha} \,.
\end{split}
\end{align}

Let us explain how the operator under
consideration behaves with respect to rescaled functions.

\begin{lemma}[Scaling property] \label{lem:scaling} 
Assume $\xi \in \R^d$ and $r\in 
(0,1)$. Let $u \in V^\mu_{B_{r}(\xi)}(\R^d)$ 
satisfy
$\cE^\mu(u,\phi) \geq (f,\phi)$ for every nonnegative $\phi \in
H^\mu_{B_{r}(\xi)}(\R^d)$. Define a
diffeomorphism
$J$ by $J(x)= rx + \xi$.   Define rescaled versions $\widetilde f$, $\widetilde
u$ of $u$ and $f$ by $\widetilde u(x)= u(J(x))$ and $\widetilde f$ by
$\widetilde f(x)=r^\alpha
f(J(x))$.
\begin{enumerate}
\item Then $\widetilde u$ satisfies for all nonnegative $\phi \in 
H^{\widetilde \mu}_{B_{1}}(\R^d)
$
\begin{align*}
\cE^{\widetilde{\mu}}(\widetilde u,\phi) = \iil_{\R^d \R^d}
\big(\widetilde{u}(y)-\widetilde{u}(x)\big)\big(\phi(y)-\phi(x)\big)
\widetilde{\mu}(x,\d
y) \, \d x \geq (\widetilde{f},\phi)\,,
\end{align*}
where 
\begin{align}
\widetilde{\mu} (x,\d y) = r^\alpha \mu_{J^{-1}}(J(x), \d y) \text{ and
} \mu_{J^{-1}}(z, A) = \mu(z, J(A)) \,.
\end{align}
\item Assume $\mu$ satisfies conditions \eqref{eq:assum_comp-scaled},
\eqref{eq:assum_cutoff-scaled} for some $\alpha \in
(0,2)$ and $A \geq 1$, $B \geq 1$, $\xi \in \R^d$, $r > 0$. Then the family of 
measures
$\widetilde{\mu}=\widetilde{\mu} (\cdot,\d y)$ satisfies assumptions
\eqref{eq:assum_comp} and \eqref{eq:assum_cutoff} with the same constants.
\end{enumerate}
\end{lemma}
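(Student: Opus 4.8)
The plan is to carry out a direct change-of-variables computation, keeping track of how the Jacobian of $J(x)=rx+\xi$ interacts with the measure family and with the factor $r^\alpha$ introduced in the definition of $\widetilde\mu$, and then to match the two halves of the lemma: part (1) is the substitution in the bilinear form, and part (2) is simply rereading the definitions of the assumptions after rescaling.

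\textbf{Part (1).} First I would fix a nonnegative $\phi\in H^{\widetilde\mu}_{B_1}(\R^d)$ and observe that $\psi:=\phi\circ J^{-1}$ is supported in $B_r(\xi)$, so it is an admissible test function for $u$ (one has to check $\psi\in H^\mu_{B_r(\xi)}(\R^d)$, which follows because the substitution $y\mapsto J(y)$ turns the $\widetilde\mu$-energy of $\phi$ into the $\mu$-energy of $\psi$ up to the constant $r^\alpha$ and a Jacobian $r^d$; the integrability hypothesis \eqref{eq:mu-integrability} and the membership $\phi\in H^{\widetilde\mu}_{B_1}$ make this finite). Then I would write out $\cE^\mu(u,\psi)$, apply the substitutions $x=J(\tilde x)$, $y\mapsto J(A)$ inside $\mu(x,\d y)$ using the pushforward notation $\mu_{J^{-1}}(z,A)=\mu(z,J(A))$, and collect the Jacobian factor $r^d$ coming from $\d x = r^d\,\d\tilde x$. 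The point is that $\widetilde\mu(x,\d y)=r^\alpha\mu_{J^{-1}}(J(x),\d y)$ is defined precisely so that, after also substituting in the right-hand side $(f,\psi)=\int f\,\psi = r^d\int f(J(\tilde x))\phi(\tilde x)\,\d\tilde x = r^{d-\alpha}\int\widetilde f(\tilde x)\phi(\tilde x)\,\d\tilde x$, the common factor $r^d$ cancels and the inequality $\cE^\mu(u,\psi)\geq(f,\psi)$ becomes exactly $\cE^{\widetilde\mu}(\widetilde u,\phi)\geq(\widetilde f,\phi)$. I would also note the symmetry \eqref{eq:mu-symmetry} is inherited by $\widetilde\mu$ (needed for $\cE^{\widetilde\mu}$ to make sense as a symmetric form), again by the change of variables.

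\textbf{Part (2).} Here the work is bookkeeping. Given a ball $B_\rho(x_0)$ with $\rho\in(0,1)$ and $x_0\in B_1$, its image $B_{r\rho}(J(x_0))$ satisfies $r\rho\in(0,r)$ and $J(x_0)\in B_r(\xi)$, so \eqref{eq:assum_comp-scaled} applies there. For $v\in H^{\alpha/2}(B_\rho(x_0))$ I would compute $\cE^{\widetilde\mu}_{B_\rho(x_0)}(v,v)$ by the same substitution as in part (1), getting $r^{\alpha}\cdot r^{d}\cdot r^{-d}=\ldots$ — more precisely $\cE^{\widetilde\mu}_{B_\rho(x_0)}(v,v)=r^\alpha r^{-d}\,\cE^\mu_{B_{r\rho}(J(x_0))}(v\circ J^{-1},v\circ J^{-1})$ after the Jacobian; the same manipulation applied to $\mu_\alpha$ — using its exact scaling $\mu_\alpha(rz,r\,\d y)=r^{-\alpha}\mu_\alpha(z,\d y)$ — shows $\cE^{\widetilde{\mu_\alpha}}_{B_\rho(x_0)}=\cE^{\mu_\alpha}_{B_\rho(x_0)}$, i.e. the normalized stable form is scale-invariant. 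Chaining these identities with the comparability \eqref{eq:assum_comp-scaled} for $\mu$ on $B_{r\rho}(J(x_0))$ yields \eqref{eq:assum_comp} for $\widetilde\mu$ with the same $A$. For \eqref{eq:assum_cutoff} I would take the cut-off $\tau$ given by \eqref{eq:assum_cutoff-scaled} for the pair $(r R, r\rho)$ and the ball $B_{rR}(J(x_0))$, and set $\widetilde\tau:=\tau\circ J$; it has support in $\overline{B_{R+\rho}(x_0)}$, equals $1$ on $B_R(x_0)$, has sup-norm $\leq 1$, and $\sup_x\int(\widetilde\tau(y)-\widetilde\tau(x))^2\widetilde\mu(x,\d y)=r^\alpha\sup_z\int(\tau(y)-\tau(z))^2\mu(z,\d y)\leq r^\alpha\cdot B(r\rho)^{-\alpha}=B\rho^{-\alpha}$, which is \eqref{eq:assum_cutoff} with the same $B$.

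\textbf{Main obstacle.} None of the steps is deep; the one place to be careful is the simultaneous tracking of the three scaling factors — the $r^\alpha$ built into $\widetilde\mu$, the Lebesgue Jacobian $r^d$ from $\d x$, and the intrinsic $r^{-\alpha}$ scaling of $\mu_\alpha$ — and making sure the test-function space membership ($\phi\circ J^{-1}\in H^\mu_{B_r(\xi)}$, resp. $\widetilde u\in V^{\widetilde\mu}_{B_1}$) is actually verified rather than assumed, so that the inequality is applied to a legitimate competitor. Once the substitution identity $\cE^\mu(u,\phi\circ J^{-1})=r^{d-\alpha}\cE^{\widetilde\mu}(\widetilde u,\phi)$ is established cleanly, everything else is a consequence of reading off definitions.
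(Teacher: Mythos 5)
Your proposal is correct and follows essentially the same route as the paper's proof: the same change of variables with $\phi_r=\phi\circ J^{-1}$ and the identity $\cE^\mu(u,\phi\circ J^{-1})=r^{d-\alpha}\cE^{\widetilde\mu}(\widetilde u,\phi)$ for part (1), and the same substitution plus the exact scale invariance of $\cE^{\mu_\alpha}$ and the composed cut-off $\tau\circ J$ for part (2). (Only cosmetic remark: the common factor that cancels is $r^{d-\alpha}$, not $r^d$, as your own displayed computation correctly shows.)
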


\begin{remark}
The condition \eqref{eq:assum_largejumps} is affected by scaling in a
non-critical way. We deal with this phenomenon further below
in \autoref{sec:weak_solutions} and \autoref{sec:regularity}
\end{remark}

\begin{proof} For the proof of the first statement, let $\phi \in H^{\widetilde 
\mu}_{B_{1}}(\R^d)$ be a nonnegative test function. Define $\phi_r \in
H^{\mu}_{B_{r}(\xi)}(\R^d) $ by $\phi_r = \phi \circ 
J^{-1}$. Then
\begin{align*}
 \iint &\left(\widetilde u(y) - \widetilde u(x)\right) \left(\phi(y) -
\phi(x)\right) \widetilde \mu(x,\d y) \d x \\
&= r^\alpha \iint \left(u(J(y)) - u(J(x))\right) \left(\phi_r(J(y)) -
\phi_r(J(x))\right) \mu_{J^{-1}}(J(x),\d y) \d x \\
&= r^{\alpha-d} \iint \left(u(J(y)) - u(x)\right) \left(\phi_r(J(y)) -
\phi_r(x)\right) \mu_{J^{-1}}(x,\d y) \d x \\
&= r^{\alpha-d} \iint \left(u(y) - u(x)\right) \left(\phi_r(y) -
\phi_r(x)\right) \mu(x,\d y) \d x \\
&\geq r^{\alpha-d} \int f(x) \phi_r(x) \d x = \int r^{\alpha} f(J(x)) \phi(x) \d
x = \int \widetilde f(x) \phi(x) \d x \,,
\end{align*}
which is what we wanted to prove. Let us now prove that $\widetilde \mu$
inherits properties \eqref{eq:assum_comp}, \eqref{eq:assum_cutoff} from $\mu$
with the same constants $A$ and $B$. Let us only consider 
the case $\xi=0$. 
In order to verify condition \eqref{eq:assum_comp} we need to consider an 
arbitrary ball $B_\rho(x_0)$ with $\rho \in (0,1)$ and $x_0 \in B_1$. Let us 
simplify the situation further by assuming $x_0 = 0$. The general case can be 
proved analogously. Thus, we assume $r \in
(0,1)$ and $u \in H^{\alpha/2}(B_\rho)$. The estimate $\cE^{\widetilde 
\mu}_{B_\rho}(u,u) \leq
A \cE^{\mu_\alpha}_{B_\rho}(u,u)$ can be derived as follows. Define a function 
$\widehat
u \in H^{\alpha/2}(B_{r \rho})$ by $\widehat u = u \circ J^{-1}$. Then
{\allowdisplaybreaks
\begin{align*}
 \cE^{\widetilde \mu}_{B_\rho}(u,u) &= \int\limits_{B_\rho} 
\int\limits_{B_\rho} \left(
u(y) - u(x) \right)^2 \widetilde \mu(x, \d y) \d x = r^\alpha 
\int\limits_{B_\rho}
\int\limits_{B_\rho} \left(
\widehat u(J(y)) - \widehat u(J(x)) \right)^2 \mu_{J^{-1}}(J(x), \d y) \d x \\
&= r^{\alpha-d} \int\limits_{B_{r\rho}}
\int\limits_{B_r} \left(
\widehat u(J(y)) - \widehat u(x) \right)^2 \mu_{J^{-1}}(x, \d y) \d x \\
&= r^{\alpha-d} \int\limits_{B_{r \rho}}
\int\limits_{B_{r \rho}} \left(
\widehat u(y) - \widehat u(x) \right)^2 \mu(x, \d y) \d x \leq A r^{\alpha-d}
\int\limits_{B_{r \rho}}
\int\limits_{B_{r \rho}} \frac{\left(
\widehat u(y) - \widehat u(x) \right)^2}{|x-y|^{d+\alpha}} \d y \d x \\
&= A r^{-2d}
\int\limits_{B_{r \rho}} \int\limits_{B_{r \rho}} \frac{\left(
u(J^{-1}(y)) - u(J^{-1}(x)) \right)^2}{|J^{-1}(x)-J^{-1}(y)|^{d+\alpha}} \d y
\d x = A \int\limits_{B_{\rho}} \int\limits_{B_{\rho}} \frac{\left(
u(y) - u(x) \right)^2}{|x-y|^{d+\alpha}} \d y \d x \,, 
\end{align*}
}
which proves our claim. The estimate $\cE^{\mu_\alpha}_{B_\rho}(u,u) \leq A \,
\cE^{\widetilde \mu}_{B_\rho}(u,u)$ follows in the same way.

In order to check condition \eqref{eq:assum_cutoff} for $\widetilde \mu$ we
proceed as follows. Again, we assume $x_0 = 0$, $r \in (0,1)$. The general case 
can be proved analogously. Assume 
$R,\rho
\in (0,1)$. Let $\widehat \tau: \R^d \to \R $ satisfy
$\operatorname{supp}(\widehat \tau) \subset \overline{B_{rR+r\rho}}, \,
\widehat \tau \equiv 1 \text{ on } B_{rR}$ and 
\begin{align*} \sup\limits_{x \in \R^d} & \; \il_{\R^d} \big(\widehat
\tau(y) - \widehat \tau(x)\big)^2 \mu(x,\d y) \leq B (r\rho)^{-\alpha} \\
&\Leftrightarrow  \quad \sup\limits_{x \in \R^d} \; \il_{\R^d} \big(\widehat
\tau(y) - \widehat \tau(J(x))\big)^2 \mu(J(x),\d y) \leq B (r\rho)^{-\alpha} \,.
\end{align*}
Such a function $\widehat \tau$ exists because, by assumption, $\mu$ satisfies
\eqref{eq:assum_cutoff-scaled}. Next, define $\tau = \widehat \tau \circ J$. 
Then
$\tau$ satisfies $\operatorname{supp}(\tau) \subset
\overline{B_{R+\rho}}, \, \tau \equiv 1
\text{ on } B_{R}$ and, by a change of variables, 
\begin{align*} \sup\limits_{x \in \R^d}  \; \il_{\R^d} \big(\tau(y) -
\tau(x)\big)^2 \widetilde \mu(x,\d y) &= r^\alpha \sup\limits_{x \in \R^d} \;
\il_{\R^d} \big(\widehat \tau(J(y)) - \widehat \tau(J(x))\big)^2 \mu_{J^{-1}}
(J(x),\d y) \\
&= r^\alpha \sup\limits_{x \in \R^d} \;
\il_{\R^d} \big(\widehat \tau(y) - \widehat \tau(J(x))\big)^2 \mu
(J(x),\d y) \quad \leq B \rho^{-\alpha} \,,
\end{align*}
which shows that $\widetilde \mu$ satisfies \eqref{eq:assum_cutoff} with the
constant $B$. The proof of the lemma is complete.
\end{proof}

\section{The weak Harnack inequality for nonlocal 
equations}\label{sec:weak_solutions}

The main aim of this section is to provide a proof of the weak Harnack
inequality \autoref{theo:weak_harnack_nonlocal}. The key result of this 
section is the corresponding result for supersolutions that are nonnegative in 
all of $\R^d$:

\begin{theorem} \label{theo:weak_harnack_f}
Assume $f \in L^{q/\alpha}(B_1)$ for some $q > d$, $\alpha \in
[\alpha_0,2)$. There are positive reals $p_0, c$ such that for every $u \in
V_{B_{1}}^\mu(\R^d) $ with $u \geq 0$ in $\R^d$ satisfying 
\begin{align*} \cE(u,\phi) \geq (f,\phi) \text{ for every nonnegative } \phi \in
H^\mu_{B_1}(\R^d) \end{align*}
the following holds:
\[   \inf\limits_{B_{\frac14}} u \geq c \big( \fint\limits_{B_\frac{1}{2}}
u(x)^{p_0} \,
\d x \big)^{1/p_0} - \|f\|_{L^{q/\alpha}(B_{\frac{15}{16}})} \,. \] 
The constants $p_0, c$ depend only on  $d, \alpha_0, A, B$. They are 
independent of 
$\alpha \in [\alpha_0,2)$.
\end{theorem}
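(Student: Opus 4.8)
The plan is to adapt Moser's iteration technique to the nonlocal bilinear form $\cE^\mu$, exploiting throughout that condition \eqref{eq:assum_comp} lets us replace $\cE^\mu_{B_\rho}$ by the Gagliardo form $\cE^{\mu_\alpha}_{B_\rho}$ on every small ball (paying only the factor $A$), and that condition \eqref{eq:assum_cutoff} supplies cut-off functions with the correct $\rho^{-\alpha}$ bound on the nonlocal energy. The argument splits, as in the local case, into two halves joined by a lemma of Bombieri--Giusti type. \emph{First half (estimate of negative powers):} for $p<0$ one tests the supersolution inequality with $\phi = \tau^2 u^{p-1}$ (regularized to keep $u$ away from $0$, then passing to the limit) where $\tau$ is a cut-off between concentric balls $B_{\rho'}\subset B_\rho \subset B_{15/16}$. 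Expanding $\cE^\mu(u,\tau^2 u^{p-1})$ and using the elementary pointwise inequality for $(a-b)(\tau(a)^2 a^{p-1} - \tau(b)^2 b^{p-1})$ together with the algebraic identity $(a^{p/2}-b^{p/2})^2 \lesssim_p (a-b)(a^{p-1}-b^{p-1})$, one bounds $\cE^{\mu_\alpha}_{B_\rho}(\tau u^{p/2}, \tau u^{p/2})$ (via \eqref{eq:assum_comp}) by an error term controlled by $\|\nabla\tau\|$-type quantities, i.e. by $B(\rho-\rho')^{-\alpha}\fint_{B_\rho} u^p$ plus the contribution of $f$; the tail term $\iint_{B_\rho \times B_\rho^c}$ is handled using $u\geq 0$ globally and the cut-off support, and it has a favorable sign for negative powers. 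Feeding this into the fractional Sobolev inequality \autoref{fact:sobolev} (with the appropriate $q$) yields a reverse-Hölder / $L^p \to L^{\kappa p}$ gain with $\kappa = d/(d-\alpha) \geq d/(d-\alpha_0)>1$, and iterating over a geometric sequence of radii between $B_{1/4}$ and $B_{1/2}$ gives $\inf_{B_{1/4}} u \gtrsim (\fint_{B_{1/2}} u^{p_0})^{1/p_0}$ for some small $p_0>0$, modulo the $\|f\|_{L^{q/\alpha}}$ term, \emph{provided} one already knows $\log u \in \mathrm{BMO}$-type control.

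\emph{Second half (the logarithmic estimate):} one tests with $\phi = \tau^2 u^{-1}$ (again regularized), which after the same expansion produces a bound of the form $\cE^{\mu_\alpha}_{B_\rho}(\tau \log u, \tau \log u) \lesssim B(\rho-\rho')^{-\alpha} \rho^d + (\text{$f$-term})$; combined with the fractional Poincaré inequality \autoref{fact:poincare-i}, this shows $\fint_{B}|\log u - (\log u)_B|^2 \lesssim C$ uniformly, i.e. $\log u$ lies in a John--Nirenberg class with constants independent of $\alpha$. The John--Nirenberg lemma then yields a single exponent $p_0>0$ and a constant with $\fint_{B_{1/2}}u^{p_0} \cdot \fint_{B_{1/2}} u^{-p_0} \leq C$, which is exactly what is needed to close the iteration of the first half and pass from "$\inf u$ controls small negative powers of $u$" to "$\inf u$ controls a small positive power of $u$." Throughout, the reduction to the unit-ball normalization and the scaling of the $f$-term are handled by \autoref{lem:scaling}, which is why $f$ enters with exponent $q/\alpha$ and the norm over $B_{15/16}$.

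The main obstacle, and the point requiring genuine care rather than bookkeeping, is keeping \emph{all constants independent of $\alpha\in[\alpha_0,2)$} in the limit $\alpha\to 2-$: this forces one to use the $(2-\alpha)$-normalized kernel $\mu_\alpha$ everywhere, to invoke precisely the $\alpha$-uniform versions of the Sobolev, Poincaré and extension inequalities (\autoref{fact:extension}, \autoref{fact:poincare-i}, \autoref{fact:sobolev}), and to track that the combinatorial loss in the Moser iteration (a product $\prod \kappa^{-k/\kappa^k}$ and a sum $\sum (\rho_k-\rho_{k+1})^{-\alpha}$) stays bounded as $\kappa = d/(d-\alpha)$ ranges over the compact set $[d/(d-\alpha_0),\,\infty)$ — one must cap $\kappa$ (equivalently, not let the Sobolev gain degenerate) by using a slightly suboptimal Sobolev exponent when $\alpha$ is close to $2$. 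A secondary technical nuisance is the rigorous justification of the test-function computations: $\tau^2 u^{p-1}$ and $\tau^2 u^{-1}$ are not a priori in $H^\mu_{B_1}(\R^d)$, so one works with $u_\delta = u+\delta$, verifies the truncated test functions are admissible, derives the estimates with constants independent of $\delta$, and then sends $\delta\to 0$ using monotone/dominated convergence on the (sign-definite, for the relevant powers) integrands. Once these uniformities are in place, the passage to \autoref{theo:weak_harnack_nonlocal} for $u$ only nonnegative in $B_1$ is routine: split $u = u^+ - u^-$, apply the present theorem to a suitable nonnegative modification, and absorb $\int_{\R^d\setminus B_1} u^-\,\mu(x,\d z)$ as the stated nonlocal tail term.
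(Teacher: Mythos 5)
Your plan is correct and follows essentially the same route as the paper: Moser iteration on negative powers of $u$ (test functions $\tau^{p+1}u^{-p}$ handled by an algebraic inequality, then the $\alpha$-uniform Sobolev inequality and iteration to get $\inf_{B}u\gtrsim(\fint u^{-p_0})^{-1/p_0}$), combined with the logarithm estimate from the test function $\tau^2u^{-1}$, the $\alpha$-uniform Poincar\'e inequality and John--Nirenberg to obtain the crossover $(\fint u^{\bar p})^{1/\bar p}\leq c(\fint u^{-\bar p})^{-1/\bar p}$. The paper does not need a Bombieri--Giusti lemma and simply concatenates these two estimates (after replacing $u$ by $u+\|f\|_{L^{q/\alpha}}$), but since your second half describes exactly that John--Nirenberg crossover, the arguments coincide in substance.
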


\begin{remark}
All results in this section are robust with respect to $\alpha \in
[\alpha_0,2)$, i.e. constants do not depend on $\alpha$. 
\end{remark}

The main application of this result is the proof of 
\autoref{theo:weak_harnack_nonlocal}.

\begin{proof} Set $u=u^+ - u^-$. The assumptions imply for any nonnegative $\phi
\in H^\mu_{B_1}(\R^d)$ 
\begin{align*}
 \cE(u^+,\phi) \geq \cE(u^-,\phi) + (f, \phi) = \il_{B_1} \phi(x) \big( f(x) - 
2 \il_{\R^d \setminus B_1} u^-(y) \mu(x, \d y) \big) \, \d x \,, 
\end{align*}
i.e. $u^+$ satisfies all assumptions of \autoref{theo:weak_harnack_f} with $q=+ 
\infty$ and $\wt{f}:B_1 \to \R$ defined by
\[ \wt{f}(x) = f(x) - 2 \il_{\R^d \setminus B_1} u^-(y) \mu(x, \d y) \,.\]
The assertion of the theorem is true if $\supl_{x \in B_{\frac{15}{16}}} 
\il_{\R^d \setminus B_1} u^-(y) \mu(x, \d y)$ is infinite. Thus we can assume 
this quantity to be finite. \autoref{theo:weak_harnack_f} now implies
\begin{align*}   
\inf\limits_{B_{\frac14}} u \geq c_1 \big( \fint\limits_{B_\frac{1}{2}}
u(x)^{p_0}
\, \d x
\big)^{1/p_0} - c_2 \supl_{x \in B_{\frac{15}{16}}} \big(\il_{\R^d \setminus 
B_1} 
u^-(y)
 \mu(x, \d y) \big) - \|f\|_{L^{q/\alpha}(B_{\frac{15}{16}})} 
\end{align*} 
for some positive constants $c_1, c_2$. The proof is complete. 
\end{proof}

By scaling and translation, we obtain the following corollary.

\begin{corollary} \label{cor:weak_harnack}
Let $x_0 \in \R^d$, $R \in
(0,1)$. Assume $\mu$ is a family of measures satisfying 
\eqref{eq:assum_comp-scaled} and 
\eqref{eq:assum_cutoff-scaled}. Assume $u \in V^{\mu}_{B_R(x_0)}(\R^d)$ 
satisfies $u \geq 0$ in $B_R(x_0)$ and $\cE(u,\phi) \geq 0$ for every 
nonnegative $\phi \in H^{\mu}_{B_R(x_0)}(\R^d)$. Then 
\[   \inf\limits_{B_{\frac{R}{4}}(x_0)} u \geq c \big(
\fint\limits_{B_{\frac{R}{2}}(x_0)}
u(x)^{p_0} \, \d x \big)^{1/p_0} - R^\alpha \sup\limits_{x \in
B_{\frac{15 R}{16}}(x_0)}
\int\limits_{\R^d \setminus B_R(x_0)} u^-(y) \mu(x , \d y) \,, \] 
with positive constants $p_0, c$ which depend only on  $d, \alpha_0, A, 
B$. In particular, they are independent of $\alpha \in [\alpha_0,2)$.
\end{corollary}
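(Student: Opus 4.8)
The plan is to deduce \autoref{cor:weak_harnack} from \autoref{theo:weak_harnack_f} (or rather from \autoref{theo:weak_harnack_nonlocal} itself) by a straightforward rescaling and translation argument, using the scaling property \autoref{lem:scaling}. First I would reduce to the case $x_0 = 0$ by translating, noting that all the conditions \eqref{eq:assum_comp-scaled}, \eqref{eq:assum_cutoff-scaled} and the inequality $\cE(u,\phi) \ge 0$ are manifestly translation invariant (one simply shifts the measure family $\mu$ by $x_0$). So it suffices to treat $u$ defined on a neighbourhood of $B_R$, nonnegative in $B_R$, with $\cE^\mu(u,\phi) \ge 0$ for all nonnegative $\phi \in H^\mu_{B_R}(\R^d)$, where $\mu$ satisfies \eqref{eq:assum_comp-scaled} and \eqref{eq:assum_cutoff-scaled} with parameters $\xi = 0$, $r = R$.

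Next I would apply \autoref{lem:scaling} with $\xi = 0$, $r = R$ and $f \equiv 0$. Setting $J(x) = Rx$, the rescaled function $\widetilde u(x) = u(Rx)$ satisfies $\cE^{\widetilde\mu}(\widetilde u, \phi) \ge 0$ for every nonnegative $\phi \in H^{\widetilde\mu}_{B_1}(\R^d)$, where $\widetilde\mu(x,\d y) = R^\alpha \mu_{J^{-1}}(J(x), \d y)$. By part (2) of \autoref{lem:scaling}, the hypotheses \eqref{eq:assum_comp-scaled}, \eqref{eq:assum_cutoff-scaled} on $\mu$ with radius $R$ translate into the unit-ball conditions \eqref{eq:assum_comp}, \eqref{eq:assum_cutoff} for $\widetilde\mu$ with the \emph{same} constants $A, B$. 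Moreover $\widetilde u$ is nonnegative on $B_1$ and lies in the appropriate space $V^{\widetilde\mu}_{B_1}(\R^d)$. Thus \autoref{theo:weak_harnack_nonlocal} applies to $\widetilde u$ with $f = 0$, yielding
\begin{align*}
\inf_{B_{1/4}} \widetilde u \ge c \Big( \fint_{B_{1/2}} \widetilde u(x)^{p_0}\,\d x \Big)^{1/p_0} - \sup_{x \in B_{15/16}} \int_{\R^d \setminus B_1} \widetilde u^-(z)\, \widetilde\mu(x,\d z)\,,
\end{align*}
with $p_0, c$ depending only on $d, \alpha_0, A, B$.

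Finally I would translate this back. A change of variables gives $\inf_{B_{1/4}} \widetilde u = \inf_{B_{R/4}} u$ and, since $\widetilde u^p$ integrates against normalized Lebesgue measure, $\fint_{B_{1/2}} \widetilde u(x)^{p_0}\,\d x = \fint_{B_{R/2}} u(x)^{p_0}\,\d x$. For the nonlocal tail term, unwinding the definition $\widetilde\mu(x,\d y) = R^\alpha \mu(Rx, J(\cdot))$ and substituting $z = Ry$ converts $\int_{\R^d \setminus B_1} \widetilde u^-(z)\,\widetilde\mu(x,\d z)$ into $R^\alpha \int_{\R^d \setminus B_R} u^-(y)\,\mu(Rx,\d y)$, and $\sup_{x \in B_{15/16}}$ becomes $\sup_{x \in B_{15R/16}}$. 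This is exactly the claimed inequality, and since $p_0, c$ came out of \autoref{theo:weak_harnack_nonlocal} they are independent of $\alpha \in [\alpha_0, 2)$. The only mildly delicate point — and the one I would state carefully rather than the ``main obstacle'', since it is routine — is tracking the Jacobian factors $r^{\alpha - d}$, $r^{-2d}$, etc., through the change of variables in the tail integral so that the power $R^\alpha$ comes out correctly; this is precisely the bookkeeping already performed in the proof of \autoref{lem:scaling}, so it can be invoked directly.
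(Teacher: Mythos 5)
Your proposal is correct and is exactly the argument the paper intends: the paper gives no written proof, stating only ``by scaling and translation,'' and your reduction via \autoref{lem:scaling} (with $f\equiv 0$, $J(x)=Rx+x_0$) followed by an application of \autoref{theo:weak_harnack_nonlocal} to $\widetilde u$ and $\widetilde\mu$, and the change of variables in the tail term producing the factor $R^\alpha$, is precisely that argument carried out.
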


Let us proceed to the proof of \autoref{theo:weak_harnack_f}. 

\begin{remark}
Without further mentioning we assume that $\mu$ is a family of measures that 
satisfies \eqref{eq:assum_comp} and \eqref{eq:assum_cutoff} for some $A \geq 1, 
B \geq 1$ and $\alpha_0 \leq \alpha < 2$. The constants in the assertions below 
depend, among other things, on  $A, B$, and $\alpha_0$. They do not depend on 
$\alpha$, though.
\end{remark}

Let us first establish several auxiliary results. Our approach is closely 
related to the approach in \cite{Kas09} from where we borrow the following 
technical lemma, cf. \cite[Lemma 2.5]{Kas09}.

\begin{lemma}\label{lem:thewonder}
Let $a,b >0$, $p>1$ and $\tau_1, \tau_2 \geq 0$. Then
\begin{align}\label{eq:thewonder}
\begin{split}
(b-a) &\big( \tau_1^{p+1} a^{-p} - \tau_2^{p+1}
b^{-p} \big) \\
& \geq  \tfrac{\tau_1 \tau_2}{p-1}    \big( (\tfrac{b}{\tau_2})^\frac{-p+1}{2} -
(\tfrac{a}{\tau_1})^\frac{-p+1}{2} \big)^2 - \max\{4, \tfrac{6p-5}{2}\} (\tau_2
- \tau_1)^2 \big(
(\tfrac{b}{\tau_2})^{-p+1} +
(\tfrac{a}{\tau_1})^{-p+1} \big)\,.
\end{split}
\end{align}
\end{lemma}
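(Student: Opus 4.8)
The inequality \eqref{eq:thewonder} is purely elementary: both sides are functions of the two ratios $s:=a/\tau_1$ and $t:=b/\tau_2$ together with $\tau_1,\tau_2$, so the plan is to introduce these substitutions and reduce to a one-variable calculus estimate. First I would rewrite the left-hand side as
\[
(b-a)\big(\tau_1^{p+1}a^{-p}-\tau_2^{p+1}b^{-p}\big)
 = (b-a)\big(\tau_1 s^{-p}-\tau_2 t^{-p}\big),
\]
and then split $b-a = (b-\tau_2 t) + (\tau_2 t - \tau_1 s) + (\tau_1 s - a)$; since $b=\tau_2 t$ and $a=\tau_1 s$ the outer two brackets vanish, but the point of writing it this way is psychological: after expanding, the ``diagonal'' contribution is $(t-s)(\tau_1\tau_2)(s^{-p}-t^{-p})\cdot(\text{something})$ and the ``off-diagonal'' contributions are controlled by $(\tau_2-\tau_1)^2$. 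More honestly, I would set $\tau_1=\tau_2=1$ first, prove the clean inequality
\[
(t-s)\,(s^{-p}-t^{-p}) \;\geq\; \tfrac{1}{p-1}\big(t^{\frac{-p+1}{2}}-s^{\frac{-p+1}{2}}\big)^2
\]
for $s,t>0$, $p>1$, and then track how the error terms appear when $\tau_1\neq\tau_2$.

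For the normalized inequality, the natural approach is to use the integral representation
\[
s^{-p}-t^{-p} = p\int_s^t r^{-p-1}\,dr,\qquad
t^{\frac{-p+1}{2}}-s^{\frac{-p+1}{2}} = \tfrac{-p+1}{2}\int_t^s r^{\frac{-p-1}{2}}\,dr
 = \tfrac{p-1}{2}\int_s^t r^{\frac{-p-1}{2}}\,dr,
\]
so that (assuming $s<t$ WLOG by symmetry of both sides) the claim becomes
\[
p\Big(\int_s^t dr\Big)\Big(\int_s^t r^{-p-1}\,dr\Big)\;\geq\; \tfrac{1}{p-1}\cdot\tfrac{(p-1)^2}{4}\Big(\int_s^t r^{\frac{-p-1}{2}}\,dr\Big)^2,
\]
i.e. $\big(\int_s^t r^{\frac{-p-1}{2}}\,dr\big)^2 \leq \tfrac{4p}{p-1}\big(\int_s^t 1\,dr\big)\big(\int_s^t r^{-p-1}\,dr\big)$, which is just Cauchy--Schwarz ($r^{\frac{-p-1}{2}} = 1\cdot r^{\frac{-p-1}{2}}$) up to the constant $\tfrac{4p}{p-1}\geq 1$. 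So the normalized case is Cauchy--Schwarz plus bookkeeping of constants.

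The genuinely tedious part, and the step I expect to be the main obstacle, is the passage from $\tau_1=\tau_2$ to general $\tau_1,\tau_2\geq 0$: one must expand $(b-a)(\tau_1 s^{-p}-\tau_2 t^{-p})$ with $b=\tau_2 t$, $a=\tau_1 s$, isolate the term that reproduces $(t-s)(s^{-p}-t^{-p})$ up to a factor $\tau_1\tau_2$ (which after the normalized inequality yields $\tfrac{\tau_1\tau_2}{p-1}\big((\tfrac{b}{\tau_2})^{\frac{-p+1}{2}}-(\tfrac{a}{\tau_1})^{\frac{-p+1}{2}}\big)^2$), and then bound all remaining terms — which each carry a factor $(\tau_2-\tau_1)$ — from below by $-\max\{4,\tfrac{6p-5}{2}\}(\tau_2-\tau_1)^2\big((\tfrac{b}{\tau_2})^{-p+1}+(\tfrac{a}{\tau_1})^{-p+1}\big)$. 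Here one uses elementary inequalities of the form $|xy|\leq \tfrac12(\eta x^2+\eta^{-1}y^2)$ with carefully chosen weights $\eta$, together with $s^{-p}t = s^{-p+1}\cdot\tfrac ts$ and similar manipulations to convert mixed powers of $s$ and $t$ into the allowed symmetric combination $s^{-p+1}+t^{-p+1}$; keeping track of which numerical constant ($4$ versus $\tfrac{6p-5}{2}$) dominates for which range of $p$ is where the explicit constant $\max\{4,\tfrac{6p-5}{2}\}$ comes from. Since this is exactly \cite[Lemma 2.5]{Kas09}, I would follow that computation, and I would present only the normalized inequality in detail, indicating the substitution and the splitting for the general case and leaving the constant-chasing to the reader or to the cited reference.
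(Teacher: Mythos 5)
The paper itself gives no proof of this lemma; it is quoted verbatim from \cite[Lemma 2.5]{Kas09}, so your proposal is being compared against a citation rather than an argument. Your treatment of the normalized case $\tau_1=\tau_2=1$ is correct and clean: writing $t-s=\int_s^t dr$, $s^{-p}-t^{-p}=p\int_s^t r^{-p-1}dr$ and $|t^{(1-p)/2}-s^{(1-p)/2}|=\tfrac{p-1}{2}\int_s^t r^{(-p-1)/2}dr$ (for $s<t$), Cauchy--Schwarz gives $(t-s)(s^{-p}-t^{-p})\ge \tfrac{4p}{(p-1)^2}\big(t^{(1-p)/2}-s^{(1-p)/2}\big)^2$, which is stronger than the claimed $\tfrac{1}{p-1}$ since $\tfrac{4p}{p-1}\ge 1$. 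For the general case the useful identity (which your ``telescoping'' of $b-a$ does not quite deliver, since the outer brackets vanish identically) is
\begin{align*}
(b-a)\big(\tau_1^{p+1}a^{-p}-\tau_2^{p+1}b^{-p}\big)=\tau_1\tau_2(t-s)(s^{-p}-t^{-p})-(\tau_2-\tau_1)\big(\tau_2 t^{1-p}-\tau_1 s^{1-p}\big),
\end{align*}
with $s=a/\tau_1$, $t=b/\tau_2$; one then symmetrizes the cross term and absorbs the piece proportional to $\big(t^{(1-p)/2}-s^{(1-p)/2}\big)^2$ into the \emph{surplus} $\tfrac{4p}{(p-1)^2}-\tfrac{1}{p-1}=\tfrac{3p+1}{(p-1)^2}$ left over from Cauchy--Schwarz, using Young's inequality with a $p$-dependent weight. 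This is exactly the bookkeeping you defer to \cite{Kas09}, and it is the only genuinely unverified step in your proposal: your sketch does not establish that the specific constant $\max\{4,\tfrac{6p-5}{2}\}$ suffices (a naive choice of weight blows up as $p\to 1+$, where the allowed constant is only $4$), so the deferral to the reference is doing real work there. Two trivial points worth a sentence in a write-up: the substitution requires a separate (easy) check when $\tau_1=0$ or $\tau_2=0$, where the conventions $0^{p+1}a^{-p}=0$ and $(a/0)^{-p+1}=0$ make the inequality immediate; and your first integral identity for $t^{(1-p)/2}-s^{(1-p)/2}$ has the limits reversed, which is harmless since the quantity is squared.
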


The next result is an extension of corresponding results in \cite{Kas09} and 
\cite{BBCK09}.

\begin{lemma} \label{lem:morrey-log}
Assume $0 < \rho < r < 1$ and $z_0 \in B_1$. Set $B_{r}=B_{r}(z_0)$. 
Assume $f \in L^{q/\alpha}(B_{2r})$ for some $q > d$. Assume $u \in
V^{\mu}_{B_{2r}}(\R^d)$ is nonnegative in $\R^d$ and satisfies
\begin{align*}
\cE(u,\phi) &\geq (f,\phi) \text{ for any nonnegative } \phi \in
H^{\mu}_{B_{2r}}(\R^d)  \\
u(x) &\geq \eps \quad \text{ for almost all } x \in B_{2r} \text{ and some }
\eps > 0 \,. \end{align*}
Then 
\begin{align}
\iil_{B_{r} B_{r}} & \Big( \suml^\infty_{k=1} \frac{ \left( \log u(y) - \log
u(x) \right)^{2k}}{(2k)!} \Big) \mu(x, \d y) \d x \\ & \leq c 
\rho^{-\alpha}
|B_{r+\rho}| 
+ \eps^{-1} \|f\|_{L^{q/\alpha}(B_{r+\rho})}
\|\mathbbm{1}\|_{L^{q/(q-\alpha)}(B_{r+\rho})}\,, \label{eq:morrey-log-ext}
\end{align}
where $c>0$ is independent of $u, x_0, r, \rho,f, \eps, \alpha$. 
\end{lemma}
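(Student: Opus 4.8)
The plan is to test the weak formulation $\cE(u,\phi)\ge (f,\phi)$ against the logarithmic test function $\phi = \tau^2 u^{-1}$, where $\tau$ is a cut-off as in \eqref{eq:assum_cutoff} with $\supp \tau \subset \overline{B_{r+\rho}}$ and $\tau \equiv 1$ on $B_r$. This $\phi$ is admissible: it is nonnegative because $u \ge \eps > 0$ on $B_{2r}$ keeps $u^{-1}$ bounded on the support of $\tau$, and $\phi \in H^\mu_{B_{2r}}(\R^d)$ by the product/quotient estimates together with \eqref{eq:assum_cutoff} and the fact that $u \in V^\mu_{B_{2r}}$. I would first split $\cE(u,\phi)$ into the diagonal piece $\iint_{B_{2r}B_{2r}}$ and the off-diagonal piece $2\iint_{B_{2r}B_{2r}^c}$, and bound the off-diagonal piece using $\tau \equiv 0$ outside $B_{r+\rho}$, the nonnegativity $u \ge 0$ globally, and \eqref{eq:mu-integrability}; this contributes only a harmless term.

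For the diagonal piece the key algebraic input is \autoref{lem:thewonder} applied with $a = u(x)$, $b = u(y)$, $\tau_1 = \tau(x)$, $\tau_2 = \tau(y)$ and $p \to 1+$ (or simply $p=1$ after passing to the limit / using the appropriate limiting form of the inequality). The left-hand side of \eqref{eq:thewonder} is exactly the integrand $(u(y)-u(x))(\phi(y)-\phi(x))$ with $\phi = \tau^2 u^{-1}$, so integrating \eqref{eq:thewonder} against $\mu(x,\d y)\,\d x$ over $B_{2r}\times B_{2r}$ gives
\begin{align*}
\iil_{B_{2r}B_{2r}} \tau(x)\tau(y)\,\big(\log u(y) - \log u(x)\big)^2 \mu(x,\d y)\,\d x \leq c\iil_{B_{2r}B_{2r}} (\tau(y)-\tau(x))^2 \big(\text{bounded}\big)\,\mu(x,\d y)\,\d x + \cE(u,\phi)\,,
\end{align*}
where in the limit $p\to 1$ the first term on the right of \eqref{eq:thewonder} becomes $\tau(x)\tau(y)(\log u(y)-\log u(x))^2$ and the quadratic remainder term becomes a constant times $(\tau(y)-\tau(x))^2$. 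Since $\tau \equiv 1$ on $B_r$, the left side controls $\iint_{B_rB_r}(\log u(y)-\log u(x))^2\mu(x,\d y)\,\d x$; the remainder term is bounded by $\sup_x \int (\tau(y)-\tau(x))^2\mu(x,\d y)\cdot|B_{r+\rho}| \le B\rho^{-\alpha}|B_{r+\rho}|$ via \eqref{eq:assum_cutoff}; and the $\cE(u,\phi)=(f,\phi)$ term is estimated by $\|f\|_{L^{q/\alpha}(B_{r+\rho})}\|\tau^2 u^{-1}\|_{L^{q/(q-\alpha)}} \le \eps^{-1}\|f\|_{L^{q/\alpha}(B_{r+\rho})}\|\1\|_{L^{q/(q-\alpha)}(B_{r+\rho})}$ by Hölder's inequality. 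Finally, to upgrade from $(\log u(y)-\log u(x))^2$ to the full series $\sum_{k\ge 1}\frac{(\log u(y)-\log u(x))^{2k}}{(2k)!}$ claimed in \eqref{eq:morrey-log-ext}, I would use that $\sum_{k\ge1}\frac{t^{2k}}{(2k)!} = \cosh t - 1 \le \tfrac12 t^2 e^{|t|}$ and, crucially, observe that $e^{|\log u(y)-\log u(x)|} = \max(u(x)/u(y), u(y)/u(x))$, so that the extra exponential factor is absorbed into the $\mu$-integral by the same admissibility-of-test-function estimate — alternatively, one tests directly with the test function arising from the full $\cosh$ expansion, which is essentially the point of the sharper \autoref{lem:thewonder}.

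The main obstacle, as usual in Moser-type arguments for nonlocal forms, is controlling the nonlocal/off-diagonal interactions and the choice of admissible test function. Concretely: (i) one must be careful that $\phi = \tau^2 u^{-1}$ genuinely lies in $H^\mu_{B_{2r}}(\R^d)$ — this needs the lower bound $u \ge \eps$ on $B_{2r}$ and \eqref{eq:assum_cutoff} simultaneously, and a Leibniz-type estimate for $\cE^\mu$ of products; (ii) the passage $p \to 1$ in \autoref{lem:thewonder} must be justified, or the correct limiting inequality used verbatim; and (iii) the summation over $k$ requires that the exponential weight $e^{|\log u(y)-\log u(x)|}$ does not destroy integrability, which again reduces to the test-function admissibility but must be tracked carefully. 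The scale-invariance in $r$ (the constant $c$ being independent of $r$) follows because every term scales correctly: $\rho^{-\alpha}|B_{r+\rho}|$ is the natural scaling, and the $f$-term carries exactly the $L^{q/\alpha}$–$L^{q/(q-\alpha)}$ Hölder pairing that is scale-consistent with the equation, as recorded in \autoref{lem:scaling}.
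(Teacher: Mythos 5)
Your overall architecture matches the paper's: the test function $\pm\tau^{2}u^{-1}$, the split into the part over $B_{r+\rho}\times B_{r+\rho}$ and the exterior part, the use of the global nonnegativity of $u$ and of $\tau\equiv 0$ outside $B_{r+\rho}$ for the exterior part, the cut-off bound, and the H\"older estimate $\|f\|_{L^{q/\alpha}}\|\tau^{2}u^{-1}\|_{L^{q/(q-\alpha)}}\le \eps^{-1}\|f\|_{L^{q/\alpha}}\|\1\|_{L^{q/(q-\alpha)}}$ for the source term. (One small correction: the exterior contribution is controlled by $\sup_{x}\int(\tau(y)-\tau(x))^{2}\,\mu(x,\d y)\le B\rho^{-\alpha}$ from \eqref{eq:assum_cutoff}, not by \eqref{eq:mu-integrability}, which would not produce the $\rho^{-\alpha}$ scaling.)

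The core algebraic step, however, does not work as proposed. The limit $p\to 1^{+}$ in \autoref{lem:thewonder} degenerates: with $s=(p-1)/2$ one has $(b/\tau_2)^{-s}-(a/\tau_1)^{-s}=O(s)$, so the first term on the right of \eqref{eq:thewonder} is $\tfrac{\tau_1\tau_2}{p-1}\,O\big((p-1)^{2}\big)\to 0$, and the limiting inequality reads $(b-a)(\tau_1^{2}a^{-1}-\tau_2^{2}b^{-1})\ge -8(\tau_2-\tau_1)^{2}$, which carries no information on the logarithmic oscillation, let alone on the full series. (That lemma is the tool for the negative-power iteration in \autoref{lem:moser-neg-it}, not for the log estimate.) Your fallback, bounding $\cosh t-1\le\tfrac12 t^{2}e^{|t|}$ and absorbing $e^{|\log u(y)-\log u(x)|}=\max\big(u(x)/u(y),\,u(y)/u(x)\big)$ into the $\mu$-integral, goes in the wrong direction: the lemma asks for an upper bound on the series by the pairing $(u(y)-u(x))\big(\tau^{2}(x)u^{-1}(x)-\tau^{2}(y)u^{-1}(y)\big)$, and that exponential factor is not controlled by admissibility of the test function. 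The missing ingredient is the exact identity \eqref{Adam}: for $a,b>0$, $(a-b)(b^{-1}-a^{-1})=\frac{(a-b)^{2}}{ab}=2\sum_{k\ge1}\frac{(\log a-\log b)^{2k}}{(2k)!}$. The paper writes the integrand on $B_{r+\rho}\times B_{r+\rho}$ as $\tau(x)\tau(y)\big(\tfrac{A}{B}+\tfrac{B}{A}-B-\tfrac1B\big)$ with $A=u(y)/u(x)$, $B=\tau(y)/\tau(x)$, applies the identity to $A/B$, and the remainder collapses exactly to $(\tau(x)-\tau(y))^{2}$; restricting to $B_{r}\times B_{r}$, where $\tau\equiv1$, then yields the full series in $\log u(y)-\log u(x)$ with no limiting argument and no absorption. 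With that identity in place, the rest of your outline goes through.
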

Note that for $\eps \geq c_1 (r+\rho)^\delta \|f\|_{L^{q/\alpha}(B_{r+\rho})}$
with $\delta=\alpha (\frac{q-d}{q})$ one obtains 
\begin{align}
\iil_{B_{r} B_{r}} \Big( \suml^\infty_{k=1} \frac{ \left( \log u(y) - \log u(x)
\right)^{2k}}{(2k)!} \Big) \mu(x, \d y) \d x \leq c_2 
\rho^{-\alpha}
|B_{r+\rho}|
\,.
\end{align}

From the above lemma it will be deduced that $\log {u} \in$ BMO $(B_1)$ where 
BMO $(B_1)$ contains all functions of bounded mean oscillations \cite{JoNi61}. 

\begin{proof} The proof uses several ideas developed in \cite{BBCK09}. Let 
$\tau :\R^d \to \R$ be a function according to \eqref{eq:assum_cutoff}, i.e. 
more precisely we assume 
\begin{align*}
\begin{cases}
\opn{supp}(\tau) \subset \overline{B_{r+\rho}} \subset B_{2r},  \|\tau\|_\infty
\leq 1 \,, \tau \equiv 1 \text{ on } B_r, \\
\sup\limits_{x \in \R^d} \; \il_{\R^d} \big(\tau(y) - \tau(x)\big)^2  
\mu(x,\d y) \leq B \rho^{-\alpha} \,. 
\end{cases}
\end{align*}

Then    
\begin{align}\label{eq:gradtauest}
\begin{split}
\iil_{\R^d \R^d} &  \big(\tau(y) - \tau(x) \big)^2 \mu(x, \d y) 
\d x \\
& = \iil_{B_{r+\rho} B_{r+\rho}}  \big(\tau(y) - \tau(x) \big)^2 \mu(x, \d y) 
\d x + 2 \iil_{B_{r+\rho} B_{r+\rho}^c} \big(\tau(y) - \tau(x) \big)^2 
\mu(x, \d y) 
\d x \\
& \leq 2 \iil_{B_{r+\rho} \R^d} \big(\tau(y) - \tau(x) \big)^2 
\mu(x, \d y) \d x \\
& \leq  2 |B_{r+\rho}| \sup\limits_{x \in \R^d} \; \il_{\R^d} \big(\tau(y) - 
\tau(x)\big)^2 \mu(x,\d y) \\
& \leq 2 c \rho^{-\alpha} |B_{r+\rho}| \,.
\end{split}
\end{align}

We choose $\phi(x) = - \tau^2(x) u^{-1}(x)$ as a test function. Denote
$B_{r+\rho}$ by $B$. We obtain
{\allowdisplaybreaks
\begin{align}\label{eq:split1}
\begin{split}
(f,\phi) &\geq  \iil_{\R^d \R^d} \big(u(y) - u(x)\big) \big(\tau^2(x) u^{-1}(x)
- \tau^2(y) u^{-1}(y)\big) \; \mu(x, \d y) \, \d x \\
    &= \iil_{B B} \tau(x)\tau(y) \Big( \frac{\tau(x)u(y)}{\tau(y)u(x)} +
\frac{\tau(y) u(x)}{\tau(x) u(y)} - \frac{\tau(y)}{\tau(x)}
-\frac{\tau(x)}{\tau(y)} \Big) \, \mu(x, \d y) \, \d x \\
&\quad + 2 \iil_{B B^c} \big(u(y) - u(x)\big) \big(\tau^2(x) u^{-1}(x) -
\tau^2(y) u^{-1}(y)\big) \; \mu(x, \d y) \, \d x \\
&\quad + \iil_{B^c B^c} \big(u(y) - u(x)\big) \big(\tau^2(x) u^{-1}(x) -
\tau^2(y) u^{-1}(y)\big) \; \mu(x, \d y) \d x  \,.
\end{split}
\end{align}
} 
Setting $A(x,y)=\frac{u(y)}{u(x)}$ and $B(x,y)=\frac{\tau(y)}{\tau(x)}$ we
obtain
{\allowdisplaybreaks
\begin{align*}
\iil_{B B} &\tau(x)\tau(y) \Big( \frac{A(x,y)}{B(x,y)}  + \frac{B(x,y)}{A(x,y)}
- B(x,y) - \frac{1}{B(x,y)} \Big) \, \mu(x, \d y) \, \d x \\
& = \iil_{B B} \tau(x)\tau(y) \left[ \Big(\frac{A(x,y)}{B(x,y)} +
\frac{B(x,y)}{A(x,y)} -2 \Big)  - \Big(\sqrt{B(x,y)}-\frac{1}{\sqrt{B(x,y)}}
\Big)^2 \right]  \; \mu(x, \d y) \, \d x
\\
& = \iil_{B B} \tau(x)\tau(y) \Big( 2 \suml^\infty_{k=1} \frac{ \left(\log
A(x,y) - \log B(x,y)\right)^{2k}}{(2k)!} \Big) \, \mu(x, \d y) \d x  \\
& \qquad - \iil_{B B} \tau(x)\tau(y)
\Big(\sqrt{B(x,y)}-\frac{1}{\sqrt{B(x,y)}}\Big)^2  \, \mu(x, \d y) \d x \\
& = \iil_{B B} \tau(x)\tau(y) \Big( 2 \suml^\infty_{k=1} \frac{ \left( \log
\frac{u(y)}{\tau(y)} - \log \frac{u(x)}{\tau(x)} \right)^{2k}}{(2k)!} \Big) \, 
\mu(x, \d y) \d x \\
& \qquad -  \iil_{B B} \big( \tau(x) - \tau(y) \big)^2 \, \mu(x, \d y) \, \d
x \\
& \geq \il_{B_r} \il_{B_r} \Big( 2 \suml^\infty_{k=1} \frac{ \left( \log u(y) -
\log u(x) \right)^{2k}}{(2k)!} \Big) \, \mu(x, \d y) \, \d x -  
\iil_{B B}
\big(
\tau(x) - \tau(y) \big)^2 \, \mu(x, \d y) \, \d x \,,
\end{align*}
}where we applied \eqref{eq:gradtauest} and the fact that for positive real
$a,b$
\begin{align}
\frac{(a-b)^2}{ab} = (a-b) (b^{-1} - a^{-1}) = (\log a - \log
b)^2 + 2 \suml^\infty_{k=2} \frac{(\log a - \log b)^{2k}}{(2k)!}  \;.
\label{Adam}
\end{align}
Altogether, we obtain 
\begin{align}\label{eq:split2}
\begin{split}
(f,\phi) &\geq \il_{B_r} \il_{B_r} \Big( 2 \suml^\infty_{k=1} \frac{ \left( \log
u(y) - \log u(x) \right)^{2k}}{(2k)!} \Big) \, \mu(x, \d y) \d x -  \iil_{B
B}
\big( \tau(x) - \tau(y) \big)^2 \, \mu(x, \d y) \d x \\
&\quad + 2 \iil_{B_{r+\rho} B_{r+\rho}^c} \big(u(y) - u(x)\big) \big(\tau^2(x)
u^{-1}(x) - \tau^2(y) u^{-1}(y)\big) \; \mu(x, \d y) \d x \,.
\end{split}
\end{align}
The third term on the right-hand side can be estimated as follows:

\begin{align*}
2 &\iil_{B_{r+\rho} B_{r+\rho}^c} \big(u(y) - u(x)\big) \big(\tau^2(x) u^{-1}(x)
- \tau^2(y) u^{-1}(y)\big) \; \mu(x, \d y) \d y \\
&= 2 \iil_{B_{r+\rho} B_{r+\rho}^c} \big(u(y) - u(x)\big) \big(- \tau^2(y)
u^{-1}(y)\big) \; \mu(x, \d y) \d y \\
&= 2 \il_{B_{r+\rho}} \il_{B_{r+\rho}^c} \tfrac{\tau^2(y)}{u(y)}  u(x) \,
\mu(x,\d y) \d x - 2 \il_{B_{r+\rho}} \il_{B_{r+\rho}^c} 
\tau^2(y) \, \mu(x, \d y) \, \d x \\ 
&\geq - 2 \ilRd\ilRd \big(\tau(y)-\tau(x)\big)^2 \mu(x, \d y) \d x \,,
\end{align*}

where we used nonnegativity of $u$ in $\R^d$. Therefore, 
\begin{align}\label{eq:split3}
\begin{split}
\il_{B_r} &\il_{B_r} \Big( 2 \suml^\infty_{k=1} \frac{ \left( \log u(y) - \log
u(x) \right)^{2k}}{(2k)!} \Big) \, \mu(x, \d y)  \d x \\
&\leq 2 \iil_{\R^d \R^d} \big( \tau(x) - \tau(y) \big)^2 \, \mu(x, \d y) \d x
+ \|f\|_{L^{q/\alpha}(B_{r+\rho})} \|u^{-1}\|_{L^{q/(q-\alpha)}(B_{r+\rho})} \,.
\end{split}
\end{align}
The proof is complete after the trivial observation $|u^{-1}| \leq
\eps^{-1}$.
\end{proof}

\begin{lemma} \label{lem:moser-flip} Assume $0<R<1$ and $f \in
L^{q/\alpha}(B_{\frac{5R}{4}})$ for some $q > d$. Assume $u \in 
V^\mu_{B_{\frac{5R}{4}}}(\R^d)$ is nonnegative in $\R^d$ and satisfies
\begin{align*}
\cE(u,\phi) &\geq (f,\phi) \text{ for any nonnegative } \phi \in
H^\mu_{B_{\frac{5R}{4}}}(\R^d) \,,\\
u(x) &\geq \eps \quad \text{ for almost all } x \in B_{\frac{5R}{4}} \text{ and
some }
\eps > \tfrac14 R^\delta \|f\|_{L^{q/\alpha}(B_{\frac{9R}{8}})} \,,
\end{align*}
where $\delta=\alpha (\frac{q-d}{q})$. Then there exist $\ov{p} \in (0,1)$ and
$c>0$ such that ,
\begin{align}
\left(\fint_{B_R} u(x)^{\ov{p}} \; \d x \right)^{1/\ov{p}} \; \d x \leq c
\left(\fint_{B_R} u(x)^{-\ov{p}} \; \d x \right)^{-1/\ov{p}}\,,
\label{eq:flip-assert}
\end{align}
where $c$ and $\ov{p}$ are independent of $x_0, R, u$, $\eps$, and $\alpha$.
\end{lemma}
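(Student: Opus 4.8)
The plan is to reduce \eqref{eq:flip-assert} --- the nonlocal counterpart of Moser's classical passage from the logarithmic energy estimate to the ``flip'' inequality --- to the statement that $w:=\log u$ (well defined since $u\ge\eps>0$ on $B_{\frac{5R}{4}}$) has bounded mean oscillation on sub-balls of $B_R$, uniformly in $\alpha$, and then to invoke the John--Nirenberg inequality. Indeed, raising \eqref{eq:flip-assert} to the power $\ov p$ and setting $a:=\fint_{B_R}w$, the claim becomes $\bigl(\fint_{B_R}u^{\ov p}\bigr)\bigl(\fint_{B_R}u^{-\ov p}\bigr)\le c^{\ov p}$, and its left-hand side equals $\bigl(\fint_{B_R}e^{\ov p(w-a)}\bigr)\bigl(\fint_{B_R}e^{-\ov p(w-a)}\bigr)$ because $\fint_{B_R}u^{\pm\ov p}=e^{\pm\ov p a}\fint_{B_R}e^{\pm\ov p(w-a)}$. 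Hence it suffices to exhibit $\ov p\in(0,1)$ and $C\ge1$, both depending only on $d,\alpha_0,A,B$, with $\fint_{B_R}e^{\ov p|w-a|}\le C$; then \eqref{eq:flip-assert} follows with $c=C^{2/\ov p}$. By the classical John--Nirenberg inequality \cite{JoNi61} on a Euclidean ball, such $\ov p,C$ exist as soon as one has a bound $\fint_{B_s(z)}|w-w_{B_s(z)}|\le K$, with $K=K(d,\alpha_0,A,B)$, valid for every ball $B_s(z)\subset B_R$ (after a translation, $B_R$ is centered at the origin).

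The crux is therefore this uniform mean-oscillation bound. Fix $B_s(z)\subset B_R$ and set $\rho:=s/8$; then $B_{s+\rho}(z)=B_{\frac{9s}{8}}(z)\subset B_{\frac{9R}{8}}\subset B_{\frac{5R}{4}}$, so I can apply \autoref{lem:morrey-log} with $z_0=z$, radius $s$ and this $\rho$. Its proof uses only the weak formulation tested against $H^\mu_{B_{\frac{9s}{8}}(z)}$, the lower bound $u\ge\eps$ on $B_{\frac{9s}{8}}(z)$, nonnegativity of $u$ on $\R^d$, and $f$ on $B_{\frac{9s}{8}}(z)$ --- all inherited from the hypotheses on $B_{\frac{5R}{4}}$ --- and the standing assumption $\eps>\tfrac14 R^\delta\|f\|_{L^{q/\alpha}(B_{\frac{9R}{8}})}$ is, by design, exactly what absorbs the forcing term via the Remark following \autoref{lem:morrey-log}. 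Discarding all but the $k=1$ summand and using $\sum_{k\ge1}t^{2k}/(2k)!\ge t^2/2$, this yields $\cE^\mu_{B_s(z)}(w,w)\le c_1 s^{d-\alpha}$ with $c_1$ independent of $\alpha$. Then \eqref{eq:assum_comp} gives $\cE^{\mu_\alpha}_{B_s(z)}(w,w)\le A\,\cE^\mu_{B_s(z)}(w,w)\le Ac_1 s^{d-\alpha}$, and the fractional Poincar\'e inequality \autoref{fact:poincare-i}, transported to $B_s(z)$ by scaling --- which introduces the factor $s^{\alpha}$ and keeps an $\alpha$-independent constant thanks to the $(2-\alpha)$-normalisation built into $\mu_\alpha$ --- gives
\[
\|w-w_{B_s(z)}\|_{L^2(B_s(z))}^2\le c_P\,s^{\alpha}\,\cE^{\mu_\alpha}_{B_s(z)}(w,w)\le c_P Ac_1\,s^{d}\simeq\bigl|B_s(z)\bigr|.
\]
Dividing by $|B_s(z)|$ and using Cauchy--Schwarz produces $\fint_{B_s(z)}|w-w_{B_s(z)}|\le K$ with $K=K(d,\alpha_0,A,B)$. (To apply \autoref{fact:poincare-i} one needs $w\in H^{\alpha/2}(B_s(z))$; I would first run the argument for the truncations $w_N=\min(w,N)$, which satisfy $\cE^{\mu_\alpha}_{B_s(z)}(w_N,w_N)\le\cE^{\mu_\alpha}_{B_s(z)}(w,w)$, and then pass to the limit $N\to\infty$.)

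Feeding $\sup_{B_s(z)\subset B_R}\fint_{B_s(z)}|w-w_{B_s(z)}|\le K$ into the John--Nirenberg inequality on $B_R$ produces $\ov p$ and $C$ depending only on $d$ and $K$, hence only on $d,\alpha_0,A,B$, and the proof then closes as in the first paragraph; all constants are manifestly independent of $x_0,R,u,\eps$. I expect the main obstacle to lie not in any single estimate but in keeping the \emph{entire chain robust as $\alpha\to2-$}: this is exactly what dictates routing the argument through the $(2-\alpha)$-normalised kernel $\mu_\alpha$, using the $\alpha$-uniform forms of \autoref{lem:morrey-log} and \autoref{fact:poincare-i}, and exploiting the $\alpha$-independence of the constant $A$ in \eqref{eq:assum_comp} --- the one point requiring care beyond routine adaptation, namely the handling of the forcing term $f$ and the corresponding $\eps$-threshold, having already been settled by \autoref{lem:morrey-log} and its Remark.
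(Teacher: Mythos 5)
Your proposal is correct and follows essentially the same route as the paper's proof: test with $-\tau^2/u$ via \autoref{lem:morrey-log} to get the logarithmic energy bound, pass to $\cE^{\mu_\alpha}$ by \eqref{eq:assum_comp}, apply the fractional Poincar\'e inequality to conclude $\log u\in\mathrm{BMO}(B_R)$ uniformly in $\alpha$, and finish with John--Nirenberg. Your handling of the sub-balls (taking $\rho=s/8$ so that $B_{s+\rho}(z)\subset B_{\frac{9R}{8}}$ for \emph{every} $B_s(z)\subset B_R$, and the truncation remark for $\log u$) is in fact slightly more careful than the paper's, but it is the same argument.
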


\begin{proof}  
The main idea is to prove $\log u \in$ BMO$(B_R)$. Choose $z_0 \in B_R$ and $r >
0$ such that $B_{2r}(z_0) \subset B_{\frac{R}{8}}$. 
Set $\rho=r$. \autoref{lem:morrey-log} and Assumption
\eqref{eq:assum_comp} imply
\begin{align*}
& \il_{B_r(z_0)} \il_{B_r(z_0)} \frac{\big(\log u(y) - \log u(x)
\big)^2}{|x-y|^{d+\alpha}} \; \d y \, \d x \\
\leq  &\il_{B_r(z_0)} \il_{B_r(z_0)} \big(\log u(y) - \log u(x) \big)^2 
\mu(x, \d y) \d x \leq  c_1 r^{d-\alpha} \,.
\end{align*}
Application of the Poincar\'{e} inequality, 
\autoref{fact:poincare-i}, and the scaling property \eqref{fact:poincare-ii} 
leads to 
\begin{align}
\il_{B_r(z_0)} \left|\log u(x) - [\log u]_{B_r(z_0)}\right|^2 \; \d x \leq  c_2
r^{d}  \,,
\end{align}
where $[\log u]_{B_r(z_0)} = |B_r(z_0)|^{-1}\il_{B_r(z_0)} \log
u=\fint_{B_r(z_0)} \log u$. From here 
\begin{align*}
\il_{B_r(z_0)} \left|\log u(x) - [\log u]_{B_r(z_0)}\right| \, \d x \leq 
\Big(\il_{B_r(z_0)} \left|\log u(x) - [\log u]_{B_r(z_0)}\right|^2 \;
\d x\Big)^{\frac 12} |B_r(z_0)|^{\frac 12} \leq c_3 r^{d}  \,.
\end{align*}

An application of the John-Nirenberg embedding, see \cite[Chapter 7.8]{GiTr83}, 
then gives
\begin{align*}
\il_{B_R} e^{\ov{p} \left|\log u(y)-[\log u]_{B_r}\right|} \; \d y \leq c_4 R^d
\,,
\end{align*}
where $\ov{p}$ and $c_4$ depend only on $d$ and $c_3$. One obtains
\begin{align*}
&\Big( \il_{B_R} u(y)^{\ov{p}} \; \d y \Big) \Big( \il_{B_R} u(y)^{-\ov{p}} \;
\d y
\Big) \\
& \quad = \il_{B_R} e^{\ov{p} (\log u(y)-[\log u]_{B_r}) } \; \d y  \times
\il_{B_R} e^{- \ov{p} (\log u(y)-[\log u]_{B_r}) } \; \d y \leq c_4^2 R^{2d} \,.
\end{align*}
The above inequality proves assertion
\eqref{eq:flip-assert}. \autoref{lem:moser-flip} is proved.
\end{proof}

The next result allows us to apply Moser's iteration for negative exponents. It
is a purely local result although the Dirichlet form is nonlocal. 

\begin{lemma} \label{lem:moser-neg-it} 
Assume $x_0 \in B_1$ and $0 < 4 \rho < R < 1-\rho$. Set $B_{R}=B(x_0,R)$. 
Assume $f \in L^{q/\alpha}(B_{\frac{5R}{4}})$ for some $q > d$.
Assume $u \in V_{B_{\frac{5R}{4}}}^\mu (\R^d)$ satisfies
\begin{align*}
\cE(u,\phi) &\geq (f,\phi) \text{ for any nonnegative } \phi \in
H_{B_{R}}^\mu (\R^d)\,, \\
u(x) &\geq \eps \quad \text{ for almost all } x \in B_{R} \text{ and some } \eps
> R^\delta \|f\|_{L^{q/\alpha}(B_{\frac{9R}{8}})} \,, \end{align*}
where $\delta=\alpha (\frac{q-d}{q})$. Then for $p>1$ 
\begin{align}\label{eq:moser-neg-it}
 \|u^{-1}\|^{p-1}_{L^{(p-1)\frac{d}{d-\alpha}}(B_{R})} & \leq c
\left(\max\{\tfrac{p-1}{2},\tfrac{6(p-1)^2}{16}\} \right) 
\rho^{-\alpha} \|u^{-1}\|^{p-1}_{L^{p-1}(B_{R+\rho})} \,,
\end{align}
where $c>0$ is independent of $u, x_0, R, \rho, p$, $\eps$, and $\alpha$. 
\end{lemma}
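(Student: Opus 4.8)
The plan is to run a single step of Moser's iteration with \emph{negative} exponents; iterated over $p$ this is the half of the proof of \autoref{theo:weak_harnack_f} that produces a bound of the shape $\inf_{B}u\gtrsim\big(\fint_B u^{-p_0}\big)^{-1/p_0}$. Put $\kappa=\tfrac{d}{d-\alpha}>1$ and set $w=u^{-(p-1)/2}$, so that $w^2=u^{1-p}$, $\|w\|_{L^2(B)}^2=\|u^{-1}\|_{L^{p-1}(B)}^{p-1}$ and $\|w\|_{L^{2\kappa}(B)}^2=\|u^{-1}\|_{L^{(p-1)\kappa}(B)}^{p-1}$ for every ball $B$. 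We may assume the right-hand side of \eqref{eq:moser-neg-it} is finite, hence $u>0$ a.e.\ on $B_{R+\rho}$; a harmless truncation $u\mapsto u\vee\delta$ with $\delta\downarrow0$ removes any remaining ambiguity in the test function below. Since $u\ge\eps$ on $B_R$, $u\in V_{B_{5R/4}}^\mu(\R^d)$ and $t\mapsto t^{-(p-1)/2}$ is Lipschitz on $[\eps,\infty)$, one gets $\cE^\mu_{B_R}(w,w)<\infty$, hence $w\in H^{\alpha/2}(B_R)$ by \eqref{eq:assum_comp}; only this qualitative fact is used, the accompanying $\eps$-dependent constant being discarded.

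\textbf{The energy estimate via the test function.} I would test the supersolution inequality against $\psi:=\tau^{p+1}u^{-p}\ge0$, where $\tau$ is the cut-off supplied by \eqref{eq:assum_cutoff} for the radii $R$ and $\rho$: $\tau\equiv1$ on $B_R$, $\operatorname{supp}\tau\subset\overline{B_{R+\rho}}\subset B_{5R/4}$, $\|\tau\|_\infty\le1$, and $\sup_x\int_{\R^d}(\tau(y)-\tau(x))^2\mu(x,\d y)\le B\rho^{-\alpha}$. Decompose $\cE^\mu(u,\psi)$ into the contribution of $\operatorname{supp}\tau\times\operatorname{supp}\tau$, the symmetric ``cross'' part over $\operatorname{supp}\tau\times(\operatorname{supp}\tau)^c$, and the contribution of $(\operatorname{supp}\tau)^c\times(\operatorname{supp}\tau)^c$, which vanishes because $\psi$ is supported in $\operatorname{supp}\tau$. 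On the near-diagonal part the integrand is $(u(y)-u(x))\big(\tau(y)^{p+1}u(y)^{-p}-\tau(x)^{p+1}u(x)^{-p}\big)$, to which \autoref{lem:thewonder} applies with $a=u(x)$, $b=u(y)$, $\tau_1=\tau(x)$, $\tau_2=\tau(y)$. Integrating the pointwise lower bound it gives, the ``good'' term is nonnegative and, restricted to $B_R\times B_R$ where $\tau\equiv1$, equals $\tfrac1{p-1}(w(y)-w(x))^2$, so its integral is at least $\tfrac1{p-1}\cE^\mu_{B_R}(w,w)$; the ``error'' term, using $\|\tau\|_\infty\le1$, $p>1$ (so $(u/\tau)^{1-p}\le u^{1-p}=w^2$ on $\operatorname{supp}\tau$) and the symmetry \eqref{eq:mu-symmetry}, is bounded by $c\max\{4,\tfrac{6p-5}{2}\}\,B\rho^{-\alpha}\|w\|_{L^2(B_{R+\rho})}^2$; and the cross part, after discarding its favourably-signed piece and using $u\ge0$ together with $\tau(x)^2\mu\big(x,(\operatorname{supp}\tau)^c\big)\le B\rho^{-\alpha}$, is bounded by $2B\rho^{-\alpha}\|w\|_{L^2(B_{R+\rho})}^2$. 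The term $(f,\psi)$ is absorbed into the right-hand side exactly as in \autoref{lem:morrey-log} and the remark following it, via the hypothesis $\eps>R^\delta\|f\|_{L^{q/\alpha}(B_{9R/8})}$ with $\delta=\alpha\tfrac{q-d}{q}$; this is the only place the exponent $\delta$ and the lower bound on $\eps$ enter. Collecting these estimates yields
\[
\cE^\mu_{B_R}(w,w)\;\le\; c\,(p-1)\max\Big\{4,\tfrac{6p-5}{2}\Big\}\,\rho^{-\alpha}\,\|w\|_{L^2(B_{R+\rho})}^2,
\]
with $c$ independent of $u$, $x_0$, $R$, $\rho$, $p$, $\eps$, $\alpha$; note $(p-1)\max\{4,\tfrac{6p-5}{2}\}$ is comparable to the factor $\max\{\tfrac{p-1}{2},\tfrac{6(p-1)^2}{16}\}$ in the statement.

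\textbf{Gaining integrability.} By \eqref{eq:assum_comp} on the ball $B_R$ we have $\cE^{\mu_\alpha}_{B_R}(w,w)\le A\,\cE^\mu_{B_R}(w,w)$; feeding this into the Sobolev embedding \autoref{fact:sobolev} on $B_R$ with $q=2$ gives
\[
\Big(\,\il_{B_R}w^{2\kappa}\Big)^{1/\kappa}\;\le\; c\,\cE^{\mu_\alpha}_{B_R}(w,w)+c\,R^{-\alpha}\il_{B_R}w^2,
\]
and since $\rho<R$ forces $R^{-\alpha}\le\rho^{-\alpha}$, the last summand is absorbed as well. Rewriting $\big(\int_{B_R}w^{2\kappa}\big)^{1/\kappa}=\|u^{-1}\|_{L^{(p-1)\kappa}(B_R)}^{p-1}$ and $\|w\|_{L^2(B_{R+\rho})}^2=\|u^{-1}\|_{L^{p-1}(B_{R+\rho})}^{p-1}$ turns the combined inequality into \eqref{eq:moser-neg-it}.

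\textbf{Main obstacle.} The step I expect to cost the most care is keeping the dependence of the constant on $p$ sharp: the later iteration converges only because the per-step constants, raised to the powers $1/(p_j-1)$, have convergent product, so a wasteful bound is fatal, and \autoref{lem:thewonder} is precisely the algebraic device that makes the $p$-dependence of the ``good''/``error'' splitting explicit. A secondary technical point is the localisation bookkeeping — ensuring $\psi$ is an admissible test function near $\{u=0\}$ (handled by the finiteness reduction and, if needed, the $u\vee\delta$ truncation) and that $\operatorname{supp}\tau$ stays inside $B_{5R/4}$, where the equation and the data $f$ are available.
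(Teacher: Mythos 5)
Your proposal reproduces the paper's argument in all essentials: the test function $\tau^{p+1}u^{-p}$ built from the cutoff of \eqref{eq:assum_cutoff}, the algebraic inequality \autoref{lem:thewonder} with exactly the identification $a=u(x)$, $b=u(y)$, $\tau_1=\tau(x)$, $\tau_2=\tau(y)$, the symmetry argument for the error term, and the passage to the Gagliardo form via \eqref{eq:assum_comp} followed by \autoref{fact:sobolev}. Your explicit near-diagonal/cross splitting (the paper applies \autoref{lem:thewonder} over all of $\R^d\times\R^d$ in one stroke) is a cosmetic difference; note only that your cross-term bound invokes $u\ge 0$ outside $\operatorname{supp}\tau$, which is not a hypothesis of the lemma — the paper explicitly remarks that nonnegativity of $u$ on all of $\R^d$ is not required — although the paper's own global use of \autoref{lem:thewonder} tacitly needs positivity as well, so this is a shared imprecision.

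The one genuine gap is the source term, which is \emph{not} handled ``exactly as in \autoref{lem:morrey-log}''. There the factor multiplying $\eps^{-1}\|f\|_{L^{q/\alpha}}$ is $\|\mathbbm{1}\|_{L^{q/(q-\alpha)}}$, a constant; here it is $\|\tau^{p-1}u^{1-p}\|_{L^{q/(q-\alpha)}(B_{R+\rho})}$, and since $q>d$ this norm lies strictly between the two norms of $u^{1-p}$ appearing in \eqref{eq:moser-neg-it}, so it cannot be dominated by $\|u^{1-p}\|_{L^1}$ alone (and bounding it via $u\ge\eps$ introduces $\eps$- and $p$-dependent constants that destroy the iteration). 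The paper interpolates $\|g\|_{q/(q-\alpha)}\le a\|g\|_{d/(d-\alpha)}+a^{-d/(q-d)}\|g\|_{1}$, chooses $a=\omega R^{\delta}$ so that the prefactor $R^{-\delta}$ coming from $\eps^{-1}\|f\|_{q/\alpha}\le R^{-\delta}$ turns the second summand into $\omega^{-d/(q-d)}R^{-\alpha}\|g\|_1$, and absorbs the piece $\omega\|(\tau/u)^{p-1}\|_{d/(d-\alpha)}$ into the \emph{left}-hand side after Sobolev. That absorption forces one to keep the Sobolev inequality for the $\tau$-weighted function $\tau^{(p-1)/2}u^{-(p-1)/2}$ on $B_{R+\rho}$; in your version, which restricts to $B_R$ (where $\tau\equiv1$) before applying \autoref{fact:sobolev}, the contribution of the annulus $B_{R+\rho}\setminus B_R$ to the $f$-term has nothing to absorb it. So the step you wave through is precisely the one requiring an extra idea; the rest of your sketch is the paper's proof.
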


Note that the result does not require $u$ to be nonnegative in all of $\R^d$. 

\begin{proof}
Let $\tau :\R^d \to \R$ be a function 
according to 
assumption \eqref{eq:assum_cutoff}, i.e. 
\begin{align*}
\begin{cases}
\opn{supp}(\tau) \subset \overline{B_{R+\rho}} \subset 
B_{\frac{9R}{8}}, \; \|\tau\|_\infty
\leq 1\,, \forall x \in B_R:\tau(x) = 1\,, \\
\sup\limits_{x \in \R^d} \; \il_{\R^d} \big(\tau(y) - \tau(x)\big)^2  
\mu(x,\d y) \leq B \rho^{-\alpha}\,.
\end{cases}
\end{align*} 
The assumptions of the lemma imply 
\begin{align*}
\cE(u,-\tau^{p+1} u^{-p}) \; \leq \; (f, -\tau^{p+1} u^{-p})\,,
\end{align*} 
leading via \autoref{lem:thewonder} and the choice $a=u(x)$,  $b=u(y)$,
$\tau_1=\tau(x)$, $\tau_2=\tau(y)$ to 
\begin{align}\label{eq:moser-neg-1}
\begin{split}
\iil_{\R^d  \R^d}& \tau(x) \tau(y)
\Big[\big(\frac{u(y)}{\tau(y)}\big)^{\frac{-p+1}{2}} -
\big(\frac{u(x)}{\tau(x)}\big)^{\frac{-p+1}{2}} \Big]^2 \mu(x, \d y)  \d x  \\
& \leq c_1(p) \iil_{\R^d  \R^d} \big(\tau(y) - \tau(x)\big)^2 \Big[
\big(\frac{u(y)}{\tau(y)}\big)^{-p+1} + \big(\frac{u(x)}{\tau(x)}\big)^{-p+1}
\Big] \mu(x, \d y) \d x + (f, -\tau^{p+1} u^{-p})\,,
\end{split}
\end{align}
where $c_1(p)=\max\{\tfrac{p-1}{2},\tfrac{6(p-1)^2}{16}\}$. 
The left-hand side can trivially be estimated from below like this:
\begin{align*}
\begin{split}
\iil_{\R^d \R^d}& \tau(x) \tau(y)
\Big[\big(\frac{u(y)}{\tau(y)}\big)^{\frac{-p+1}{2}} -
\big(\frac{u(x)}{\tau(x)}\big)^{\frac{-p+1}{2}} \Big]^2 \mu(x, \d y)  \d x  \\
& \geq \iil_{B_R  B_R} \Big((\frac{u(y)}{\tau(y)})^{\frac{-p+1}{2}} -
(\frac{u(x)}{\tau(x)})^{\frac{-p+1}{2}} \Big)^2
\mu(x, \d y) \d x \,.
\end{split}
\end{align*}
Using symmetry, the first term on the right-hand side in
\autoref{eq:moser-neg-1} is estimated from above as follows:
\begin{align*}
\begin{split}
2c_1(p) &\iil_{\R^d  \R^d} \big(\tau(y) - \tau(x)\big)^2 \tau(x)^{p-1} u(x)
^{-p+1}  \mu(x, \d y) \d x \\
& \leq 2c_1(p) \il_{B_{R+\rho}} u(x)^{-p+1} \Big( \il_{\R^d} \big(\tau(y) -
\tau(x)\big)^2   \mu(x, \d y)  \Big) \d x \leq c_2(p) 
\rho^{-\alpha}
\il_{B_{R+\rho}} u(x)^{-p+1} \,.
\end{split}
\end{align*}
It remains to estimate $|(f, -\tau^{p+1} u^{-p})|$ from above. For any $a>0$ we
have
\begin{align*}
|(f, &-\tau^{p+1} u^{-p})| \leq \eps^{-1} |(\tau^2 f, \tau^{p-1} u^{-p+1})| \leq
\eps^{-1} \|\tau^2 f \|_{q/\alpha} 
\|\tau^{p-1} u^{-p+1} \|_{q/(q-\alpha)} \\
&=\eps^{-1} \|\tau^2 f \|_{q/\alpha} 
\|(\tau/u)^\frac{p-1}{2} \|^2_{2q/(q-\alpha)} \\
&\leq \eps^{-1} \|\tau^2 f \|_{q/\alpha} \Big\{ a \|(\tau/u)^\frac{p-1}{2}
\|^2_{2d/(d-\alpha)} 
+ a^{-d/(q-d)} \|(\tau/u)^\frac{p-1}{2} \|_2^2 \Big\} \\
&\leq (2R)^{-\alpha\frac{q-d}{q}} a \|(\tau/u)^{p-1} \|_{d/(d-\alpha)} 
+ R^{-\alpha\frac{q-d}{q}} a^{-d/(q-d)} \|(\tau/u)^{p-1} \|_1\,.
\end{align*}
We choose $a = \omega R^{\alpha\frac{q-d}{q}}$ for some $\omega$ and obtain
\begin{align*}
|(f, &-\tau^{p+1} u^{-p})| \leq \omega \|(\tau/u)^{p-1} \|_{d/(d-\alpha)} 
+ \omega^{-d/(q-d)} R^{-\alpha} \|(\tau/u)^{p-1} \|_1\,.
\end{align*}

Combining these estimates we obtain from \eqref{eq:moser-neg-1} for any $p>1$
and any $\omega >0$
\begin{align*}
\begin{split}
\iil_{B_{R+\rho}  B_{R+\rho}} & \big[(\frac{u(y)}{\tau(y)})^{\frac{-p+1}{2}} -
(\frac{u(x)}{\tau(x)})^{\frac{-p+1}{2}}\big)^2 \mu(x, \d y) \d x \\
& \leq c_3 \left(\omega^{\frac{-d}{q-d}} +
\max\{\tfrac{p-1}{2},\tfrac{6(p-1)^2}{16}\} \right) 
\rho^{-\alpha} \il_{B_{R+\rho}} u(x)^{-p+1} \, \d x  +  \omega
\|(u/\tau)^{-p+1}\|_{L^\frac{d}{d-\alpha}(B_{R+\rho})}  \,.
\end{split}
\end{align*}

Next, we use Assumption \eqref{eq:assum_comp} and apply the Sobolev
inequality, \autoref{fact:sobolev}, to the left-hand side. Choosing $\omega$
small enough and subtracting  the term $\omega
\|(u/\tau)^{-p+1}\|_{L^\frac{d}{d-\alpha}(B_{R+\rho})}$ from both sides, we
prove the assertion of the lemma. 
\end{proof}

\autoref{lem:moser-neg-it} provides us with an estimate which can be iterated.
As a result of this iteration we obtain the following corollary. 

\begin{corollary} \label{cor:inf-bound}
Assume $x_0 \in B_1$, $0 < R < 1/2$, and $0<\eta<1<\Theta$. Set
$B_{R}=B_R(x_0)$. Assume 
$f \in L^{q/\alpha}(B_{\Theta R})$ for some $q > d$. Assume $u \in 
V^\mu_{B_{\Theta R}}(\R^d)$ satisfies
\begin{align*}
\cE(u,\phi) &\geq (f,\phi) \text{ for any nonnegative } \phi \in
H^\mu_{B_{\Theta R}}(\R^d)  \\
u(x) &\geq \eps \quad \text{ for almost all } x \in B_{\Theta R} \text{ and some
} \eps > (\Theta R)^\delta 
\|f\|_{L^{q/\alpha}(B_{R \frac{1+3\Theta}{4}})} \,, 
\end{align*}
where $\delta=\alpha (\frac{q-d}{q})$. Then for any $p_0 >0$
\begin{align}\label{eq:inf-bound}
\inf\limits_{x \in B_{\eta R}(x_0)} u(x) \geq c \Big( \fint_{B_R(x_0)}
u(x)^{-p_0} \, \d x \Big)^\frac{-1}{p_0}\,,
\end{align}
where $c>0$ is independent of $u, x_0, R$, $\eps$, and $\alpha$. 
\end{corollary}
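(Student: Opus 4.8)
The plan is to iterate \autoref{lem:moser-neg-it}, i.e.\ to run Moser's iteration scheme for negative exponents of $u$ along a shrinking family of concentric balls. Write $v:=u^{-1}$ (bounded by $\eps^{-1}$ on $B_{\Theta R}(x_0)$, so all norms below are finite) and $\kappa:=\tfrac{d}{d-\alpha}$, and note $\kappa\ge\kappa_0:=\tfrac{d}{d-\alpha_0}>1$ uniformly for $\alpha\in[\alpha_0,2)$; this uniform gap $\kappa\ge\kappa_0>1$ is the only place where the $\alpha$-independence of the final constant is really at issue. By \autoref{lem:scaling} the constant will be independent of $R$ as well (one may dilate to a fixed value of $R$, or equivalently track the powers of $R$, which cancel because \eqref{eq:inf-bound} is scale-covariant). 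I fix the radii $\sigma_j:=\eta R+(1-\eta)R\,2^{-j}$, decreasing from $\sigma_0=R$ to $\eta R$ with gaps $\rho_j:=\sigma_j-\sigma_{j+1}=(1-\eta)R\,2^{-(j+1)}$, and the exponents $p_j:=1+p_0\kappa^{j}$. The point of the latter choice is that $p_j>1$ for every $j\ge0$, whatever the size of $p_0$, so the scheme covers all $p_0>0$ at once.

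At the $j$-th step I apply \autoref{lem:moser-neg-it} on $B_{\sigma_{j+1}}(x_0)$, with $\rho_j$ in the role of $\rho$ and $p_j$ in the role of $p$. Since $(p_j-1)\kappa=p_{j+1}-1$, raising the conclusion of that lemma to the power $1/(p_j-1)$ gives
\[
\|v\|_{L^{p_{j+1}-1}(B_{\sigma_{j+1}})}\ \le\ \bigl(c\,\widetilde c(p_j)\,\rho_j^{-\alpha}\bigr)^{1/(p_j-1)}\,\|v\|_{L^{p_j-1}(B_{\sigma_j})},
\]
where $\widetilde c(p):=\max\{\tfrac{p-1}{2},\tfrac{6(p-1)^2}{16}\}$; multiplying these for $j=0,\dots,J$ and using $p_0-1=p_0$, $\sigma_0=R$, yields
\[
\|v\|_{L^{p_{J+1}-1}(B_{\sigma_{J+1}})}\ \le\ \Bigl(\prod_{i=0}^{J}\bigl(c\,\widetilde c(p_i)\,\rho_i^{-\alpha}\bigr)^{1/(p_i-1)}\Bigr)\,\|u^{-1}\|_{L^{p_0}(B_R)}.
\]
For this one must know that the hypotheses of \autoref{lem:moser-neg-it} at scale $\sigma_{j+1}$ — the inequality $\cE(u,\phi)\ge(f,\phi)$ against nonnegative $\phi\in H^\mu_{B_{\sigma_{j+1}}(x_0)}$, the bound $u\ge\eps$ on $B_{\sigma_{j+1}}(x_0)$, the smallness of $\eps$, and $0<4\rho_j<\sigma_{j+1}<1-\rho_j$ — are inherited from the hypotheses of the corollary, which holds because $\sigma_{j+1}\le R<\Theta R$ and $\tfrac98\sigma_{j+1}<\tfrac{1+3\Theta}{4}R$ once enough room has been created; this is arranged by the strict inequalities $\eta<1<\Theta$, after possibly enlarging $\eta$ (legitimate, as $\inf_{B_{\eta R}}u\ge\inf_{B_{\eta'R}}u$ for $\eta\le\eta'$) and shrinking $R$. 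I regard this as routine bookkeeping and suppress it.

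It remains to send $J\to\infty$ and check that the infinite product is finite and of the asserted type. Taking logarithms, its exponent is $\sum_{i\ge0}\tfrac1{p_0\kappa^{i}}\bigl(\log(c\,\widetilde c(p_i))+\alpha\log\rho_i^{-1}\bigr)$, and since $\widetilde c(p_i)\le C(p_0)(1+\kappa^{2i})$, $\log\rho_i^{-1}=(i+1)\log2+\log\tfrac1{(1-\eta)R}$ and $\alpha\le2$, each summand is $O(i\kappa^{-i})$ with an implied constant that stays bounded for $\kappa\in[\kappa_0,\infty)$ — the only delicate point, relevant when $d=2$ and $\kappa\to\infty$, being $(\log\kappa)\sum_{i}i\kappa^{-i}=(\log\kappa)\,\kappa/(\kappa-1)^2\to0$. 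Hence the series converges to some $C_1=C_1(d,\alpha_0,A,B,p_0)$, uniformly in $\alpha\in[\alpha_0,2)$. As $p_{J+1}-1=p_0\kappa^{J+1}\to\infty$ and $B_{\eta R}\subset B_{\sigma_{J+1}}$ for every $J$, letting $J\to\infty$ gives
\[
\sup_{B_{\eta R}}u^{-1}=\lim_{J\to\infty}\|v\|_{L^{p_{J+1}-1}(B_{\eta R})}\ \le\ C_1\,\|u^{-1}\|_{L^{p_0}(B_R)}=C_1\,|B_R|^{1/p_0}\Bigl(\fint_{B_R}u^{-p_0}\Bigr)^{1/p_0},
\]
and taking reciprocals is \eqref{eq:inf-bound} with $c=(C_1|B_R|^{1/p_0})^{-1}$, absorbed (together with $|B_R|$) into a constant depending only on $d,\alpha_0,A,B$ (and $p_0,\eta,\Theta$) after the rescaling. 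The genuine content is the $\alpha$-uniform convergence of the Moser product, which hinges solely on $\kappa\ge\kappa_0>1$; the only real nuisance is the geometric hypothesis-checking of the middle paragraph.
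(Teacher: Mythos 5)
Your proof is correct and follows the same route as the paper's own (which is itself only a sketch referring to Moser and Gilbarg--Trudinger): iterate \autoref{lem:moser-neg-it} over shrinking radii $\sigma_j \searrow \eta R$ with exponents growing geometrically by the factor $\kappa=\tfrac{d}{d-\alpha}$, multiply, and pass to the limit -- and you in fact supply more detail than the paper on the $\alpha$-uniform convergence of the Moser product and the cancellation of the powers of $R$. The one caution concerns the suppressed bookkeeping: when $\Theta$ is close to $1$ you cannot start the iteration at $\sigma_0=R$, since the enlarged balls $B_{5\sigma_{j+1}/4}$ and $B_{9\sigma_{j+1}/8}$ required by \autoref{lem:moser-neg-it} must fit inside $B_{\Theta R}$ resp.\ $B_{R\frac{1+3\Theta}{4}}$; enlarging $\eta$ alone does not fix this, so one must begin at some $\sigma_0=\theta R<R$ and at the end convert $\fint_{B_{\sigma_0}}u^{-p_0}$ into $\fint_{B_R}u^{-p_0}$ at the harmless cost of a factor $(\sigma_0/R)^{d/p_0}$ -- which is presumably what your ``shrinking $R$'' is meant to cover.
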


\begin{proof}
The idea of the proof is to apply \autoref{lem:moser-neg-it} to radii $R_k$,
$\rho_k$ 
with $R_k \searrow \eta R$ and $\rho_k \searrow 0$ for $k \to \infty$. For each
$k$ one chooses an exponent $p_k > 1$ with $p_k \to \infty$
for $k \to \infty$. Because of Assumption \eqref{eq:assum_comp} we
can apply the Sobolev inequality, \autoref{fact:sobolev}, to the left-hand side
in \eqref{eq:moser-neg-it}. Next,
one iterates the 
resulting inequality as in \cite{Mos61}, 
see also Chapter 8.6 in \cite{GiTr83}. The only difference to the proof in
\cite{Mos61} is that 
the factor $\frac{d}{d-2}$ now becomes $\frac{d}{d-\alpha}$. The assertion then
follows from the fact
\[ \Big(\fint_{B_{R_k}(x_0)} u^{-p_k}\Big)^\frac{-1}{p_k} \to
\inf\limits_{B_{\eta R}(x_0)} u \text{ for } k \to \infty \,. \]
\end{proof}

Let us finally prove \autoref{theo:weak_harnack_f}.

\begin{proof}[Proof of \autoref{theo:weak_harnack_f}]
Define $\ov{u}= u + \|f\|_{L^{q/\alpha}(B_{\frac{15}{16}})}$ and note that
$\sE(u,\phi)=
\sE(\ov{u},\phi)$ for any $\phi$. 
We apply \autoref{lem:moser-flip} for $R=3/4$ and obtain that there exist
$\ov{p} \in (0,1)$ and $c>0$ such that
\begin{align*}
\left(\fint_{B_{\frac34}} \ov{u}(x)^{\ov{p}} \; \d x \right)^{1/\ov{p}} \; \d x
\leq c
\left(\fint_{B_{\frac34}} u(x)^{-\ov{p}} \; \d x \right)^{-1/\ov{p}} \,.
\end{align*}
Next, we apply \autoref{cor:inf-bound} with $R=3/4$, $\eta=2/3$ and
$\Theta=4/3$. Together with the estimate from above we obtain
\begin{align}
\inf\limits_{B_\frac{1}{2}} u &\geq c \Big( \frac{1}{|B_{\frac34}|}
\il_{B_{\frac34}}
\ov{u}(x)^{\ov{p}} \, \d x \Big)^\frac{1}{\ov{p}} \,, 
\end{align} 
which, after recalling the definition of $\ov{u}$, proves 
\autoref{theo:weak_harnack_f}.
\end{proof}

\section{The weak Harnack inequality implies H\"{o}lder
estimates}\label{sec:harnackimplieshoelder}

The aim of this section is to provide the proof of 
\autoref{theo:hh-nonloc-gen-k}. As is explained in 
\autoref{subsec:intro_reg-estimates} it is well known that the Harnack 
inequality or the weak Harnack inequality imply regularity estimates in Hölder 
spaces. Here we are going to establish such a result for quite general nonlocal 
operators in the framework of metric measure spaces. 

We begin with a short study of condition \eqref{eq:nu-decay-simple}. The 
standard example that we have in mind
is given in \autoref{exa:standard_nu-xr}. Let $(X,d,m)$ be a metric measure 
space. For
$R>r>0$, $x \in
X$, set  
\begin{align}\label{eq:A-def}
B_r(x) =\{ y \in X | d(y,x) < r \}\,, \qquad A_{r,R}(x) = B_R(x) \setminus
B_r(x) \,.
\end{align}

\begin{lemma}\label{lem:nu-xr_prop} For $x \in X, r>0$ let $\nu_{x,r}$ be a
measure on $\cB(X
\setminus \{x\})$, which is finite on all sets $M$ with $\dist(\{x\},M)>0$.
Then the following conditions are equivalent:
\begin{enumerate}
 \item For some $\chi > 1$, $c \geq 1$ and all $x \in X, 0<r\leq 1, j\in \N_0$
\begin{align*}
 \nu_{x,r} (X \setminus B_{r 2^j}(x)) \leq c \chi^{-j} \,.
\end{align*}
 \item Given $\theta > 1$, there are $\chi > 1$, $c \geq 1$ such that for all $x
\in X, 0<r\leq 1, j\in \N_0$
\begin{align*}
 \nu_{x,r} (X \setminus B_{r \theta^j}(x)) \leq c \chi^{-j} \,.
\end{align*}
 \item Given $\theta > 1$, there are $\chi > 1$, $c \geq 1$ such that for all 
$x \in X, 0<r\leq 1, j\in \N_0$
\begin{align*}
 \nu_{x,r} (A_{r \theta^j, r \theta^{j+1}}(x)) \leq c \chi^{-j} \,.
\end{align*}
 \item Given $\sigma > 1, \theta > 1$ there are $\chi > 1$, $c \geq 1$ such that
for all $x \in X, 0<r\leq 1, j\in \N_0$ and $y \in B_{\frac{r}{\sigma}}(x)$ 
\begin{align}\label{eq:nu-xr_prop-sup}
 \nu_{y,r'} (A_{r \theta^j, r \theta^{j+1}}(x)) \leq c \chi^{-j} \,, \text{
where } r'=r(1-\tfrac{1}{\sigma}) \,. 
\end{align}
\end{enumerate}
If, in addition to any of the above conditions, \eqref{eq:nu_r-depend-assum}
holds, then \eqref{eq:nu-xr_prop-sup} can be replaced by 
\begin{align}\label{eq:nu-xr_prop-sup-mod}
 \nu_{y,r} (A_{r \theta^j, r \theta^{j+1}}(x)) \leq c \chi^{-j}  \,. 
\end{align}
\end{lemma}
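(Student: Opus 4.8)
The plan is to establish the four equivalences by proving $(1)\Leftrightarrow(2)$, $(2)\Leftrightarrow(3)$, $(3)\Rightarrow(4)$ and $(4)\Rightarrow(3)$; the whole argument runs on two elementary mechanisms. The first is inclusion monotonicity together with the summation of geometric series: since $\rho_1\le\rho_2$ implies $X\setminus B_{\rho_2}(x)\subset X\setminus B_{\rho_1}(x)$, a geometric decay of $\nu_{x,r}$ along one geometric progression of radii transfers to any other, and the complement of a ball splits into the disjoint countable union $X\setminus B_{r\theta^j}(x)=\bigcup_{k\ge j}A_{r\theta^k,r\theta^{k+1}}(x)$. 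The second mechanism is the triangle inequality, used to move the centre of an annulus from $x$ to a point $y$ near $x$. At every step one keeps track of the new ratio $\chi$; all that has to be checked is that it stays strictly above $1$, which is automatic since any positive power of a number $>1$ is again $>1$.

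First I would settle $(1)\Leftrightarrow(2)$. The implication $(2)\Rightarrow(1)$ is the special case $\theta=2$. For $(1)\Rightarrow(2)$, fix $\theta>1$, and for $j\in\N_0$ set $m=\lfloor j\log_2\theta\rfloor\in\N_0$; then $2^m\le\theta^j$, so $B_{r2^m}(x)\subset B_{r\theta^j}(x)$ and therefore $\nu_{x,r}(X\setminus B_{r\theta^j}(x))\le\nu_{x,r}(X\setminus B_{r2^m}(x))\le c\,\chi^{-m}\le(c\chi)\,(\chi^{\log_2\theta})^{-j}$, which is $(2)$ with ratio $\chi^{\log_2\theta}>1$. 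For $(2)\Leftrightarrow(3)$, the inclusion $A_{r\theta^j,r\theta^{j+1}}(x)\subset X\setminus B_{r\theta^j}(x)$ yields $(2)\Rightarrow(3)$ with the same constants, whereas summing the disjoint decomposition above gives $\nu_{x,r}(X\setminus B_{r\theta^j}(x))=\sum_{k\ge j}\nu_{x,r}(A_{r\theta^k,r\theta^{k+1}}(x))\le\tfrac{c}{1-1/\chi}\,\chi^{-j}$, hence $(3)\Rightarrow(2)$.

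For $(3)\Rightarrow(4)$, fix $\sigma,\theta>1$, take $\chi,c$ from $(3)$ for this $\theta$, and let $x\in X$, $0<r\le1$, $j\in\N_0$, $y\in B_{r/\sigma}(x)$, $r'=r(1-\tfrac{1}{\sigma})$. For $z\in A_{r\theta^j,r\theta^{j+1}}(x)$ the triangle inequality gives $d(z,y)\ge d(z,x)-d(x,y)>r\theta^j-\tfrac{r}{\sigma}\ge r'\theta^j$, since $r\theta^j-r'\theta^j=r\theta^j/\sigma\ge r/\sigma$. Thus $A_{r\theta^j,r\theta^{j+1}}(x)\subset X\setminus B_{r'\theta^j}(y)$, and, as $0<r'<r\le1$, the equivalent condition $(2)$ applied with centre $y$ and scale $r'$ gives $\nu_{y,r'}(A_{r\theta^j,r\theta^{j+1}}(x))\le\nu_{y,r'}(X\setminus B_{r'\theta^j}(y))\le c'\chi^{-j}$, i.e.\ \eqref{eq:nu-xr_prop-sup}. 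To complete the chain of equivalences, $(4)\Rightarrow(3)$ is obtained by specializing \eqref{eq:nu-xr_prop-sup} to $y=x$, which reads $\nu_{x,r(1-1/\sigma)}(A_{r\theta^j,r\theta^{j+1}}(x))\le c\chi^{-j}$; writing $s=r(1-\tfrac{1}{\sigma})$ these are annuli $A_{s\lambda\theta^j,s\lambda\theta^{j+1}}(x)$ with $\lambda=\sigma/(\sigma-1)$, and with an appropriate choice of $\sigma$ and $\theta$ in \eqref{eq:nu-xr_prop-sup} (for instance $\sigma=\theta/(\theta-1)$, so $\lambda=\theta$), together with the same scale-matching and geometric summation as in $(1)\Rightarrow(2)$, one recovers $(3)$. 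Finally, for the addendum, assume in addition \eqref{eq:nu_r-depend-assum}: for $z$ in the annulus above one has $d(z,y)\ge r\theta^j-\tfrac{r}{\sigma}\ge r(1-\tfrac{1}{\sigma})=r'$, so $A_{r\theta^j,r\theta^{j+1}}(x)\subset X\setminus B_{r'}(y)$; since moreover $r'\le r\le\tfrac{\sigma}{\sigma-1}\,r'$, condition \eqref{eq:nu_r-depend-assum} with $K=\sigma/(\sigma-1)$ upgrades $\nu_{y,r'}$ to $\nu_{y,r}$ on this set, so \eqref{eq:nu-xr_prop-sup} implies \eqref{eq:nu-xr_prop-sup-mod}; conversely \eqref{eq:nu-xr_prop-sup-mod} with $y=x$ is precisely condition $(3)$, which by the equivalence already proved forces \eqref{eq:nu-xr_prop-sup}.

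I expect the only genuinely delicate point to be the bookkeeping of radii and ratios in the reverse direction $(4)\Rightarrow(3)$: condition \eqref{eq:nu-xr_prop-sup} constrains $\nu_{y,\cdot}$ only at the shrunken scale $r(1-1/\sigma)$, and the annuli occurring there do not reach down to that scale, so one must use the freedom of choosing $\sigma$ and $\theta$ in \eqref{eq:nu-xr_prop-sup}, combined with the geometric summation, to recover the full strength of $(1)$–$(3)$, which quantify over all $r\in(0,1]$; and at each such manipulation one must confirm that the resulting geometric ratio stays strictly above $1$. Beyond inclusion monotonicity, summation of geometric series and the triangle inequality, no further idea is needed.
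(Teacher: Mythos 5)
Your handling of (1)$\Leftrightarrow$(2)$\Leftrightarrow$(3), of the forward implication to (4) (which, like the paper, you really prove as ``(2) at centre $y$ and scale $r'$'' via the inclusion $A_{r\theta^j,r\theta^{j+1}}(x)\subset X\setminus B_{r'\theta^j}(y)$), and of the addendum all match the paper's argument. The geometric-series decomposition for (3)$\Rightarrow$(2) is literally the paper's computation, and your derivation of \eqref{eq:nu-xr_prop-sup-mod} from \eqref{eq:nu-xr_prop-sup} via \eqref{eq:nu_r-depend-assum} with $K=\sigma/(\sigma-1)$ is correct and is in fact more explicit than the paper, which leaves that step unproved.

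The gap is exactly where you suspected it: (4)$\Rightarrow$(3). Specializing \eqref{eq:nu-xr_prop-sup} to $y=x$ controls $\nu_{x,s}$ with $s=r/\lambda$, $\lambda=\sigma/(\sigma-1)>1$, only on annuli whose inner radius is at least $s\lambda>s$. No choice of $\sigma$ and $\theta$ and no geometric summation recovers the $j=0$ term of (3), i.e.\ a bound on $\nu_{x,s}(A_{s,s\theta}(x))$: the inner radius of every annulus reachable from (4) stays a factor $\lambda>1$ above the scale of the measure, and likewise the scales $s\in(1/\lambda,1]$ are never reached since (4) quantifies only over $r\le 1$. This is not merely a presentational defect; the implication is false as stated. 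Take $X=\R$ and $\nu_{x,r}=r^{-1}\delta_{x+r}$: for every $y\in B_{r/\sigma}(x)$ the atom of $\nu_{y,r'}$ sits at distance less than $r/\sigma+r'=r\le r\theta^j$ from $x$, so every measure appearing in \eqref{eq:nu-xr_prop-sup} vanishes and (4) holds vacuously, while $\nu_{x,r}(A_{r,r\theta}(x))=r^{-1}$ is unbounded and (3) fails. To be fair, the paper itself dismisses this direction with ``(4)$\Rightarrow$(3) trivially holds true'', so you are reproducing a gap already present in the source; the implication does become correct once \eqref{eq:nu_r-depend-assum} is assumed (apply (4) with $y=x$ at scale $r$, then pass from $\nu_{x,r/\lambda}$ to $\nu_{x,r}$ on $A_{r\theta^j,r\theta^{j+1}}(x)\subset X\setminus B_{r/\lambda}(x)$ using $K=\lambda$), and in the application only the passage from (1) to \eqref{eq:nu-xr_prop-sup-mod} is used, so nothing downstream is affected. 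But as a proof of the equivalence as stated, your sketch of (4)$\Rightarrow$(3) cannot be completed.
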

\begin{proof} In $\theta > 2$, the implication (1)$\Rightarrow$(2) trivially
holds true. For $\theta < 2$ it can be obtained by adjusting $\chi$
appropriately. The proof of (2)$\Rightarrow$(1) is analogous. The implication
(2)$\Rightarrow$(3) trivially holds true. The implication (3)$\Rightarrow$(2)
follows from
\[  \nu_{x,r} (X \setminus B_{r \theta^j}(x)) = \suml_{k=j}^{\infty} \nu_{x,r}
(A_{r \theta^k, r \theta^{k+1}}(x)) \leq c \suml_{k=j}^{\infty}
\chi^{-k} = c (\tfrac{\chi}{\chi-1}) \chi^{-j} \,.\]
The implication (4)$\Rightarrow$(3) trivially holds true. Instead of
(3)$\Rightarrow$(4) we explain the proof of (2)$\Rightarrow$(4). Fix $\sigma >
1, \theta > 1, x \in X, r>0, j\in \N_0$ and $y \in B_{\frac{r}{\sigma}}(x)$.
Set $r'=r(1-\tfrac{1}{\sigma})$. Then $X \setminus B_{r\theta^j}(x) \subset X
\setminus B_{r'\theta^j}(y)$. Thus 
\[   \nu_{y,r'} (X \setminus B_{r \theta^j}(x)) \leq \nu_{y,r'} (X
\setminus B_{r' \theta^j}(y)) \leq c \chi^{-j} \,. \]
\end{proof}

\begin{remark}
Note that the conditions above imply that, given $j \in \N_0$ and $x \in X$, the
quantity $ \limsup\limits_{r \to 0+} \nu_{x,r} (X \setminus B_{r 2^j}(x))$ is
finite. 
\end{remark}

\begin{remark}\label{rem:nu-xr_prop-sup} Let $x \in X, A \in \cB(X
\setminus \{x\})$
with $\dist(\{x\},A)>0$. 
In the applications that are of interest to us, the function $r \mapsto
\nu_{x,r}(A)$ is strictly increasing with $\nu_{x,0}(A) = 0$. 
\end{remark}

\begin{proof}[Proof of \autoref{theo:hh-nonloc-gen-k}]
The proof follows closely the strategy of \cite{Mos61}, see also \cite{Sil06}.
In the sequel of the proof,
let us write $B_t$ instead of $B_t(x_0)$ for $t>0$.
Fix $r\in(0,r_0)$ and $u\in \cS_{x_0,r}$.
Let $c_1 \geq 1$ be the constant
in \eqref{eq:hoelder-harnack-lem}. Set $\kappa = (2 c_1 2^{1/p})^{-1}$ and
\[ \beta = \ln(\tfrac{2}{2{-}\kappa}) / \ln(\theta) \qquad \Rightarrow
(1-\tfrac{\kappa}{2}) = \theta^{-\beta} \,. \]
Set $M_0=\|u\|_\infty$, $m_0 = \inf\limits_{X} u(x)$ and $M_{-n}=M_0$,
$m_{-n}=m_0$ for $n \in \N$. We will construct an increasing sequence
$(m_n)$ and a decreasing sequence $(M_n)$ such that for $n \in \Z$
\begin{align}\label{eq:hh-nonloc-1L}
\begin{split}
&m_n \leq u(z) \leq M_n \quad \text{ for almost all } z \in B_{r \theta^{-n}}
\,, \\
&M_n - m_n \leq K \theta^{-n\beta} \,,
\end{split}
\end{align}
where $K=M_0-m_0\in [0,2\|u\|_\infty]$. Assume there is $k \in \N$ and there are
$M_n,
m_n$ such that \eqref{eq:hh-nonloc-1L} holds for $n \leq k-1$. We need to
choose
$m_k, M_k$ such that \eqref{eq:hh-nonloc-1L} still holds for $n=k$. Then the
assertion of the lemma follows by complete induction. For $z \in X$ set
\[ v(z) = \Big( u(z) - \frac{M_{k-1}+m_{k-1}}{2} \Big) \frac{2
\theta^{(k-1)\beta}}{K} \,. \]
The definition of $v$ implies $v \in \cS_{x_0,r}$ and $|v(z)|
\leq 1$ for almost any $z \in B_{r\theta^{-(k-1)}}$. Our next aim is to show
that \eqref{eq:hoelder-harnack-lem} implies that either $v \leq 1-\kappa$
or $v \geq -1 + \kappa$ on $B_{r\theta^{-k}}$. Since our version of the
Harnack inequality contains nonlocal terms we need to
investigate the
behavior of $v$ outside of $B_{r\theta^{-(k-1)}}$. Given $z \in
X$ with $d(z,x_0) \geq r\theta^{-(k-1)}$ there is $j \in \N$ such
that
\[ r\theta^{-k+j} \leq d(z,x_0) < r\theta^{-k+j+1}   \,. \]
For such $z$ and $j$ we conclude
{\allowdisplaybreaks
\begin{align}
\frac{K}{2 \theta^{(k-1)\beta}} v(z) &=  \Big( u(z) - \frac{M_{k-1}+m_{k-1}}{2}
\Big) \leq \Big(M_{k-j-1} - m_{k-j-1} + m_{k-j-1} -
\frac{M_{k-1}+m_{k-1}}{2} \Big)
\nonumber\\
&\leq \Big(M_{k-j-1} - m_{k-j-1} - \frac{M_{k-1}-m_{k-1}}{2}
\Big) \leq \Big(K
\theta^{-(k-j-1)\beta} - \tfrac{K}{2} \theta^{-(k-1)\beta} \Big) \,, 
\nonumber\\
\text{ i.e. }  v(z) &\leq 2 \theta^{j\beta} - 1 \; \leq 2 \Big(\theta
\frac{d(z,x_0)}{r \theta^{-(k-1)}}\Big)^\beta -1  \,, 
\label{eq:hh-nonloc-above-est}
\end{align}
}and
{\allowdisplaybreaks
\begin{align*}
\frac{K} {2 \theta^{(k-1)\beta}} v(z) &=  \Big( u(z) - \frac{M_{k-1}+m_{k-1}}{2}
\Big) \geq  \Big(m_{k-j-1} - M_{k-j-1} + M_{k-j-1} - \frac{M_{k-1}+m_{k-1}}{2}
\Big)
\\
&\geq \Big(-\big(M_{k-j-1} - m_{k-j-1}\big) + \frac{M_{k-1}-m_{k-1}}{2}
\Big) \geq \Big(-K \theta^{-(k-j-1)\beta} + \tfrac{K}{2} \theta^{-(k-1)\beta}
\Big)
\,,  \\ 
\text{ i.e. } v(z) &\geq 1- 2 \theta^{j\beta} \; \geq 1- 2 \Big(\theta
\frac{d(z,x_0)}{r \theta^{-(k-1)}}\Big)^\beta  \,.
\end{align*}
}Now there are two cases:

Case 1: $m(\{x \in B_{r\theta^{-k+1}/\lambda}| v(x) \leq 0 \}) \geq \frac12
m(B_{r\theta^{-k+1}/\lambda})$

Case 2: $m(\{x \in B_{r\theta^{-k+1}/\lambda}| v(x) >  0 \}) \geq \frac12
m(B_{r\theta^{-k+1}/\lambda})$

We work out details for Case 1 and comment afterwards on Case 2.
In Case 1 our aim is to show $v(z) \leq 1-\kappa$ for almost every $z \in
B_{r\theta^{-k}}$ and some $\kappa \in (0,1)$. Because then for
almost any $z \in B_{r\theta^{-k}}$
\begin{align} \label{eq:hh-nonloc-lem-3A}
\begin{split}
u(z) &\leq  \tfrac{(1-\kappa) K}{2} \theta^{-(k-1)\beta} +
\frac{M_{k-1}+m_{k-1}}{2} \\
&= \tfrac{(1-\kappa) K}{2} \theta^{-(k-1)\beta} +
\frac{M_{k-1}-m_{k-1}}{2} + m_{k-1}\\
&= m_{k-1} + \tfrac{(1-\kappa) K}{2} \theta^{-(k-1)\beta} + \tfrac12 K 
\theta^{-(k-1)\beta}\\
&\leq m_{k-1} + K \theta ^{-k\beta}\,.
\end{split}
\end{align}
We then set $m_k=m_{k-1}$ and $M_k= m_{k} + K \theta^{-k\beta}$ and obtain,
using \eqref{eq:hh-nonloc-lem-3A}, $m_k \leq u(z) \leq M_k$ for almost every $z
\in
B_{r\theta^{-k}}$, what needs to be proved.

Consider $w=1-v$ and note $w \in \cS_{x_0,r\theta^{-(k-1)}}$ and $w
\geq 0$ in $B_{r\theta^{-(k-1)}}$. We apply
\eqref{eq:hoelder-harnack-lem} and obtain
\begin{align}\label{eq:harnack_applied-lem}
\Big( \fint\limits_{B_{r\theta^{-k+1}/\lambda}(x_0)} w^p \d m \Big)^{1/p}
\leq c_1
\inf\limits_{B_{r\theta^{-k}}} w 
  + c_1 \sup\limits_{x\in B_{r\theta^{-k+1}/\sigma}} \il_{X} w^-(z)
\nu_{x, r\theta^{-(k-1)}}(\d z) \,,
\end{align}
In Case~1 the left-hand side of \eqref{eq:harnack_applied-lem} is bounded from 
below
by $(\frac12)^{1/p}$.
This, the estimate \eqref{eq:hh-nonloc-above-est} on $v$ from above leads to
\begin{align*}
\inf\limits_{B_{r\theta^{-k}}} w &\geq (c_1 2^{1/p})^{-1} 
- \sup\limits_{x\in B_{r\theta^{-k+1}/\sigma}} \il_{X} w^-(z) 
\nu_{x,r\theta^{-(k-1)}}(\d z) \\
&\geq (c_1 2^{1/p})^{-1} - \suml_{j=1}^\infty
 \sup\limits_{x\in B_{r\theta^{-k+1}/\sigma}} \il
\mathbbm{1}_{A_{r\theta^{-k+j},r\theta^{-k+j+1}} (x_0)} (1-v(z))^-
\, \nu_{x,r\theta^{-(k-1)}}(\d z) \\
&\geq (c_1 2^{1/p})^{-1} - \suml_{j=1}^\infty
(2\theta^{j\beta}-2) \eta_{x_0,r, \theta,j,k} \,,
\end{align*}
where $\eta_{x_0,r, \theta,j,k} = \sup\limits_{x\in B_{r\theta^{-k+1}/\sigma}} 
\nu_{x,r\theta^{-(k-1)}}(A_{r\theta^{-k+j},r\theta^{-k+j+1}} (x_0))$. Now, 
\eqref{eq:nu-xr_prop-sup-mod} implies that $\eta_{x_0,r, \theta,j,k} \leq c 
\chi^{-j-1}$. Thus we obtain

\begin{align}
\inf\limits_{B_{r\theta^{-k}}} w \geq (c_1 2^{1/p})^{-1} - 2 c
\suml_{j=1}^\infty (\theta^{j\beta}-1) \chi^{-j-1} \,.
\end{align}

Note that $\suml_{j=1}^\infty \theta^{j\beta}
\chi^{-j-1} < \infty$ for $\beta>0$ small enough, i.e. there
is $l \in \N$ with
\[ \suml_{j=l+1}^\infty (\theta^{j\beta}-1) \chi^{-j-1} \leq 
\suml_{j=l+1}^\infty \theta^{j\beta} \chi^{-j-1} \leq (16 c_1)^{-1} \,. \]
Given $l$ we choose $\beta >0$ smaller (if needed) in order to assure
\[ \suml_{j=1}^{l} (\theta^{j\beta}-1) \chi^{-j-1} \leq (16 c_1)^{-1} \,. \]
The number $\beta$ depends only on $c_1$, $c$, $\chi$ from 
\eqref{eq:nu-xr_prop-sup-mod} and on
$\theta$.
Thus we have shown that $w
\geq \kappa$ on $B_{r\theta^{-k}}$ or, equivalently, $v \leq
1-\kappa$ on $B_{r\theta^{-k}}$.

In Case 2 our aim is to show $v(x) \geq -1+\kappa$. This time, set $w=1+v$.
Following
the strategy above one sets $M_k=M_{k-1}$ and $m_k= M_{k} - K
\theta^{-k\beta}$ leading to the desired result.

Let us show how \eqref{eq:hh-nonloc-1L} proves the assertion of the
lemma.
Given $\rho \leq r$, there exists $j\in \N_0$ such that
\[
 r\theta^{-j-1} \leq \rho \leq r\theta^{-j}.
\]
From \eqref{eq:hh-nonloc-1L} we conclude
\[
 \osc_{B_\rho} u \leq \osc_{B_{r\theta^{-j}}} u \leq M_j - m_j
 \leq 2\theta^\beta \|u\|_\infty \left( \frac{\rho}{r} \right)^\beta.\qedhere
\]
\end{proof}

\begin{corollary}\label{cor:hh-nonloc-gen-k}
Let $\Omega=B_{r_0}(x_0) \subset X$ and let $\sigma, \theta, \lambda > 1$.
Let $\cS_{x,r}$ and $\nu_{x,r}$ be as
above. Assume that conditions \eqref{eq:nu-decay-simple},
\eqref{eq:nu_r-depend-assum} are satisfied. Assume that there is $c\geq 1$ such 
that
for $0 < r \leq r_0$,
\begin{align}\label{eq:hoelder-harnack-ass}
\left.
\begin{cases}
\big(B_r(x) \subset \Omega\big) \wedge
\big(u \in \cS_{x,r}) \wedge \big(u \geq 0 \text{ in } B_r(x) \big) \,, \\
\quad \Rightarrow \quad
\Big( \fint\limits_{B_{\frac{r}{\lambda}}(x)} u(\xi)^p m(\d \xi) \Big)^{1/p} 
\leq
c \inf\limits_{B_{\frac{r}{\theta}}(x)} u + c  \sup\limits_{\xi \in
B_{\frac{r}{\sigma}}(x)} \il_{X} u^-(z) \nu_{\xi,r} (\d z) \,.
\,.
\end{cases} \right\}
\end{align}
Then there exist $\beta \in (0,1)$ such that for
every $u\in \cS_{x_0,r_0}$  and almost every $x$, $y \in \Omega$
\begin{align} \label{eq:hoelder-asser-abstr}
|u(x) - u(y)| \leq 16\theta^\beta \|u\|_\infty \Big( 
\frac{d(x,y)}{d(x,\Omega^c) \vee d(y,\Omega^c)}
\Big)^\beta
\, .
\end{align}
\end{corollary}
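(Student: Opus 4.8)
The plan is to deduce \eqref{eq:hoelder-asser-abstr} from \autoref{theo:hh-nonloc-gen-k} by applying that theorem \emph{at every interior point} $x\in\Omega$ rather than only at the distinguished center $x_0$, and then stitching the resulting oscillation bounds together. The observation that makes the re-centering legitimate is the monotonicity axiom of the family $(\cS_{x,r})$: for $x\in\Omega$ set $r_x:=d(x,\Omega^c)\wedge r_0$; if $0<r\le r_x$ then $B_r(x)\subset\Omega=B_{r_0}(x_0)$, whence $\cS_{x_0,r_0}\subset\cS_{x,r}$, so $u\in\cS_{x,r}$, and for such $r$ the hypothesis \eqref{eq:hoelder-harnack-ass} is exactly what is required to run \autoref{theo:hh-nonloc-gen-k} with $x_0,r_0$ replaced by $x,r_x$ (the ball condition $B_r(x)\subset\Omega$ being automatic). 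I would first observe that the exponent $\beta$ produced by \autoref{theo:hh-nonloc-gen-k} depends only on the Harnack constant, on $p$, on $\chi$ and on $\theta$: its proof picks $\kappa$ and then $\beta$ from those quantities alone and nowhere uses the location of the center, so the \emph{same} $\beta$ serves for all $x\in\Omega$. The theorem thus yields
\begin{align*}
\osc\limits_{B_\rho(x)}u\ \le\ 2\theta^\beta\|u\|_\infty\,(\rho/r_x)^\beta\qquad(x\in\Omega,\ 0<\rho\le r_x).
\end{align*}

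Next I would record two elementary facts. Since $d(x,x_0)<r_0$ for every $x\in\Omega$, one has $\Omega=B_{r_0}(x_0)\subset B_{r_0+d(x,x_0)}(x)\subset B_{2r_0}(x)$, hence $d(x,\Omega^c)\le 2r_0$ and therefore $r_x\ge\tfrac12 d(x,\Omega^c)$; this comparison is exactly the slack that lets the normalization $d(x,\Omega^c)\vee d(y,\Omega^c)$ of \eqref{eq:hoelder-asser-abstr} replace $r_x$ at the cost of a bounded factor. Second, by the Lebesgue differentiation theorem there is a null set $N\subset\Omega$ off of which $\essinf_{B_\rho(x)}u\le u(x)\le\esssup_{B_\rho(x)}u$ for all $\rho>0$ (for a fixed rational $\rho$ this holds a.e., and monotonicity in $\rho$ upgrades it to every $\rho$), and off of which also $|u(x)|\le\|u\|_\infty$. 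It then suffices to establish \eqref{eq:hoelder-asser-abstr} for $x,y\in\Omega\setminus N$.

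Finally I would fix such $x,y$, relabel them so that $R:=d(x,\Omega^c)\vee d(y,\Omega^c)=d(x,\Omega^c)\le 2r_x$, put $d:=d(x,y)$, and distinguish two cases. If $d\ge r_x$ then $d/R\ge\tfrac12$, so $16\theta^\beta\|u\|_\infty(d/R)^\beta\ge 16\theta^\beta\cdot 2^{-\beta}\|u\|_\infty\ge 8\|u\|_\infty\ge|u(x)-u(y)|$ and \eqref{eq:hoelder-asser-abstr} is trivial. If $d<r_x$, pick $\eps>0$ with $(1+\eps)d\le r_x$; then $B_{\eps d}(y)\subset B_{(1+\eps)d}(x)\subset B_{r_x}(x)$, so by the second fact both $u(x)$ and $u(y)$ lie between the essential infimum and the essential supremum of $u$ over $B_{(1+\eps)d}(x)$, giving $|u(x)-u(y)|\le\osc\limits_{B_{(1+\eps)d}(x)}u\le 2\theta^\beta\|u\|_\infty((1+\eps)d/r_x)^\beta$; letting $\eps\to0$ and using $r_x\ge R/2$ produces $|u(x)-u(y)|\le 2^{1+\beta}\theta^\beta\|u\|_\infty(d/R)^\beta\le 16\theta^\beta\|u\|_\infty(d/R)^\beta$, which is \eqref{eq:hoelder-asser-abstr}.

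Beyond the re-centering this is the classical Moser-type passage from oscillation decay to H\"older continuity, so I expect no real difficulty there. The genuinely load-bearing point is that \eqref{eq:hoelder-harnack-ass} is postulated at \emph{every} admissible center and not merely at $x_0$; that is precisely what permits invoking \autoref{theo:hh-nonloc-gen-k} at each $x\in\Omega$. The only spots asking for a little care are checking that the $\beta$ of \autoref{theo:hh-nonloc-gen-k} does not depend on the center (i.e.\ that its proof is translation-free) and carrying along the trivial geometric inequality $d(x,\Omega^c)\le 2r_0$ that makes the stated constant $16$ sufficient.
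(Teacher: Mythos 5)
Your proposal is correct and follows essentially the same route as the paper: both rest on the observation that \eqref{eq:hoelder-harnack-ass} is postulated at every admissible center, so that \autoref{theo:hh-nonloc-gen-k} can be re-centered at each $x\in\Omega$ with radius comparable to $d(x,\Omega^c)$; the paper merely organizes this through a covering of $\Omega$ by balls $B_i\subset B_i^*$ and dyadic scales instead of applying the theorem directly at $x$ and passing to Lebesgue points. One micro-step is misjustified: the inclusion $\Omega\subset B_{2r_0}(x)$ bounds $d(x,X\setminus B_{2r_0}(x))$ from \emph{below} and does not give $d(x,\Omega^c)\le 2r_0$ in a general metric space; this is harmless, however, since in the Euclidean setting $d(x,\Omega^c)\le r_0$, so $r_x=d(x,\Omega^c)$ and no factor of $2$ is needed (the paper's own proof tacitly relies on the same property when it takes $B_i^*$ of radius about $d(y,\Omega^c)$ while only assuming the Harnack hypothesis up to scale $r_0$).
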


\begin{proof} By symmetry, we may assume that $r:=d(y,\Omega^c) \geq 
d(x,\Omega^c)$.
Furthermore, it is enough to prove \eqref{eq:hoelder-asser-abstr} for
pairs $x$, $y$ such that $d(x,y) < r/8$, as in the opposite case the assertion
is obvious.

We fix a~number $\rho \in (0,r_0/4)$ and consider all pairs of $x$, $y \in
\Omega$ such that
\begin{equation}\label{lem:pairs-xy}
  \frac{\rho}{2} \leq d(x,y) \leq \rho.
\end{equation}
We cover the ball $B_{r_0 - 4\rho}(x_0)$ by a countable family of balls
$\tilde{B}_i$ with radii $\rho$.
Without loss of generality, we may assume that  $\tilde{B}_i \cap B_{r_0 -
4\rho}(x_0) \neq \emptyset$.
Let $B_i$ resp. $B_i^*$ denote the balls with the same center as the ball
$\tilde{B}_i$ and the radius $2\rho$ resp. the maximal radius that allows for 
$B_i^* \subset \Omega$.

Let $x$, $y\in \Omega$ satisfy \eqref{lem:pairs-xy}. From $r> 8d(x,y) \geq 
4\rho$ it follows that $y \in B_{r_0 - 4\rho}(x_0)$,
therefore
$y\in \tilde{B}_i$ for some index $i$.
We observe that both $x$ and $y$ belong to $B_i$. We
apply \autoref{theo:hh-nonloc-gen-k} to 
$x_0$ and $r_0$ being the center and radius of $B_i^*$, respectively,
and obtain
\begin{align*}
 \osc_{B_i} u &\leq 2\theta^\beta  \|u\|_\infty
\left(\frac{\radius(B_i)}{\radius(B_i^*)}\right)^\beta
\leq 2\theta^\beta  \|u\|_\infty \left(\frac{\rho}{r-\rho}\right)^\beta\\
&\leq \frac{16}{3}\theta^\beta  \|u\|_\infty 
\left(\frac{d(x,y)}{r}\right)^\beta.
\end{align*}
Hence \eqref{eq:hoelder-asser-abstr} holds, provided $x$ and $y$ are such that
$|u(x)-u(y)| \leq \osc_{B_i} u$.

By considering $\rho=r_0 2^{-j}$ for $j=3$, $4$, \ldots, we prove 
\eqref{eq:hoelder-asser-abstr}
for almost all $x$ and $y$ such that $d(x,y) \leq r_0/8$,
 hence the proof is finished.
\end{proof}

\subsection{Proof of \autoref{theo:reg_result_nonlocal}}
\label{sec:regularity}

We are now going to use the above results and prove one of our main results. 

\begin{proof}[Proof of \autoref{theo:reg_result_nonlocal}]
The proof of \autoref{theo:reg_result_nonlocal} follows from 
\autoref{cor:weak_harnack} and \autoref{cor:hh-nonloc-gen-k}. 
The proof is complete once we can apply 
\autoref{cor:hh-nonloc-gen-k} for $x_0 = 0$ und $r_0 = \frac12$. Assume $0 < r 
\leq r_0$ and $B_r(x) \subset B_{\frac12}$. Let 
$\cS_{x,r}$ be the set of all functions $u \in V^\mu_{B_{r}(x)}(\R^d)$ 
satisfying $\cE(u,\phi) = 0$ for every $\phi \in H^\mu_{B_{r}(x)}(\R^d)$. 
Assume $u \in \cS_{x,r}$ and $u \geq 0$ in $B_{r}(x)$. Then 
\autoref{cor:weak_harnack} implies
\[   \inf\limits_{B_{\frac{r}{4}}(x)} u \geq c \big(
\fint\limits_{B_{\frac{r}{2}}(x)}
u(x)^{p_0} \, \d x \big)^{1/p_0} - r^\alpha \sup\limits_{y \in
B_{\frac{15 R}{16}}(x)}
\int\limits_{\R^d \setminus B_r(x)} u^-(z) \mu(y , \d z) \,, \] 
with positive constants $p_0, c$ which depend only on  $d, \alpha_0, A, 
B$. Set $\theta=4, \lambda=2, \sigma=\frac{16}{15}$. Let $\nu_{x,r}$ 
be the measure on
$\R^d \setminus B_r(x)$ defined by 
\[
 \nu_{x,r}(A) = r^\alpha \mu(x, A) 
\] 
The condition \eqref{eq:nu_r-depend-assum} obviously holds true. The 
condition \eqref{eq:nu-decay-simple} follows from \eqref{eq:assum_largejumps}. 
Thus we can apply \autoref{cor:hh-nonloc-gen-k} for $x_0 = 0$ und $r_0 = 
\frac12$ and obtain the assertion of \autoref{theo:reg_result_nonlocal}. The 
proof is complete. 
\end{proof}

\section{Local comparability results for nonlocal quadratic 
forms}\label{sec:comparability}

The aim of this section is to prove \autoref{theo:U1L1implyA}. The assertion of 
this result is that \eqref{eq:assum_comp} and \eqref{eq:assum_cutoff} hold 
true under certain assumptions on $\mu(\cdot, \d y)$, 
see \autoref{subsec:intro_comp}. 
It is easy to prove that \eqref{eq.lo-up-meas} and \eqref{eq:cond_U} imply (B) 
with a constant $B \geq 1$ independent of $\alpha \in (\alpha_0,2)$: Let 
$\tau\in C^\infty(\R^d)$ be a
function satisfying
$\supp(\tau)=\overline{B_{R+\rho}}$,
$\tau\equiv 1$ on $B_R$, $0\leq \tau \leq 1$ on $\R^d$ and $|\tau(x)-\tau(y)|
\leq 2\rho^{-1}|x-y|$ for all $x$, $y\in \R^d$.
In particular, we have then $|\tau(x)-\tau(y)| \leq (2\rho^{-1}|x-y|) \wedge 1$.
For every $x\in \R^d$ we obtain
\begin{align*}
\int_{\R^d} (\tau(x)-\tau(y))^2 \mu(x,dy)
 &\leq
\int_{\R^d} \left( (4\rho^{-2}|z|^2) \wedge 1 \right) \upB(dz)\\
 &= 4\rho^{-2}
\int_{\R^d} (|z|^2 \wedge \frac{\rho^2}{4}) \upB(dz) \leq 2^\alpha C_U
\rho^{-\alpha} \leq 4 C_U \rho^{-\alpha}.\qedhere
\end{align*}

Thus we only need to concentrate on proving \eqref{eq:assum_comp}. The upper 
bound can be established quite easily, so we do this first.

\subsection{Upper bound in \texorpdfstring{\eqref{eq:assum_comp}}{(A)}}

Let us formulate and prove the following
comparability result.

\begin{proposition}\label{prop:upper}
Assume that $\nu$ satisfies \eqref{eq:cond_U} with the constant $C_U$ and  let
$0<\alpha_0 \leq \alpha<2$.
If $D\subset \R^d$ is a bounded Lipschitz domain, then
there exists a constant $c=c(\alpha_0,d,C_U, D)$ such that
\begin{equation}\label{Dupper}
 \cE^\nu_D(u,u) \leq c \cE^{\mu_\alpha}_D(u,u), \quad u\in H^{\alpha/2}(D).
\end{equation}
The constant $c$ may be chosen such that \eqref{Dupper} holds for all balls
$D=B_r$ of radius $r<1$, and for all $\alpha\in [\alpha_0,2)$.
\end{proposition}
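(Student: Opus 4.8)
The plan is to show that the energy $\cE^\nu_D(u,u)$ is controlled by the Gagliardo seminorm associated to $\mu_\alpha$, using only the upper-bound hypothesis \eqref{eq:cond_U} on $\nu$. The key idea is that \eqref{eq:cond_U}, a statement about truncated second moments of $\nu$, lets us decompose the $z$-integral dyadically in $|z|$ and, on each annulus, trade the increment $(u(x)-u(x+z))^2$ against a multiple of $|z|^{-d-\alpha}$ times the same increment after integrating a suitable extension of $u$ over a slightly larger set.

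First I would reduce to the case $D = \R^d$ by invoking the extension operator of \autoref{fact:extension}: given $u \in H^{\alpha/2}(D)$ with $D$ a bounded Lipschitz domain, let $Eu \in H^{\alpha/2}(\R^d)$ be its extension, with $\|Eu\|_{H^{\alpha/2}(\R^d)} \le c\|u\|_{H^{\alpha/2}(D)}$ and $c$ independent of $\alpha$. Since $\1_D(x)\1_D(x+z) \le 1$, we have $\cE^\nu_D(u,u) \le \cE^\nu_{\R^d}(Eu,Eu)$, and it suffices to prove $\cE^\nu_{\R^d}(v,v) \le c\,\cE^{\mu_\alpha}_{\R^d}(v,v)$ for all $v \in H^{\alpha/2}(\R^d)$; then the local norm $\|u\|^2_{L^2(D)}$ that the extension norm also involves is harmless because the left-hand side of \eqref{Dupper} only sees increments. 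For the ball case $D = B_r$, $r<1$, one uses a scale-invariant family of extension operators (or scales to the unit ball) so that $c$ does not depend on $r$.

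Next, for $v \in H^{\alpha/2}(\R^d)$ I would write, using symmetry of $\nu$,
\begin{align*}
\cE^\nu_{\R^d}(v,v) &= \int_{\R^d}\int_{\R^d} \big(v(x)-v(x+z)\big)^2 \nu(\d z)\,\d x
= \suml_{j \in \Z} \int_{A_j} \Big(\int_{\R^d} \big(v(x)-v(x+z)\big)^2\,\d x\Big)\nu(\d z),
\end{align*}
where $A_j = \{ 2^{j-1} < |z| \le 2^j\}$. For the inner $x$-integral I would use the standard bound $\int_{\R^d}(v(x)-v(x+z))^2\,\d x \le c\,(|z|^\alpha \wedge |z|^2)\,[v]^2$, where $[v]^2 = \int\int (v(y)-v(x))^2|x-y|^{-d-\alpha}\,\d x\,\d y$; more precisely, for $|z|\le 1$ one has the sharp estimate $\int_{\R^d}(v(x)-v(x+z))^2\,\d x \le c\,|z|^\alpha\,(2-\alpha)^{-1}[v]^2$ (this is where the normalizing factor $(2-\alpha)$ in $\mu_\alpha$ makes the constant uniform as $\alpha\to 2$), while for $|z|>1$ the crude bound $\le 4\|v\|_{L^2}^2 \le c\,[v]^2$ combined with a Poincaré-type argument or simply absorbing into the tail suffices. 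Summing over $j$ with $2^j \le 1$ gives $\suml_{2^j\le 1} c\,2^{j\alpha}\,\nu(A_j) \le c\suml_{2^j\le 1}\int_{A_j}|z|^\alpha\nu(\d z)$, and I would relate this back to \eqref{eq:cond_U} via the telescoping identity $\int_{|z|\le 1}|z|^\alpha\,\nu(\d z) = \suml_{k\ge 0}\int_{\{2^{-k-1}<|z|\le 2^{-k}\}}|z|^\alpha\nu(\d z)$ together with the observation that \eqref{eq:cond_U} applied at radius $r = 2^{-k}$ yields $\nu(\{|z|>2^{-k}\}) \le 4\,C_U\,2^{k\alpha}$ after discarding the part near zero, hence $\int_{\{2^{-k-1}<|z|\le 2^{-k}\}}|z|^\alpha\,\nu(\d z)$ is summable against a geometric bound; more directly, $\int_{|z|\le 1}|z|^\alpha\nu(\d z) \le \int_{\R^d}(1\wedge|z|)^2\,|z|^{\alpha-2}\,\nu(\d z)$ is finite and bounded by $c\,C_U$ by splitting and summing the dyadic pieces of \eqref{eq:cond_U}. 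The tail $\suml_{2^j>1}\nu(A_j) \le \nu(\{|z|>1\}) \le 4C_U$ is bounded directly by \eqref{eq:cond_U} at $r=1$.

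The main obstacle I anticipate is the bookkeeping of constants: one must verify at every step that the constants coming from the extension operator, from the increment estimate $\int(v(x)-v(x+z))^2\,\d x \le c|z|^\alpha(2-\alpha)^{-1}[v]^2$, and from the dyadic summation against \eqref{eq:cond_U} are all independent of $\alpha \in [\alpha_0,2)$ (and, in the ball case, of $r<1$). The factor $(2-\alpha)$ built into $\mu_\alpha$ is exactly what cancels the blow-up of the Gagliardo constant as $\alpha\to 2-$; on the $\alpha\to 0$ side one uses $\alpha\ge\alpha_0$ to keep the geometric series $\suml 2^{-k\alpha}$ under control. None of the individual estimates is deep — the content is the uniformity — so I would organize the proof so that each inequality is displayed with its constant's dependence made explicit, and conclude by taking $c = c(\alpha_0,d,C_U,D)$ to be the product of the finitely many constants thus produced.
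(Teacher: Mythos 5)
Your outer scaffolding — extension operator, reduction to $\R^d$, normalizing $\int_D u\,\d x=0$ and absorbing the $L^2$ term by Poincar\'e, scaling for balls — matches the paper's proof. The genuine gap is in the core estimate. You decouple the variables $z$ and $\xi$ by bounding $\int_{\R^d}(v(x)-v(x+z))^2\,\d x \le c\,|z|^\alpha\,\cE^{\mu_\alpha}_{\R^d}(v,v)$ for each fixed $z$ and then integrating the envelope $|z|^\alpha$ against $\nu$. That increment bound is sharp, and \eqref{eq:cond_U} only gives $\nu(A_j)\le 4C_U\,2^{-j\alpha}$ on the annulus $A_j=\{2^{j-1}<|z|\le 2^j\}$, so each dyadic term $2^{j\alpha}\nu(A_j)$ is of size $C_U$ \emph{uniformly} in $j$; the sum over $2^j\le 1$ diverges. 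Concretely, your claim that $\int_{|z|\le 1}|z|^\alpha\,\nu(\d z)\le c\,C_U$ is false: for $\nu(\d z)=(2-\alpha)|z|^{-d-\alpha}\,\d z$, which satisfies \eqref{eq:cond_U}, this integral equals $(2-\alpha)\int_{|z|\le1}|z|^{-d}\,\d z=+\infty$. The terms you describe as ``summable against a geometric bound'' are only uniformly bounded, not geometrically decaying. (The tail step ``$4\|v\|_{L^2(\R^d)}^2\le c[v]^2$'' is also false on $\R^d$ by scaling, but that is repairable via the mean-zero normalization on the bounded set $D$.)

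The repair is to keep $z$ and $\xi$ coupled, which is exactly the paper's route. By Plancherel and Fubini,
\[
\cE^\nu_{D}(u,u)\le\int_{\R^d}\Big(\int_{B_{\diam D}(0)} 4\sin^2\big(\tfrac{\xi\cdot z}{2}\big)\,\nu(\d z)\Big)\,|\widehat{Eu}(\xi)|^2\,\d\xi ,
\]
and for each fixed $\xi$ with $|\xi|>2$ the inner integral is at most $|\xi|^2\int(|z|^2\wedge 4|\xi|^{-2})\,\nu(\d z)\le 4C_U|\xi|^\alpha$, directly from \eqref{eq:cond_U} with $r=2/|\xi|$ (and $\le 4C_U$ for $|\xi|\le2$). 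The saving comes from the fact that, at fixed frequency, the integrand vanishes \emph{quadratically} as $z\to0$ — precisely the small-$|z|$ behaviour that \eqref{eq:cond_U} controls — whereas your $|z|$-uniform envelope $c|z|^\alpha$ discards this and leaves a non-integrable singularity of $\nu$ at the origin. With the multiplier bound $c(|\xi|^\alpha+1)$ in hand, one concludes via $\|Eu\|^2_{H^{\alpha/2}(\R^d)}\le c\|u\|^2_{H^{\alpha/2}(D)}$ and Poincar\'e as you outline.
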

\begin{proof}
By $E$ we denote the extension operator from $H^{\alpha/2}(D)$ to
$H^{\alpha/2}(\R^d)$, see \autoref{fact:extension}.
By subtracting  a constant, we may and do assume that $\int_D u\,\d x = 0$.
We have by Plancherel formula and Fubini theorem
{\allowdisplaybreaks
\begin{align}
\cE^\nu_{D}(u,u) &=
\int_D\!\int_{D-y} (u(y+z)-u(y))^2 \,\nu(dz)\,\d y \label{Plproof}\\
&\leq
\int_D\!\int_{B_{\diam D}(0)} (Eu(y+z)-Eu(y))^2 \,\nu(dz)\,\d y \nonumber\\
&\leq
\int_{B_{\diam D}(0)}\!\int_{\R^d} (Eu(y+z)-Eu(y))^2 \,\d y \, \,\nu(dz)
\nonumber\\
 &=\int_{\R^d} \left( \int_{B_{\diam D}(0)} |e^{i\xi\cdot z}-1|^2 \,\nu(dz)
\right)
|\widehat{Eu}(\xi)|^2\,d\xi  \nonumber\\
 &=\int_{\R^d} \left( \int_{B_{\diam D}(0)}4\sin^2\Big(\frac{\xi\cdot
z}{2}\Big) 
\,\nu(dz) \right) |\widehat{Eu}(\xi)|^2\,d\xi.\label{Plproof2}
\end{align}
}
For  $|\xi| > 2$ we obtain, using \eqref{eq:cond_U}
\begin{equation}\label{largexi} 
 \int 4\sin^2\Big(\frac{\xi\cdot z}{2}\Big) \,\nu(dz) \leq
|\xi|^2 \int (|z|^2 \wedge 4|\xi|^{-2}) \,\nu(dz) 
\leq 4C_U |\xi|^\alpha,
\end{equation}
and for $|\xi| \leq 2$
\begin{align*}
 \int 4\sin^2\Big(\frac{\xi\cdot z}{2}\Big) \,\nu(dz) &\leq
4 \int \left(\Big|\frac{\xi\cdot z}{2}\Big|^2 \wedge 1\right) \,\nu(dz) 
\leq 4C_U.
\end{align*}
Thus
\begin{align}
\cE^\nu_{D}(u,u) &\leq
c' \int_{\R^d} \left(|\xi|^\alpha + 1  \right)
|\widehat{Eu}(\xi)|^2\,d\xi \nonumber\\
& \leq c' \|Eu\|_{H^{\alpha/2}(\R^d)}^2 \leq c
\|u\|_{H^{\alpha/2}(D)}^2
=c (\cE^{\mu_\alpha}_D(u,u) + \|u\|_{L^2(D)}^2)\label{EkdEa}
\end{align}
with  $c=c(d,C_U, D)$.
Since $\int_D u\,\d x=0$, we have by \autoref{fact:poincare-i}
\begin{align*}
\cE^{\mu_\alpha}_D(u,u) &\geq c(\alpha_0, d, D)  \int_D u^2(x)\,\d x
\end{align*}
and this together with \eqref{EkdEa} proves \eqref{Dupper}.

By scaling, the last assertion of the Theorem is satisfied with a~constant
$c=c(\alpha_0,d, C_U, B_1)$.
\end{proof}

\begin{proof}[Proof of \autoref{theo:U1L1implyA}  -- upper bound in
\eqref{eq:assum_comp}]
The second inequality in \eqref{eq:assum_comp} follows from
\autoref{prop:upper}.
We note that the constant in this inequality is robust under the mere assumption
that $\alpha$ is bounded away from zero.
\end{proof}

\subsection{Lower bound in \texorpdfstring{\eqref{eq:assum_comp}}{(A)}}

The main difficulty in establishing the lower bound in \eqref{eq:assum_comp} is 
that the measures might be singular. We will introduce a new convolution-type 
operation that, on the one hand, smoothes the support of the measures and, on 
the other hand, interacts nicely with our quadratic forms. The main result of 
this subsection is \autoref{prop:Hausdorff}.

For $\lambda < 1 \leq \eta$ and $\alpha \in (0,2)$ let
\begin{align}\label{def:g_lam_eta}
 g_\lambda^\eta(y,z) = \frac{1}{2-\alpha} |y+z|^{\alpha} 
\mathbbm{1}_{A_{|y+z|}}(y)
   \mathbbm{1}_{A_{|y+z|}}(z), \qquad y,z\in \R^d,
\end{align}
where
\[
 A_r = B(0,\eta r) \setminus B(0,\lambda r).
\]

\begin{definition}\label{def:heart_convolution} For measures $\nu_1, \nu_2$ on 
  $\cB(\R^d)$ satisfying \eqref{eq:cond_U} with some $\alpha\in (0,2)$, define a new 
measure $\nu_1 \heartsuit \nu_2$ on $\cB(\R^d)$ by
\[
 \nu_1 \heartsuit \nu_2(E) = \iint  \mathbbm{1}_{E\cap B_2}(\eta (y+z))
g_\lambda^\eta(y,z)
\,\nu_1(dy)\,\nu_2(dz),
\]
i.e.,
\[
 \int f(x) \nu_1 \heartsuit \nu_2(dx) = \iint (f\cdot \1_{B_2})(\eta(y+z)) 
g_\lambda^\eta(y,z)
\,\nu_1(dy)\,\nu_2(dz),
\]
for every measurable function $f:\R^d\to [0,\infty]$. 
\end{definition}

This definition is tailored for our applications and needs some 
explanations. We consider $\nu_1 \heartsuit \nu_2$ only for measures 
$\nu_j$, which satisfy  \eqref{eq:cond_U} with some $\alpha\in (0,2)$ for $j 
\in \{1,2\}$. This $\alpha$ equals the exponent $\alpha$ in the definition of 
$g_\lambda^\eta$. The above definition does not require $\nu_j$ to satisfy 
\eqref{eq:cond_scaling} but most often, this will be the case. Note that 
\autoref{def:heart_convolution} is valid for any choice $\lambda < 1 \leq 
\eta$. However, it will be important to choose $\lambda$ small enough and 
$\eta$ large enough. The precise bounds depend on the number $a$ from 
\eqref{eq:cond_scaling}, see \autoref{prop:Hausdorff}. Before we explain and 
prove the rather technical details, let us treat an example. 

Let us study \autoref{exa:axes} in $\R^2$. Assume $\alpha \in (0,2)$ and 
\begin{align*}
\nu_1(\d h) &= (2-\alpha) |h_1|^{-1-\alpha} \d h_1 
\delta_{\{0\}}(\d h_2)\,, \\
\nu_2(\d h) &= (2-\alpha) |h_2|^{-1-\alpha} \d 
h_2 \delta_{\{0\}}(\d h_1) \,.
\end{align*}
Both measures are one-dimensional $\alpha$-stable measures which are orthogonal 
to each other. The factor $(2-\alpha)$ ensures that for $\alpha \to 2-$ the 
measures do not explode. Let us show that $\nu_1 \heartsuit \nu_2$ is 
already absolutely continuous with respect to the two-dimensional Lebesgue 
measure.  For $E \subset B_2$; by the \autoref{def:heart_convolution} and 
the Fubini theorem 
\begin{align*}
\nu_1 &\heartsuit \nu_2(E) \\
&= (2-\alpha) \iiiint  |y+z|^{\alpha} 
\mathbbm{1}_E(\eta(y+z))
\mathbbm{1}_{A_{|y+z|}}(y)
   \mathbbm{1}_{A_{|y+z|}}(z) |y_1|^{-1-\alpha} 
|z_2|^{-1-\alpha} \ldots \\
&\qquad \ldots  \delta_{\{0\}}(\d y_2) \, \delta_{\{0\}}(\d z_1) \d y_1 
\d 
z_2 \\
&= (2-\alpha) \iint  |(y_1, z_2)|^{\alpha} \mathbbm{1}_E(\eta(y_1,z_2))
\mathbbm{1}_{A_{|(y_1, z_2)|}}(y_1,0)
   \mathbbm{1}_{A_{|(y_1, z_2)|}}(0,z_2) |y_1|^{-1-\alpha} 
|z_2|^{-1-\alpha} \, \d y_1 \d 
z_2 \\
&= (2-\alpha) \iint   \mathbbm{1}_E(\eta x)
\mathbbm{1}_{A_{|x|}}(x_1,0)
   \mathbbm{1}_{A_{|x|}}(0,x_2) |x|^{\alpha} |x_1|^{-1-\alpha} 
|x_2|^{-1-\alpha} \, \d x_1 \d 
x_2.
\end{align*}
The above computation 
shows that the measure $\nu_1 \heartsuit \nu_2$ is absolutely continuous with 
respect to the two-dimensional Lebesgue measure,
because  $\nu_1 \heartsuit \nu_2(\R^d\setminus B_2)=0$.
Let us look at the density 
more closely.

So far, we have not specified $\lambda$ and $\eta$ in the definition of 
$g_\lambda^\eta$. If $\lambda < 1$ is too large (in this particular case, if $\lambda>1/\sqrt{2}$), 
then $\mathbbm{1}_{A_{|x|}}(x_1,0) \mathbbm{1}_{A_{|x|}}(0,x_2) = 0$ for all $x 
\in \R^2$. If $\lambda$ is sufficiently small, then the support of the function 
$\mathbbm{1}_{A_{|x|}}(x_1,0) \mathbbm{1}_{A_{|x|}}(0,x_2)$ is a double-cone 
centered around the diagonals $\{x \in \R^2 | |x_1|=|x_2|\}$. Let us denote 
this support by $M$. Note that on $M$ the function $|x|^{\alpha} 
|x_1|^{-1-\alpha} 
|x_2|^{-1-\alpha}$ is comparable to $|x|^{-2-\alpha}$. Thus indeed the quantity 
$\nu_1 \heartsuit \nu_2$ is comparable to an $\alpha$-stable measure in 
$\R^2$. If we continue the procedure and define 
\[ \widetilde{\nu} = (\nu_1 \heartsuit \nu_2) \heartsuit (\nu_1 \heartsuit 
\nu_2) \,, \]
then we can make use of the fact that $(\nu_1 \heartsuit
\nu_2)$ is already absolutely continuous with respect to the two-dimensional 
Lebesgue measure. Note that, if 
$\mu_j = h_j\, dx$, then $\mu_1 \heartsuit \mu_2$ has a~density $h_1\heartsuit 
h_2$
with respect to the Lebesgue measure given by
\begin{equation}\label{e.L-density}
 h_1\heartsuit h_2(\eta y) 
= \frac{\eta^{-d} |y|^\alpha}{2-\alpha} \int \1_{A_{|y|}}(y-z) \1_{A_{|y|}}(z) 
h_1(y-z) h_2(z) \,dz,
\qquad \eta y\in B_2.
\end{equation}
In this way we conclude that $\widetilde{\nu}$ has full support and is 
comparable to a rotationally symmetric $\alpha$-stable measure in $\R^2$. With 
this observation we end our study of \autoref{def:heart_convolution} in light 
of \autoref{exa:axes}.

Before we proceed to the proofs, let us  informally explain the idea 
behind \autoref{def:heart_convolution} and our strategy. In the inner integral 
defining
\[
 \cE^{\nu}_B(u,u) = \int_B \int_{\R^d} \big(u(x)-u(x+h)\big)^2 \1_B(x+h) \,
\nu(\d h)\, \d x
\]
we take into account squared increments $(u(x)-u(x+h))^2$ in these directions
$h$, which are charged by the measure $\nu$ and such that $x+h$ is still in $B$.
By changing the variables, we see that we also have squared increments
$(u(x+h)-u(x+h+z))^2$,
 again in directions $z$, which are charged by the measure $\nu$ and such that
$x+h+z$ is still in $B$. This allows us to estimate the integral
$\cE^{\nu}_B(u,u)$ from below by a~similar integral with $\nu$ replaced by some
kind of a~convolution of $\nu$ with itself.
Measure $\nu \heartsuit \nu$ turns out to be the right convolution for this
purpose, see \autoref{lem:heart-form-comp}.

In the definition of $\nu \heartsuit \nu$, function $g_\lambda^\eta$ vanishes if
$|y|$ or
$|z|$ is bigger than $\eta|y+z|$ or smaller than $\lambda |y+z|$.
This means, in our interpretation, that we consider only those pairs of jumps
which are comparable with the size of the whole two-step jump (and in
particular,
the jumps must be comparable with each other).

To conclude these informal remarks on the definition of $\nu_1\heartsuit \nu_2$
let us note that if $\nu_1$ and $\nu_2$ have 'good properties', then so has
$\nu_1\heartsuit \nu_2$ (see \autoref{lem:heart-scaling}
and \autoref{lem:heart-upper}) and that $\cE_B^{\nu_1\heartsuit \nu_2}(u,u)$ 
can be
estimated from above by $\cE_B^{\nu_j}(u,u)$ (see
\autoref{lem:heart-form-comp}).
This allows us to reduce the problem of estimating $\cE_B^\nu(u,u)$ from below
to estimating $\cE_B^{\nu\heartsuit\nu} (u,u)$ from below, and this turns out to
be easier, since the $\heartsuit$-convolution makes the measure more 'smooth',
see \autoref{prop:Hausdorff}.

\begin{lemma}\label{lem:heart-scaling}
If two measures $\nu_j$ for $j \in \{1,2\}$ satisfy the scaling assumption 
\eqref{eq:cond_scaling} for some $a>1$,
then so does the measure $\nu_1 \heartsuit \nu_2$ for the same constant $a$.
\end{lemma}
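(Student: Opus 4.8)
The plan is to peel off the definition of $\nu_1\heartsuit\nu_2$ and reduce the asserted scaling identity to the two given scaling identities for $\nu_1$ and for $\nu_2$, applied successively, with the kernel $g_\lambda^\eta$ contributing an explicit homogeneity factor. It suffices to verify $\int f\,\nu_1\heartsuit\nu_2(\d x) = a^{-\alpha}\int f(ax)\,\nu_1\heartsuit\nu_2(\d x)$ for every nonnegative measurable $f$ with $\supp f\subset B_1$. Since $a>1$ and $B_1\subset B_2$, both $f$ and $f(a\,\cdot)$ are supported in $B_2$, so the truncation $\1_{B_2}$ in \autoref{def:heart_convolution} plays no role and
\[
 \int f\,\nu_1\heartsuit\nu_2(\d x) = \iint f(\eta(y+z))\, g_\lambda^\eta(y,z)\,\nu_1(\d y)\,\nu_2(\d z),
\]
with the analogous formula for $f(a\,\cdot)$.

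Next I would record the homogeneity of the kernel. Writing $A_r=\{w:\lambda r\le|w|<\eta r\}$, one has $w\in A_{ar}\iff a^{-1}w\in A_r$, so that $\1_{A_{|ay+az|}}(ay)=\1_{A_{a|y+z|}}(ay)=\1_{A_{|y+z|}}(y)$ and likewise in $z$; since also $|ay+az|^\alpha=a^\alpha|y+z|^\alpha$, this gives $g_\lambda^\eta(ay,az)=a^\alpha g_\lambda^\eta(y,z)$. Now set $F(y,z)=f(\eta(y+z))\,g_\lambda^\eta(y,z)$ (jointly measurable, since $g_\lambda^\eta$ is Borel), and do the bookkeeping on supports: $g_\lambda^\eta(y,z)\ne 0$ forces $|y|,|z|<\eta|y+z|$, while $f(\eta(y+z))\ne 0$ forces $|y+z|<1/\eta$, so for each fixed $z$ the slice $y\mapsto F(y,z)$ is supported in $B_1$; and after one rescaling the slice $z\mapsto F(ay,z)$ is again supported in $B_1$, because $g_\lambda^\eta(ay,z)\ne 0$ still forces $|z|<\eta|ay+z|$ while the $f$-factor still forces $|ay+z|<1/\eta$. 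By Tonelli we may integrate in $y$ first and apply the scaling property \eqref{eq:cond_scaling} of $\nu_1$ to $y\mapsto F(y,z)$, then integrate in $z$ and apply \eqref{eq:cond_scaling} of $\nu_2$ to $z\mapsto F(ay,z)$, which yields
\[
 \iint F(y,z)\,\nu_1(\d y)\,\nu_2(\d z) = a^{-2\alpha}\iint F(ay,az)\,\nu_1(\d y)\,\nu_2(\d z).
\]
Finally, $F(ay,az)=f(a\eta(y+z))\,a^\alpha g_\lambda^\eta(y,z)$ by the homogeneity, so the right-hand side equals $a^{-\alpha}\iint f(a\eta(y+z))\,g_\lambda^\eta(y,z)\,\nu_1(\d y)\,\nu_2(\d z)=a^{-\alpha}\int f(ax)\,\nu_1\heartsuit\nu_2(\d x)$, which is the claim.

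The only step that needs genuine care — hence the main (minor) obstacle — is the support verification that makes \eqref{eq:cond_scaling} applicable: that property is purely local, valid only for test functions supported in $B_1$, and because the annulus constraint in $g_\lambda^\eta$ couples $y$ and $z$, one must check that the $f$-factor, through its control on $|y+z|$, forces the individual variables into $B_1$ both before and after the first rescaling. Everything else is elementary: the homogeneity of $g_\lambda^\eta$, the irrelevance of the $\1_{B_2}$ truncation for functions already supported in $B_1$, and two applications of Tonelli's theorem. I would also note in passing that the exponent $\alpha$ in \eqref{eq:cond_scaling} for $\nu_1$ and $\nu_2$ is the same exponent appearing in $g_\lambda^\eta$, as built into the standing hypotheses of \autoref{def:heart_convolution}.
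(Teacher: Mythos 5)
Your proof is correct and follows essentially the same route as the paper's: exploit the homogeneity $g_\lambda^\eta(ay,az)=a^\alpha g_\lambda^\eta(y,z)$, observe that the integrand $f(\eta(y+z))g_\lambda^\eta(y,z)$ vanishes outside $B_1\times B_1$ so that \eqref{eq:cond_scaling} may be applied once in each variable, and note that the $\1_{B_2}$ truncation is inactive. The only difference is cosmetic — you run the identity from $\int f$ to $a^{-\alpha}\int f(a\cdot)$ rather than the reverse, which costs you the one extra (correctly performed) support check on the slice $z\mapsto F(ay,z)$.
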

\begin{proof}
If $\supp f\subset B_1$, then
\begin{align*}
\int f(ax)\nu_1\heartsuit \nu_2(dx) &=
\iint f(\eta a(y+z)) \1_{B_2}(\eta(y+z)) g_\lambda^\eta(y,z) 
\,\nu_1(dy)\,\nu_2(dz) \\
&=
a^{-\alpha} \iint f(\eta(ay+az)) g_\lambda^\eta(ay,az) \,\nu_1(dy)\,\nu_2(dz),
\end{align*}
because $g_\lambda^\eta(y,z) =a^{-\alpha} g_\lambda^\eta(ay,az)$.  
We observe that the function $(y,z) \mapsto f(\eta(y+z)) g_\lambda^\eta(y,z)$ 
vanishes
outside $B_1\times B_1$. Hence we may apply \eqref{eq:cond_scaling} twice to
obtain
\[
\int f(ax)\nu_1\heartsuit \nu_2(dx)
=
a^\alpha \iint f(\eta(y+z)) g_\lambda^\eta(y,z) \,\nu_1(dy)\,\nu_2(dz) = 
a^\alpha \int
f(x)\nu_1\heartsuit \nu_2(dx). \qedhere
\]
\end{proof}

Next, we establish conditions which are equivalent to \eqref{eq:cond_U}.
We say that a measure $\nu$ on $\cB(\R^d)$ satisfies the 
upper-bound assumption 
\eqref{eq:cond_U0} if for some $C_0 >0$
\begin{equation}\label{eq:cond_U0}\tag{U0}
\int_{\R^d} (|z|^2 \wedge 1)\, \nu(\d z) \leq C_0  \,. 
\end{equation}
We say that a measure $\nu$ on $\cB(\R^d)$ satisfies the upper-bound assumption 
\eqref{eq:cond_U1'} if there exists $C_1 >0$  such that for every $r \in (0,1)$
\begin{equation}\label{eq:cond_U1'}\tag{U1}
 \int_{B_r(0)} |z|^2 \,\nu(dz) \leq C_1 r^{2-\alpha} \,.
\end{equation}

\begin{lemma}\label{lem:U1p}
\begin{align*}
 \eqref{eq:cond_U} \quad \Longleftrightarrow \quad \eqref{eq:cond_U0} \wedge
\eqref{eq:cond_U1'} \,.
\end{align*}
If the constants $C_0, C_1$ are independent of $\alpha \in 
[\alpha_0,2)$, then so is $C_U$,
and vice versa.
\end{lemma}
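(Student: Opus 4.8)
The plan is to read off both implications from the layer-cake (distribution-function) identity for $\int(r\wedge|z|)^2\,\nu(\d z)$; the estimates themselves are elementary, so the real task is to track the constants and verify that $\alpha$ enters only through the uniformly bounded factor $(2^{\alpha_0}-1)^{-1}$.

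\emph{The implication $\eqref{eq:cond_U}\Rightarrow\eqref{eq:cond_U0}\wedge\eqref{eq:cond_U1'}$ is immediate.} Taking $r=1$ in \eqref{eq:cond_U} and using $(1\wedge|z|)^2=|z|^2\wedge1$ gives \eqref{eq:cond_U0} with $C_0=C_U$. For $r\in(0,1)$, since $(r\wedge|z|)^2=|z|^2$ on $B_r(0)$, restricting the integral in \eqref{eq:cond_U} to $B_r(0)$ gives \eqref{eq:cond_U1'} with $C_1=C_U$. In particular $C_0,C_1$ may be taken $\alpha$-independent whenever $C_U$ is, which is the ``vice versa'' part of the last assertion in this direction.

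\emph{For the converse} I would fix $0<r\le1$ and split
\[
\int_{\R^d}(r\wedge|z|)^2\,\nu(\d z)=\int_{B_r(0)}|z|^2\,\nu(\d z)+r^2\,\nu(\R^d\setminus B_r(0)).
\]
The first summand is bounded by $C_1 r^{2-\alpha}$ for $r<1$ by \eqref{eq:cond_U1'}, and by $C_0$ at $r=1$ by \eqref{eq:cond_U0} (on $B_1(0)$ one has $|z|^2=|z|^2\wedge1$). The content is therefore the tail bound
\[
\nu(\R^d\setminus B_r(0))\le 4C_0+\frac{4C_1}{2^{\alpha_0}-1}\,r^{-\alpha}\qquad(0<r\le1),
\]
derived as follows. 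Set $N(s)=\nu(\R^d\setminus B_s(0))$, which is finite by \eqref{eq:cond_U0}. Applying \eqref{eq:cond_U1'} on the annulus $B_{2r}(0)\setminus B_r(0)$ gives $r^2\big(N(r)-N(2r)\big)\le\int_{B_{2r}(0)}|z|^2\,\nu(\d z)\le C_1(2r)^{2-\alpha}$ whenever $2r\le1$, i.e.\ $N(r)\le N(2r)+4C_12^{-\alpha}r^{-\alpha}$. Iterating this until the radius first exceeds $\tfrac12$ produces a geometric series $\sum_{j\ge0}2^{-j\alpha}=(1-2^{-\alpha})^{-1}$ and a boundary term $N(s_0)$ at a scale $s_0\in(\tfrac12,1]$; the boundary term is $\le4C_0$ since $\1_{\{|z|\ge1/2\}}\le4(|z|^2\wedge1)$, and $(1-2^{-\alpha})^{-1}=2^{\alpha}(2^{\alpha}-1)^{-1}\le4(2^{\alpha_0}-1)^{-1}$. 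Feeding the tail bound back and using $r^2\le r^{2-\alpha}$ for $r\le1$ to absorb the $4C_0r^2$ term yields \eqref{eq:cond_U} with $C_U=C_1+4C_0+4C_1(2^{\alpha_0}-1)^{-1}$, a constant depending only on $C_0,C_1,\alpha_0$; at $r=1$ one simply invokes \eqref{eq:cond_U0} directly.

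The step I expect to need the most care is exactly this last tail estimate: the annular iteration must be run only while the radii stay below $1$, so that \eqref{eq:cond_U1'} (assumed only for $r\in(0,1)$) is applicable, and the leftover ``outer'' mass must then be absorbed into \eqref{eq:cond_U0}; the $\alpha$-uniformity of the whole argument rests entirely on the elementary inequality $2^{\alpha}(2^{\alpha}-1)^{-1}\le4(2^{\alpha_0}-1)^{-1}$ for $\alpha\in[\alpha_0,2)$. Everything else is a routine computation.
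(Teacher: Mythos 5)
Your proof is correct and follows essentially the same route as the paper: the forward implication is read off directly with $C_0=C_1=C_U$, and the converse is obtained by splitting $\int(r\wedge|z|)^2\,\nu(\d z)$ into $\int_{B_r}|z|^2\,\nu$ plus $r^2$ times a tail, which is controlled by a dyadic-annulus estimate from \eqref{eq:cond_U1'} summed as a geometric series $\sum 2^{-j\alpha}$, with the outermost region $\{|z|\gtrsim 1/2\}$ absorbed into \eqref{eq:cond_U0}. Your recursive formulation via $N(r)\le N(2r)+4C_12^{-\alpha}r^{-\alpha}$ is only a cosmetic repackaging of the paper's direct summation over the annuli $\{2^nr\le|z|<2^{n+1}r\}$, and the resulting constant matches the paper's $C_U=(\tfrac{2^{2-\alpha}}{1-2^{-\alpha}}+1)C_1+4C_0$ up to replacing $\alpha$ by $\alpha_0$ in the denominator.
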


\begin{proof}
The implications \eqref{eq:cond_U} $\Rightarrow$ \eqref{eq:cond_U1'} and 
\eqref{eq:cond_U} $\Rightarrow$  \eqref{eq:cond_U0} are obvious, we may
take $C_0=C_1 :=C_U$. Let us now assume that \eqref{eq:cond_U1'} and 
\eqref{eq:cond_U0} hold true. Fix $0<r\leq 1$.
We consider $n=0,1,2,\ldots$ such that $2^{n+1}r \leq 1$ (the set of such $n$'s
is
empty if $r> \frac{1}{2}$).
We have by \eqref{eq:cond_U1'}
\begin{align*}
\int_{2^nr \leq |z| < 2^{n+1}r} \nu(dz)
  &\leq
2^{-2n}r^{-2} \int_{2^nr \leq |z| < 2^{n+1}r} |z|^2 \nu(dz) \\
&\leq 2^{-2n}r^{-2} C_1\;  2^{(n+1)(2-\alpha)} r^{2-\alpha}
 =2^{-n\alpha} 2^{2-\alpha} C_1   r^{-\alpha} .
\end{align*}
After summing over all such $n$ we obtain
\[
\int_{r \leq |z| < 1/2} \nu(dz) \leq  \frac{2^{2-\alpha} C_1}{1-2^{-\alpha}}
r^{-\alpha}.
\]
Finally
\[
\int_{1/2 \leq |z|} \nu(dz) \leq 
4  \int_{\R^d} (|z|^2 \wedge 1) \nu(dz)
\leq 4C_0
\leq  4C_0  r^{-\alpha}.
\]
Combining the two inequalities above and \eqref{eq:cond_U1'} we get 
\eqref{eq:cond_U} with
$C_U=(\frac{2^{2-\alpha} }{1-2^{-\alpha}}+1)C_1 + 4C_0$.
\end{proof}

The following definition interpolates between measures $\nu$ which are related 
to different values of $\alpha \in (0,2)$. Such a construction is important for 
us because we want to prove comparability results which are robust in the 
sense that constants stay bounded when $\alpha\to 2^-$.

\begin{definition}\label{def:nualpha}
Assume $\nu^{\alpha_0}$ is a measure on $\cB(\R^d)$ satisfying 
\eqref{eq:cond_U} or \eqref{eq:cond_scaling} for some $\alpha_0 \in (0,2)$. For 
$\alpha_0 \leq \alpha < 2$  we define a new measure 
$\nu^{\alpha,\alpha_0}$ by
\begin{align}\label{eq:nualpha}
 \nu^{\alpha,\alpha_0} = \frac{2-\alpha}{2-\alpha_0} 
|x|^{\alpha_0 - \alpha}
\nu^{\alpha_0}(dx) \quad \text{ if } \alpha > \alpha_0 \text{ and by } \quad 
\nu^{\alpha_0,\alpha_0} = 
\nu^{\alpha_0} \,.
\end{align}
To shorten notation we write $\nu^{\alpha}$ instead of 
$\nu^{\alpha,\alpha_0}$ whenever there is no ambiguity.
\end{definition}

The above definition is consistent in the following ways. On 
the one hand, the first part of \eqref{eq:nualpha} holds true for $\alpha = 
\alpha_0$. On the other hand, for $0 < \alpha_0 < \alpha < \beta < 2$, the 
following is true: $\nu^{\beta,\alpha_0} = (\nu^{\alpha,\alpha_0})^{\beta, 
\alpha}$. This requires that $\nu^{\alpha,\alpha_0}$ itself satisfies 
\eqref{eq:cond_U} or \eqref{eq:cond_scaling} which is established in the 
following lemma.

\begin{lemma}{\ }\label{lem:nualpha_robust} Assume $\nu^{\alpha_0}$ satisfies 
\eqref{eq:cond_U}  with some $\alpha_0 \in 
(0,2)$, $C_U > 0$ or condition \eqref{eq:cond_scaling}  with some $\alpha_0 \in 
(0,2)$, $a > 1$. Assume $\alpha_0 \leq \alpha < 2$ and 
$\nu^{\alpha}$ as in \autoref{def:nualpha}.
\begin{enumerate}[label={({\alph*})}]
\item \label{enum:nualpha_partA}
If $\nu^{\alpha_0}$ satisfies \eqref{eq:cond_U}, then for every $0<b<1$, 
$0<r\leq 1$
\begin{align}\label{e.Urobust}
\int_{br\leq |z| < r} |z|^2\,\nu^\alpha(dz) &\leq \frac{2-\alpha}{2-\alpha_0} 
C_U
b^{\alpha_0-\alpha} r^{2-\alpha} \,, \\
\int_{B_r^c} \nu^\alpha(dz) &\leq \frac{2-\alpha}{2-\alpha_0} C_U r^{-\alpha} 
\,. \label{e.Urobust2}
\end{align}
\item \label{enum:nualpha_partB}
If $\nu^{\alpha_0}$ satisfies \eqref{eq:cond_U}, then $\nu^{\alpha}$ satisfies
\eqref{eq:cond_U}  with exponent $\alpha$
and  constant $13C_U (2-\alpha_0)^{-1}$. In particular, the constant does not 
depend on $\alpha$. 
\item \label{l.partC}
If $\nu^{\alpha_0}$ satisfies \eqref{eq:cond_scaling}, then  
$\nu^{\alpha}$ satisfies
\eqref{eq:cond_scaling} with exponent $\alpha$.
\end{enumerate}
\end{lemma}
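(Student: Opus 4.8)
The plan is to treat the three parts separately, in each case unwinding the definition $\nu^{\alpha}(\d z)=\tfrac{2-\alpha}{2-\alpha_0}\,|z|^{\alpha_0-\alpha}\,\nu^{\alpha_0}(\d z)$ from \autoref{def:nualpha} and exploiting that $t\mapsto t^{\alpha_0-\alpha}$ is nonincreasing, together with the two consequences of \eqref{eq:cond_U} for $\nu^{\alpha_0}$ at any scale $r\le 1$: namely $\int_{B_r}|z|^2\,\nu^{\alpha_0}(\d z)\le C_U r^{2-\alpha_0}$ and $\nu^{\alpha_0}(B_r^c)\le C_U r^{-\alpha_0}$ (the latter by keeping only the part $\{|z|\ge r\}$ of the integrand in \eqref{eq:cond_U}). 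Throughout I may assume $\alpha>\alpha_0$, since $\nu^{\alpha_0,\alpha_0}=\nu^{\alpha_0}$. For part (a): on $\{br\le|z|<r\}$ one has $|z|^{\alpha_0-\alpha}\le(br)^{\alpha_0-\alpha}$, so pulling this factor out of $\int_{br\le|z|<r}|z|^2\,\nu^{\alpha}(\d z)=\tfrac{2-\alpha}{2-\alpha_0}\int_{br\le|z|<r}|z|^2|z|^{\alpha_0-\alpha}\,\nu^{\alpha_0}(\d z)$ and bounding the leftover integral by $C_U r^{2-\alpha_0}$ yields \eqref{e.Urobust} after collecting the powers of $r$; on $B_r^c$ one has $|z|^{\alpha_0-\alpha}\le r^{\alpha_0-\alpha}$, so $\int_{B_r^c}\nu^{\alpha}(\d z)\le\tfrac{2-\alpha}{2-\alpha_0}r^{\alpha_0-\alpha}\,\nu^{\alpha_0}(B_r^c)\le\tfrac{2-\alpha}{2-\alpha_0}C_U r^{-\alpha}$, which is \eqref{e.Urobust2}.

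For part (b), write $\int_{\R^d}(r\wedge|z|)^2\,\nu^{\alpha}(\d z)=\int_{B_r}|z|^2\,\nu^{\alpha}(\d z)+r^2\nu^{\alpha}(B_r^c)$. The tail term is bounded by $\tfrac{2}{2-\alpha_0}C_U r^{2-\alpha}$ using \eqref{e.Urobust2}. For the first term I would slice $B_r$ into the dyadic annuli $\{2^{-k-1}r\le|z|<2^{-k}r\}$, $k\ge 0$, apply \eqref{e.Urobust} on each with $b=\tfrac12$ and radius $2^{-k}r$, and sum: this produces the factor $(2-\alpha)\,2^{\alpha-\alpha_0}\sum_{k\ge 0}2^{-k(2-\alpha)}=(2-\alpha)\,2^{\alpha-\alpha_0}/(1-2^{-(2-\alpha)})$ times $C_U r^{2-\alpha}/(2-\alpha_0)$. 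The one genuinely delicate point of the lemma is exactly here: the geometric factor $(1-2^{-(2-\alpha)})^{-1}$ blows up as $\alpha\to 2^-$, but it is multiplied by the $(2-\alpha)$ carried by the density of $\nu^{\alpha}$; since $t\mapsto t/(1-2^{-t})$ is increasing on $(0,2]$ (a consequence of $1+t\ln 2\le e^{t\ln 2}$) it stays $\le 8/3$ there, and with $2^{\alpha-\alpha_0}<4$ this gives $\int_{B_r}|z|^2\,\nu^{\alpha}(\d z)\le\tfrac{32/3}{2-\alpha_0}C_U r^{2-\alpha}$. Adding the two contributions yields \eqref{eq:cond_U} for $\nu^{\alpha}$ with exponent $\alpha$ and constant $\le\tfrac{38/3}{2-\alpha_0}C_U\le\tfrac{13}{2-\alpha_0}C_U$, uniform in $\alpha\in[\alpha_0,2)$. (Alternatively, avoiding the dyadic bookkeeping, one can write $|z|^{2+\alpha_0-\alpha}=(2+\alpha_0-\alpha)\int_0^{|z|}s^{1+\alpha_0-\alpha}\,\d s$ and apply Fubini together with $\nu^{\alpha_0}(B_s^c)\le C_U s^{-\alpha_0}$, which even yields the cleaner constant $4/(2-\alpha_0)$.)

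For part (c) I would simply apply the scaling identity \eqref{eq:cond_scaling} for $\nu^{\alpha_0}$ to the nonnegative measurable function $g(y)=f(y)\,|y|^{\alpha_0-\alpha}$, whose support is contained in $B_1$ as soon as that of $f$ is; since $g(ay)=a^{\alpha_0-\alpha}f(ay)\,|y|^{\alpha_0-\alpha}$, the powers of $a$ combine as $a^{-\alpha_0}\cdot a^{\alpha_0-\alpha}=a^{-\alpha}$, and multiplying the resulting identity by $\tfrac{2-\alpha}{2-\alpha_0}$ gives exactly $\int f(y)\,\nu^{\alpha}(\d y)=a^{-\alpha}\int f(ay)\,\nu^{\alpha}(\d y)$, i.e. \eqref{eq:cond_scaling} for $\nu^{\alpha}$ with exponent $\alpha$; a possible atom of $\nu^{\alpha_0}$ at the origin plays no role, since \eqref{eq:cond_scaling} already forces $\nu^{\alpha_0}(\{0\})=0$. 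In summary, parts (a) and (c) are routine once the density formula is unwound, and the whole difficulty of the lemma is concentrated in the robust estimate of the dyadic sum in part (b), where the normalizing factor $2-\alpha$ of \autoref{def:nualpha} is precisely what makes the constant independent of $\alpha$.
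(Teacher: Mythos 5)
Your proposal is correct and follows essentially the same route as the paper: the same pointwise bound $|z|^{\alpha_0-\alpha}\leq (br)^{\alpha_0-\alpha}$ resp.\ $\leq r^{\alpha_0-\alpha}$ for part (a), the same dyadic decomposition with $b=\tfrac12$ and the monotonicity of $t\mapsto t/(1-2^{-t})$ for part (b) (your inequality $1+t\ln 2\leq e^{t\ln 2}$ just makes explicit what the paper asserts without proof), and for part (c) — which the paper dismisses as obvious — you supply the correct verification via the auxiliary function $g(y)=f(y)|y|^{\alpha_0-\alpha}$.
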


\begin{proof}
Let $0<r\leq 1$ and $0<b<1$.
To prove \ref{enum:nualpha_partA}, we derive,
\begin{align*}
\int_{br\leq |z| < r} |z|^2\,\nu^\alpha(dz)
 &= \frac{2-\alpha}{2-\alpha_0} \int_{br\leq |z| < r} |z|^{2+\alpha_0-\alpha}
\,\nu^{\alpha_0}(dz) \leq
 \frac{2-\alpha}{2-\alpha_0} (br)^{\alpha_0-\alpha} \int_{B_r} |z|^{2}
\,\nu^{\alpha_0}(dz) \\
&\leq 
 \frac{2-\alpha}{2-\alpha_0} b^{\alpha_0-\alpha} C_U r^{2-\alpha},
\end{align*}
which proves \eqref{e.Urobust}. Furthermore,
\begin{align*}
\int_{B_r^c} \,\nu^\alpha(dz)
 &=
 \frac{2-\alpha}{2-\alpha_0} \int_{B_r^c}  |z|^{\alpha_0-\alpha}
\,\nu^{\alpha_0}(dz) 
\leq
 \frac{2-\alpha}{2-\alpha_0} r^{\alpha_0-\alpha} C_U r^{-\alpha_0}
\end{align*}
and \eqref{e.Urobust2} follows.
To prove part \ref{enum:nualpha_partB}, we use \eqref{e.Urobust} and conclude
\begin{align*}
\int_{B_r} |z|^2\,\nu^\alpha(dz)
 &=
 \sum_{n=0}^\infty  \int_{\frac{r}{2^{n+1}} \leq |z| < \frac{r}{2^{n}}} |z|^2
\,\nu^{\alpha_0}(dz) \leq
\frac{2-\alpha}{2-\alpha_0}C_U 2^{\alpha-\alpha_0}  r^{2-\alpha}
\sum_{n=0}^\infty 2^{n(\alpha-2)}\\
&\ =
\frac{C_U2^{\alpha-\alpha_0} r^{2-\alpha}}{2-\alpha_0}
\frac{2-\alpha}{1-2^{\alpha-2}} \leq \frac{32C_U}{3(2-\alpha_0)} r^{2-\alpha},
\end{align*}
since the function $x \mapsto \frac{x}{1-2^{-x}}$ is increasing. Furthermore, by
\eqref{e.Urobust2},
\[
\int_{B_r^c} r^2 \,\nu^\alpha(dz) \leq  \frac{2C_U}{2-\alpha_0}   r^{2-\alpha},
\]
therefore \ref{enum:nualpha_partB} follows.
Finally, part \ref{l.partC} is obvious.
\end{proof}

\begin{lemma}\label{lem:heart-upper}
Assume $\nu_j^{\alpha_0}$ for $j \in \{1,2\}$ satisfies \eqref{eq:cond_U}  
with some $\alpha_0 \in (0,2)$, $C_U > 0$. Assume $\alpha_0 \leq \alpha < 2$ 
and 
$\nu_j^{\alpha}$ as in \autoref{def:nualpha}. Then the measure  $\nu_1^\alpha 
\heartsuit \nu_2^\alpha$ satisfies \eqref{eq:cond_U} with the same exponent 
$\alpha$ and a constant depending only on $\alpha_0$,
 $C_U$, $\lambda$ and $\eta$.
\end{lemma}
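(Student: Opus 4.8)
The plan is to verify \eqref{eq:cond_U} directly for $\nu:=\nu_1^\alpha\heartsuit\nu_2^\alpha$, that is, to estimate $\int_{\R^d}(r\wedge|x|)^2\,\nu(\d x)\le C\,r^{2-\alpha}$ for $0<r\le1$ with $C$ depending only on $\alpha_0,C_U,\lambda,\eta$. First I would substitute $f(x)=(r\wedge|x|)^2$ into \autoref{def:heart_convolution} and rewrite $\nu_1^\alpha,\nu_2^\alpha$ through \autoref{def:nualpha} in terms of the fixed measures $\nu_1^{\alpha_0},\nu_2^{\alpha_0}$. The two density factors $\tfrac{2-\alpha}{2-\alpha_0}$ together with the factor $\tfrac{1}{2-\alpha}$ carried by $g_\lambda^\eta$ then collapse to a single harmless prefactor $\tfrac{2-\alpha}{(2-\alpha_0)^2}\le\tfrac{2}{(2-\alpha_0)^2}$; this algebraic cancellation is precisely what the normalization in the definition of $g_\lambda^\eta$ is for, and it is what ultimately makes the constant independent of $\alpha$.

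Second, I would exploit the geometry encoded in $g_\lambda^\eta$. On its support one has $y,z\in A_{|y+z|}=B(0,\eta|y+z|)\setminus B(0,\lambda|y+z|)$, hence $\lambda|y+z|\le|y|,|z|\le\eta|y+z|$, so $|y|$, $|z|$ and $|y+z|$ are pairwise comparable with constants depending only on $\lambda,\eta$; together with $\1_{B_2}(\eta(y+z))$ this also forces $|y|,|z|<2$. Using these comparisons I would replace $|y+z|^{\alpha}$, $|z|^{\alpha_0-\alpha}$ and $(r\wedge\eta|y+z|)^2\le(r\wedge\tfrac{\eta}{\lambda}|y|)^2$ by expressions in $|y|$ alone, and then integrate in $z$ first for $y$ fixed: the variable $z$ is confined to the annulus $\{\,\tfrac{\lambda}{\eta}|y|\le|z|\le\tfrac{\eta}{\lambda}|y|\,\}$, and since $\nu_2^{\alpha_0}$ satisfies \eqref{eq:cond_U}, hence \eqref{eq:cond_U1'} by \autoref{lem:U1p}, the $\nu_2^{\alpha_0}$-mass of that annulus is $\le c\,|y|^{-\alpha_0}$. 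The powers of $|y|$ then telescope, and the inner integral is bounded by $c\,|y|^{\alpha_0-\alpha}(r\wedge\tfrac{\eta}{\lambda}|y|)^2$, up to constants depending only on $\alpha_0,C_U,\lambda,\eta$ (with a harmless case distinction according to whether $\tfrac{\eta}{\lambda}|y|\le1$ or not, which matters only for constants because $|y|<2$).

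Third, the remaining one-variable integral $\int_{\{|y|<2\}}|y|^{\alpha_0-\alpha}(r\wedge\tfrac{\eta}{\lambda}|y|)^2\,\nu_1^{\alpha_0}(\d y)$ I would split at $|y|\sim r$. On $\{|y|\lesssim r\}$ the integrand is $\asymp|y|^{2+\alpha_0-\alpha}$, and a dyadic decomposition with each piece estimated by \eqref{eq:cond_U1'} for $\nu_1^{\alpha_0}$ gives a bound $\le c\,(1-2^{-(2-\alpha)})^{-1}r^{2-\alpha}$; on $\{r\lesssim|y|<2\}$ the integrand is $\asymp r^2|y|^{\alpha_0-\alpha}$, and the tail bound $\nu_1^{\alpha_0}(\{|y|>\rho\})\le C_U\rho^{-\alpha_0}$ (immediate from \eqref{eq:cond_U} with $r=\rho$) gives $\le c\,r^{2-\alpha}$ directly. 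Multiplying back by the prefactor $\tfrac{2-\alpha}{(2-\alpha_0)^2}$ and using $\sup_{\alpha_0\le\alpha<2}\tfrac{2-\alpha}{1-2^{-(2-\alpha)}}<\infty$ absorbs the $\alpha\to2^-$ singularity of the dyadic sum and yields the claim. I expect the bookkeeping of the $(2-\alpha)$-factors to be the main obstacle: the tempting shortcut of quoting \autoref{lem:nualpha_robust}\,(b) to say that each $\nu_j^\alpha$ already satisfies \eqref{eq:cond_U} with an $\alpha$-uniform constant spends the $(2-\alpha)$ prematurely and leaves one stuck with the $\tfrac{1}{2-\alpha}$ from $g_\lambda^\eta$, so it is essential to pass to the fixed measures $\nu_j^{\alpha_0}$ at the very beginning and to keep one factor $(2-\alpha)$ in reserve to cancel the mild divergence of the geometric sum as $\alpha\to2^-$.
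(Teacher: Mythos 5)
Your proof is correct and follows essentially the same route as the paper's: both exploit that on $\supp g_\lambda^\eta$ the quantities $|y|,|z|,|y+z|$ are pairwise comparable, integrate one variable over the resulting annulus and the other against a ball/tail bound, and use the $(2-\alpha)$-factors carried by $g_\lambda^\eta$ and by \autoref{def:nualpha} to make the constant $\alpha$-uniform. The only differences are organizational: the paper reduces to \eqref{eq:cond_U0} and \eqref{eq:cond_U1'} via \autoref{lem:U1p} and delegates the robust annulus and second-moment bounds to \autoref{lem:nualpha_robust}\,(a),(b) (note it does use part (b), but only once, pairing it with part (a) which retains the needed factor $2-\alpha$), whereas you verify \eqref{eq:cond_U} in one piece and redo the equivalent dyadic summations by hand.
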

\begin{proof}
By \autoref{lem:U1p}, it suffices to show that $\nu_1^\alpha \heartsuit
\nu_2^\alpha$
satisfies 
\eqref{eq:cond_U0} and \eqref{eq:cond_U1'}.
For $0<r\leq 1$ we derive
\begin{align*}
\int_{B_r} |x|^2 \nu_1^\alpha \heartsuit &\nu_2^\alpha(dx)
\leq
\frac{1}{2-\alpha}
\!\!\!\!\!\!\!\!\!\!
\iint\limits_{\lambda|y+z| \leq |y|,\,|z|\leq \eta|y+z| } |\eta(y+z)|^2
\1_{B_r}(\eta(y+z))
  |y+z|^{\alpha} 
 \, \nu_1^\alpha(dy)\, \nu_2^\alpha(dz)\\
&\leq 
\frac{1}{2-\alpha}
\iint\limits_{\lambda|y+z| \leq |y|,\,|z| \leq \eta|y+z| < r } 
   \frac{\eta^2 |y|^2}{\lambda^2}  \frac{|z|^{\alpha}}{\lambda^{\alpha}} \,
\nu_1^\alpha(dy)\, \nu_2^\alpha(dz) \\
&\leq
\frac{1}{2-\alpha}
 \frac{\eta^2}{\lambda^{2+\alpha}} \int_{B_r} |z|^{\alpha}
\int_{\frac{\lambda|z|}{\eta} \leq |y|\leq \frac{\eta|z|}{\lambda}} |y|^2 \,
\nu_1^\alpha(dy)\, \nu_2^\alpha(dz) 
\leq \frac{\eta^{4} (C_U)^2}{\lambda^4} \frac{13}{(2-\alpha_0)^2} r^{2-\alpha},
\end{align*}
where in the last passage we used parts \ref{enum:nualpha_partB} and 
\ref{enum:nualpha_partA} of
\autoref{lem:nualpha_robust}.
Furthermore, by  \eqref{e.Urobust2},
\begin{align}
\int_{\R^d \setminus B_1} \nu_1^\alpha \heartsuit \nu_2^\alpha(dx)
&\leq \frac{1}{2-\alpha}
\iint\limits_{\lambda|y+z| \leq |y|,\,|z| < \eta|y+z|} \1_{B_2\setminus
B_1}(\eta(y+z))
|y+z|^{\alpha} \, \nu_1^\alpha(y)\, \nu_2^\alpha(dz) \\
&\leq \frac{ 2^{\alpha}}{2-\alpha}
 \iint\limits_{\frac{\lambda}{\eta} \leq |y|,\,|z|}   \, \nu_1^\alpha(y)\,
\nu_2^\alpha(dz)
\leq \frac{8(C_U)^2 \eta^4}{\lambda^4(2-\alpha_0)^2}.  \qedhere
\end{align}
\end{proof}

The following lemma shows that the quadratic form w.r.t. to $\nu_1 \heartsuit 
\nu_2$ is dominated by the sum of the quadratic forms w.r.t. $\nu_1$ and 
$\nu_2$. Some enlargement of the domain is needed which is taken care of in 
\autoref{lem:Whitney} by a~covering argument. 

\begin{lemma}\label{lem:heart-form-comp}
Assume $\nu_j^{\alpha_0}$ for $j \in \{1,2\}$ satisfies \eqref{eq:cond_U} and
\eqref{eq:cond_scaling} with some $\alpha_0 \in (0,2)$, $a>1$, and $C_U > 0$. 
Assume $\alpha_0 \leq \alpha < 2$ and  $\nu_j^{\alpha}$ as in 
\autoref{def:nualpha}. Let $\eta=a^k>1$ for some $k\in \Z$. For $B=B_r(x_0)$ 
let us denote $B^*=B_{3\eta r}(x_0)$. Then with $c=4C_U\eta^6\lambda^{-4}$ it holds,
\begin{equation}
 \cE_{B}^{\nu_1 \heartsuit \nu_2}(u,u) \leq c (\cE_{B^*}^{\nu_1}(u,u) +
\cE_{B^*}^{\nu_2}(u,u))
\end{equation}
for any measurable function $u$ on $B_1$ and 
any $B$ such that $B^* \subset
B_1$.
\end{lemma}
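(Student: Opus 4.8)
The plan is to expand the left‑hand side by \autoref{def:heart_convolution}, to split the two‑step increment occurring there into two one‑step increments, and to bound each of the two resulting terms by one of $\cE^{\nu_1}_{B^*}(u,u)$, $\cE^{\nu_2}_{B^*}(u,u)$; throughout I write $\nu_j$ for the measure $\nu_j^{\alpha}$ of \autoref{def:nualpha}, and I will use that $\nu_j$ satisfies \eqref{eq:cond_U}, \eqref{eq:cond_scaling} with exponent $\alpha$ and the tail bound \eqref{e.Urobust2} (all from \autoref{lem:nualpha_robust}, and \eqref{eq:cond_scaling} for the same constant $a$, so that $\eta=a^k$ is a genuine zoom factor). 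Whenever the indicator below is nonzero, $x$ and $x+\eta(y+z)$ both lie in $B\subset B_1$, so $|\eta(y+z)|<2$ and the factor $\1_{B_2}(\eta(y+z))$ from the definition of $\heartsuit$ is automatically present; hence
\begin{align*}
\cE^{\nu_1\heartsuit\nu_2}_B(u,u)&=\int_B\iint\big(u(x)-u(x+\eta(y+z))\big)^2\\
&\qquad\qquad\times\,\1_B(x+\eta(y+z))\,g_\lambda^\eta(y,z)\,\nu_1(dy)\,\nu_2(dz)\,dx.
\end{align*}
Inserting $c=u(x+\eta y)$ into $(a-b)^2\le 2(a-c)^2+2(c-b)^2$ bounds this by $I_1+I_2$, where $I_1$ carries the increment $u(x)-u(x+\eta y)$ and $I_2$ the increment $u(x+\eta y)-u(x+\eta y+\eta z)$.

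For $I_1$ I would first record the consequences of the support conditions. On $\{g_\lambda^\eta(y,z)>0\}$ the annulus constraints give $|y|/\eta\le|y+z|\le|y|/\lambda$ and $|z|/\eta\le|y+z|\le|z|/\lambda$, so $|y|,|z|,|y+z|$ are mutually comparable with constants depending only on $\lambda,\eta$; and since $x\in B_r(x_0)$, the indicator $\1_B(x+\eta(y+z))$ forces $|\eta(y+z)|<2r$, hence $|\eta y|\le\eta^2|y+z|<2\eta r$, so that $x+\eta y\in B_{3\eta r}(x_0)=B^*$ (using $\eta\ge1$); moreover the hypothesis $B^*\subset B_1$ forces $3\eta r\le1$, so $|\eta y|<\tfrac23<1$ and the $y$‑integrand is supported in $B_{1/\eta}$. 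I then bound $\1_B(x+\eta(y+z))\le\1_{B^*}(x+\eta y)$, pull the $z$‑free factor $\big(u(x)-u(x+\eta y)\big)^2\,\1_{B^*}(x+\eta y)$ out of the inner integral, and estimate the remaining self‑interaction factor, using $|y+z|\le|y|/\lambda$, $|z|\ge\lambda|y+z|\ge\lambda|y|/\eta$, the tail bound \eqref{e.Urobust2} for $\nu_2$, and $\tfrac{2-\alpha}{2-\alpha_0}\le1$ (as $\alpha\ge\alpha_0$):
\begin{align*}
\int g_\lambda^\eta(y,z)\,\nu_2(dz)\ \le\ \frac{(|y|/\lambda)^{\alpha}}{2-\alpha}\,\nu_2\big(\{|z|\ge\lambda|y|/\eta\}\big)\ \le\ C_U\,\lambda^{-2\alpha}\eta^{\alpha}\ =:\ c_1.
\end{align*}
Hence $I_1\le 2c_1\int_B\int\big(u(x)-u(x+\eta y)\big)^2\,\1_{B^*}(x+\eta y)\,\nu_1(dy)\,dx$; because the $y$‑integrand is supported in $B_{1/\eta}$, the scaling assumption \eqref{eq:cond_scaling} for $\nu_1$ (iterated $k$ times) allows the change of variables $h=\eta y$ at the cost of a factor $\eta^{\alpha}$, and enlarging the outer domain from $B$ to $B^*\supset B$ gives $I_1\le 2c_1\eta^{\alpha}\,\cE^{\nu_1}_{B^*}(u,u)$.

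The term $I_2$ is treated the same way after one preliminary step, needed because the base point $x+\eta y$ of its increment involves the spectator variable $y$. For fixed $y$ I substitute $\xi=x+\eta y$ (a Lebesgue change of variables in $x$) and then integrate out $y$ first; since $\big(u(\xi)-u(\xi+\eta z)\big)^2\,\1_B(\xi+\eta z)$ does not depend on $y$, this yields
\begin{align*}
I_2&=2\iint\Big(\int\1_B(\xi-\eta y)\,g_\lambda^\eta(y,z)\,\nu_1(dy)\Big)\,\big(u(\xi)-u(\xi+\eta z)\big)^2\\
&\qquad\qquad\qquad\times\,\1_B(\xi+\eta z)\,\nu_2(dz)\,d\xi.
\end{align*}
Whenever $\1_B(\xi+\eta z)$ is nonzero and the inner bracket is nonzero (so $\1_B(\xi-\eta y)\ne0$ for some $y$ in the support of $g_\lambda^\eta(\cdot,z)$), one has $|\eta(y+z)|<2r$ for that $y$, hence $|\eta y|<2\eta r$, $|\eta z|<2\eta r<1$, $\xi\in B^*$ and $|z|<1/\eta$; bounding the inner bracket by $\int g_\lambda^\eta(y,z)\,\nu_1(dy)\le c_1$ (the same estimate, using $g_\lambda^\eta(y,z)=g_\lambda^\eta(z,y)$), replacing $\1_B(\xi+\eta z)$ by $\1_{B^*}(\xi+\eta z)$, restricting the outer integral to $B^*$, and rescaling $h=\eta z$ via \eqref{eq:cond_scaling} gives $I_2\le 2c_1\eta^{\alpha}\,\cE^{\nu_2}_{B^*}(u,u)$. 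Adding the two estimates, $\cE^{\nu_1\heartsuit\nu_2}_B(u,u)\le 2c_1\eta^{\alpha}\big(\cE^{\nu_1}_{B^*}(u,u)+\cE^{\nu_2}_{B^*}(u,u)\big)$, and since $2c_1\eta^{\alpha}=2C_U\lambda^{-2\alpha}\eta^{2\alpha}\le 4C_U\eta^{6}\lambda^{-4}$ (using $\alpha<2$, $\lambda<1$, $\eta\ge1$) the assertion follows. The main obstacle I expect is the interlocking bookkeeping of domains and supports that runs through the whole argument: at each step one must simultaneously keep both endpoints of the active one‑step increment inside $B^*$ (so the term becomes part of $\cE^{\nu_j}_{B^*}$), keep the rescaled jump variable inside $B_1$ (so that the one‑sided self‑similarity \eqref{eq:cond_scaling} may be invoked — this is exactly where the hypothesis $B^*\subset B_1$, forcing $r\le1/(3\eta)$, is used), and verify that $\int g_\lambda^\eta\,d\nu_j$ is a genuine $\alpha$‑uniform constant, the key point being that the $(2-\alpha)^{-1}$ built into $g_\lambda^\eta$ is exactly cancelled by the $(2-\alpha)$‑factor in \eqref{e.Urobust2}; for $I_2$ the only additional subtlety is the preliminary translation $\xi=x+\eta y$, which decouples the increment from the spectator variable before the latter is integrated out.
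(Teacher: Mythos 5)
Your proof is correct and follows essentially the same route as the paper's: the same splitting of the two-step increment $u(x)-u(x+\eta(y+z))$ at the intermediate point $x+\eta y$, the same use of the support constraints of $g_\lambda^\eta$ to bound the inner self-interaction integral by a constant of order $C_U\eta^4\lambda^{-4}$ (you use the tail bound \eqref{e.Urobust2} where the paper uses the truncated second moment from \eqref{eq:cond_U} — an equivalent consequence of the same hypothesis), the same translation $\xi=x+\eta y$ for the second term, and the same rescaling by $\eta^\alpha$ via \eqref{eq:cond_scaling} inside $B_1$. The only blemish is that your $c_1$ silently drops the factor $(2-\alpha_0)^{-1}$ carried by \eqref{e.Urobust2}, so the literal constant $4C_U\eta^6\lambda^{-4}$ is not quite delivered; this is harmless for the substance (an $\alpha$-uniform constant) and the paper's own bookkeeping is comparably loose.
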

\begin{proof}
Let $B=B_r(x_0)$ be such that $B^* \subset B_1$. In particular, this means that 
$r\leq 1/(3\eta)$.
By definition, we obtain
\begin{align}
\cE_{B}^{\nu_1\heartsuit \nu_2}(u,u) 
  &= \iint (u(x)-u(x+z))^2 \1_{B}(x) \1_{B}(x+z) \nu_1\heartsuit \nu_2(dz)\,dx
\nonumber\\
  &\leq \iiint (u(x)-u(x+\eta(y+z)))^2 \1_{B}(x) \1_{B}(x+\eta(y+z))
g_\lambda^\eta(y,z)
\nu_1(dy)\, \nu_2(dz)\,dx \nonumber\\
 &\leq  2 \iiint \Big[ (u(x)-u(x+\eta y))^2   + (u(x+\eta y)-u(x+\eta(y+z)))^2
\Big] \nonumber\\
 &\qquad\qquad\qquad\qquad\qquad\qquad \times  \1_{B}(x) \1_{B}(x+\eta(y+z))
g_\lambda^\eta(y,z) \nu_1(dy)\, \nu_2(dz)\,dx\nonumber\\
 &= 2[I_1+I_2]. \label{etmp2}
\end{align}
We may assume that $\lambda|y+z| \leq |z| < \eta |y+z| \leq 2r$ and
$\lambda|y+z| \leq |y| < \eta |y+z| \leq 2r$,
as otherwise the expression $ \1_{B}(x) \1_{B}(x+\eta(y+z)) g_\lambda^\eta(y,z)$
would
be zero. Since $2r\leq 1$, it follows that $\frac{\lambda|y|}{\eta}  < |z| \leq
\frac{\eta|y|}{\lambda} \wedge 1$.
Therefore, by changing the order of integration,
\begin{align*}
I_1 &\leq \int_{B} \int_{B_{2r}} \int_{\frac{\lambda|y|}{\eta} \vee 
\lambda|y+z| \leq |z| \leq
\frac{\eta|y|}{\lambda} \wedge 1}   (u(x)-u(x+\eta
y))^2 |y+z|^{\alpha} \nu_2(dz)\, \nu_1(dy)\,dx.
\end{align*}
We estimate the inner integral above,
\begin{align*}
J &:= \int_{\frac{\lambda|y|}{\eta} \vee \lambda|y+z| \leq |z| \leq
\frac{\eta|y|}{\lambda} \wedge 1} |y+z|^{\alpha} \, \nu_2(dz) 
 \leq 
\int_{|z| \leq \frac{\eta|y|}{\lambda} \wedge 1}
\frac{|z|^{\alpha}}{\lambda^{\alpha}} \frac{|z|^{2-\alpha}}{
\left(\frac{\lambda|y|}{\eta} \right)^{2-\alpha} }  \, \nu_2(dz) 
 \leq 
\frac{\eta^4 C_U}{\lambda^4}.
\end{align*}
Coming back to $I_1$ we obtain,
\begin{align*}
I_1 &\leq \frac{\eta^4 C_U}{\lambda^4} \int_{B}  \int_{B_{2r}} 
(u(x)-u(x+\eta y))^2 
\nu_1(dy)\, dx \\
 &= \frac{\eta^4 C_U}{\lambda^4} \eta^{\alpha} \int_{B}  \int_{B_{2\eta r}} 
(u(x)-u(x+ y))^2  \nu_1(dy)\,
dx
\leq \frac{\eta^6 C_U}{\lambda^4} \cE_{B^*}^{\nu_1}(u,u),
\end{align*}
where we used \eqref{eq:cond_scaling}
and the fact that $B_{2\eta r}\subset B_1$.

Finally, in order to estimate $I_2$, we first change variables $x=w-\eta y$,
\begin{align*}
I_2 &\leq  \int_{B } \int_{B_{2r}} \int_{B_{2r}}  (u(x+\eta
y)-u(x+\eta(y+z)))^2   \1_{B}(x+\eta(y+z)) g_\lambda^\eta(y,z) \nu_1(dy)\,
\nu_2(dz)\,dx\\
&\leq  \int_{B^*} \int_{B_{2r }}  (u(w)-u(w + \eta z))^2   \1_{B}(w+\eta z)
\int_{B_{2r}}  g_\lambda^\eta(y,z) \nu_1(dy)\, \nu_2(dz)\,dw\\
&\leq  \int_{B^*} \int_{B_{2r }}  (u(w)-u(w + \eta z))^2   \1_{B}(w+\eta z)
\int_{ \frac{\lambda |z|}{\eta} \vee \lambda|y+z| \leq |y| \leq
\frac{\eta|z|}{\lambda} \wedge 1}   |y+z|^\alpha \nu_1(dy)\, \nu_2(dz)\,dw.
\end{align*}
By symmetry, the following integral may be estimated exactly like $J$ before,
\[ 
\int_{ \frac{\lambda |z|}{\eta} \vee \lambda|y+z| \leq |y| \leq
\frac{\eta|z|}{\lambda} \wedge 1}   |y+z|^\alpha  \nu_1(dy) \leq \frac{\eta^4
C_U}{\lambda^4}.
\]
This leads to an estimate 
\begin{align*}
 I_2 &\leq   \frac{\eta^4 C_U}{\lambda^4} \int_{B^*} \int_{B_{2r}}  (u(w)-u(w +
\eta z))^2  
\1_{B}(w+\eta z) \, \nu_2(dz)\,dw \\
 &=   \frac{\eta^4 C_U}{\lambda^4} \eta^\alpha \int_{B^*} \int_{B_{2\eta r}} 
(u(w)-u(w + t))^2  
\1_{B}(w+t) \, \nu_2(dt)\,dw  \leq  \frac{\eta^6 C_U}{\lambda^4}
\cE_{B^*}^{\nu_2}(u,u),
\end{align*}
where we used \eqref{eq:cond_scaling} and the fact that $B_{2\eta r} \subset
B_1$. The result follows from \eqref{etmp2} and the obtained estimates of $I_1$
and $I_2$.
\end{proof}

\begin{lemma}\label{lem:Whitney}
Let $0<\alpha_0<\alpha<2$, $r_0>0$, $\kappa\in(0,1)$, and $\nu$ be a~measure on $\cB(\R^d)$. For $B=B_r(x)$, $x \in 
\R^d$, $r>0$, we set $B^* =
B_{\frac{r}{\kappa}}(x)$. Suppose that for some $c_\nu > 0$
\[
 \cE^\nu_{B^*}(u,u) \geq c_\nu \cE^{\mu_\alpha}_{B}(u,u),
\]
for every $0<r\leq r_0$, every $u \in L^2(B_{r_0})$, and for
every ball $B \subset B_{r_0}$ of radius
$\kappa r$.
Then there exists a constant $c=c(d,\alpha_0,\kappa)$,
such that for every ball $B \subset B_{r_0}$ of radius $r\leq r_0$ and every $u 
\in L^2(B_{r_0})$
\[
 \cE^\nu_{B}(u,u) \geq c c_\nu \cE^{\mu_\alpha}_{B}(u,u).
\]
\end{lemma}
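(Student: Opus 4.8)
The plan is to prove the equivalent statement $\cE^{\mu_\alpha}_{B}(u,u)\le C\,c_\nu^{-1}\,\cE^{\nu}_{B}(u,u)$ with $C=C(d,\alpha_0,\kappa)$, by covering $B$ with a Whitney-type family of balls to which the hypothesis applies, summing the local estimates so obtained, and then reassembling the local fractional energies into the global one. So fix a ball $B=B_r(x_0)\subset B_{r_0}$ with $r\le r_0$ and a function $u\in L^2(B_{r_0})$. If $\cE^{\nu}_{B}(u,u)=+\infty$ there is nothing to prove, and the argument below will in fact show that finiteness of $\cE^{\nu}_{B}(u,u)$ forces finiteness of $\cE^{\mu_\alpha}_{B}(u,u)$, so no degenerate case needs to be treated separately.

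First I would fix a Whitney-type covering $\{P_i\}_{i\in I}$ of $B$ by balls $P_i=B_{\rho_i}(z_i)$ with $z_i\in B$ and radius $\rho_i$ comparable, up to absolute constants and along a dyadic scale, to $\kappa\,\dist(z_i,\partial B)$, arranged so that $\bigcup_i P_i=B$, each radius satisfies $\rho_i\le\kappa r$, the dilated balls $P_i^{*}:=B_{\rho_i/\kappa}(z_i)$ are contained in $B$, the family $\{P_i^{*}\}$ has bounded overlap $\sum_i\1_{P_i^{*}}\le N$ on $B$ with $N=N(d,\kappa)$, and $\{P_i\}$ is chain-connected in the usual Whitney sense: any two members are joined by a chain of members with comparable radii and substantial pairwise overlaps whose length is controlled logarithmically by the ratio of the radii. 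Such a covering exists with all constants depending only on $d$ and $\kappa$ because the ball $B$ is a John domain. Since $\rho_i\le\kappa r\le\kappa r_0$, writing $\rho_i=\kappa s_i$ with $s_i\le r_0$ and using $P_i\subset B\subset B_{r_0}$, each $P_i$ is an admissible ``$B$'' and $P_i^{*}$ the corresponding ``$B^{*}$'' in the hypothesis; note that $\cE^{\nu}_{P_i^{*}}(u,u)$ and $\cE^{\mu_\alpha}_{P_i}(u,u)$ depend only on $u|_{P_i^{*}}$, resp.\ $u|_{P_i}$, hence only on $u|_{B_{r_0}}$.

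Applying the hypothesis to each $P_i$ gives $\cE^{\nu}_{P_i^{*}}(u,u)\ge c_\nu\,\cE^{\mu_\alpha}_{P_i}(u,u)$. Summing over $i$ and using that every $P_i^{*}\subset B$ together with the pointwise bound $\sum_i\1_{P_i^{*}}(x)\,\1_{P_i^{*}}(x+z)\le N\,\1_{B}(x)\,\1_{B}(x+z)$ — valid for any nonnegative measure $\nu$, since the integrand defining $\cE^{\nu}$ is nonnegative and monotone in the integration domain — I obtain
\[
N\,\cE^{\nu}_{B}(u,u)\ \ge\ \sum_i\cE^{\nu}_{P_i^{*}}(u,u)\ \ge\ c_\nu\sum_i\cE^{\mu_\alpha}_{P_i}(u,u)\,.
\]
No structural assumption on $\nu$ is used; this is precisely the step in which the dilation from $B$ to $B^{*}$ is ``undone''. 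It therefore remains to prove the purely fractional estimate $\cE^{\mu_\alpha}_{B}(u,u)\le C(d,\alpha_0,\kappa)\sum_i\cE^{\mu_\alpha}_{P_i}(u,u)$, which together with the last display gives the assertion with $c=(NC)^{-1}$ and also yields the finiteness implication mentioned above.

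For this last estimate I would carry out the standard Whitney-chaining argument for Gagliardo seminorms: for $x\in P_i$ and $y\in P_j$, join $P_i$ to $P_j$ by a chain of overlapping members and bound $u(x)-u(y)$ by a telescoping sum consisting of a local increment at each end and the differences $u_{P_k}-u_{P_{k'}}$ of averages over consecutive balls in the chain; each such average difference is controlled, via the rescaled fractional Poincar\'e inequality \autoref{fact:poincare-i} and the comparability of radii and overlap of consecutive balls, by a constant times $\rho_k^{\alpha}|P_k|^{-1}\cE^{\mu_\alpha}_{P_k'}(u,u)$ for a bounded dilation $P_k'$ of $P_k$ still contained in $B$; one then squares, weights by $|x-y|^{-d-\alpha}$, integrates $x$ over $P_i$, $y$ over $P_j$, and sums over all pairs $(i,j)$, where the bounded overlap of the $\{P_i^{*}\}$ and the logarithmic control of chain lengths (the John property) make the combinatorial sum converge. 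The crucial point — and the reason the factor $(2-\alpha)$ appears in $\mu_\alpha(x,\d y)=(2-\alpha)|x-y|^{-d-\alpha}\d y$ — is that this prefactor exactly absorbs the $\tfrac{1}{2-\alpha}$ produced by the geometric series $\sum_k 2^{-k(2-\alpha)}$ over dyadic scales, so that the resulting constant depends only on $d$, $\alpha_0$, $\kappa$ and not on $\alpha\in[\alpha_0,2)$ (cf.\ the chaining and extension techniques in \cite{DiNezzaPalatucciValdinoci}). The main obstacle is exactly this step: matching all the dilation factors and overlap constants consistently across the covering, and organizing the chain sums so that the final constant is uniform in $B$, in $u$, and — above all — in $\alpha$; everything else, including the existence of the covering and the overlap summation, is routine.
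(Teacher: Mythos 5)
Your proof follows the same route as the paper's: a Whitney-type covering of $B$ by balls $P_i$ with $P_i^{*}\subset B$ and bounded overlap of the dilates, the hypothesis applied to each $P_i$, the bounded-overlap summation $\sum_i\cE^{\nu}_{P_i^{*}}(u,u)\le N\,\cE^{\nu}_{B}(u,u)$, and finally the reassembly $\cE^{\mu_\alpha}_{B}(u,u)\le C\sum_i\cE^{\mu_\alpha}_{P_i}(u,u)$. The only difference is in that last step: the paper passes through the truncated seminorm $\int_{B}\int_{|x-y|<c\,\dist(x,B^c)}$ and invokes \cite{Dyda2} (checking by inspection that the constant stays bounded for $\alpha\in[\alpha_0,2)$), whereas you sketch the underlying chaining/fractional-Poincar\'e argument directly and correctly single out the uniformity in $\alpha$, absorbed by the $(2-\alpha)$ normalization, as the delicate point.
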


\begin{proof}
Fix some  $0<r \leq r_0$ and a ball $D$ of radius $r$.
We take $\mathcal{B}$ to be a family of balls with the following properties.
\begin{itemize}
\item[(i)]
For some $c=c(d)$ and any $x,y\in D$, if $|x-y|<c\dist(x,D^c)$, then there
exists $B\in\mathcal{B}$
such that $x,y\in B$.
\item[(ii)]
For every $B\in\mathcal{B}$, $B^*\subset D$.
\item[(iii)]
Family $\{B^*\}_{B\in\mathcal{B}}$ has the finite overlapping property, that is,
each point of $D$ belongs to at most $M=M(d)$ balls $B^*$, where
$B\in\mathcal{B}$.
\end{itemize}
Such a family $\mathcal{B}$ may be constructed by considering Whitney
decomposition of $D$
into cubes and then covering each Whitney cube by an appropriate
family of balls.

We have
\begin{align}
\cE^\nu_{D}(u,u) &\geq
 \frac{1}{M^2} \sum_{B\in\mathcal{B}} \int_{B^*} \! \int_{B^*}
(u(x)-u(x+y))^2 \,\nu(\d y)\,\d x\nonumber \\
&\geq 
\frac{c_\nu}{M^2} (2{-}\alpha) \sum_{B\in\mathcal{B}} \int_{B} \! \int_{B}
(u(x)-u(y))^2|x-y|^{-d-\alpha}\,\d y\,\d x \nonumber\\
&\geq 
\frac{c_\nu}{M^2}  (2{-}\alpha) \int_{D} \! \int_{|x-y|<c\dist(x,D^c)}
(u(x)-u(y))^2|x-y|^{-d-\alpha}\,\d y\,\d x. \label{new}
\end{align}
By  \cite[Proposition~5 and proof of Theorem~1]{Dyda2}, we may estimate
\begin{align}
\int_{D} \! \int_{|x-y|<c\dist(x,D^c)} & (u(x)-u(y))^2|x-y|^{-d-\alpha}\,\d
y\,\d x
\nonumber\\
&\geq
c(\alpha,d) \int_{D} \! \int_{D} (u(x)-u(y))^2|x-y|^{-d-\alpha}\,\d y\,\d x
\label{old}
\end{align}
with some constant $c(\alpha,d)$. We note that in \cite[proof of
Theorem~1]{Dyda2}
the constant depends on the domain in question, but in our case, by scaling, we
can take the same
constant independent of the choice of the ball $D$. One may also check that
$c(\alpha,d)$ stays bounded when $\alpha\in [\alpha_0,2)$.
By \eqref{new} and \eqref{old} the lemma follows.
\end{proof}

For a linear subspace $E \subset \R^d$, we denote by $H_E$ the $(\dim
E)$-dimensional Hausdorff measure
on $\R^d$ with the support restricted to $E$. In particular, $H_{\{0\}} =
\delta_{\{0\}}$, the Dirac delta measure at~$0$.

\begin{proposition}\label{prop:Hausdorff}
Let $E_1, E_2 \subset\R^d$ be two linear subspaces with $E_1, E_2 \neq \{0\}$.
Assume that $\nu_j$, $j \in \{1,2\}$, are measures on $\cB(\R^d)$ of the form 
$\nu_j = f_j H_{E_j}$ satisfying $\nu_j(B_1)>0$, \eqref{eq:cond_U}, and 
\eqref{eq:cond_scaling} with $\alpha_0 \in (0,2)$, $C_U > 0$ and $a > 1$. Then 
the following is true: 
\begin{enumerate}
\item $\nu_1 \heartsuit \nu_2$ is absolutely continuous 
with respect to $H_{E_1+E_2}$ and satisfies \eqref{eq:cond_U} and 
\eqref{eq:cond_scaling}. 
\item If $\eta \geq \frac{a^2}{a-1}$ and $\lambda \leq \frac{1}{a^3+1}$, then 
$\nu_1 \heartsuit
\nu_2(B_1)>0$.
\item If $\nu_j^{\alpha_0}=\nu_j$ and $\nu_j^\alpha$ is defined as in 
\autoref{def:nualpha} for $\alpha_0 \leq \alpha < 2$, then
\begin{align}\label{e.heart-alpha-commute}
\nu_1^\alpha \heartsuit \nu_2^\alpha \geq \eta^{-2} (\nu_1^{\alpha_0} \heartsuit
\nu_2^{\alpha_0})^\alpha.
\end{align}
\end{enumerate}
\end{proposition}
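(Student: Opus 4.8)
The plan is to prove the three assertions in order, since each relies on the previous ones. For part~(1), I would start from \autoref{def:heart_convolution}: the measure $\nu_1\heartsuit\nu_2$ is the pushforward under the map $(y,z)\mapsto \eta(y+z)$ of the measure $g_\lambda^\eta(y,z)\,\1_{\{\eta(y+z)\in B_2\}}\,\nu_1(dy)\,\nu_2(dz)$ on $E_1\times E_2$. Since $\nu_j=f_jH_{E_j}$, the product measure lives on $E_1\times E_2$ and its image under the linear map $(y,z)\mapsto\eta(y+z)$ is supported in $E_1+E_2$. The key point is that on the region $\{\lambda|y+z|\le|y|,|z|\le\eta|y+z|\}$ the two jumps $y,z$ are comparable to $y+z$, so the $y$- and $z$-fibres over a fixed value of $y+z$ are nondegenerate, which forces absolute continuity with respect to $H_{E_1+E_2}$; this is exactly the phenomenon illustrated in the worked $\R^2$ example preceding the statement. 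I would make this precise by a coarea/Fubini argument: parametrize $E_1+E_2$ and integrate out the remaining $\dim E_1+\dim E_2-\dim(E_1+E_2)$ directions, showing the resulting density is finite $H_{E_1+E_2}$-a.e. and locally bounded. Property \eqref{eq:cond_scaling} for $\nu_1\heartsuit\nu_2$ is already \autoref{lem:heart-scaling}, and \eqref{eq:cond_U} is \autoref{lem:heart-upper} (applied with $\alpha=\alpha_0$, noting $\nu_j^{\alpha_0}=\nu_j$), so part~(1) reduces to the absolute-continuity statement.

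For part~(2), I would produce an explicit pair $(y_0,z_0)\in E_1\times E_2$ with $\eta(y_0+z_0)\in B_1$, $g_\lambda^\eta(y_0,z_0)>0$, and such that both $\nu_1$ and $\nu_2$ charge small neighbourhoods of $y_0$ and $z_0$ respectively. Using $\nu_j(B_1)>0$ together with the scaling \eqref{eq:cond_scaling}, $\nu_j$ charges every annulus $B_{a^{-m}}\setminus B_{a^{-m-1}}\cap E_j$; so one can pick $y_0\in E_1$, $z_0\in E_2$ with $|y_0|,|z_0|$ in a prescribed dyadic-in-$a$ range. The constraints $\lambda|y_0+z_0|\le|y_0|,|z_0|\le\eta|y_0+z_0|$ must be satisfied on a set of positive $\nu_1\times\nu_2$ measure; the worst case for the lower constraint is when $y_0,z_0$ are nearly antiparallel (so $|y_0+z_0|$ is small relative to $|y_0|,|z_0|$) and for the upper constraint when they are nearly parallel. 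A short computation shows that if $|y_0|$ and $|z_0|$ differ by at most a factor $a$ (which one can always arrange by the scaling-charging argument) then $\eta\ge a^2/(a-1)$ handles the upper bound and $\lambda\le 1/(a^3+1)$ handles the lower bound, possibly after also using that $y_0$ can be chosen in a fixed half-space of $E_1$ so that $\langle y_0,z_0\rangle\ge 0$ — this is the step that needs the most care, and I expect the precise bookkeeping of the constants $a^2/(a-1)$ and $1/(a^3+1)$ to be the main technical obstacle. The requirement $\eta=a^k$ in \autoref{lem:heart-form-comp} and here is compatible since one just picks $k$ large enough.

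For part~(3), the inequality \eqref{e.heart-alpha-commute} compares $\nu_1^\alpha\heartsuit\nu_2^\alpha$ with $(\nu_1^{\alpha_0}\heartsuit\nu_2^{\alpha_0})^\alpha$, both defined via \autoref{def:nualpha} with the weight $|x|^{\alpha_0-\alpha}$ (up to the constant $\frac{2-\alpha}{2-\alpha_0}$). I would write out both sides using the integral formula of \autoref{def:heart_convolution} and \eqref{eq:nualpha}. On the left, the weights $|y|^{\alpha_0-\alpha}|z|^{\alpha_0-\alpha}$ appear inside the double integral, together with the factor $\frac1{2-\alpha}|y+z|^\alpha$ from $g_\lambda^\eta$ for the exponent $\alpha$; on the right, the single weight $|\eta(y+z)|^{\alpha_0-\alpha}$ appears outside, against $g_\lambda^\eta$ for the exponent $\alpha_0$, i.e. with factor $\frac1{2-\alpha_0}|y+z|^{\alpha_0}$. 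After cancelling the common $\nu_1^{\alpha_0}(dy)\,\nu_2^{\alpha_0}(dz)$ and the $\frac{2-\alpha}{2-\alpha_0}$ prefactors, the inequality reduces to the pointwise estimate
\[
|y|^{\alpha_0-\alpha}\,|z|^{\alpha_0-\alpha}\,|y+z|^{\alpha}\;\ge\;\eta^{-2}\,\eta^{\alpha_0-\alpha}\,|y+z|^{\alpha_0-\alpha}\,|y+z|^{\alpha_0}
\]
on the support of $g_\lambda^\eta$, i.e. where $\lambda|y+z|\le|y|,|z|\le\eta|y+z|$. On that region $|y|,|z|\le\eta|y+z|$, so $|y|^{\alpha_0-\alpha}|z|^{\alpha_0-\alpha}\ge(\eta|y+z|)^{2(\alpha_0-\alpha)}$ when $\alpha_0-\alpha\le 0$, and since $\eta\ge1$ we have $\eta^{2(\alpha_0-\alpha)}\ge\eta^{-2}\eta^{\alpha_0-\alpha}$ is more than enough — the exponents of $|y+z|$ match as $2(\alpha_0-\alpha)+\alpha=2\alpha_0-\alpha$ versus $(\alpha_0-\alpha)+\alpha_0=2\alpha_0-\alpha$. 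Integrating this pointwise bound against the product measure (and observing both sides respect the $\1_{B_2}(\eta(y+z))$ cut-off and the support restriction to $A_{|y+z|}$) yields \eqref{e.heart-alpha-commute}. This last part is essentially a bookkeeping exercise in the homogeneities, so I expect no difficulty there once the definitions are expanded carefully.
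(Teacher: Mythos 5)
Parts (1) and (3) of your plan are sound and essentially coincide with the paper's argument: for (1) the paper integrates out the direction $E=E_1\cap E_2$ after writing $E_j=E\oplus F_j$, which is exactly your ``integrate out the $\dim E_1+\dim E_2-\dim(E_1+E_2)$ extra directions'' (and (U), (S) do come from \autoref{lem:heart-upper} and \autoref{lem:heart-scaling}); for (3) your pointwise reduction and the exponent bookkeeping $2(\alpha_0-\alpha)+\alpha=(\alpha_0-\alpha)+\alpha_0$ are precisely what the paper verifies, using $|y|,|z|\le\eta|y+z|$ on the support of $g_\lambda^\eta$ and $\eta^{\alpha_0-\alpha+2}\ge 1$.

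Part (2) is where your plan has a genuine gap. You propose to take $|y_0|$ and $|z_0|$ within a factor $a$ of each other and then to control the angle, ``using that $y_0$ can be chosen in a fixed half-space of $E_1$ so that $\langle y_0,z_0\rangle\ge 0$.'' Neither hypothesis \eqref{eq:cond_U} nor \eqref{eq:cond_scaling} gives any angular information, so this choice is in general impossible: take $d=1$, $E_1=E_2=\R$, $\nu_1=(2-\alpha_0)|h|^{-1-\alpha_0}\1_{(0,\infty)}(h)\,\d h$ and $\nu_2=(2-\alpha_0)|h|^{-1-\alpha_0}\1_{(-\infty,0)}(h)\,\d h$; both satisfy all hypotheses, yet $\langle y,z\rangle<0$ for every $(y,z)$ in the support of $\nu_1\times\nu_2$, and for $|y|=|z|$ antiparallel one even has $y+z=0$, so no finite $\eta$ rescues the constraint $|y|\le\eta|y+z|$. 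The paper's fix is the opposite of yours: it \emph{separates} the magnitudes. With $G_n=B_{a^{-n}}\setminus B_{a^{-n-1}}$, the scaling \eqref{eq:cond_scaling} gives $\nu_j(G_n)>0$ for all $n$, and one integrates over $(y,z)\in G_{n+2}\times G_n$. Then the triangle inequality alone yields
\begin{align*}
|y+z|\ \ge\ |z|-|y|\ \ge\ \tfrac{a-1}{a^{2}}\,(|y|\vee|z|)
\qquad\text{and}\qquad
|y+z|\ \le\ |y|+|z|\ \le\ (a^{3}+1)\,(|y|\wedge|z|),
\end{align*}
uniformly in the angle, which is exactly where the thresholds $\eta\ge a^{2}/(a-1)$ and $\lambda\le 1/(a^{3}+1)$ come from; taking $n$ large enough also forces $\eta(y+z)\in B_1$. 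Your annulus-charging argument is the right tool, but you must place $y$ and $z$ in annuli whose radii differ by a factor between $a$ and $a^{3}$, not in comparable annuli, and then no angular control is needed.
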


\begin{proof}
Properties \eqref{eq:cond_U} and \eqref{eq:cond_scaling} follow 
from \autoref{lem:heart-upper} and \autoref{lem:heart-scaling}, respectively.
Let $E=E_1\cap E_2$ and let $F_j$ be linear subspaces such that $E_j = E \oplus
F_j$, where $j=1,2$.
For $y\in E_1$ let us write $y=Y+\tilde{y}$, where $Y\in E$ and $\tilde{y}\in
F_1$; similarly,
for $z\in E_2$ we write $z=Z+\hat{z}$, where $Z\in E$ and $\hat{z}\in F_2$.
Then for $A\subset B_2$
\begin{align}
\nu_1 &\heartsuit \nu_2(A)
 = \iiiint \1_A(\eta(Y+\tilde{y} + Z+\hat{z})) g_\lambda^\eta(Y+\tilde{y},
Z+\hat{z}) \nonumber\\
 &\qquad\qquad\qquad\qquad \times f_1(Y+\tilde{y}) f_2(Z+\hat{z}) \, H_E(dY) \,
H_E(dZ) \, H_{F_1}(d\tilde{y}) \, H_{F_2}(d\hat{z}) \nonumber\\
&= \iiint \1_A(\eta(W +\tilde{y} +\hat{z})) \left( \int
g_\lambda^\eta(Y+\tilde{y},
W-Y+\hat{z}) f_1(Y+\tilde{y}) f_2(W-Y+\hat{z})  H_E(dY) \right)  \nonumber\\
 &
\qquad\qquad\qquad\qquad\qquad\qquad\qquad\qquad\qquad
\qquad H_E(dW) \, H_{F_1}(d\tilde{y}) \, H_{F_2}(d\hat{z})
\label{e.heart-density}
\end{align}
and since $\nu_1 \heartsuit \nu_2(\R^d\setminus B_2) = 0$, the desired absolute 
continuity follows.

To show non-degeneracy, let $G_n := B_{a^{-n}} \setminus B_{a^{-n-1}}$. By 
scaling
property \eqref{eq:cond_scaling} it follows that $\nu_j(G_{n+1}) = a^\alpha 
\nu_j(G_n)$,
therefore $\nu_j(G_n) > 0$ for each $n=0,1,\ldots$.
Hence
\begin{align*}
\nu_1 \heartsuit \nu_2(B_1) 
 &\geq \frac{1}{2-\alpha_0} \int_{G_n } \int_{G_{n+2} }  \!\! 
\mathbbm{1}_{B_1}(\eta (y+z)) 
 \mathbbm{1}_{A_{|y+z|}}(y) \mathbbm{1}_{A_{|y+z|}}(z) 
  |y+z|^\alpha  \,\nu_1(dy)\,\nu_2(dz).
\end{align*}
For $(y,z) \in G_{n+2} \times G_{n}$ it holds that 
$\frac{a-1}{a^2} (|y| \vee |z|) \leq |y+z| \leq (a^3+1)(|y|\wedge |z|)$
and also $\eta(y+z) \in B_1$, provided $n$ is large enough. Therefore $\nu_1
\heartsuit \nu_2(B_1) >0$, if $\eta \geq \frac{a^2}{a-1}$ and $\lambda \leq
\frac{1}{a^3+1}$.

To prove the last part of the lemma, we calculate first the most inner
integral in \eqref{e.heart-density} corresponding to $\nu_1^\alpha\heartsuit 
\nu_2^\alpha$,
it equals
\begin{align*}
L
&:= \int g_\lambda^\eta(Y+\tilde{y}, W-Y+\hat{z}) f_1^\alpha(Y+\tilde{y})
f_2^\alpha(W-Y+\hat{z})  H_E(dY)  \\
& = \frac{2-\alpha}{(2-\alpha_0)^2} 
 \int |W +\tilde{y} +\hat{z}|^{\alpha} |Y+\tilde{y}|^{\alpha_0-\alpha}
|W-Y+\hat{z}|^{\alpha_0-\alpha} \1(\ldots) \\
&\qquad\qquad\qquad\qquad \times f_1^{\alpha_0}(Y+\tilde{y})
f_2^{\alpha_0}(W-Y+\hat{z})  H_E(dY),
\end{align*}
where we used an abbreviation
\[
 \1(\ldots) := \1_{A_{|W +\tilde{y} +\hat{z}|}}(Y+\tilde{y}) \1_{A_{|W
+\tilde{y} +\hat{z}|}}(W-Y+\hat{z}).
\]
On the other hand, the 
most inner
integral in \eqref{e.heart-density} corresponding to 
$(\nu_1^{\alpha_0}\heartsuit \nu_2^{\alpha_0})^\alpha$
is
\begin{align*}
R &:= \frac{2-\alpha}{2-\alpha_0} (\eta|W +\tilde{y} 
+\hat{z}|)^{\alpha_0-\alpha}
\\
&\qquad\qquad\qquad\times \int g_\lambda^\eta(Y+\tilde{y}, W-Y+\hat{z})
f_1^{\alpha_0}(Y+\tilde{y}) f_2^{\alpha_0}(W-Y+\hat{z})  H_E(dY)  \\
& = \frac{(2-\alpha)  \eta^{\alpha_0-\alpha}}{(2-\alpha_0)^2}
 \int |W +\tilde{y} +\hat{z}|^{2\alpha_0 - \alpha} \1(\ldots) 
f_1^{\alpha_0}(Y+\tilde{y}) f_2^{\alpha_0}(W-Y+\hat{z})  H_E(dY).
\end{align*}
Inequality \eqref{e.heart-alpha-commute} follows now from the following
estimate,
\[
 |Y+\tilde{y}|^{\alpha_0-\alpha} |W-Y+\hat{z}|^{\alpha_0-\alpha}  \1(\ldots)
\geq (\eta |W +\tilde{y} +\hat{z}|)^{2(\alpha_0 - \alpha)}  \1(\ldots)
\]
and the fact that both sides of \eqref{e.heart-alpha-commute} are zero on 
$\R^d\setminus B_2$.
\end{proof}

\begin{proof}[Proof of \autoref{theo:U1L1implyA} -- lower bound in
\eqref{eq:assum_comp}]
We recall from \autoref{subsec:intro_comp} that we may and do assume that $f_k$
are symmetric, i.e., $f_k(x)=f_k(-x)$ for all $x$. By \autoref{prop:Hausdorff} 
it follows 
that the measure
\[
\nu:=(f_1 H_{E_1}) \heartsuit (f_2 H_{E_2}) \heartsuit \ldots \heartsuit (f_n
H_{E_n})
\]
satisfies \eqref{eq:cond_U} and \eqref{eq:cond_scaling} and has a~density
$h$ with respect to the Lebesgue measure on $\cB(\R^d)$ with $\int_{B_1} 
h(x)\,dx > 
0$, if $\eta$ is large enough and $\lambda$ small enough. We will show that the 
measure $\nu \heartsuit \nu$ possesses a density $h^\heartsuit$ 
with $h^\heartsuit(x) \geq c |x|^{-d-\alpha_0}$ for all $x \in B_1 \setminus 
\{0\}$ and some positive constant $c$ to be specified. This, together with the 
preliminary results, will establish the assertion.

Condition \eqref{eq:cond_scaling} for $\nu$ implies that 
$h(ax)=a^{-d-\alpha_0}
h(x)$ if $x\in B_{1/a}$.
Therefore $\int_{G_0} h(x)\,dx > 0$, where $G_0=B_1\setminus B_{1/a}$.
Define $h^{G_0}(x)=h(x)\1_{G_0}(x) \wedge 1$. The function
\[
 x \mapsto h^{G_0}\ast h^{G_0}(x) = \int h^{G_0}(y-x) h^{G_0}(y)\,dy
\]
is continuous and strictly positive at $0$. Thus there exists $\delta \in
(0,(2a)^{-1})$ and $\varepsilon >0$
such that
\[
 h^{G_0}\ast h^{G_0}(x) \geq \varepsilon \qquad \text{for $x\in B_\delta$.}
\]
We consider the measure $\nu \heartsuit \nu$, it has a~density $h^\heartsuit$
with respect to the Lebesgue measure on $\cB(B_2)$
given by formula, cf. \eqref{e.L-density},
\begin{align*}
 h^\heartsuit(x) &= \eta^{-2d} \int g_\lambda^\eta(\tfrac{w}{\eta},
\tfrac{x-w}{\eta})
h(\tfrac{w}{\eta}) h(\tfrac{x-w}{\eta}) \,dw \\
 &\geq \eta^{2\alpha_0} \int_{G_0} g_\lambda^\eta(\tfrac{w}{\eta},
\tfrac{x-w}{\eta})
\1_{G_0}(x-w)  h(w) h(x-w)\,dw \\
&=\frac{\eta^{\alpha_0}}{2-\alpha_0} \int_{G_0} 
 |x|^{\alpha_0}
 \mathbbm{1}_{A_{|x|}}(w)  \mathbbm{1}_{A_{|x|}}(x-w)
\1_{G_0}(x-w) h(w) h(x-w)\,dw.
\end{align*}
Suppose $\eta \geq a^2/\delta$ and $\lambda\leq 1/(a\delta)$. Then for $x\in B_\delta \setminus
B_{\delta/a^2}$ and $w\in {G_0}$ such that $x-w\in {G_0}$ it holds
\[
 \mathbbm{1}_{A_{|x|}}(w)  \mathbbm{1}_{A_{|x|}}(x-w) = 1.
 \]

This leads to the following estimate
\begin{align*}
 h^\heartsuit(x) 
 &\geq \frac{\eta^{\alpha_0} \delta^{\alpha_0} a^{-2\alpha_0} }{2-\alpha_0} 
h^{G_0} \ast h^{G_0}(x)
 \geq
\frac{\varepsilon}{2-\alpha_0} , \qquad \text{for $x\in B_\delta \setminus
B_{\delta/a^2}$.}
\end{align*}
For $x\in B_1\setminus \{0\}$ let $k\in \Z$ be such that $\frac{\delta}{a^2}<
|x|a^k < \delta < |x|a^{k+1}$. Then, by scaling \eqref{eq:cond_scaling},
\[
 h^\heartsuit(x) = a^{k(d+\alpha_0)} h^\heartsuit(xa^k) 
\geq 
\frac{a^{k(d+\alpha_0)} \varepsilon}{2-\alpha_0}
 \geq \frac{\delta^{d+\alpha_0}\varepsilon}{a^{2d+2\alpha_0}(2-\alpha_0)}
|x|^{-d-\alpha_0}.
\]
Now from \autoref{lem:heart-form-comp} and \autoref{lem:Whitney} it follows that
for any $B\subset B_1$
\begin{equation}\label{e.end4}
 \cE_B^{\mu_{\alpha_0}}(u,u) \leq c \cE_B^{\nu_*}(u,u),
\end{equation}
with $c=c((f_j), (E_j))$.

Finally, to obtain a robust result, we observe that by 
\eqref{e.heart-alpha-commute}
\begin{align*}
\underbrace{ (\nu_*)^\alpha \heartsuit \ldots \heartsuit (\nu_*)^\alpha }_\text{$2n$ 'factors'}
&\geq
\eta^{-2(2n-1)} (\,\underbrace{\nu_* \heartsuit \ldots \heartsuit \nu_*}_\text{$2n$ 'factors'}\, )^\alpha \\
&\geq 
 \eta^{-2(2n-1)} \frac{2-\alpha}{2-\alpha_0} |x|^{\alpha_0-\alpha}
   \frac{\delta^{d+\alpha_0}\varepsilon}{a^{2d+2\alpha_0}} |x|^{-d-\alpha_0} 
\1_{B_1}(x)\,dx.
\end{align*}
This together with  \autoref{lem:heart-form-comp} and \autoref{lem:Whitney} 
gives us
\[
 \cE_B^{\alpha}(u,u) \leq c \cE_B^{(\nu_*)^\alpha}(u,u),
\]
with the constant $c$ \emph{not depending} on $\alpha\in [\alpha_0,2)$.
\end{proof}

Let us show that the assumptions of \autoref{theo:U1L1implyA} are not necessary
for \eqref{eq:assum_comp} and \eqref{eq:assum_cutoff} to hold. This is true 
because the condition \eqref{eq:assum_comp} relates to integrated 
quantities but does not require pointwise bounds on the density of $\mu(x,\d 
y)$.


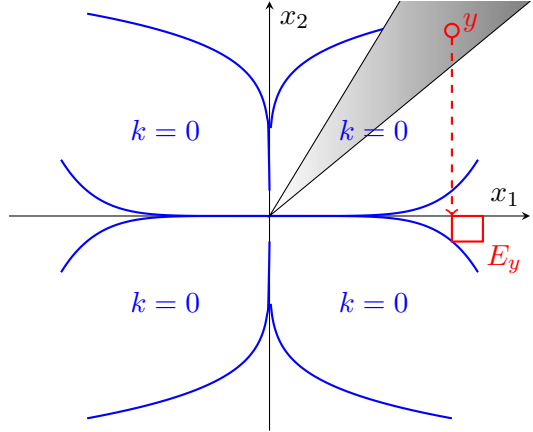
\begin{figure}

\begin{center}

\begin{tikzpicture}
\begin{axis}[
smooth,
axis lines=middle,
xmin=-1,
xmax=1,
ymin=-1,
ymax=1,
xlabel=$x_1$,
ylabel=$x_2$,
xtick=\empty,
ytick=\empty,
]
\addplot[smooth, thick, blue, domain=0:0.7, samples=150] {x^(1/6)};
\addplot[smooth, thick, blue, domain=0:0.7, samples=150] {-x^(1/6)};
\addplot[smooth, thick, blue, domain=-0.7:0, samples=150] {(-x)^(1/6)};
\addplot[smooth, thick, blue, domain=-0.7:0, samples=150] {-(-x)^(1/6)};
\addplot[smooth, thick, blue, domain=0:0.8, samples=150] {x^6};
\addplot[smooth, thick, blue, domain=0:0.8, samples=150] {-x^6};
\addplot[smooth, thick, blue, domain=-0.8:0, samples=150] {(-x)^6};
\addplot[smooth, thick, blue, domain=-0.8:0, samples=150] {-(-x)^6};


\shadedraw[right color = gray, left color = white] (axis cs: 0,0) 
-- (axis cs: 1,1)  -- (axis cs: 1,2) -- cycle;

\draw [red, thick] (axis cs: 0.7, -0.119) rectangle (axis cs: 0.819,0);
\node [red, thick] at (axis cs: 0.9,-0.2){\bf $E_y$};

\draw [thick, red, dashed, o->] (axis cs: 0.7,0.9) -- (axis cs: 0.7,0);  
\node [red, thick, right] at (axis cs: 0.7,0.9){\bf $y$};

\node [blue, thick] at (axis cs: 0.4,-0.4){\bf $k = 0$};
\node [blue, thick] at (axis cs: -0.4,-0.4){\bf $k = 0$};
\node [blue, thick] at (axis cs: 0.4,0.4){\bf $k = 0$};
\node [blue, thick] at (axis cs: -0.4,0.4){\bf $k = 0$};

\end{axis}
\end{tikzpicture}

\end{center}

\caption{Support of the kernel $k$ (with 
$b=1/6$) consisting of four thorns. The set $P$ from the proof below is 
shown, too.} \label{fig:thorn}
\end{figure}

\begin{example}\label{ex:cone}

Let $b\in(0,1)$ and
\[
 \Gamma = \{ (x_1,x_2)\in \R^2| |x_2|\geq |x_1|^b \textrm{ or } |x_1|\geq
|x_2|^b \}.
\]
We consider the following function
\begin{equation}
k(z) = (2{-}\alpha) \ind_{\Gamma \cap B_1}(z) |z|^{-2-\beta}, \quad z\in \R^2,
\end{equation}
where $\beta = \alpha-1+1/b$, see \autoref{fig:thorn}. As we will show, for 
such a function $k$ conditions (A) and (B) are satisfied. We have, for $0<r<1$
\begin{align}
\int_{B_r} |z|^2 k(z)\,dz &\leq 8(2{-}\alpha)\int_0^r \int_0^{x^{1/b}}
(x^2+y^2)^{-\beta/2} \,\d y\,\d x\nonumber\\
&\leq 8(2{-}\alpha)\int_0^r \int_0^{x^{1/b}} x^{-\beta} \,\d y\,\d x = 
 8 r^{2-\alpha}, \label{qU1}
\end{align}
hence $k$ satisfies \eqref{eq:cond_U1'} with $C_1 = 8$.
Since \eqref{eq:cond_U0} is clear, from \autoref{lem:U1p} we conclude that $k$ 
satisfies
\eqref{eq:cond_U}.

Let
\[
 P = \{x\in B_{1/4}| 0< x_1 < x_2 < 2x_1 \}
\]
and for $y=(x_1,x_2)\in P$, let
\[
 E_y =[x_1, x_1+x_1^{1/b}] \times [-x_1^{1/b}, 0].
\]
It is easy to check that if $y\in P$ and $z\in E_y$, then
\[
 \frac{|y|}{3} \leq |z| \leq 4|y| \textrm{,}\qquad 
  \frac{|y|}{3} \leq |y-z| \leq 4|y| \qquad\textrm{and}\qquad 
z,\, y-z \in \Gamma\cap B_1 \,.
\]
Let $\eta=4$ and $\lambda=\frac{1}{3}$. Then for $y\in P$
\begin{align*}
 k\heartsuit k(\eta y)
&=
\frac{|y|^\alpha}{2-\alpha} \int \1_{A_{|y|}}(z) \1_{A_{|y|}}(y-z)  
(2-\alpha)^2 
 \1_{\Gamma\cap B_1}(z) \1_{\Gamma\cap B_1}(y-z) |z|^{-2-\beta} 
|y-z|^{-2-\beta}\, dz\\
&\geq
(2-\alpha) |y|^\alpha \int_{E_y} |z|^{-2-\beta} |y-z|^{-2-\beta}\, dz \\
&\geq
(2-\alpha) |y|^\alpha (4|y|)^{2(-2-\beta)} x_1^{2/b} \\
&\geq
(2-\alpha) 3^{-2/b} 4^{-4-2\beta} |y|^{-2-\alpha} \geq 4^{-6}12^{-2/b} 
(2-\alpha)  |y|^{-2-\alpha}.
\end{align*}
\end{example}

In the following example we provide a condition that implies comparability 
of corresponding quadratic forms but which is not covered by 
\autoref{theo:U1L1implyA}.

\begin{example}
For a measure $\nu$ on $\cB(\R^d)$ with a~density $k$ with respect 
to the Lebesgue measure we formulate the following condition:
\begin{align} 
\label{eq:assum_L1} 
\begin{split}
& \text{There exist $a>1$ and $C_2, C_3 > 0$ such that every annulus 
$B_{a^{-n+1}}\setminus B_{a^{-n}}$ ($n=0,1,\ldots$)} \\
&\text{contains a ball $B_n$ with radius $C_2 a^{-n}$, such that}\\
&\qquad\qquad k(z) \geq C_3(2-\alpha) |z|^{-d-\alpha}, \quad z \in B_n.
\end{split}
\end{align}
The following proposition provides a substitute for \autoref{theo:U1L1implyA}.
\begin{proposition}
Let $a>1$, $\alpha_0 \in (0,2)$, $\alpha\in [\alpha_0,2)$, and  $C_U, C_2, C_3 
>0$.
 Let $\mu=(\mu(x,\cdot) )_{x\in \R^d}$ be a family of measures on $\R^d$ which
satisfies \eqref{eq:mu-symmetry}.
Furthermore, we assume that there exist measures $\loB$ and $\upB$ with property
\eqref{eq.lo-up-meas}, such that
\eqref{eq:cond_U} and \eqref{eq:assum_L1} hold with exponent $\alpha$
and the constants $C_U, C_2, C_3 $.
 Then there is $A = A(a,\alpha_0,C_U, C_2, C_3 ) \geq 1$ not depending on 
$\alpha$
 such that \eqref{eq:assum_comp} hold. 
\end{proposition}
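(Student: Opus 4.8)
The plan is to prove the two inequalities of \eqref{eq:assum_comp} separately, tracking that every constant depends only on $d,a,\alpha_0,C_U,C_2,C_3$. Monotonicity of the quadratic forms in the measure, together with \eqref{eq.lo-up-meas}, gives $\cE^{\nu_*}_B(v,v)\le\cE^\mu_B(v,v)\le\cE^{\nu^*}_B(v,v)$ for every ball $B$, so it is enough to compare $\cE^{\mu_\alpha}_B$ with $\cE^{\nu_*}_B$ and $\cE^{\nu^*}_B$. One direction is immediate: $\nu^*$ satisfies \eqref{eq:cond_U}, hence \autoref{prop:upper} yields $\cE^{\nu^*}_B(v,v)\le c\,\cE^{\mu_\alpha}_B(v,v)$ and therefore $\cE^\mu_B(v,v)\le c\,\cE^{\mu_\alpha}_B(v,v)$, robustly in $\alpha\in[\alpha_0,2)$. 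So the whole work is in the reverse bound $\cE^{\mu_\alpha}_B(v,v)\le A\,\cE^{\nu_*}_B(v,v)$ on balls $B\subset B_1$, uniformly in $\alpha$.

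First I would manufacture a minorant of $\nu_*$ out of \eqref{eq:assum_L1}. As in the proof of \autoref{theo:U1L1implyA} we may assume $\nu_*$ symmetric; its density $k$ then satisfies $k(z)\ge C_3(2-\alpha)|z|^{-d-\alpha}$ on each ball $B_n\subset B_{a^{-n+1}}\setminus B_{a^{-n}}$ \emph{and} on $-B_n$. Since the sets $B_n\cup(-B_n)$, $n\ge0$, lie in pairwise disjoint annuli, the measure $\nu_0(\d z):=C_3(2-\alpha)|z|^{-d-\alpha}\1_E(z)\,\d z$ with $E:=\bigcup_{n\ge0}\bigl(B_n\cup(-B_n)\bigr)$ satisfies $\nu_0\le\nu_*$, so $\cE^{\nu_0}_B(v,v)\le\cE^{\nu_*}_B(v,v)$, and it inherits \eqref{eq:cond_U}. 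It then remains to prove $\cE^{\mu_\alpha}_B(v,v)\le A\,\cE^{\nu_0}_B(v,v)$.

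The core estimate is a two-step argument in the spirit of \autoref{lem:heart-form-comp} and \autoref{prop:Hausdorff}, but made scale-adaptive. Fix $B=B_r(x_0)$ and its dilate $B^*=B_{Kr}(x_0)\subset B_1$, with $K$ depending only on $a,C_2$. Given $x\in B$ and a jump $h$ with $\rho:=|h|<2r$, choose $n=n(\rho)\in\N_0$ with $a^{-n}\approx\rho$; since $B_n$ is a \emph{full-dimensional} ball of radius $\approx\rho$, the set $B_n-B_n$ is a ball of radius $\approx\rho$ containing $h$, so $h=y-y'$ with $y$ ranging over $S_{n,h}:=B_n\cap(h+B_n)$ and $y':=y-h\in B_n$, where $S_{n,h}$ has Lebesgue measure $\gtrsim\rho^d$. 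Averaging $(v(x)-v(x+h))^2\le2(v(x)-v(x+y))^2+2(v(x+y)-v(x+h))^2$ over $y\in S_{n,h}$, multiplying by $(2-\alpha)|h|^{-d-\alpha}$ and integrating in $x$ and $h$, both terms are — after interchanging the order of integration — at most $C\,C_3^{-1}\cE^{\nu_0}_{B^*}(v,v)$: in the first, $y\in B_n$ is a $\nu_0$-admissible jump issued from $x\in B^*$; in the second, $x+h=(x+y)+(-y')$ with $-y'\in-B_n$ a $\nu_0$-admissible jump issued from the intermediate point $x+y\in B^*$; and the weights match because, for a fixed such jump of size $\approx\rho$, the quantity $|S_{n,h}|^{-1}(2-\alpha)|h|^{-d-\alpha}$ integrated over the admissible $h$ is bounded by a multiple of $(2-\alpha)|y|^{-d-\alpha}$, i.e.\ of $C_3^{-1}$ times the density of $\nu_0$ on $E$. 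Working the scale in adaptively in this way is exactly what lets us dispense with the exact scaling \eqref{eq:cond_scaling} of $\nu_0$, which we do not have here. Finally \autoref{lem:Whitney} turns the enlarged bound $\cE^{\mu_\alpha}_B(v,v)\le A'\cE^{\nu_0}_{B^*}(v,v)$ into $\cE^{\mu_\alpha}_B(v,v)\le A\,\cE^{\nu_0}_B(v,v)$ for all balls $B$ in the range relevant to \eqref{eq:assum_comp}, and chaining with $\cE^{\nu_0}_B\le\cE^{\nu_*}_B\le\cE^\mu_B$ completes the proof; the factors $(2-\alpha)$ carried by both $\mu_\alpha$ and $\nu_0$ cancel, so $A$ does not depend on $\alpha$.

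The hard part will be precisely this adaptive two-step estimate. In contrast to \autoref{theo:U1L1implyA}, the balls $B_n$ supplied by \eqref{eq:assum_L1} sit at unrelated angular positions, so there is no scale-invariant sub-measure of $\nu_*$ and \autoref{lem:heart-form-comp} (which uses the scaling \eqref{eq:cond_scaling}) cannot be invoked verbatim; one must either redo its proof with the intermediate scale chosen per jump, or equivalently prove a scaling-free variant of it. The delicate points are the lower bound $|S_{n,h}|\gtrsim\rho^d$ — this is where full-dimensionality of each $B_n$, rather than the spanning condition \eqref{eq:cond_nondeg}, does the work — the interchange of integrals, and the verification that the resulting powers of $a$ and $C_2$ combine into a constant independent of $\alpha$. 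The only other point requiring attention, namely choosing the radii so that $B^*\subset B_1$ throughout, is settled by an innocuous rescaling, for which the factor $(2-\alpha)$ is again immaterial.
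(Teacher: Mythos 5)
Your proposal is correct and follows the same overall architecture as the paper: the upper half of \eqref{eq:assum_comp} via \autoref{prop:upper} applied to $\nu^*$, and the lower half by running two consecutive jumps of $\nu_*$ through the balls $B_n$ of \eqref{eq:assum_L1} to produce a full $(2-\alpha)|w|^{-d-\alpha}$ lower bound, finished off with \autoref{lem:Whitney}. Where you genuinely diverge is in the middle step. The paper stays inside the $\heartsuit$-formalism: it computes $k\heartsuit k(\eta y)\geq C(2-\alpha)|y|^{-d-\alpha}$ on the relevant range using \eqref{e.L-density} and then cites \autoref{lem:heart-form-comp} to dominate $\cE_B^{k\heartsuit k}$ by $\cE_{B^*}^{\nu_*}$. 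You instead redo that domination by hand, choosing the intermediate scale $n$ adaptively per jump $h$ and averaging over $y\in B_n\cap(h+B_n)$. Your stated reason is sound and in fact points at a soft spot in the paper's own argument: \autoref{lem:heart-form-comp} is formulated under the scaling hypothesis \eqref{eq:cond_scaling} (it is used there to undo the $\eta$-dilation built into $\nu_1\heartsuit\nu_2$), and the proposition's hypotheses \eqref{eq:cond_U} and \eqref{eq:assum_L1} do not supply \eqref{eq:cond_scaling} for $\nu_*$; so the lemma cannot be invoked verbatim and must be replaced by a scaling-free variant — which is exactly what your adaptive two-step estimate is. The price you pay is that the key computation (the interchange of the $h$- and $y$-integrations, the lower bound $|S_{n,h}|\gtrsim|h|^d$ from full-dimensionality of $B_n$, and the bookkeeping of powers of $a$ and $C_2$) must be carried out explicitly rather than quoted; your sketch of it checks out, the constants depend only on $d,a,\alpha_0,C_U,C_2,C_3$, and the factors $(2-\alpha)$ cancel as you say. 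One point to make explicit when writing it up: jumps $|h|>C_2$ have no admissible scale $n\geq 0$, so the two-step estimate is only available on balls $B$ of radius $\lesssim C_2$; this is harmless because \autoref{lem:Whitney} only requires the input comparison on balls $B$ whose dilates $B^*$ fit inside the reference ball, so $\kappa$ can be chosen small enough in terms of $a$ and $C_2$ — the same mechanism that keeps the jumps short in the paper's route.
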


\begin{proof}
We fix $\lambda < 2/C_2 \wedge 1$ and $\eta \geq 2a^2/C_2 \vee 1$.
Let for some $n\in \{0,1,\ldots\}$,
\[
 \frac{C_2}{2} a^{-n-1} \leq |y| \leq  \frac{C_2}{2} a^{-n},
\]
and assume that $\eta y \in B_2$. 
By formula \eqref{e.L-density}, we obtain
\begin{align*}
k\heartsuit k(\eta y)
 &\geq
\frac{\eta^{-d}|y|^\alpha}{2-\alpha} \int \1_{A_{|y|}}(y-z) \1_{A_{|y|}}(z) 
k(y-z) k(z)\,dz.
\end{align*}
Let us denote by $B^o_n$ the ball concentric with $B_n$, but with radius $C_2 
a^{-n}/2$ (that is, $B^o_n$ is twice smaller than $B_n$).
We observe that if $z\in B^o_n$, then $y-z\in B_n$. Furthermore, by our choice 
of $\lambda$ and $\eta$ it follows that
\[
\lambda|y| \leq |y-z| < \eta |y|, \qquad   \lambda|y| \leq |z| < \eta |y|,
\qquad \text{if $z\in B^o_n$,}
\]
that is, $y-z$, $z\in A_{|y|}$ for $z\in B^o_n$. Hence
\begin{align*}
k\heartsuit k(\eta y)
 &\geq
 \frac{\eta^{-d}|y|^\alpha}{2-\alpha} C_3^2 (2-\alpha)^2 \int_{B^0_n} 
|y-z|^{-d-\alpha}  |z|^{-d-\alpha} \,dz\\
&\geq 
\frac{C_3^2 \eta^{-d} (2-\alpha) C_2^{2d+2\alpha}}{ 2^{2d+2\alpha} 
a^{3d+4\alpha}} |y|^{-d-\alpha} \\
&\geq C(\alpha_0, d, C_2, C_3, \eta, a) (2-\alpha) |y|^{-d-\alpha},
\end{align*}
or, equivalently, for $w\in B_2$
\[
 k\heartsuit k(w) \geq C'(\alpha_0, d, C_2, C_3, \eta, a) (2-\alpha) 
|w|^{-d-\alpha}.
\]
By \autoref{lem:heart-form-comp} and \autoref{lem:Whitney} we conclude that 
the 
lower estimate in \eqref{eq:assum_comp} holds. The upper 
estimate is in turn a~consequence of \autoref{prop:upper}.
\end{proof}
\end{example}

\section{Global comparability results for nonlocal quadratic 
forms}\label{sec:appendix}

In this section we provide a global comparability result, i.e. we study
comparability in the whole $\R^d$. This result is not needed for the other
results in this article, however it contains an interesting and useful
observation. 

\begin{proposition}\label{prop:upperRd}
Assume \eqref{eq:cond_U} holds. Then there exists a constant $c=c(\alpha,d, 
C_U)$ such 
that
\begin{equation}\label{Rdupper2Rd}
 \cE^\mu(u,u) \leq c (\cE^{\mu_\alpha}(u,u) + \|u\|_{L^2(\R^d)}^2) 
\qquad \text{ for every } u\in L^2(\R^d)\,.
\end{equation}
Furthermore, if \eqref{eq:cond_U} is satisfied for \emph{all} $r>0$, then  for 
every $u\in L^2(\R^d)$
\begin{equation}\label{RdupperRd}
 \cE^\mu(u,u) \leq c \cE^{\mu_\alpha}(u,u)\,.
\end{equation}
If the constant $C_U$ in  \eqref{eq:cond_U} is independent of $\alpha \in 
(\alpha_0,2)$, where $\alpha_0>0$,
then so are the constants in \eqref{Rdupper2Rd} and \eqref{RdupperRd}.
\end{proposition}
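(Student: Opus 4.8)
The plan is to mimic the proof of \autoref{prop:upper}, but work directly on $\R^d$ so that no extension operator is needed. First I would reduce to the spectral side: for $u \in L^2(\R^d)$ we may assume $\cE^{\mu_\alpha}(u,u) < \infty$ (otherwise nothing to prove), so $u \in H^{\alpha/2}(\R^d)$ and the Plancherel identity gives
\[
\cE^{\mu}(u,u) = \int_{\R^d} \Big( \int_{\R^d} 4\sin^2\big(\tfrac{\xi\cdot z}{2}\big)\, \mu(\d z) \Big) |\widehat u(\xi)|^2 \, \d \xi \,,
\]
after first using \eqref{eq.lo-up-meas} to dominate $\cE^\mu(u,u)$ by $\cE^{\upB}(u,u)$ and recalling that we may take $\upB$ symmetric, so that the $z$-integral of the increments becomes the Fourier multiplier $m(\xi) = \int 4\sin^2(\xi\cdot z/2)\,\upB(\d z)$. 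Here the key point compared with \autoref{prop:upper} is that the full Fourier transform of $u$ appears, not that of an extension, which is exactly why no Lipschitz-domain hypothesis enters.

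Next I would estimate the multiplier $m(\xi)$ pointwise. Splitting the $z$-integral at $|z| = |\xi|^{-1} \wedge 1$ and using $\sin^2 t \le t^2 \wedge 1$, condition \eqref{eq:cond_U} with $r = |\xi|^{-1} \wedge 1$ yields
\[
m(\xi) \le |\xi|^2 \int_{\R^d} \big( |z|^2 \wedge (|\xi|^{-1}\wedge 1)^2 \big) \upB(\d z) \le C_U |\xi|^2 (|\xi|^{-1}\wedge 1)^{2-\alpha} = C_U \big( |\xi|^\alpha \wedge |\xi|^2 \big) \le C_U (|\xi|^\alpha + 1)\,.
\]
Plugging this in gives $\cE^\mu(u,u) \le C_U \int_{\R^d} (|\xi|^\alpha + 1) |\widehat u(\xi)|^2 \, \d\xi = C_U \big( \cE^{\mu_\alpha}(u,u) + \|u\|_{L^2}^2 \big)$ up to the usual normalizing constant relating $\int |\xi|^\alpha |\widehat u|^2$ to $\cE^{\mu_\alpha}$, which by \eqref{eq:c-alpha-d_prop} is comparable to $1$ uniformly on $[\alpha_0,2)$. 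This proves \eqref{Rdupper2Rd}.

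For \eqref{RdupperRd}, when \eqref{eq:cond_U} holds for all $r>0$ I would take $r\to\infty$: then $\int_{\R^d}|z|^2 \upB(\d z)$ may still be infinite, but the bound $\int_{\R^d}(r\wedge|z|)^2\upB(\d z)\le C_U r^{2-\alpha}$ lets me choose, for each $\xi$, $r = |\xi|^{-1}$ with no truncation at $1$, giving $m(\xi) \le C_U |\xi|^\alpha$ for all $\xi$ and hence $\cE^\mu(u,u) \le c\,\cE^{\mu_\alpha}(u,u)$ directly. The robustness statement is then immediate since every constant introduced is either $C_U$ itself or a dimensional/$\alpha_0$-dependent constant from \eqref{eq:c-alpha-d_prop}. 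The only mild obstacle is bookkeeping: one must justify the Fubini/Plancherel exchange (legitimate once $\cE^{\mu_\alpha}(u,u)<\infty$, by nonnegativity of the integrand) and handle the symmetrization $\upB \mapsto (\upB + \overline{\upB})/2$, which as noted in the text leaves $\cE^{\upB}$ unchanged; neither is serious.
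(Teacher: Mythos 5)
Your proposal is correct and follows essentially the same route as the paper, which simply reruns the Plancherel-plus-multiplier argument of \autoref{prop:upper} with the extension operator replaced by the identity (so that $\widehat{u}$ itself appears), bounds $\int 4\sin^2(\xi\cdot z/2)\,\upB(\d z)$ by $c\,C_U(|\xi|^\alpha+1)$ via \eqref{eq:cond_U}, and for the second part observes that when \eqref{eq:cond_U} holds for all $r>0$ the bound $\leq 4C_U|\xi|^\alpha$ holds for all $\xi\neq 0$. The only blemish is a harmless constant slip in your multiplier estimate: since $4\sin^2(t)\leq 4$, the correct pointwise bound is $4\sin^2(\xi\cdot z/2)\leq |\xi|^2\bigl(|z|^2\wedge 4|\xi|^{-2}\bigr)$, so one should apply \eqref{eq:cond_U} with $r=2(|\xi|^{-1}\wedge 1)$ and accept an extra factor $2^{2-\alpha}\leq 4$.
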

\begin{proof}
By $E$ we denote the identity operator from $H^{\alpha/2}(\R^d)$ to itself.
One easily checks that the proof of \autoref{prop:upper} from
\eqref{Plproof} until \eqref{EkdEa} works also in the present case of $D=\R^d$.
Hence \eqref{Rdupper2Rd} follows.

To prove \eqref{RdupperRd} we observe that if \eqref{eq:cond_U} holds for all
$r>0$, 
then also
\eqref{largexi} holds for \emph{all} $\xi \neq 0$, we plug it into \eqref{Plproof2}
and we are done.
\end{proof}

We consider the following condition.
\begin{itemize}
\item[\as{2}]
 There exists $c_0 >0$ such that for all
$h \in S^{d-1}$ and all $0<r< r_0$
\begin{equation}\label{A0}
 \int_{\R^d} r^2 \sin^2\Big(\frac{h \cdot z}{r}\Big) \loB(dz) \geq c_0
r^{2-\alpha}.
\end{equation}
\end{itemize}

Clearly \eqref{eq:assum_L1}  implies \as{2} for $r_0=1$, and if $C_3$ is 
independent of $\alpha \in (\alpha_0,2)$, where $\alpha_0>0$,
then so is $c_0$.
Condition \as{2} is also satisfied if for all $h \in S^{d-1}$ and all $0<r< r_0$
\begin{equation}\label{A2}
 \int_{B_r(0)} |h \cdot z|^2 \loB(dz) \geq c_2 r^{2-\alpha}.
\end{equation}
We note that \eqref{comp0} under condition \eqref{A2} has been proved in
\cite{AbelsHusseini}
by Abels and Husseini.
The following theorem extends their result by giving a \emph{characterization}
of kernels $\loB$
admitting comparability \eqref{comp0}.
We stress that $r_0=\infty$ is allowed, and in such a case we put 
$\frac{1}{r_0^\alpha} = 0$.

\begin{theorem}\label{thm:Rd}
Let $0<r_0\leq \infty$.
If \as{2} holds, then
\begin{equation}\label{comp0}
\cE^{\mu_\alpha}(u,u) \leq \frac{1}{c_0} \cE^\mu(u,u) +
\frac{2^\alpha}{r_0^\alpha} \|u\|_{L^2}^2\,,
\quad u\in C_c^1(\R^d).
\end{equation}
Conversely, if for some $c<\infty$
\begin{equation}\label{comp0conv}
\cE^{\mu_\alpha}(u,u) \leq c 
\cE^{\loB}(u,u) +
\frac{2^\alpha}{r_0^\alpha} \|u\|_{L^2}^2\,,\quad
u\in \mathcal{S}(\R^d),
\end{equation}
then \as{2} holds.
\end{theorem}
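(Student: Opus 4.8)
The plan is to pass, via Plancherel's theorem, from the quadratic forms to their Fourier symbols, and to recognise condition \as{2} as nothing but a one-sided pointwise bound on the symbol of $\cE^{\loB}$. Just as in the Plancherel--Fubini computation \eqref{Plproof}--\eqref{Plproof2}, for a symmetric measure $\nu$ on $\R^d$ with $\nu(\{0\})=0$ and $\int_{\R^d}(|z|^2\wedge1)\,\nu(\d z)<\infty$ one has
\[
\cE^\nu(u,u)=\int_{\R^d}\Psi_\nu(\xi)\,|\widehat u(\xi)|^2\,\d\xi,\qquad
\Psi_\nu(\xi):=\int_{\R^d}4\sin^2\!\Big(\tfrac{z\cdot\xi}{2}\Big)\,\nu(\d z),
\]
for $u\in C^1_c(\R^d)$ and for $u\in\mathcal{S}(\R^d)$. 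By rotational invariance and scaling $\Psi_{\mu_\alpha}(\xi)=\kappa_{d,\alpha}|\xi|^\alpha$, where $\kappa_{d,\alpha}$ is bounded above and below for $\alpha\in[\alpha_0,2)$ by \eqref{eq:c-alpha-d_prop}; I will absorb $\kappa_{d,\alpha}$ into the constants and write $\Psi_{\mu_\alpha}(\xi)=|\xi|^\alpha$. The elementary but crucial point is that the substitution $h=\xi/|\xi|$, $r=2/|\xi|$ turns the inequality in \eqref{A0} into $\Psi_{\loB}(\xi)\ge2^{2-\alpha}c_0|\xi|^\alpha\ge c_0|\xi|^\alpha$, valid exactly for $|\xi|>2/r_0$ (for all $\xi\neq0$ if $r_0=\infty$), and that this reformulation is reversible.

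\textbf{Forward implication.} Assuming \as{2}, I would observe that for $|\xi|>2/r_0$ the reformulation yields $\Psi_{\mu_\alpha}(\xi)=|\xi|^\alpha\le\tfrac1{c_0}\Psi_{\loB}(\xi)$, while for $|\xi|\le2/r_0$ one has trivially $\Psi_{\mu_\alpha}(\xi)=|\xi|^\alpha\le(2/r_0)^\alpha=2^\alpha r_0^{-\alpha}$; hence $\Psi_{\mu_\alpha}(\xi)\le\tfrac1{c_0}\Psi_{\loB}(\xi)+2^\alpha r_0^{-\alpha}$ for all $\xi$. Multiplying by $|\widehat u(\xi)|^2$, integrating, and using $\cE^{\loB}(u,u)\le\cE^\mu(u,u)$ from \eqref{eq.lo-up-meas} gives \eqref{comp0} for $u\in C^1_c(\R^d)$. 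When $r_0=\infty$ the low-frequency term is absent and the same argument produces \eqref{RdupperRd}, recovering \autoref{prop:upperRd} in this case.

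\textbf{Converse implication.} Starting from \eqref{comp0conv}, I would first note that $\loB$ inherits $\int_{\R^d}(|z|^2\wedge1)\,\loB(\d z)<\infty$ from \eqref{eq.lo-up-meas} together with \eqref{eq:mu-integrability}, so both $\Psi_{\loB}$ and $\Psi_{\mu_\alpha}$ are finite and, by dominated convergence, continuous on $\R^d$. Rewriting \eqref{comp0conv} via Plancherel,
\[
\int_{\R^d}\big(\Psi_{\mu_\alpha}(\xi)-c\,\Psi_{\loB}(\xi)-2^\alpha r_0^{-\alpha}\big)\,|\widehat u(\xi)|^2\,\d\xi\le0\qquad(u\in\mathcal{S}(\R^d)).
\]
Since the Fourier transform maps $\mathcal{S}(\R^d)$ onto itself, the weights $|\widehat u|^2$ contain arbitrarily narrow smooth bumps centred at any prescribed point; were the bracket positive at some $\xi_0$, continuity would keep it positive on a neighbourhood and a bump supported there would violate the inequality. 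Hence $\Psi_{\mu_\alpha}(\xi)\le c\,\Psi_{\loB}(\xi)+2^\alpha r_0^{-\alpha}$ everywhere, i.e.\ $\Psi_{\loB}(\xi)\ge\tfrac1c(|\xi|^\alpha-2^\alpha r_0^{-\alpha})$. Choosing $R_0$ so that $2^\alpha r_0^{-\alpha}\le\tfrac12|\xi|^\alpha$ whenever $|\xi|\ge R_0$ gives $\Psi_{\loB}(\xi)\ge\tfrac1{2c}|\xi|^\alpha$ there, and undoing the substitution $r=2/|\xi|$ yields \eqref{A0} with $c_0=2^{\alpha-2}/(2c)$ for all $0<r\le2/R_0$; that is, \as{2} holds with $r_0$ replaced by the possibly smaller value $2/R_0$ (and with every finite $r_0$ when the original comparability holds for $r_0=\infty$).

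\textbf{Main difficulty.} The one non-routine step is the localisation in the converse: extracting a pointwise symbol inequality from the inequality tested against all weights $|\widehat u|^2$, $u\in\mathcal{S}(\R^d)$. This rests on the continuity of $\Psi_{\loB}$ — hence on the $\loB$-moment bound coming from \eqref{eq.lo-up-meas} and \eqref{eq:mu-integrability} — and on the density of Fourier-localised Schwartz functions. Everything else is bookkeeping: carrying the dimensional factor $\kappa_{d,\alpha}$ in $\Psi_{\mu_\alpha}=\kappa_{d,\alpha}|\xi|^\alpha$ through both implications (uniformly bounded in $\alpha$ by \eqref{eq:c-alpha-d_prop}), and noting that in the converse the admissible range of $r$ genuinely must shrink, since $|\xi|^\alpha-2^\alpha r_0^{-\alpha}$ degenerates as $|\xi|\downarrow2/r_0$.
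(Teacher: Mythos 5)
Your forward implication and your Plancherel reformulation of \as{2} as the pointwise symbol bound $\int 4\sin^2(\tfrac{\xi\cdot z}{2})\,\loB(\d z)\ge c_0 2^{2-\alpha}|\xi|^\alpha$ for $|\xi|>2/r_0$ are exactly the paper's argument, and your bump-function localisation in the converse is a legitimate (somewhat more detailed) version of the paper's passage from the tested inequality to the pointwise bound. The gap is in the final step of the converse. The theorem asserts that \as{2} holds, i.e.\ \eqref{A0} for \emph{all} $0<r<r_0$ with \emph{some} constant $c_0$; your argument only delivers it for $0<r\le 2/R_0=2^{-1/\alpha}r_0$, that is, only \as[2^{-1/\alpha}r_0]{2}, and you then assert that this shrinkage of the admissible range is genuine and unavoidable. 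It is not: what degenerates near $|\xi|=2/r_0$ is the \emph{constant}, not the range. The paper closes the range with the elementary inequality
\[
\sin^2\Big(\frac{h\cdot z}{2r}\Big)\ \ge\ \frac14\,\sin^2\Big(\frac{h\cdot z}{r}\Big),
\]
which shows that if \eqref{A0} holds at radius $r$ with constant $c_0$, then it holds at radius $2r$ with constant $2^{\alpha-2}c_0$; iterating this doubling finitely many times (about $\lceil 1/\alpha\rceil$ steps) upgrades \as[2^{-1/\alpha}r_0]{2} to \as{2} with a smaller but still positive constant. Without this step you have proved a strictly weaker statement than the one claimed, so you should add the doubling argument and delete the remark that the range ``genuinely must shrink.'' (A minor point, present in the paper as well: since $\cE^{\mu_\alpha}$ carries a dimensional normalisation in front of $|\xi|^\alpha$, the explicit constants $1/c_0$ and $2^\alpha/r_0^\alpha$ in \eqref{comp0} are only exact under the implicit normalisation of \eqref{Ealpha}; absorbing $\kappa_{d,\alpha}$ as you do is consistent with that.)
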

\begin{proof}
Recalling that $(u(\cdot+z))^\wedge(\xi) = e^{i\xi\cdot z}\hat{u}(\xi)$
and using Plancherel formula
we obtain
\begin{align}
\cE^\mu(u,u) &\geq
\iint (u(x)-u(x+z))^2 \,dx\, \loB(dz) \nonumber\\
 &=\iint |e^{i\xi\cdot z}-1|^2 |\hat{u}(\xi)|^2\,d\xi
 \,  \loB(dz) \nonumber\\
 &=\int \left( \int 4\sin^2\Big(\frac{\xi\cdot z}{2}\Big) \loB(dz) \right)
|\hat{u}(\xi)|^2\,d\xi. \label{Ek}
\end{align}
If \as{2} holds, then for all $|\xi| > 2/ r_0$
\begin{align*}
 \int 4\sin^2\Big(\frac{\xi\cdot z}{2}\Big) \loB(dz) &\geq
\frac{4c_0}{2^\alpha} |\xi|^\alpha \geq c_0|\xi|^\alpha.
\end{align*}
For $|\xi| \leq 2/ r_0$ we have $|\xi|^\alpha \leq (2/r_0)^\alpha$.
Inequality \eqref{comp0}  follows from 
\begin{equation}\label{Ealpha}
\frac{\cA_{d,-\alpha}}{2\alpha(2{-}\alpha)} \cE^\alpha_{\R^d}(u,u) =
\int_{\R^d}|\xi|^\alpha |\hat{u}(\xi)|^2\,d\xi.
\end{equation}
Now we prove the converse. Assume \eqref{comp0conv}.
By \eqref{Ek}, the right hand side of \eqref{comp0conv} equals
\[
 \int \left( c\int 4\sin^2\Big(\frac{\xi\cdot z}{2}\Big) \loB(dz)  +
\frac{2^\alpha}{r_0^\alpha}\right) |\hat{u}(\xi)|^2\,d\xi,
\]
hence by \eqref{Ealpha} and \eqref{comp0conv} we obtain that
\begin{equation}\label{closetoA0}
c\int 4\sin^2\Big(\frac{\xi\cdot z}{2}\Big) \loB(dz)  +
\frac{2^\alpha}{r_0^\alpha} \geq |\xi|^\alpha, 
  \quad \textrm{for a.e. $\xi\in\R^d$.}
\end{equation}
By continuity of the function
\[
 \R^d\setminus\{0\} \ni \xi \mapsto \int 4\sin^2\Big(\frac{\xi\cdot z}{2}\Big)
\loB(dz),
\]
\eqref{closetoA0} holds for all $\xi\in\R^d$. For $|\xi|\geq
2^{1+1/\alpha}r_0^{-1}$
we have by \eqref{closetoA0}
\[
c\int 4\sin^2\Big(\frac{\xi\cdot z}{2}\Big) \loB(dz)   \geq
\frac{|\xi|^\alpha}{2},
\]
and hence \as[2^{-1/\alpha}r_0]{2}{}  holds with $c_0=2^{\alpha-3}c^{-1}$.
Since
\[
 \sin^2\Big(\frac{h \cdot z}{2r}\Big) \geq \frac{1}{4}  \sin^2\Big(\frac{h \cdot 
z}{r}\Big),
\]
also \as{2} holds with \emph{some} constant $c_0$.
\end{proof}


%
\enlargethispage*{4 \baselineskip}
\bibliographystyle{abbrv}
\bibliography{bib_file_mega}

\def\cprime{$'$} \def\cprime{$'$}
  \def\polhk#1{\setbox0=\hbox{#1}{\ooalign{\hidewidth
  \lower1.5ex\hbox{`}\hidewidth\crcr\unhbox0}}}
  \def\polhk#1{\setbox0=\hbox{#1}{\ooalign{\hidewidth
  \lower1.5ex\hbox{`}\hidewidth\crcr\unhbox0}}}
  \def\polhk#1{\setbox0=\hbox{#1}{\ooalign{\hidewidth
  \lower1.5ex\hbox{`}\hidewidth\crcr\unhbox0}}} \def\cprime{$'$}
  \def\cprime{$'$} \def\cprime{$'$} \def\cprime{$'$} \def\cprime{$'$}
  \def\cprime{$'$}
\begin{thebibliography}{10}

\bibitem{AbelsHusseini}
H.~Abels and R.~Husseini.
\newblock On hypoellipticity of generators of {L}\'evy processes.
\newblock {\em Ark. Mat.}, 48(2):231--242, 2010.

\bibitem{BBCK09}
M.~T. Barlow, R.~F. Bass, Z.-Q. Chen, and M.~Kassmann.
\newblock Non-local {D}irichlet forms and symmetric jump processes.
\newblock {\em Trans. Amer. Math. Soc.}, 361(4):1963--1999, 2009.

\bibitem{BaCh10}
R.~F. Bass and Z.-Q. Chen.
\newblock Regularity of harmonic functions for a class of singular stable-like
  processes.
\newblock {\em Math. Z.}, 266(3):489--503, 2010.

\bibitem{BaLe02TAMS}
R.~F. Bass and D.~A. Levin.
\newblock Transition probabilities for symmetric jump processes.
\newblock {\em Trans. Amer. Math. Soc.}, 354(7):2933--2953 (electronic), 2002.

\bibitem{BlHa86}
J.~Bliedtner and W.~Hansen.
\newblock {\em Potential theory}.
\newblock Universitext. Springer-Verlag, Berlin, 1986.
\newblock An analytic and probabilistic approach to balayage.

\bibitem{BoSz05}
K.~Bogdan and P.~Sztonyk.
\newblock Harnack's inequality for stable {L}\'evy processes.
\newblock {\em Potential Anal.}, 22(2):133--150, 2005.

\bibitem{BBM01}
J.~Bourgain, H.~Brezis, and P.~Mironescu.
\newblock Another look at sobolev spaces.
\newblock Menaldi, Jos\'e Luis (ed.) et al., Optimal control and partial
  differential equations. In honour of Professor Alain Bensoussan's 60th
  birthday. Proceedings of the conference, Paris, France, December 4, 2000.
  Amsterdam: IOS Press; Tokyo: Ohmsha. 439-455 (2001)., 2001.

\bibitem{CCV10}
L.~Caffarelli, C.-H. Chan, and A.~Vasseur.
\newblock Regularity theory for parabolic nonlinear integral operators.
\newblock {\em J. Amer. Math. Soc.}, 24:27--62, 2011.

\bibitem{ChKu03}
Z.-Q. Chen and T.~Kumagai.
\newblock Heat kernel estimates for stable-like processes on {$d$}-sets.
\newblock {\em Stochastic Process. Appl.}, 108(1):27--62, 2003.

\bibitem{DiCKP14}
A.~Di~Castro, T.~Kuusi, and G.~Palatucci.
\newblock Nonlocal {H}arnack inequalities.
\newblock {\em J. Funct. Anal.}, 267(6):1807--1836, 2014.

\bibitem{DiCKP15}
A.~Di~Castro, T.~Kuusi, and G.~Palatucci.
\newblock Local behavior of fractional p-minimizers.
\newblock {\em Ann. de l'Inst. H. Poincare (C) Non Linear Analysis}, 2015.
\newblock in print.

\bibitem{DiNezzaPalatucciValdinoci}
E.~Di~Nezza, G.~Palatucci, and E.~Valdinoci.
\newblock Hitchhiker's guide to the fractional {S}obolev spaces.
\newblock {\em Bull. {S}ci. {M}ath.}, 136(5):521--573, 2015.
\newblock See also arXiv:1104.4345v2 [math.FA], 2011.

\bibitem{Dyda2}
B.~Dyda.
\newblock On comparability of integral forms.
\newblock {\em J. Math. Anal. Appl.}, 318(2):564--577, 2006.

\bibitem{FeKa13}
M.~Felsinger and M.~Kassmann.
\newblock Local regularity for parabolic nonlocal operators.
\newblock {\em Comm. Partial Differential Equations}, 38(9):1539--1573, 2013.

\bibitem{GiTr83}
D.~Gilbarg and N.~S. Trudinger.
\newblock {\em Elliptic partial differential equations of second order}, volume
  224 of {\em Grundlehren der Mathematischen Wissenschaften [Fundamental
  Principles of Mathematical Sciences]}.
\newblock Springer-Verlag, Berlin, second edition, 1983.

\bibitem{GHL14}
A.~Grigor'yan, J.~Hu, and K.-S. Lau.
\newblock Estimates of heat kernels for non-local regular {D}irichlet forms.
\newblock {\em Trans. Amer. Math. Soc.}, 366(12):6397--6441, 2014.

\bibitem{JoNi61}
F.~John and L.~Nirenberg.
\newblock On functions of bounded mean oscillation.
\newblock {\em Comm. Pure Appl. Math.}, 14:415--426, 1961.

\bibitem{Kas07_SFB}
M.~Kassmann.
\newblock The classical {H}arnack inequality fails for nonlocal operators.
\newblock preprint No. 360, Sonderforschungsbereich 611; link:
  sfb611.iam.uni-bonn.de/uploads/360-komplett.pdf, 2007.

\bibitem{Kas07_survey}
M.~Kassmann.
\newblock Harnack inequalities: An introduction.
\newblock {\em Boundary Value Problems}, 2007:Article ID 81415, 21 pages, 2007.
\newblock doi:10.1155/2007/81415.

\bibitem{Kas09}
M.~Kassmann.
\newblock A priori estimates for integro-differential operators with measurable
  kernels.
\newblock {\em Calc. Var. Partial Differential Equations}, 34(1):1--21, 2009.

\bibitem{Kas11}
M.~Kassmann.
\newblock A new formulation of {H}arnack's inequality for nonlocal operators.
\newblock {\em C. R. Acad. Sci. Paris, Ser. I}, 349:637--640, 2011.

\bibitem{KRS14}
M.~Kassmann, M.~Rang, and R.~W. Schwab.
\newblock Integro-differential equations with nonlinear directional dependence.
\newblock {\em Indiana Univ. Math. J.}, 63(5):1467--1498, 2014.

\bibitem{KaSc14}
M.~Kassmann and R.~W. Schwab.
\newblock Regularity results for nonlocal parabolic equations.
\newblock {\em Riv. Math. Univ. Parma (N.S.)}, 5(1):183--212, 2014.

\bibitem{Kom95}
T.~Komatsu.
\newblock Uniform estimates for fundamental solutions associated with non-local
  {D}irichlet forms.
\newblock {\em Osaka J. Math.}, 32(4):833--860, 1995.

\bibitem{FKV13}
M.~K. Matthieu~Felsinger and P.~Voigt.
\newblock The {D}irichlet problem for nonlocal operators.
\newblock {\em Math. Z.}, 3--4(279):779--809, 2014.

\bibitem{MaSh02}
V.~Maz{\cprime}ya and T.~Shaposhnikova.
\newblock On the {B}ourgain, {B}rezis, and {M}ironescu theorem concerning
  limiting embeddings of fractional {S}obolev spaces.
\newblock {\em J. Funct. Anal.}, 195(2):230--238, 2002.

\bibitem{Mos61}
J.~Moser.
\newblock On {H}arnack's theorem for elliptic differential equations.
\newblock {\em Comm. Pure Appl. Math.}, 14:577--591, 1961.

\bibitem{Ponce04}
A.~C. Ponce.
\newblock An estimate in the spirit of {P}oincar\'e's inequality.
\newblock {\em J. Eur. Math. Soc. (JEMS)}, 6(1):1--15, 2004.

\bibitem{Rie38}
M.~Riesz.
\newblock Int\'{e}grales de {R}iemann-{L}iouville et {P}otentiels.
\newblock {\em Acta Sci. Math. Szeged}, IX:1--42, 1938.

\bibitem{RoSe15}
X.~Ros-Oton and J.~Serra.
\newblock Regularity theory for general stable operators.
\newblock http://arxiv.org/pdf/1412.3892v1.

\bibitem{DRV14}
X.~R.-O. Serena~Dipierro and E.~Valdinoci.
\newblock Nonlocal problems with {N}eumann boundary conditions.
\newblock http://arxiv.org/abs/1407.3313.

\bibitem{Sil06}
L.~Silvestre.
\newblock H\"older estimates for solutions of integro-differential equations
  like the fractional {L}aplace.
\newblock {\em Indiana Univ. Math. J.}, 55(3):1155--1174, 2006.

\bibitem{Sil15}
L.~Silvestre.
\newblock A new regularization mechanism for the {B}oltzmann equation without
  cut-off.
\newblock http://arxiv.org/pdf/1412.4706v1, 2015.

\end{thebibliography}

\end{document}